\newif\ifPDF
\tikzset{EdgeStyle/.style = {->}}
\tikzset{LabelStyle/.style= {fill=yellow}}
\newtheorem{theorem}{Theorem}[section]
\newtheorem{corollary}[theorem]{Corollary}
\newtheorem{lemma}[theorem]{Lemma}
\newtheorem{proposition}[theorem]{Proposition}
\theoremstyle{definition}
\newtheorem{definition}[theorem]{Definition}
\newtheorem{remark}[theorem]{Remark}
\newtheorem{example}[theorem]{Example}
\numberwithin{equation}{section}
\theoremstyle{definition}
\begin{document}
% -------------------------------------MATH 
%-----------------------------------------------------------
\newcommand{\norm}[1]{\left\Vert#1\right\Vert}
\newcommand{\abs}[1]{\left\vert#1\right\vert}
\newcommand{\To}{\longrightarrow}
\newcommand{\F}{\mathcal{F}}

\newcommand{\tcb}{\textcolor{blue}}
\newcommand{\tcr}{\textcolor{red}}
\newcommand{\ol}{\overline}
\newcommand{\ov}{\overline}
\newcommand{\wt}{\widetilde}
\newcommand{\N}{\mathbb N}
\newcommand{\Z}{\mathbb Z}
\newcommand{\E}{\mathcal E}
\newcommand{\B}{\mathcal B}
\newcommand{\om}{\omega}
\newcommand{\ent}{f^{(n)}_{vw}}

\newcommand{\be}{\begin{equation}}
\newcommand{\ee}{\end{equation}}
\newcommand{\ba}{\begin{aligned}}
\newcommand{\ea}{\end{aligned}}
\newcommand{\wh}{\widehat}
\newcommand{\mc}{\mathcal}

\newcommand{\vp}{\varphi}
\newcommand{\e}{\varepsilon}

%------------------------------------------------------

\newcommand{\La}{\Lambda}
\newcommand{\Om}{\Omega}
\newcommand{\al}{\alpha}
\newcommand{\G}{\Gamma}
\newcommand{\g}{\gamma}
\newcommand{\T}{\theta}
\newcommand{\De}{\Delta}
\newcommand{\de}{\delta}
\newcommand{\s}{\sigma}
\newcommand{\A}{{\cal A}}
\newcommand{\M}{{\cal M}}
\newcommand{\bs}{(X,{\cal B})}
\newcommand{\Aut}{Aut(X,{\cal B})}
\newcommand{\h}{Homeo(\Om)}
\newcommand{\pos}{{\mathbb R}^*_+}
\newcommand{\R}{{\mathbb R}}
\newcommand{\ap}{{\cal A}p}
\newcommand{\per}{{\cal P}er}
\newcommand{\inc}{{\cal I}nc}

% title------------------------------------------------

\title[Harmonic analysis on Bratteli diagrams]
{Harmonic analysis on graphs via Bratteli diagrams and path-space measures}

\author{Sergey Bezuglyi and Palle E.T. Jorgensen}
\address{Department of Mathematics, University of Iowa, Iowa City,
 Iowa, USA}
\email{sergii-bezuglyi@uiowa.edu}
%\author{Palle E.T. Jorgensen}
%\address{University of Iowa, Iowa City, Iowa, USA}
\email{palle-jorgensen@uiowa.edu}

\thanks{}
\keywords{Graphs, Bratteli diagrams, electrical networks, graph Laplacian, 
unbounded operators, Hilbert space, spectral theory, 
harmonic analysis, dynamical systems, invariant measures, standard 
measure space, symmetric and path-space measure, Markov operator and 
process, finite energy space}

\subjclass[2010]{05C60, 37B10, 41A63, 42B37, 6N30, 47L50, 60J45} %37B10, 37L30, 47L50, 60J45}
\date{\today}

\dedicatory{To the memory of \textit{Ron Graham} (1935-2020) a 
pioneer and leader in discrete mathematics.}\commby{}

%------------------------------------------abstract--------------------------------------

\begin{abstract}

The past decade has seen a flourishing of advances in harmonic analysis 
of graphs. They lie at the crossroads of graph theory and such analytical 
tools as graph Laplacians, Markov processes and associated boundaries, 
analysis of path-space, harmonic analysis, dynamics, and tail-invariant 
measures. Motivated by recent advances for the special case of Bratteli 
diagrams, our present focus will be on those graph systems $G$ with the 
property that the sets of vertices $V$ and edges $E$ admit discrete level 
structures. A choice of discrete levels in turn leads to new and intriguing 
discrete-time random-walk models.  

   Our main extension (which greatly expands the earlier analysis of Bratteli 
diagrams) is the case when the levels in the graph system $G$ under 
consideration are now allowed to be standard  measure spaces. Hence, in
 the 
measure framework, we must deal with systems of \textit{transition probabilities}, 
as opposed to \textit{incidence matrices} (for the traditional Bratteli diagrams).

The paper is divided into two parts, (i) the special case when the levels are 
countable discrete systems, and (ii) the (non-atomic) measurable category,
i.e., when 
each level is a prescribed measure space with standard Borel structure. 
The study of the two cases together is motivated in part by recent new 
results on \textit{graph-limits.}
Our results depend  on a new 
analysis of certain duality systems for operators in Hilbert space; 
specifically, one dual system of operator for each level. We prove new 
results in both cases, (i) and (ii); and we further stress both similarities, 
and differences, between results and techniques involved in the two cases.

\end{abstract}

\maketitle

\tableofcontents

\setcounter{tocdepth}{1}

\section{Introduction}\label{sect Introduction}

\subsection{Motivation} A key tool in various
approaches to harmonic analysis on \textit{infinite graph networks} 
(sets of \textit{vertices} $V$ and \textit{edges} $E$) is notions of
``boundary''. While in classical 
harmonic analysis, the Poison boundary is a favorite, in carrying over key 
harmonic analysis ideas to infinite graph networks, the possibilities are 
much wider. But the analyses introduced here are all based on the notion 
of \textit{infinite paths}. The study of Markov transitions on graphs, and 
associated boundaries, are a case in point. Of special interest for 
harmonic analysis, and dynamics, on infinite graphs, is therefore the 
study of path-spaces, and path-space measures.

This analysis takes an especially nice form in the special case of 
\textit{infinite graphs} which admits representations as \textit{Bratteli 
diagrams}
 (including in their generalized forms, see Sections \ref{sect basics} -
 \ref{sec Laplace} and \ref{sect meas BD}). For details regarding Bratteli
  diagrams, we refer to the 
 literature cited below in this section, see \ref{ssect Literature}.
The main approach which we elaborate in  this paper is based on 
an application  of various ideas and methods developed  in the theory
of Bratteli diagrams to the case of measurable Bratteli diagrams, see 
Definitions \ref{def GBD} and \ref{def meas BD} from Sections 
\ref{sect basics} and \ref{sect meas BD}. 
We use this term for the case where every level of a diagram is
 represented by a $\sigma$-finite measure space, and a sequence  
\textit{transition kernels} plays the role of \textit{incidence matrices}. 

Motivated by numerous applications, we shall offer a systematic dynamical
 system  approach to both these extensions with countable and measure
 space levels.

It is worth noting that classical Bratteli diagrams (with finite levels)
have been used in the solution of diverse classification 
problems (operator algebras, representation theory, fast Fourier 
transform algorithms, and  dynamical systems on Cantor, Borel, and
measure spaces), in modeling 
\textit{electrical networks}, in \textit{neural networks}, in 
\textit{harmonic analysis,} and in an 
analysis of \textit{Cantor dynamics}. We collected the corresponding references at
the end of the introduction, see Subsection \ref{ssect Literature}.

A formal definition of a generalized Bratteli diagram is given in 
Section \ref{sect basics}, Definition \ref{def GBD}. 
Here we shall adopt the view of Bratteli 
diagrams as a special class 
of graphs $G$ with specified sets of vertices $V$ and edges $E$. What 
sets Bratteli diagrams apart from other graph systems $G = (V, E)$ is the 
introduction of \textit{levels}, such that edges are linking pairs of vertices only 
from neighboring levels of vertices. Hence, for each level, there will be a 
specification of a transition (incidence) matrix or  a transition kernel,
the term that is appropriate for measurable Bratteli diagrams.   The case 
of discrete levels is subdivided into two principally different classes: (i)
every level consists of a finite set of vertices, (ii) every level is an infinitely
countable set. In case (i), the path-space is a Cantor set, and such Bratteli 
diagrams represent models for homeomorphisms of Cantor sets. In case
(ii), the path-space is a zero-dimensional Polish space, the corresponding 
Bratteli diagrams are models for Borel automorphisms of a standard 
Borel space. 

Transition (incidence) matrices of a Bratteli diagram may then constitute a
 discrete-time \textit{Markov process}. In the traditional setting for Bratteli diagrams, this will 
form finite-state Markov process. But clearly, for many applications, it is 
natural to extend the setting in two ways: first, to consider instead a 
countably infinite set of vertices at each level; and secondly, to allow an 
even wider setting where each level is now a standard measure space. 
The corresponding discrete-time Markov process will then consist of a 
system of transition probability measures, indicating the probability 
distribution for transitions from one level to the next.

In more detail, a Bratteli diagram 
has a prescribed system of levels, see e.g., Figure 
\ref{fig Br D}, where the levels are sets 
of vertices, indexed by the natural numbers $\N_0$ including 0. For $n
\in \N$, the level $n$ vertex set $V_n$ then only admits edges 
backwards to $V_{n-1}$ and forwards to  $V_{n+1}$. 
For applications to Markov 
processes it is convenient to think of $n$ as discrete time, and 
``forward'' meaning increasing $n$ to $n+1$ for each $n$. Given a 
Bratteli diagram, a path is then an infinite string of  edges, $(e_n)$ such 
that, for every $n$, the range-vertex of $e_n$ in $V_{n+1}$ matches
the source of $e_{n+1}$. Our infinite paths do not admit loops. The set of 
all infinite paths is called the \textit{path-space}. A key tool for our
 harmonic analysis is the study of particular \textit{path-space 
 measures}. These 
notions are introduced in Section \ref{sect basics}.

A choice of path-space measure, then yields a \textit{random process}. 
The case of measures which yield discrete time \textit{Markov 
processes} includes the \textit{tail-invariant measures}. 
A tail invariant measure has the following “forgetful” property: For each level $n$, its value on  paths (cylinder sets) 
 that meet at a fixed vertex $v \in V_n$  is independent of the variety of
finite path segments from level $0$ up to $v$  in  $V_n$. (See Definition
\ref{def tail inv m}). An analysis of ordered Bratteli diagrams leads to 
one of our key tools, \textit{Kakutani-Rokhlin towers}, and 
\textit{Kakutani-Rokhlin partitions}, see Subsection \ref{ssect K-R towers}.

\subsection{Main results} 
We describe our main results proved in this paper. 
After defining the main concepts of discrete Bratteli diagrams $B = (V, E)$
in Section \ref{sect basics}, we focus on tail invariant measures. Clearly, 
every tail invariant measure $\mu$ is uniquely determined by its values on 
cylinder sets of the path-space $X_B$. It gives a sequence of non-negative
vectors $\mu^{(n)} = (\mu^{(n)}_v : v \in V_n)$ where $\mu^{(n)}_v$
is the measure of a cylinder set ending at $v$. By tail invariance, 
$\mu^{(n)}_v$ does not depend on a cylinder set. 
In  Theorem \ref{thm inv measures} we find a condition  
under which  a sequence of non-negative vectors determines a 
tail invariant measure. Theorem  \ref{thm inv meas stat BD} represents 
explicitly a tail invariant measure for a stationary Bratteli diagram. 

In Section \ref{sect Markov}, we consider Markov measures on the 
path-space of a generalized Bratteli diagram defined by a sequence of  
probability transition matrices $(P_n)$. Theorem  
 \ref{thm exist of Q_n} states the existence of co-transition probability 
 matrices $(Q_n)$. The properties of matrices $Q_n$ and $P_n$ 
 are discussed in 
 Propositions \ref{prop about wh Q_n}, \ref{prop T_P T_Q}.
 It is shown in Theorem \ref{thm inv meas is Markov}
 that every tail invariant  measure is a Markov measure.   

Section \ref{sec Laplace} contains some elements of harmonic analysis 
on generalized Bratteli diagrams. Starting with an initial distribution 
$q^{(0)}$ on the level $V_0$ and a sequence of transition kernels 
$(P_n)$, we define a  reversible Markov process, harmonic functions, 
\textit{Laplace operator}, and 
\textit{finite energy Hilbert space}. Our main results are obtained in Theorem \ref{thm 
harmonic}, where harmonic functions are described, and Proposition
 \ref{prop finite energy}. The latter contains 
a criterion for a function $f: V \to \R$ to be in the finite energy Hilbert
 space.

The goal of the second part of the paper (Sections \ref{sect Trans kernels}
 - \ref{sect meas BD}) is to build a theory of measurable Bratteli diagrams 
which is parallel to that of the purely discrete case. In Section  
\ref{sect Trans kernels} we consider the notion of a dual pair $(P,Q)$ of 
\textit{transition kernels} associated to $\sigma$-finite measure spaces 
$(X_1, \nu_1)$ and $(X_2, \nu_2)$. This 
means that the objects satisfy the relation 
$d\nu_1(x) P(x, dy) = d\nu_2(y) Q(y, dx)$. A number of 
results are  proved in this section which clarify the interplay between the
notions of transition kernels $P, Q$, linear (unbounded) operators
$T_P, T_Q$ generated by these kernels, and 
measures $\nu_1, \nu_2$. We refer to Theorems \ref{thm  on F(rho)} and 
\ref{thm_P determines Q} as the principal statements of Section 
\ref{sect Trans kernels}. 

In Section \ref{ssect Kernels on L^2} we develop the approach used in the 
previous section to the case when the operators  $T_P$ and  $T_Q$ are 
considered acting between the corresponding $L^2(\nu_i)$-spaces. 
 Theorem \ref{thm T_P contractive} proves that $T_P$ and $T_Q$ are
  contractive  operators
whenever $P$ and $Q$ are probability kernels. In the most general case,
$T_P$ and $T_Q$ are closable densely defined  operators such that 
$T_P \subset (T_Q)^*$ and $T_Q \subset (T_P)^*$. We give also a 
condition when the operators $T_P$ and $T_Q$  are bounded, 
Proposition \ref{prop bounded T_P T_Q}. 

We continue our analysis  of transition kernels $P, Q$ and the 
corresponding operators in Section \ref{ssect L-set}. If $R$ is a positive 
definite operator which generates a symmetric measure $\lambda$ on 
the product $X_1 \times X_2$, then a 
\textit{reproducing kernel Hilbert space}
$\mc H(\lambda)$ can be defined by the \textit{positive definite function} 
$(A, B) \mapsto \lambda(A \times B)$ where $A$ and $B$ are sets of finite 
measure. In Theorem \ref{thm RKHS}, we give
an explicit description of functions from $\mc H(\lambda)$. Theorem 
\ref{thm factorization} states that $R$ can be factorized in the product of 
two  operators.  

In the final Section \ref{sect meas BD} we apply the method 
developed in Sections \ref{sect Trans kernels} and  
\ref{ssect Kernels on L^2} to the case of measurable Bratteli diagrams. 
Recall that this name is used for a sequence of $\sigma$-finite measure 
spaces $(X_i, \nu_i)$ together with probability transition kernels 
$P_i, Q_i$. This approach is an extension of generalized Bratteli diagrams 
(discrete case) to the case of standard measure spaces. Our definitions 
and results proved in   Section \ref{sect meas BD} have discrete 
analogues considered in Section \ref{sec Laplace}. In particular, we 
consider a graph Laplacian defined on a measurable Bratteli diagram.
The most important results of this section are given  in Theorem 
\ref{prop Markov kernels} and Proposition \ref{prop meas energy}.

\subsection{Short outline of the paper}
The notions of path-space, and the corresponding measures, make 
sense for all three types of feasible Bratteli diagrams: (i) the standard 
case when each $V_n$ is assumed finite; (ii) the case when each $V_n$ 
is countably infinite, Section \ref{sect basics};  and (iii), measurable 
Bratteli diagrams, (the measurable category) when each $V_n$ is an 
uncountable standard Borel space, see Section \ref{sect meas BD}. 
However, as we show, 
the construction, and the relevant properties, of the path-space measures 
is quite different for the three cases.

Our analysis in the first three sections of the paper leads up to a study of 
graph Laplacians (Section \ref{sec Laplace}) and the corresponding 
harmonic functions. 
The path-space measures which have proved most successful for our 
present harmonic analysis are the tail-invariant measures. For the 
discrete Bratteli diagrams, we characterize these tail-invariant measures 
(Theorem \ref{thm inv measures}). In subsequent sections, we further 
demonstrate the use of tail-invariant measures in an harmonic analysis 
and discrete-time dynamics, for graphs with level structure (generalized 
Bratteli diagrams).

In Section \ref{sect basics}, we present an algorithm (Theorem 
\ref{thm inv meas stat BD}) for constructing tail-invariant measures on 
stationary Bratteli diagrams. It is based on a generalized 
Perron-Frobenius spectral analysis (Theorem \ref{thm general PF}).

In Sections \ref{sect Markov}, \ref{sect Trans kernels}, and 
\ref{ssect Kernels on L^2}, we introduce the more general class of
 Markov 
measures, and their corresponding transition kernels, referring to 
transition between levels. These results are based on a new analysis of 
associated systems of operators, transition operators. An explicit spectral 
theory for these transition operators is presented in Section 
\ref{ssect Kernels on L^2}. In Section \ref{ssect L-set}, we give a new
 tool for this analysis. It is a particular family of positive 
definite kernels, and their corresponding reproducing kernel 
Hilbert spaces (RKHSs).  Section \ref{sect meas BD} deals with the
 case of measurable 
Bratteli diagrams, and we present results which are based on a new 
analysis of path-space measures, and corresponding Markov processes.

\subsection{Literature} \label{ssect Literature}
Our paper makes direct connections to various adjacent areas such as:
stochastic analysis on graphs, graph-limits, dynamical systems, 
applied and computational harmonic analysis, weighted networks, spectral 
theory, Markov processes, etc.  For the reader's convenience we present the 
relevant references divided in several groups. 

(a) \textit{Standard Borel and measure spaces}. The literature where standard Borel spaces play a crucial role is very 
extensive. This subject is studied in ergodic theory, Borel dynamics, 
descriptive set theory, operator algebras, and many other fields. We refer 
to \cite{Kechris1995,
DoughertyJacksonKechris_1994, JacksonKechrisLouveau_2002, 
CiolettiSilvaStadlbauer2019, CornfeldFominSinai1982, Nadkarni1995,
BezuglyiDooleyKwiatkowski2006, KerrLi_2016} where the reader can find
 more information. 

(b) \textit{Cantor dynamics and $C^*$-algebras.} The problems of
 classification of dimension groups, $C^*$-algebras, 
dynamical systems in Cantor dynamics are naturally
connected with the study of invariants of the corresponding 
Bratteli diagrams. The list of relevant references is extremely long. We 
refer only to
\cite{Bratteli1972, LazarTaylor1980, GiordanoPutnamSkau1995, 
BratteliJorgensenKimRoush2001, BratteliJorgensenKimRoush2002,
Takesaki2003, Phillips2005, Jorgensen2006, Zerr2006, Putnam2018}. 
The reader can find numerous
 applications of Bratteli  diagrams in the theory of Cantor and Borel 
 dynamical systems, see, e.g.   
 \cite{HermanPutnamSkau1992,
 BezuglyiJorgensen2015, Durand2010, BezuglyiDooleyKwiatkowski2006,  BezuglyiKarpel2016}. 

(c) \textit{Random processes.} Random walks on Bratteli diagrams are discussed in 
\cite{FrickPetersen_2010, CarollPetersen_2016} and
\cite{Renault2018}.  A different approach is used in the works
\cite{Anandam_2012, Anandam2012, Anandam_2011}.

%Semibranching function systems are used in various areas, e.g. the representation theory, iteration function systems, see e.g. 
%\cite{FarsiJKP2018a, FarsiJKP2018b} or \cite{MarcolliPaolucci2011}. 
%We point out also 
%their relation with Bratteli diagrams which was considered in 
%\cite{BezuglyiJorgensen2015}.  

(d) \textit{Perron-Frobenius and Markov chains.} The Perron-Frobenius 
theory has various applications in Bratteli  diagrams,
networks, Markov chains, and other areas such as network models and
graph Laplacians: \cite{JorgensenPearse2011, Jorgensen_Pearse2013, 
JorgensenPearse_2019, Venkitaraman-2019, YankelevskyElad2019,
Li-2016, JorgensenPearse-2020}. 
We refer here also  to several
recent papers related to our study: \cite{CerfDalmau2019, 
ChaystiNoutsos2019, GautierTudiscoHein2019, GiladiRuffer2019, 
ChenVongLiXu2019, FalkNussbaum2018}.

(e) \textit{Transition probability kernels}. The notion of transition 
probability kernels is an important tool
for the study of Markov chains and properties of random walks. The 
reader can find more details, for example, in the books \cite{Douc_2018, 
Klenke_2014, LyonsPeres_2016, Nummelin_1984, Revuz_1984} 
and recent articles 
\cite{DeyTrivedi2019, Smirnov2019, EpsteinPop2019}.

(f) \textit{Weighted networks, neural networks, graph limits.} 
Engineering applications of deep neural 
networks realized as a 
generalized Bratteli diagram structure are discussed in 
\cite{YangDing2020,
Sun-2020, Chi-2020}; graph-limits were considered  in \cite{Dunlop2020,
Lovasz2012, KunszentiLovasz2019}. Important contribution to the graph  theory and
weighted networks have been made in \cite{Chung2007, Chung2010,
Chung2014, ChungKenter2014, ChungGraham2012}. 

(g) \textit{Generalized Bratteli diagram, machine learning.} 
The literature on optimization,  financial models, machine learning with 
deep neural 
networks and generalized Bratteli diagram structure includes 
\cite{JorgensenPearse_2019, GaoSu2020, Adams2020, HaninNica2020,
 Xiao2020, Xia2020}. More information can be found in  
  \cite{Baudot2019, SumLeung2019, 
KovachkiStuart2019}

\section{Graph analysis via Bratteli diagrams}\label{sect basics} 

In this section, we briefly discuss the main definitions and facts about 
Bratteli diagrams. 
Our emphasis is on those features of Bratteli diagrams, and their
generalizations, which will be important for our present analysis of general
 graphs which admit a system of levels as outlined above.
 
The literature on Bratteli diagrams and their 
application in dynamics is very extensive, we mention here  
\cite{HermanPutnamSkau1992, GiordanoPutnamSkau1995,
Durand2010,  BezuglyiKarpel2016, BezuglyiKarpel2020} 
(more references can be found therein). In contrast to most 
of the above sources,  we focus here on
the case of \textit{generalized  Bratteli diagrams}. This means that
the set of vertices in every level is infinitely countable. 

\subsection{Generalized Bratteli diagrams} 

In the introduction, we described the notion of a Bratteli diagram 
considering this as an infinite graded graph. Here is a natural extension
of this concept to the case of countable levels.

\begin{definition}\label{def GBD}
Let $V_0$ be a countable set (which may be identified with either 
$\N$ or $\Z$ for convenience).
Set $V_i = V_0$ for all $i \geq 1$. A countable graded graph   
$B = (V, E)$ is called a \textit{generalized Bratteli diagram} if it 
satisfies the following properties.

(i) The set of vertices $V$ of $B$ is $\bigsqcup_{i=0}^\infty  V_i$. 

(ii) The set of edges $E$ of $B$ is represented as $\bigsqcup_{i=0}^\infty  E_i$ where $E_i$ is the set of edges between the levels $V_i$ and 
$V_{i+1}$. 

(iii) For every $w \in V_i, v \in V_{i+1}$, the set of edges $E(w, v)$  between $w$ and $v$ is finite (or empty); set $|E(w, v)| =  
f^{(i)}_{vw}$. It defines a sequence of infinite (countable-by-countable) 
\textit{incidence  matrices} $(F_n ; n \in \N_0)$ whose entries are 
non-negative  integers: 
$$
F_i = (f^{(i)}_{vw} : v \in V_{i+1}, w\in V_i),\ \   f^{(i)}_{vw} 
 \in \N_0.
$$ 

(iv) The matrices 
$F_i$ have at most \textit{finitely many non-zero entries in each row}. 

(v) The maps  $r,s : E \to V$ are defined on the diagram $B$: 
for very $e \in 
E$ there are $w, v$ such that $e \in E(w, v)$; then $s(e) =w$ and
$r(e) = v$. They are called the \textit{range} ($r$) and \textit{source} 
($s$) maps. 

(vi) For  every $w \in V_i, \; i \geq 0$,
there exists an edge $e \in E_i$ such that $s(e) = w$ and edge $e' \in 
E_{i-1}$ such that $r(e') = w$. In other 
words, every incidence matrix $F_i$ has no zero row and zero column. 
\end{definition}

\begin{remark}
(1)  It follows from Definition \ref{def GBD} that every generalized 
Bratteli diagram is uniquely determined by a sequence of matrices $(F_n)$
such that every matrix satisfies (iii) and (iv). 
For this, one uses the rule that the entry $\ent$ indicates  the
 number of edges between the vertex $w \in V_n$ and vertex 
 $v\in V_{n+1}$. It defines the set $E(w, v)$; then one takes 
$$E_n = \bigcup_{w\in V_n, v \in V_{n+1}} E(w, v)$$

(2) To emphasize that a generalized  Bratteli diagram $B$ is determined
by the sequence of incidence matrices $(F_n)$, we will write $B =
B(F_n)$ if needed. 
An important particular case of a generalized Bratteli diagram is 
obtained when all incidence matrices $F_n$ are the same, $F_n = F$ 
for all $n\in \N_0$. Then, the generalized Bratteli diagram  $B(F)$ is
 called \textit{stationary.}
 \end{remark}

\begin{remark}
If $V_0$ is a singleton, and each $V_n$ is a finite  set, then we obtain   
 the standard definition of a Bratteli diagram originated in 
\cite{Bratteli1972}. Later it was used in the theory
 of $C^*$-algebras  and dynamical systems for solving some 
 classification  problems  and constructions of models (for references,
  see Introduction \ref{ssect Literature}). 

It is important to emphasize that we have no restriction on the entries of
columns of  the incidence matrices $F_n$. They may have any number 
of non-zero (or zero) entries. 
\end{remark}

On Figure 1, we give an example of a Bratteli diagram. 
As a matter of  fact, this example is a small (finite) part of the diagram
 since every Bratteli
diagram has infinitely many levels and every level is a countably 
infinite set. 

\begin{figure}[!htb]\label{fig Br D} 
\centering
  \includegraphics[width=0.75\textwidth, height=0.45\textheight]
  {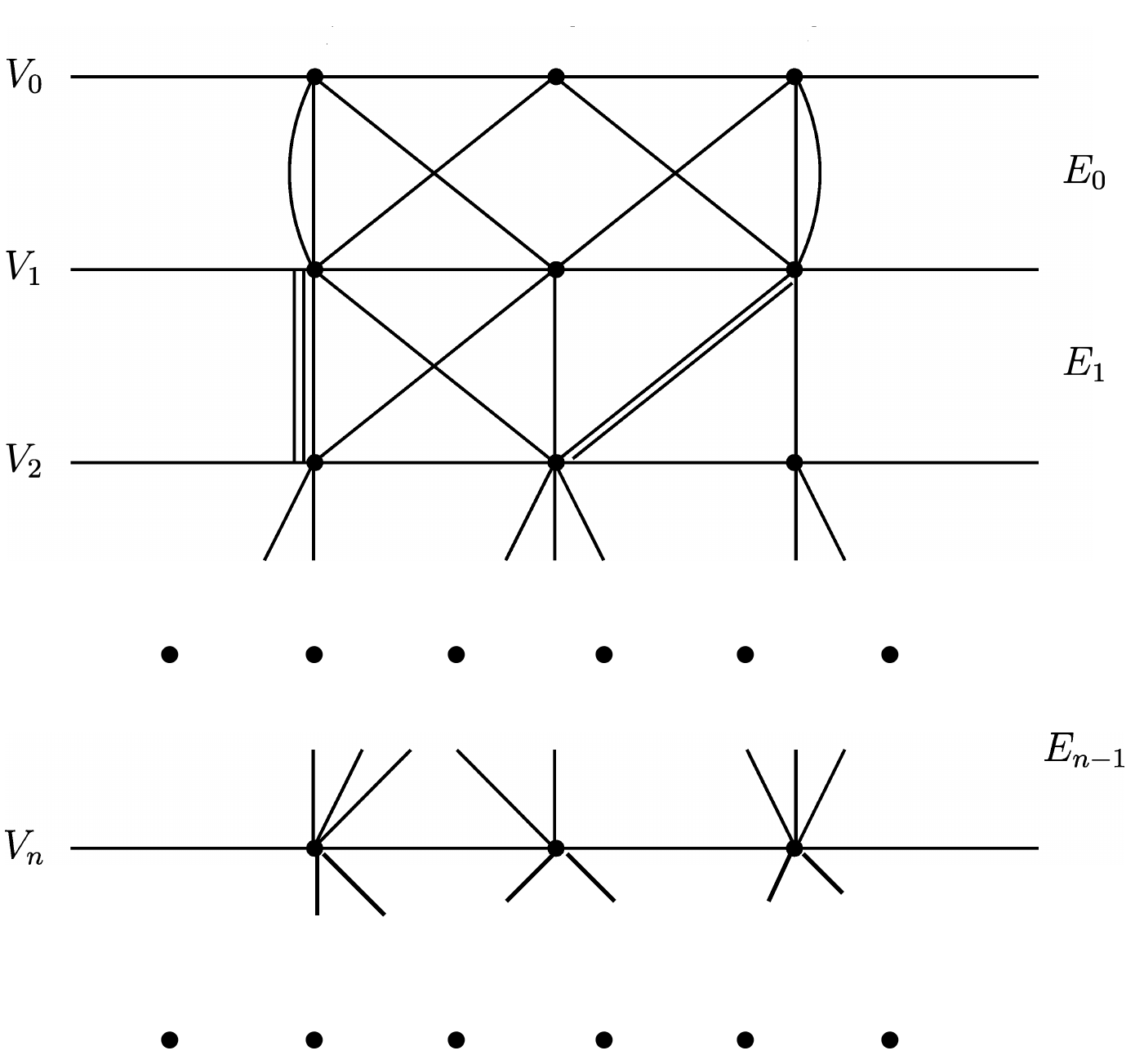}
 % {BD.jpg}
  \caption{Example of a Bratteli diagram: levels, verices, and edges
  (see Definition \ref{def GBD})}
  \label{fig:kernels}
\end{figure}

\begin{definition}\label{def path space}
A finite or infinite \textit{path} in a Bratteli diagram $B = (V,E)$ is a
 sequence of edges $(e_i : i \geq 0)$ such that $r(e_i) = s(e_{i+1})$. 
Denote by $X_B$ the set of all infinite paths. Every finite path $\ol e =
(e_0, ... , e_n)$ determines a cylinder subset $[\ol e]$ of $X_B$:
$$
[\ol e] := \{x = (x_i) \in X_B : x_0 = e_0, ..., x_n = e_n\}.
$$
The collection of all cylinder subsets forms a base of neighborhoods  for a
topology on $X_B$. In this topology, $X_B$ is a  Polish 
zero-dimensional space and every cylinder set is clopen. 
\end{definition}

\begin{remark} In this remark, we collected a number of simple 
statements about properties of generalized Bratteli diagrams.
 
(1) It follows from Definition \ref{def path space} that  $X_B$ is a
 standard Borel space whose Borel structure  is generated by clopen
(cylinder) sets. In contrast to the classical case, we note that $X_B$ is
not compact and even not locally compact. Furthermore, clopen subsets
are not compact, in general.

(2) If $x = (x_i)$ is a point in $X_B$, then it is obviously represented as
 follows:
$$
\{x\} = \bigcap_{n\geq 0} [\ol e]_n
$$ 
where $[\ol e]_n = [x_0, ... ,x_n]$. But, in general, it is not true that 
any decreasing (nested) sequence of cylinder sets determines a point in 
 $X_B$  since  it can be empty. If the set $\bigcap_{n\geq 0} [\ol e]_n$
 is non-empty, then it contains just a singe point.     
   
(3) The metric on $X_B$, which is compatible with the clopen topology,  
can be defined as follows: for $x = (x_i)$ and $y = (y_i)$ from $X_B$,
$$
\mathrm{dist}(x, y) = \frac{1}{2^N},\ \ \ N = \min\{i \in \N_0 : x_i 
\neq y_i\}.
$$

(4) Considering $X_B$ as a zero-dimensional metric space, we will 
\textit{assume} 
that the diagram $B$ is chosen so that the space $X_B$ 
\textit{has no isolated points}.  This means
that for every infinite  path $(x_0, x_1, x_2, ... ) \in X_B$ and every 
$n \geq 1$, there exists $m > n$  such that $|s^{-1}(r(x_m))| > 1$. Therefore, without loss of 
 generality, we can assume that every column of the incidence matrix 
 $F_n, n \in \N_0,$  has more than one non-zero entry. 

(5) We observe that if, for every vertex $v \in V$, the set $s^{-1}(v)$ is 
finite, then the path-space $X_B$ is a locally compact Polish space. 
The finiteness of $r^{-1}(v)$, which is a requirement included in the 
definition of a generalized Bratteli diagram, is constantly used in
 the next sections.

(6) In the study of Bratteli diagrams the \textit{telescoping} procedure 
is often used. 
This means that, for a given generalized Bratteli diagram $B = (V,E)$, 
one can take any monotone increasing sequence $(n_k : k \in 
\N_0), n_0 = 0,$ 
and construct  a new Bratteli diagram $B' = (V', E')$ where $V' = 
\bigsqcup_{k=0}^\infty V_{n_k}$, and the set of edges $E'$ is 
determined  by the sequence of incidence matrices $F'_k$,
$$
F'_k = F_{n_k} \cdots F_{n_{k+1} -1}.
$$ 
Since every matrix $F_n$ has finitely many non-zero entries in 
each row,  the set 
$E(V_0, v)$ of all finite paths $\ol e$ with  $r(\ol e) = v$ is finite,  
where  $v \in V_n, n \geq 1$. Clearly, 
$$
|E(V_0, v)| = \sum_{w \in V_0} (F_0 \cdots F_{n-1})_{w,v}.
$$

(7) Generalized Bratteli diagram arise in Borel dynamics as 
models for aperiodic Borel automorphisms of standard Borel spaces.
We refer to \cite{BezuglyiDooleyKwiatkowski2006} where such models
have been constructed. 
 \end{remark}
 
Next, we will define the notion of an \textit{irreducible} 
generalized Bratteli diagram. 
For this, it is convenient to identify all sets $V_n, n \geq 0,$, i.e.,
we can think that the same vertex $v$ is a vertex of each level. 

\begin{definition}\label{def irreducible}
It is  said  that a generalized 
Bratteli diagram $B$ is \textit{irreducible} if, for any two 
vertices $v$ and 
$w$,  there exists a level $V_m$ such that $v \in V_0$ and $w\in V_m$ 
are connected by a finite path. This is equivalent to the property that,
for any fixed $v,w$, there exists $m \in \N$ such that the product
of matrices $F_{m-1} \cdots F_0$ has non-zero $(w,v)$-entry.
\end{definition}

Let $B = (V, E)$ be a generalized Bratteli diagram. 
Define the \textit{tail equivalence relation} $\mathcal E$ on the path 
space of $X_B$.

\begin{definition}\label{def tail}
It is said that two paths $x = (x_i)$ and $y = (y_i)$ are \textit{tail
equivalent} if there
exists  $m \in \N$ such that $x_i = y_i$ for all $i \geq m$. Let 
$[x]_{\E} := \{ y \in X_B : (x,y) \in \E\}$ be the set of points tail 
equivalent
to $x$. We say that a point $x$ is \textit{periodic} if $| [x]_{\E} | <
\infty$.  If there is no periodic points, then the tail equivalence relation is 
called  \textit{aperiodic}.
\end{definition}

Without loss of generality, we will assume that $\E$ is aperiodic. Clearly,
$\E$ is a countable  Borel equivalence relation. It can be easily proved 
that $\E$ is \textit{hyperfinite} in the context of Borel dynamical
systems. This notion and results are discussed, for example, in 
\cite{DoughertyJacksonKechris1994}.

\subsection{Kakutani-Rokhlin towers and ordered Bratteli diagrams}
\label{ssect K-R towers} 

\textit{ Kakutani-Rokhlin towers} (partitions) proved to be a very fruitful 
tool in dynamical systems. They have been used to construct the
 approximation of an aperiodic transformation by periodic ones. The idea
 to us a refining sequence of Kakutani-Rokhlin partition leads to the 
 realization of aperiodic transformation as a map acting on the path-space
 of a Bratteli diagram \cite{ConnesKrieger1977, 
 Vershik1981, HermanPutnamSkau1992, 
 GiordanoPutnamSkau1995, BezuglyiDooleyKwiatkowski2006,  
 VianaOliveira2016}. 
 
 We begin with a generalized Bratteli diagram $B = (V, E)$ defined by 
 a sequence of matrices $(F_n)$.
Let $v$ be a vertex from $V_n, n \geq 1$. Then, for every $v_0 \in 
V_0$, 
we consider the set $E(v_0, v)$ (which is non-empty only for finitely 
many vertices $v_0$, see Definition \ref{def GBD}). Set 
$h^{(n)}_{v_0, v} = |E(v_0, v)|$, $v \in V_n$,  and define
$$
H^{(n)}_v = \sum_{v_0 \in V_0} h^{(n)}_{v_0, v}.
$$
It gives  the sequence of vectors $H^{(n)} = (H^{(n)}_{v} : 
v \in V_n)$ which is assigned to vertices of the corresponding
 level $V_n$.
 
 Define the vector $H^{(0)} = (H^{(0)}_{v} : v \in V_0)$ such that 
$H^{(0)}_{v} = 1$ for all $v$. 
Then we see that  the following relation holds.

\begin{lemma}\label{lem vector H} 
$F_n H^{(n)} = H^{(n+1)}$ and $F_n \cdots F_0 
 H^{(0)} = H^{(n+1)}$, $n \in \N_0$.
 \end{lemma}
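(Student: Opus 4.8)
The plan is to read $H^{(n)}_v$ as a path count and to reduce both identities to a single one-step recursion together with an induction. First I would record the combinatorial meaning of the vector $H^{(n)}$: by definition $H^{(n)}_v = \sum_{v_0 \in V_0} h^{(n)}_{v_0, v} = \sum_{v_0 \in V_0} |E(v_0, v)|$, which is precisely the number $|E(V_0, v)|$ of finite paths in $B$ that start at level $0$ and terminate at the vertex $v \in V_n$. Property (iv) of Definition \ref{def GBD} (finitely many nonzero entries in each row of $F_i$) guarantees that each such count is finite, so all the sums below are finite sums and every matrix--vector product is well defined.

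Next I would establish the one-step identity $F_n H^{(n)} = H^{(n+1)}$ by comparing components. Fix $v \in V_{n+1}$. By the definition of the incidence matrix $F_n = (f^{(n)}_{vw})$,
$$
(F_n H^{(n)})_v = \sum_{w \in V_n} f^{(n)}_{vw}\, H^{(n)}_w ,
$$
and the $v$-row of $F_n$ has only finitely many nonzero terms, so this sum is finite. The right-hand side is exactly a count of finite paths to $v$: any finite path from level $0$ to $v \in V_{n+1}$ factors uniquely as a finite path from level $0$ to some intermediate vertex $w \in V_n$ (there are $H^{(n)}_w$ of these) followed by one of the $f^{(n)}_{vw} = |E(w, v)|$ edges from $w$ to $v$. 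Summing over $w$ yields $H^{(n+1)}_v$, which proves $F_n H^{(n)} = H^{(n+1)}$.

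Finally I would obtain $F_n \cdots F_0 H^{(0)} = H^{(n+1)}$ by induction on $n$, applying the matrices successively to vectors so as to sidestep any question about associativity of infinite matrix products. The base case $n = 0$ reads $F_0 H^{(0)} = H^{(1)}$: since $H^{(0)}$ is the all-ones vector, $(F_0 H^{(0)})_v = \sum_{w \in V_0} f^{(0)}_{vw} = H^{(1)}_v$, which is the $n = 0$ instance of the one-step identity. For the inductive step, assuming $F_{n-1}\cdots F_0 H^{(0)} = H^{(n)}$, I would compute $F_n \cdots F_0 H^{(0)} = F_n\bigl(F_{n-1}\cdots F_0 H^{(0)}\bigr) = F_n H^{(n)} = H^{(n+1)}$, using the induction hypothesis and then the one-step identity.

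I would not expect any genuine obstacle here; the only point requiring attention is that the diagram is infinite, so a priori the matrix products and the defining sums could fail to converge. This is exactly what hypothesis (iv) of Definition \ref{def GBD} rules out, and phrasing the second identity as an iterated action on vectors (rather than as a product of infinite matrices) keeps every intermediate sum finite.
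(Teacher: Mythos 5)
Your proof is correct and follows essentially the same route as the paper: the paper's proof consists of exactly the one-step relation $H^{(n+1)}_v = \sum_{w \in V_n} f^{(n)}_{vw} H^{(n)}_w$, from which both identities follow immediately. You merely spell out the path-counting justification of that relation and the routine induction, which the paper leaves implicit.
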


The proof of this fact follows immediately from the relation
$$
H^{(n+1)}_v = \sum_{w \in V_n} \ent H^{(n)}_w,  \ \ \ v \in V_{n+1}.
$$

Similarly, we can define a clopen subset $X_v^{(n)}$ of $X_B$ setting 
\be\label{eq X_v^{(n)}}
X_v^{(n)} = \bigcup_{v_0 \in V_0}\bigcup_{\ol e \in E(v_0, v)} [\ol e].
\ee
Recall that $X_v^{(n)}$ is a \textit{finite union} of cylinder sets. The 
number
of cylinder sets used  in the definition of  $X_v^{(n)}$  is
x $H^{(n)}_v$ because $h^{(n)}_{v_0, v}$ gives the exact number 
of finite paths between $v_0$ and $v$. The set $X_v^{(n)}$ is 
viewed as a tower assigned to the vertex $v$ and $H_v^{(n)}$ is the 
height of this tower.

\begin{lemma} Let $B =(V,E)$ be a generalized Bratteli diagram. The
 sets $(X_v^{(n)} : v \in V_n)$ constitute a partition $\xi_n$ of 
$X_B$ into disjoint clopen sets for every $n$. The sequence of partitions 
$(\xi_n)$ is a refining sequences such that the elements of all these 
partitions generate the topology (and Borel $\sigma$-algebra) on  $X_B$. 
\end{lemma}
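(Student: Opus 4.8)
The plan is to establish the three assertions in turn: that for each fixed $n$ the family $(X_v^{(n)}:v\in V_n)$ is a partition of $X_B$ into clopen sets; that, passed to the level of their cylinder ``floors'', the partitions form a refining sequence; and that this sequence generates the clopen topology and the Borel $\sigma$-algebra. The single structural fact underlying all three is that an infinite path $x=(x_i)\in X_B$ meets each level in exactly one vertex: its level-$n$ vertex is $r(x_{n-1})$ for $n\ge 1$ (and $s(x_0)$ for $n=0$). Thus $x\mapsto r(x_{n-1})$ is a well-defined single-valued map $X_B\to V_n$, and $x\in X_v^{(n)}$ if and only if $r(x_{n-1})=v$.

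First I would check that each tower is clopen. By Definition \ref{def GBD}(iv) every vertex has only finitely many incoming edges, so by induction on $n$ the set of finite paths $E(V_0,v)$ ending at $v\in V_n$ is finite, of cardinality $H^{(n)}_v$; hence $X_v^{(n)}$, as defined in \eqref{eq X_v^{(n)}}, is a finite union of the clopen cylinders $[\ol e]$ (Definition \ref{def path space}) and is therefore clopen. Disjointness and covering are then immediate from the preceding remark: the towers are exactly the fibres of the single-valued map $x\mapsto r(x_{n-1})$, so distinct vertices give disjoint towers and every path lies in the tower of its own level-$n$ vertex. Property Definition \ref{def GBD}(vi) (no zero row or column) guarantees, by induction back to $V_0$, that every $v\in V_n$ is reachable, so no tower is empty and $\xi_n$ is genuinely indexed by $V_n$.

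For the refining and generating assertions the natural object is the finer partition $\mathcal P_n:=\{[\ol e]:\ol e\in E(V_0,V_n)\}$ of $X_B$ into length-$n$ cylinders --- the individual floors of the towers --- with each $X_v^{(n)}$ being the finite union of those floors whose defining path ends at $v$. Since a length-$(n+1)$ cylinder $[e_0,\dots,e_n]$ sits inside the length-$n$ cylinder $[e_0,\dots,e_{n-1}]$, the partition $\mathcal P_{n+1}$ refines $\mathcal P_n$; and these floors are precisely the clopen cylinder base of Definition \ref{def path space}. Because any nested sequence of such cylinders intersects in at most one point (see item (2) of the remark following Definition \ref{def path space}), the $\mathcal P_n$-block of a point shrinks to that point, so the floors separate points and generate the clopen topology, hence the Borel $\sigma$-algebra. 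As every tower is a finite union of floors, the conclusion of the lemma follows.

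The step I expect to require the most care --- and the one I would state explicitly --- is the word \emph{refining} as applied to the tower partitions $\xi_n$ \emph{themselves}. A vertex $v'\in V_{n+1}$ may have several predecessors in $V_n$ (a row of $F_n$ may carry more than one non-zero entry, and there may be several edges between a fixed pair of vertices), so in general $X_{v'}^{(n+1)}$ is not contained in any single $X_v^{(n)}$ and the sequence $(\xi_n)$ need not be nested as a sequence of block partitions. The honest content is therefore that it is the floors $[\ol e]$, rather than the coarse vertex-towers, that refine and generate: when edges carry multiplicities, only the floors --- which record the edges actually traversed, not merely the vertices visited --- separate the points of $X_B$. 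I would accordingly read ``the elements of all these partitions generate the topology'' at the level of these cylinder floors, which is exactly the standard Kakutani-Rokhlin generating sequence that the towers organize.
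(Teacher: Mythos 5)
The paper states this lemma without any proof at all --- it appears as a bare assertion immediately after the definition of the towers $X_v^{(n)}$ in \eqref{eq X_v^{(n)}} --- so there is no argument of the authors' to compare yours against. Your proposal supplies the missing argument, and it is correct: clopenness of each tower follows from finiteness of $E(V_0,v)$ (conditions (iii)--(iv) of Definition \ref{def GBD}, as the authors themselves note in their remark on telescoping), disjointness and covering follow because the towers are the fibres of the single-valued level-$n$ vertex map $x \mapsto r(x_{n-1})$, and non-emptiness follows from condition (vi).

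Your closing caveat is the one point of real substance, and you are right about it. Read literally, with $\xi_n$ the partition into towers, the second half of the lemma fails in general: if $v' \in V_{n+1}$ receives edges from two distinct vertices of $V_n$, then $X_{v'}^{(n+1)}$ meets two distinct level-$n$ towers, so $\xi_{n+1}$ does not refine $\xi_n$; and when a pair of vertices is joined by multiple edges, two paths visiting the same vertices but traversing different edges lie in the same tower at every level, so the towers of all levels together separate no such pair of points and cannot generate the topology or the Borel $\sigma$-algebra. The statement is true, as you say, for the partitions into the cylinder floors $[\ol e]$, $\ol e \in E(V_0,v)$, $v \in V_n$, of which each tower is a finite union; these floors are exactly the base cylinders of Definition \ref{def path space}, so for them the refining and generating claims are immediate. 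This is clearly the intended Kakutani--Rokhlin reading (it is the one the authors implicitly use later, e.g.\ in Corollary \ref{cor meas of towers}), and your proof establishes it; the only improvement I would suggest is to state the corrected formulation of the lemma explicitly rather than leaving it to the final paragraph.
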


This partition  can be 
viewed as an analogue of a \textit{Kakutani-Rokhlin partition} 
which is widely used in the ergodic theory and Cantor dynamics. The 
difference is that we have not defined a transformation on $X_B$ so far.
For this, we introduce the notion of an
\textit{ordered generalized Bratteli diagram}. We will see  that such
diagrams arise naturally in Borel dynamics. 
 
Let $B=(V,E,\geq)$ be a generalized Bratteli diagram $(V,E)$ equipped 
with a \textit{partial order} $\geq$ defined on each set $E_i,\ 
i=0,1, ...,$ such that two edges
$e,e'$ are comparable if and only if $r(e)=r(e')$. In other words, a 
\textit{linear order} $\geq$ is defined on each (finite) set $r^{-1}(v),
\ v\in V\setminus V_0$. For a Bratteli diagram $(V,E)$ equipped with 
such a partial order $\geq$ on $E$, one can also define a \textit{partial 
lexicographic order} on
the set $E_{k}\circ\cdots\circ E_{l-1}$ of all paths from $V_k$ to $V_l$:
$(e_{k},...,e_{l-1}) > (f_{k},...,f_{l-1})$ if and only if for some $i$ 
with $k \le i <l$, $e_j=f_j$ for $i<j < l$ and $e_i> f_i$. Then we see
 that any two paths from $E(V_0, v)$, the (finite) set of all paths 
 connecting vertices from $V_0$ with $v$, are comparable with respect 
 to the introduced lexicographic
order. We call a path $e= (e_0, e_1,..., e_i,...)$ \textit{maximal 
(minimal)} if  every
$e_i$ has a maximal (minimal) number among all elements from
$r^{-1}(r(e_i))$. Note that there are unique minimal and unique maximal 
finite paths in $E(V_0,v)$ for each $v\in V_i,\ i > 0$.

\begin{definition}\label{order} A generalized Bratteli diagram 
$B=(V,E)$
together with a partial order $\geq$ on $E$ is called an \textit{ordered
Borel-Bratteli diagram} $B=(V,E,\geq)$ if the space $X_B$ has no 
cofinal
minimal and maximal paths. This means that $X_B$ does not contain 
infinite paths $e=
(e_0, e_1,..., e_i,...)$ such that for all sufficiently large $i$ the edges 
$e_i$ have maximal (minimal) number in the set $r^{-1}(r(e_i))$.
\end{definition}

For each ordered Borel-Bratteli diagram $B=(V,E,\geq)$, define a Borel
transformation $\varphi$, which is also called the \textit{Vershik map 
(or automorphism)}, acting on the space 
$X_B$ as follows. Given $x=(e_0, e_1,...)\in X_B$, let $k$ be the
 smallest number such that $e_k$ is not a maximal edge. Let $f_k$ be
the successor of
$e_k$ in $r^{-1}(r(e_k))$. Then we define $\varphi(x)=
(f_0, f_1,...,f_{k-1},f_k,e_{k+1},...)$ where $(f_0, f_1,..., f_{k-1})$ 
is the minimal path in $E(V_0, r(f_{k-1}))$. Obviously, $\varphi$ is a
 one-to-one mapping
of $X_B$ onto itself. Moreover, $\varphi$ is a homeomorphism of $X_B$
where the 0-dimensional topology is defined by cylinder sets.

Thus, given an ordered Borel-Bratteli diagram $B=(V,E,\geq)$, we have
defined a Borel dynamical system $(X_B, \varphi)$. It turns out that 
every Borel aperiodic automorphism of a standard Borel space can be realized as a Vershik transformation acting on the 
space of infinite paths of an ordered Borel-Bratteli diagram.

\begin{theorem}[\cite{BezuglyiDooleyKwiatkowski2006}]
\label{thm T4.2} Let $T$ be an aperiodic Borel
 automorphism acting
on a standard Borel space $(X, \A)$. Then there exists an ordered
Borel-Bratteli diagram $B=(V,E,\geq)$ and a Vershik automorphism 
$\varphi : X_B \to X_B$ such that $(X, T)$ is Borel isomorphic to 
$(X_B,\varphi)$.
\end{theorem}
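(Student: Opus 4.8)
The plan is to realize $(X,T)$ as a Bratteli--Vershik system by building a refining sequence of Kakutani--Rokhlin partitions adapted to $T$ and reading off the diagram $B$ from the combinatorics of how the towers at one level are assembled from those at the previous level. Concretely, I would produce for each $n$ a Borel partition $\xi_n$ of $X$ into countably many \emph{$T$-towers} $\{\, T^j B_v^{(n)} : 0 \le j < h_v^{(n)} \,\}$, indexed by a countable set $V_n$, so that (a) the sequence $(\xi_n)$ is refining, (b) each base $\bigcup_v B_v^{(n+1)}$ is contained in the previous base $\bigcup_v B_v^{(n)}$, (c) the heights $h_v^{(n)}$ tend to infinity, and (d) the $\xi_n$ together generate $\A$. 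Given such a system, the vertex set of $B$ at level $n$ is $V_n$; an edge from $w\in V_n$ to $v\in V_{n+1}$ is recorded each time the level-$n$ tower over $w$ occurs as a sub-column of the level-$(n+1)$ tower over $v$, so that $f^{(n)}_{vw}$ equals the number of such occurrences, and the linear order on $r^{-1}(v)$ is the bottom-to-top stacking order of these sub-columns inside the tower over $v$.

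The engine for (a)--(d) is the \emph{Borel marker (Rokhlin) lemma} for aperiodic automorphisms: since $T$ is aperiodic on a standard Borel space, one obtains a decreasing sequence of complete Borel sections (markers) $A_1 \supseteq A_2 \supseteq \cdots$ with $\bigcap_n A_n = \emptyset$ and with the gaps between successive $T$-visits to $A_n$ growing without bound along each orbit. The return-time decomposition over $A_n$ is exactly a Kakutani--Rokhlin partition, and by passing to a subsequence and refining one arranges both the nesting of bases and the generation of $\A$. Aperiodicity is precisely what makes these markers available and their gaps unbounded, so this is where the standard-Borel hypothesis enters in an essential way.

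With the diagram and order in hand, I would define the \emph{itinerary map} $\pi : X \to X_B$ by sending $x$ to the path $(e_n)$ whose $n$th edge records which level-$n$ sub-column of the level-$(n+1)$ tower the point $x$ currently occupies. That $\pi$ is Borel and injective follows from (d); since a Borel injection between standard Borel spaces is automatically a Borel isomorphism onto its (Borel) image by Lusin--Souslin, it remains to check surjectivity, which follows from the correspondence between nested cylinder sets of $X_B$ and nested tower floors that shrink to points. The conjugacy $\pi \circ T = \varphi \circ \pi$ is then verified floor by floor: if $x$ is not at the top floor of its level-$n$ tower, $T$ advances it one floor, matching the successor edge; when $x$ sits at the top of all towers up to level $k$, applying $T$ resets those coordinates to the minimal path and increments at level $k$, which is exactly the definition of $\varphi$.

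The main obstacle is the absence of compactness: unlike the Cantor (minimal homeomorphism) case, one cannot use clopen partitions and continuity, so every step must be carried out in the Borel category. The delicate points are therefore (i) producing markers whose gaps grow while keeping everything Borel and the bases nested, and (ii) matching the ``carry'' produced when $T$ pushes a point off the top of a stack of towers with the lexicographic successor operation of $\varphi$. The requirement that $B$ carry no cofinal maximal or minimal paths guarantees that $\varphi$ is defined on all of $X_B$ and corresponds, under $\pi$, to the global action of $T$; arranging this by a suitable choice of markers and edge-orders is what upgrades $\pi$ to a genuine Borel isomorphism intertwining $T$ and $\varphi$. Once (i) and (ii) are handled, both the intertwining and the isomorphism claims follow from the generation property (d).
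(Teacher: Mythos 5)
Your overall strategy (vanishing markers $\to$ Kakutani--Rokhlin partitions $\to$ diagram and lexicographic order $\to$ itinerary map) is the right one, and it is essentially the route of the source the paper cites: note that the paper itself gives no proof of this theorem, it imports it from \cite{BezuglyiDooleyKwiatkowski2006}, so the comparison is against that construction. The marker lemma, the Lusin--Souslin argument for Borel isomorphism onto the image, and the floor-by-floor verification of $\pi\circ T=\varphi\circ\pi$ are all handled correctly in your sketch.

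There is, however, a genuine gap at the surjectivity step, and it sits exactly where the Borel category differs from the Cantor one. You claim surjectivity ``follows from the correspondence between nested cylinder sets of $X_B$ and nested tower floors that shrink to points.'' That is a compactness argument, and it is unavailable here: in a standard Borel space a decreasing sequence of nonempty Borel floors can have empty intersection, so $X_B$ may contain ``phantom'' paths that are the itinerary of no point of $X$. (The paper itself flags this phenomenon: a nested sequence of cylinder sets need not determine a point of $X_B$.) A concrete failure: take $X=\Z$, $T(n)=n+1$, and vanishing markers $A_k$ of the form $2^k\Z+b_k$ with suitably chosen $b_k$ (nested, empty intersection). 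Every hypothesis you impose holds --- the partitions refine, generate the Borel structure, the bases are nested, the heights $2^k\to\infty$ --- yet the resulting diagram is the dyadic odometer diagram, whose path space is homeomorphic to $\{0,1\}^{\N}$ and hence uncountable, while $X$ is countable; $\pi$ is the $2$-adic embedding and is nowhere near onto. Worse, that diagram has a cofinal maximal and a cofinal minimal path, so it is not even an ordered Borel--Bratteli diagram in the sense of Definition \ref{order}. This shows that ``refining + generating + unbounded heights'' does not suffice: the partitions must be constructed by a much more careful induction, arranged so that \emph{simultaneously} every infinite path of the resulting diagram is realized by a point of $X$ and no cofinal extreme paths appear (for $(\Z,+1)$ this forces a qualitatively different tower structure, e.g.\ one fat tower flanked by infinitely many height-one towers at each level). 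Engineering these two properties is the heart of the cited proof, not a routine detail, and your proposal acknowledges the issue only by asserting that ``a suitable choice of markers and edge-orders'' will do it, without providing the mechanism.
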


\subsection{Measures on the path-space of a Bratteli diagram}
\label{ssect measures}
 
In this subsection, we will consider Borel probability measures which are
invariant with respect to the tail equivalence  relation, see Definition
\ref{def tail}. In papers by Vershik and his colleagues such measures are
called central measures, see e.g. \cite{Vershik2015}.

\begin{definition}\label{def tail inv m} Let $B = (V, E)$ be a 
generalized Bratteli diagram, and $X_B$ the path-space of  $B$. 
Let $\mu$ be a Borel measure  on $X_B$. The measure $\mu$ is 
called \textit{tail invariant} if, for any two finite paths  $\ol e$ and 
$\ol e'$ such that $r(\ol e) = r(\ol e')$, one has 
\be\label{eq_inv measures}
\mu([\ol e]) = \mu([\ol e']),%\quad \mu \in M_1(B, \E)
\ee
where $[e]$ and $[e']$ denote the corresponding cylinder sets. 
\end{definition}

Given a generalized Bratteli diagram $B=(V,E)$, let $M_1(B, \E)$ 
denote the set of Borel positive  probability measures on $X_B$ which 
are invariant with respect to the tail equivalence relation $\E$. 
Then the property of tail invariance means that the probability to arrive
at a vertex $v \in V_n$ does not depend on a starting point $w \in V_0$
and does not depend on the path connecting $w$ and $v$.

Hence, if $\mu$ is a fixed measure from $M_1(B, \E)$, relation 
\eqref{eq_inv measures}  allows us to define a  sequence of
non-negative  vectors $(\mu^{(n)})$ where $\mu^{(n)} =
( \mu^{(n)}_v : v \in V_n)$ and 
\be\label{eq def og mu(n)}
\mu^{(n)}_v = \mu([\ol e]), \ \ \ \ol e\in E(V_0, v), \ v \in V_n.
\ee
By tail invariance of $\mu$  the value $\mu^{(n)}_v $ does
not depend on the choice of $\ol e\in E(V_0, v)$. 

\begin{theorem}\label{thm inv measures} 
Let $B =(V,E)$ be  a generalized Bratteli diagram defined
by a sequence of incidence matrices $F_n$. 
Let $\mu$ be a Borel probability measure  on the path-space $X_B$ of 
$B$ which is tail invariant. Then the corresponding sequence of vectors 
$\mu^{(n)}$ (defined as in \eqref{eq def og mu(n)}) satisfies the 
property 
\be\label{eq_inv meas via A_n}
A_n \mu^{(n+1)} = \mu^{(n)}, 
\ee
where $A_n = F_n^T$ is the transpose of $F_n$.

Conversely, if a sequence of vectors $\mu^{(n)}$ satisfies 
\eqref{eq_inv meas via A_n}, then it defines  a unique  tail invariant
 measure $\mu$.
 
 The theorem remains true for $\sigma$-finite measures $\nu$ 
 satisfying the property that
  $\nu([\ol e]) < \infty$ for every cylinder set $[\ol e]$.
\end{theorem}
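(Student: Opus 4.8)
The plan is to translate the tail invariance of $\mu$ into a one-step refinement identity for cylinder sets, and conversely to recognize the linear recursion $A_n\mu^{(n+1)}=\mu^{(n)}$ as a Kolmogorov consistency condition for a projective family of measures. For the forward direction, I would fix $w\in V_n$ and a path $\ol e\in E(V_0,w)$. Every infinite path extending $\ol e$ continues by exactly one edge $e$ with $s(e)=w$, so the cylinder $[\ol e]$ is the countable disjoint union $[\ol e]=\bigsqcup_{v\in V_{n+1}}\bigsqcup_{e\in E(w,v)}[\ol e\, e]$, where the inner union has $f^{(n)}_{vw}$ terms and each sub-cylinder $[\ol e\, e]$ lands at $v$, hence has measure $\mu^{(n+1)}_v$. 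Countable additivity of $\mu$ then yields $\mu^{(n)}_w=\mu([\ol e])=\sum_{v\in V_{n+1}}f^{(n)}_{vw}\,\mu^{(n+1)}_v$, the series converging since $\mu([\ol e])<\infty$. This is precisely the $w$-component of $A_n\mu^{(n+1)}=\mu^{(n)}$ with $A_n=F_n^{T}$, and the identical computation, using only $\nu([\ol e])<\infty$, settles the $\sigma$-finite case.

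For the converse, I would start from non-negative vectors satisfying $A_n\mu^{(n+1)}=\mu^{(n)}$ and define a set function on cylinders by $\mu([\ol e]):=\mu^{(n)}_{r(\ol e)}$ for $\ol e$ of length $n$. The recursion says exactly that the mass of each cylinder equals the sum of the masses of its one-step extensions, so iterating shows that the measures $\mu_n$ on the countable discrete set of length-$n$ paths are consistent under the truncation maps $(e_0,\dots,e_n)\mapsto(e_0,\dots,e_{n-1})$. Since each edge set $E_n$ is countable and hence a standard Borel space, the Kolmogorov extension theorem applies and produces a measure $\mu$ on the projective limit, which is exactly $X_B$; by construction it is tail invariant. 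Because the cylinder sets are closed under intersection and generate the Borel $\sigma$-algebra of $X_B$, uniqueness follows from the $\pi$-$\lambda$ theorem.

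For the probability statement one normalizes so that the total mass $\sum_{v\in V_n}H^{(n)}_v\mu^{(n)}_v$ equals $1$; using $F_nH^{(n)}=H^{(n+1)}$ from Lemma \ref{lem vector H} one checks this quantity is independent of $n$, which both confirms well-definedness and pins down the normalization. For the $\sigma$-finite statement, I would exhaust $X_B$ by the pieces $X^{(0)}_{v_0}=\{x\in X_B:s(x_0)=v_0\}$, $v_0\in V_0$: these are disjoint, cover $X_B$, each carries the finite prescribed mass $\mu^{(0)}_{v_0}$, and forward refinement preserves the starting vertex, so the recursion decouples over $v_0$. Applying the finite construction on each piece after normalization and summing the resulting measures produces the desired $\sigma$-finite tail-invariant measure, with uniqueness inherited piecewise.

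The step I expect to be the main obstacle is passing from the finitely additive, consistent cylinder data to a genuine countably additive Borel measure. In the classical Bratteli setting (finite levels) this is automatic, since $X_B$ and its cylinders are compact and continuity at $\emptyset$ is free; here, however, $X_B$ is only Polish and its cylinder sets are non-compact, so compactness arguments are unavailable. The resolution is to lean on the standard Borel structure of the countable edge sets $E_n$ through the Kolmogorov extension theorem, with the recursion $A_n\mu^{(n+1)}=\mu^{(n)}$ furnishing precisely the required consistency; the extra bookkeeping in the $\sigma$-finite case is absorbed by the decomposition over starting vertices described above.
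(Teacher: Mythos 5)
Your proof is correct and follows essentially the same route as the paper: the identical one-step cylinder decomposition $[\ol e]=\bigsqcup_{v}\bigsqcup_{e\in E(w,v)}[\ol e\,e]$ for the forward direction, and the reading of $A_n\mu^{(n+1)}=\mu^{(n)}$ as a Kolmogorov consistency condition for the converse. The only difference is that you spell out the extension and uniqueness steps (Kolmogorov extension on the projective limit, $\pi$-$\lambda$ argument, decomposition over starting vertices in the $\sigma$-finite case) which the paper states without detail; this is a faithful elaboration, not a different method.
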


\begin{proof} The proof is straightforward. Indeed, if $\mu$ is a tail invariant 
measure, then one can define the sequence of vectors $(\mu^{(n)})$ as 
in \eqref{eq def og mu(n)}. Then, for every $n\in \N$, $v \in V_n$,
 and $\ol e\in E(V_0, v)$ with $r(\ol e) =v$, we  see that 
\be\label{eq Kolmog consist}
[\ol e] = \bigcup_{u \in V_{n+1}} \bigcup_{e' \in E(v, u)} [\ol e\ol e'].
\ee
Recall that the cardinality of the set $E(v, u)$ equals $f^{(n)}_{u, v}$. 
By tail invariance of $\mu$, every subset of $X_B$, which is determined by
 an edge from $E(v, u)$, has the same measure, so that relation
 \eqref{eq Kolmog consist} implies that
 $$
 \mu^{(n)}_v = \sum_{u \in V_{n+1}} \sum_{e' \in E(v, u)} \mu([\ol e
 \ol e']) = \sum_{u \in V_{n+1}} f^{(n)}_{u, v} \mu^{(n+1)}_u =
 \sum_{u \in V_{n+1}} a^{(n)}_{v, u} \mu^{(n+1)}_u,
 $$
and these equalities prove \eqref{eq_inv meas via A_n}.

Conversely, suppose a sequence of non-negative vectors $(\mu^{(n)})$
 is given and satisfies \eqref{eq_inv meas via A_n}. Then we define 
a measure $\mu$ on $X_B$ by setting 
\be\label{eq inv for ol e}
\mu([\ol e]) = \mu^{(n)}_v
\ee 
for every $\ol e$ with $r(\ol e) = v$. Relation 
\eqref{eq_inv meas via A_n}
can be interpreted as the Kolmogorov consistency condition, and it 
guarantees that the above definition of a measure on cylinder sets 
can be extended to all Borel subsets of 
$X_B$. It follows from \eqref{eq inv for ol e}  that the obtained measure 
$\mu$ is tail invariant.  

The case of a $\sigma$-finite measure $\nu$ is considered similarly if we
 know that all values $\nu_v^{(n)}$ are finite. 
\end{proof}

Let $B = (V,E)$ be a generalized Bratteli diagram and $(F_n)$ the
sequence of incidence matrices.  We define the
sequence of row stochastic incidence matrices $(\wh F_n)$ with entries
\be\label{eq wh F_n}
\wh f^{(n)}_{vw} = \frac{H^{(n)}_w}{H^{(n+1)}_v}\ent
\ee
where $H^{(n)} = (H^{(n+1)}_v)$ is the vector of heights of 
Kakutani-Rokhlin towers $X^{(n)}_v, v \in V_n$.

\begin{corollary}\label{cor meas of towers}
Let $\mu$ be a tail invariant measure on a generalized Bratteli diagram
$B= (V, E)$. Define $s^{(n)} =  (s^{(n)}_v)$ where $s_v^{(n)} =
\mu_v^{(n)} H^{(n)}_v$ is the measure of the tower $X_v^{(n)}$.
Then 
$$
s^{(n+1)}\wh F_n = s^{(n)}, \quad n \in \N_0, 
$$
where $s^{(n)}$ is considered as a row vector.
\end{corollary}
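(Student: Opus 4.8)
The plan is to verify the stated identity of row vectors one coordinate at a time, reducing everything to the invariance relation \eqref{eq_inv meas via A_n} of Theorem \ref{thm inv measures}. First I would fix $w \in V_n$ and expand the $w$-th coordinate of $s^{(n+1)}\wh F_n$ directly from the definitions. Writing $s^{(n+1)}_v = \mu^{(n+1)}_v H^{(n+1)}_v$ and inserting formula \eqref{eq wh F_n} for the entries of $\wh F_n$, I obtain
\be
(s^{(n+1)}\wh F_n)_w = \sum_{v \in V_{n+1}} \mu^{(n+1)}_v H^{(n+1)}_v \cdot \frac{H^{(n)}_w}{H^{(n+1)}_v}\ent = H^{(n)}_w \sum_{v \in V_{n+1}} \ent\, \mu^{(n+1)}_v ,
\ee
so that the heights $H^{(n+1)}_v$ cancel and what remains is the height $H^{(n)}_w$ times a weighted column sum of the incidence matrix $F_n$.

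Second, I would recognize the inner sum as exactly the $w$-th entry of $F_n^T \mu^{(n+1)} = A_n\mu^{(n+1)}$. By \eqref{eq_inv meas via A_n} this equals $\mu^{(n)}_w$, and therefore $(s^{(n+1)}\wh F_n)_w = H^{(n)}_w \mu^{(n)}_w = s^{(n)}_w$, which is precisely the asserted value. Since $w \in V_n$ was arbitrary, this establishes $s^{(n+1)}\wh F_n = s^{(n)}$ for each $n \in \N_0$.

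The only point requiring care — and the one I would treat as the main (mild) obstacle — is the legitimacy of these manipulations when the columns of $F_n$ carry infinitely many nonzero entries, so that the displayed sums are genuine infinite series. All terms are nonnegative, hence the series may be reordered freely and the cancellation of $H^{(n+1)}_v$ is valid term by term; here one uses that $H^{(n+1)}_v$ is finite (each row of $F_n$ has finitely many nonzero entries, Definition \ref{def GBD}(iv)) and strictly positive (no zero rows, Definition \ref{def GBD}(vi), together with $H^{(0)}\equiv 1$). Finiteness of the resulting series is automatic, since it equals $\mu^{(n)}_w < \infty$; this is exactly the content of tail invariance, so no separate convergence argument is needed beyond invoking \eqref{eq_inv meas via A_n}.
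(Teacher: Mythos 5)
Your proof is correct and follows essentially the same route as the paper: expand $(s^{(n+1)}\wh F_n)_w$ using \eqref{eq wh F_n}, cancel the heights $H^{(n+1)}_v$, and identify the remaining sum $\sum_{v}\ent\mu^{(n+1)}_v$ with $\mu^{(n)}_w$ via the invariance relation \eqref{eq_inv meas via A_n}. The additional remarks on nonnegativity and convergence of the infinite sums are a sensible (if brief) supplement to the paper's computation, but they do not alter the argument.
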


\begin{proof}
We recall that $F_n H^{(n)} = H^{(n+1)}$ and $\mu^{(n+1)}F_n =
\mu^{(n)}$. Then, for every $w \in V_{n}$,
$$
\ba
(s^{(n+1)}\wh F_n)_w & = \sum_{v \in V_{n+1}} 
\mu_v^{(n+1)} H^{(n+1)}_v \frac{H^{(n)}_w}{H^{(n+1)}_v}\ent\\
& = H^{(n)}_w \sum_{v \in V_{n+1}} \mu_v^{(n+1)}\ent\\
& = \mu_w^{(n)} H^{(n)}_w\\
& = s^{(n)}_w.
\ea
$$
\end{proof}

\begin{remark}
We recall one more method for constructing measures on the path-space
$X_B$ of a generalized Bratteli diagram $B = (V, E)$. By definition, 
$X_B$ is a Borel subset of the product space $\Omega = E_0 \times E_1
 \times \cdots $.  Let $\tau_i$ be a probability distribution  on the 
 countable set $E_i$, $i \in \N_0$, and let $\tau $ denote the product
 measure $\otimes_i \tau_i$. 
 
 Consider a Borel map $\Phi$ from $\Omega$ onto $X_B$. Define a
new measure $\nu$ on $X_B$ by letting 
$$
\nu(A) = \tau(\Phi^{-1}(A)). 
$$

\textit{Question.} Can a tail invariant measure on $X_B$ be obtained 
as a pull back of a product measure? What properties have measures
$\mu$ defined in this way?
\end{remark}

\subsection{ Stationary processes as generalized Bratteli diagrams}
\label{ssect stationary BD}

We recall that a generalized Bratteli diagram $B = B(F_n) $ is called
\textit{stationary} if all incidence matrices $F_n$ are the same, 
$F_n =F$. Therefore, the set of edges $E_n$ does not depend on the level. It is convenient to enumerate the vertices of every level by 
integers $\Z$. 

In the following definition, we included a few basic properties of
countable nonnegative matrices, see \cite{Kitchens1998} for a detailed 
exposition of the Perron-Frobenius theory for countable matrices.

\begin{definition}\label{def PF}
A stationary Bratteli diagram $B =B(F)$ is called \textit{irreducible} if 
for all $i, j \in \Z$ there exists some $n \in \N_0$ such that 
$a^{(n)}_{ij} >0$. (We prefer to work here with the transpose matrix 
$A = F^T)$.) This means that there exists a finite path from 
$i \in V_0$ to $j \in V_{n+1}$. An irreducible matrix $A$ has period
$p$ if, for all vertices $i \in \Z$, 
$$
p = \gcd \{ \ell : a^{(\ell)}_{ii} >0\}.
$$
If $p=1$, the matrix $A$ is called \textit{aperiodic}. 

An irreducible aperiodic nonnegative matrix $A$ admits a 
\textit{Perron-Frobenius eigenvalue} $\lambda$ defined by 
$$
\lambda = \lim_{n\to \infty} (a^{(n)}_{ii})^{\frac{1}{n}}
$$
(the limit exists and does not depend on $i$).

An  irreducible aperiodic nonnegative matrix $A$ is called 
\textit{transient} if 
$$
\sum_{n} a^{(n)}_{ij} \lambda^{-n} < \infty;
$$
otherwise, $A$ is called \textit{recurrent}. For a recurrent matrix $A$,
define $\ell_{ij}(1) = a_{ij}$ and
$$
\ell_{ij}(n+1) =\sum_{k \neq i} \ell_{ik}(n) a_{kj}.
$$
The matrix $A$ is called \textit{null-recurrent} if
$$
\sum_{n} n \ell_{ii}(n) \lambda^{-n} < \infty;
$$
otherwise $A$ is called \textit{positive recurrent}.
\end{definition}

\begin{remark} The terminology used in Definition \ref{def PF} comes
from probability theory. If $A$ is a stochastic matrix, then $a_{ij}$ is
the probability of going from state $i$ to state $j$ in one step. If one
uses $n$ steps to reach $j$ from $i$, then  the probability is 
$a^{(n)}_{ij}$. The quantity $\ell_{ij}(n)$ is the probability of going
from state $i$ to state $j$ in $n$ steps without returning to $i$. 
The stochastic matrix is transient if the expected number of returns to
$i$ is of a random walk beginning at $i$ is finite and $A$ is recurrent
if it is infinite. A recurrent matrix $A$ is null recurrent if the expected 
time of return to $i$ of a walk beginning at $i$ is infinite and positive
recurrent if the expected time is finite.
\end{remark}

The following theorem is taken from \cite{Kitchens1998}.

\begin{theorem}[Generalized Perron-Frobenius theorem]
\label{thm general PF}
Suppose  $A$ is  a countable non­negative irreducible aperiodic matrix.  
Suppose that $A$  is recurrent. Then there exists a Perron-Frobenius
 eigenvalue 
 $$\lambda = 
 \lim_{n\to \infty} (a^{(n)}_{ij})^{\frac{1}{n}} >0
 $$ 
(assumed  to be finite) such that:

(a) there are strictly positive left $s$ and right $t$  eigenvectors 
corresponding  to $\lambda$,

(b) the eigenvectors are unique  up to constant  multiples,

(c) $s \cdot t = \sum_i s_it_i <\infty$ if and  only if $A$ is positive
 recurrent,

(d) if $0 \leq A' \leq A$ and $\lambda'$ is the Perron-Frobenius
 eigenvalue 
for $A'$, then $\lambda' \leq \lambda$; the equality holds if and only
if $A' = A$,

(e) $\displaystyle{\lim_{n\to \infty} A^n\lambda^{-n}=0}$ if $A$ 
is null-recurrent, and  $\displaystyle{\lim_{n\to \infty} A^n
\lambda^{-n}= s\cdot t}$ (normalized so that $s\cdot t =1$) if $A$ is 
positive  recurrent.
\end{theorem}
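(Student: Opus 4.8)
The plan is to reduce the entire statement to the classical renewal theory of the diagonal sequence $(a^{(n)}_{ii})$, following the Vere--Jones/Kitchens approach. Throughout I would normalize, replacing $A$ by $\lambda^{-1}A$ so that I may assume $\lambda = 1$; this rescaling preserves irreducibility, aperiodicity, and the recurrent/transient and null/positive dichotomies, and it turns the generating-function identities below into genuine renewal equations. For the existence of $\lambda$ itself I would use superadditivity: since $a^{(n+m)}_{ii} \ge a^{(n)}_{ii} a^{(m)}_{ii}$, the sequence $(\log a^{(n)}_{ii})$ is superadditive, so Fekete's lemma gives $(a^{(n)}_{ii})^{1/n} \to \sup_n (a^{(n)}_{ii})^{1/n} =: \lambda$, finite by hypothesis. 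Irreducibility and aperiodicity then yield base-point independence: for any $i,j$ one prepends and appends fixed finite paths to compare $a^{(n)}_{ij}$ with the diagonal sequence and conclude $\lim_n (a^{(n)}_{ij})^{1/n} = \lambda$ as well.

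Next, for (a) I would build the eigenvectors from taboo sums relative to a fixed reference vertex $0$. Writing ${}_0a^{(n)}_{ij}$ for the weighted number of length-$n$ paths from $i$ to $j$ avoiding $0$ at all intermediate times, the first-return decomposition $a^{(n)}_{00} = \sum_{k=1}^{n} \ell_{00}(k)\, a^{(n-k)}_{00}$ becomes, in generating-function form, $A_{00}(z) = (1 - L_{00}(z))^{-1}$ with $A_{00}(z) = \sum_n a^{(n)}_{00} z^n$ and $L_{00}(z) = \sum_n \ell_{00}(n) z^n$. With $\lambda = 1$, recurrence is exactly $L_{00}(1) = 1$, so $(\ell_{00}(n))_n$ is a probability distribution on $\N$ of mean $\mu_0 = L_{00}'(1) \in (0,+\infty]$. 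I would then set $s_j = \sum_n {}_0a^{(n)}_{0j}$ and $t_i = \sum_n {}_0a^{(n)}_{i0}$; recurrence makes these sums finite, irreducibility makes them strictly positive, and the taboo decomposition gives $sA = s$ and $At = t$, the claimed left and right eigenvectors after undoing the normalization.

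For the uniqueness (b), I would argue that $s$ is the minimal subinvariant measure normalized at $0$: iterating the eigenvector equation shows any nonnegative left eigenvector $\sigma$ dominates $\sigma_0 s$, and recurrence upgrades subinvariance to genuine invariance, so $\sigma - \sigma_0 s$ is a nonnegative eigenvector vanishing at $0$, which irreducibility forces to be $0$; applying the same argument to $A^T$ handles $t$. Claim (c) then follows by differentiating the renewal identity: a bookkeeping argument rewrites $\sum_i s_i t_i$ as a length-weighted count of first-return loops at $0$, identifying it up to the normalizing constant with $\mu_0 = L_{00}'(1)$, so $\sum_i s_i t_i < \infty$ precisely when $\mu_0 < \infty$, i.e. in the positive recurrent case. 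For the monotonicity (d), the entrywise bound $(a')^{(n)}_{ii} \le a^{(n)}_{ii}$ together with Fekete immediately gives $\lambda' \le \lambda$; for the strict case I would pass to first-return generating functions, where entrywise $L'_{00} \le L_{00}$ with a strict deficit somewhere by irreducibility, so $L'_{00}(\lambda^{-1}) < L_{00}(\lambda^{-1}) = 1$. By continuity $L'_{00}$ stays below $1$ slightly beyond $\lambda^{-1}$, so $A'_{00}$ has radius of convergence strictly larger than $\lambda^{-1}$, whence $\lambda' < \lambda$.

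Finally, (e) is where I expect the main work. After normalization, $a^{(n)}_{00}$ is an aperiodic renewal sequence with inter-arrival law $(\ell_{00}(n))$ of mean $\mu_0$, so the Erd\H{o}s--Feller--Pollard renewal theorem gives $a^{(n)}_{00} \to 1/\mu_0$, which is $0$ when $A$ is null recurrent and strictly positive when $A$ is positive recurrent. Propagating this to the full matrix via first-entrance and last-exit decompositions of $a^{(n)}_{ij}$ yields entrywise convergence; in the null recurrent case all limits vanish, and in the positive recurrent case they assemble into the rank-one matrix with $(i,j)$-entry $t_i s_j$, normalized so that $s\cdot t = 1$. The hard part will be exactly this transfer step: the renewal theorem directly controls only the diagonal, and carrying it uniformly to the off-diagonal entries, justifying the interchange of limits, and extracting the rank-one structure of the positive recurrent limit is the technical heart of the theorem and the place where aperiodicity and recurrence are both used in an essential way.
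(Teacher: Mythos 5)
The paper contains no proof of Theorem \ref{thm general PF}: it is imported verbatim from Kitchens' book, introduced by the sentence ``The following theorem is taken from \cite{Kitchens1998},'' and is used in the paper only as a black box (to construct the tail-invariant measure on a stationary diagram in Theorem \ref{thm inv meas stat BD}). So there is no internal argument to compare yours against; the relevant comparison is with the cited source, and your outline is precisely the Vere--Jones renewal-theoretic proof that Kitchens follows: supermultiplicativity and Fekete's lemma plus path-splicing for the existence and base-point independence of $\lambda$, taboo generating functions and the first-return identity $A_{00}(z)=(1-L_{00}(z))^{-1}$ to manufacture $s$ and $t$, minimality of subinvariant vectors for uniqueness, the mean-return-time identity for (c), and the Erd\H{o}s--Feller--Pollard renewal theorem with first-entrance/last-exit decompositions for (e).

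Judged as a proof rather than a plan, there are gaps. Two are minor or self-acknowledged: the finiteness of the taboo sums $s_j=\sum_n {}_0a^{(n)}_{0j}\lambda^{-n}$, $t_i=\sum_n {}_0a^{(n)}_{i0}\lambda^{-n}$ under recurrence is a genuine lemma (it follows from the splicing bound ${}_0A_{0j}(1/\lambda)\,F_{j0}(1/\lambda)\le L_{00}(1/\lambda)=1$ together with $F_{j0}(1/\lambda)>0$), and the diagonal-to-off-diagonal transfer in (e) is deferred, as you say yourself. But your argument for the equality case in (d) is actually flawed, not merely incomplete: it needs $L'_{00}$ to converge on an interval strictly beyond $1/\lambda$, which does not follow from $\ell'_{00}(n)\le\ell_{00}(n)$, and the bad case cannot be excluded. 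Concretely, let $A$ be the lazy simple random walk on $\Z$ ($a_{ii}=\tfrac12$, $a_{i,i\pm1}=\tfrac14$, null-recurrent, $\lambda=1$) and let $A'$ shrink the single entry $a_{01}$; then $(a')^{(n)}_{00}$ still dominates the unchanged weight of bridges staying nonpositive, which decays only polynomially, so $\lambda'=\lambda$ although $A'\ne A$ (such an $A'$ is necessarily $1$-transient). Thus the equality statement in (d) genuinely requires recurrence of $A'$, and the standard proof goes through eigenvectors rather than generating functions: $A't\le At=\lambda t$ gives $\lambda'\le\lambda$, and if $\lambda'=\lambda$ with $A'$ recurrent, then $\lambda$-subinvariant vectors of $A'$ are invariant and unique up to scale, so $(A-A')t=0$ with $t>0$, forcing $A'=A$. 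With that repair, and with the renewal-theoretic details of (e) filled in, your outline is a faithful reconstruction of the proof in the reference the paper leans on.
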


\begin{example}[\textit{Substitutions on a countable alphabet}]
\label{ex subst}
Let $\mc A$ be an infinite alphabet, $\A =\{a_1, a_2, ...\}$. 
Denote by $\A^*$ the set of all finite words on the alphabet $\A$ 
(including the empty word). Suppose that $\sigma :  \A \to \A^*$ is a 
\textit{substitution}, i.e., $\sigma(a) = a_{i_1} \cdots a_{i_k}$ where
all letters are from $\A$ and the word $\sigma(a)$ may contain 
repeated letters. Consider the matrix $M =(m_{ab})$ of the substitution 
$\sigma$ 
whose entries are defined as follows: $m_{ab}$ is the   number of 
occurrences of the letter $b$ in the word $\sigma(a)$, $a \in \A$. 

The matrix $M$
can be used to construct a stationary ordered Bratteli diagram $B = 
B(M)$ by the following rule. The set of vertices $V_i$ at each level $i$ 
is $\A$. The set of edges $E_i = \bigcup_{a, b \in \A} E(b, a)$ is the
same for each $i$ where $E(b, a)$ is the set of $m_{a, b}$ edges 
between $a \in V_{i+1}$ and $b \in V_i$. To define a linear order on
$r^{-1}(a)$, we identify edges from $r^{-1}(a)$ with the letters
$a_{i_1}, \cdots,  a_{i_k}$ in the word $\sigma(a)$ and assign the 
order on $r^{-1}(a)$ accordingly to the natural left-to-right order
of letters  in $\sigma(a)$, see Figure 2.

\begin{figure}[!htb]\label{fig subst} 
\centering
  \includegraphics[width=0.6\textwidth, height=0.3\textheight]
  {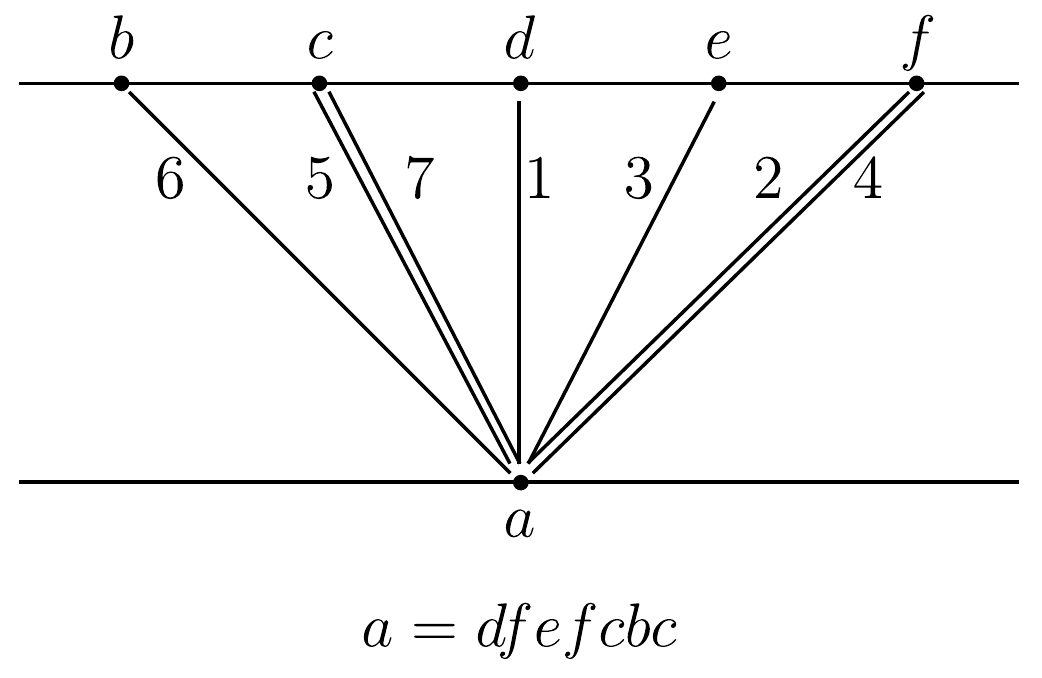}
%  {Fig_subst.jpg}
  \caption{Example of a linear order on $r^{-1}(a)$ (see Example 
  \ref{ex subst})}
  \label{fig:kernels}
\end{figure}

It is known that  stationary  Bratteli diagrams $B$ with finite levels 
are models for
minimal  (or aperiodic if $B$ is non-simple) substitution  dynamical 
systems in symbolic dynamics, see 
\cite{Forrest1997}, \cite{DurandHostSkau1999}, 
\cite{BezuglyiKwiatkowskiMedynets2009}, \cite{Durand2010}. 
We do not know whether a similar result holds for substitutions defined
on a countable alphabet. The reader can find a number of interesting 
results in \cite{Ferenczi2006} about some classes of such substitutions.
\end{example}

For stationary Bratteli diagrams, we can find explicit formulas for 
tail invariant measures.

\begin{theorem} \label{thm inv meas stat BD}
(1) 
Let $B = B(F)$ be a stationary Bratteli diagram such 
that the incidence matrix $F$ (and therefore $A = F^T$) is irreducible,
aperiodic, and recurrent.  Then there exists a tail invariant measure 
$\mu$ on the path-space $X_B$. 

(2) The measure $\mu$ is finite if and only if the right eigenvector 
$t = (t_v)$ has the property $\sum_v t_v <\infty$.
\end{theorem}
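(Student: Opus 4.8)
The plan is to reduce everything to the Kolmogorov-type consistency condition \eqref{eq_inv meas via A_n} and then feed it the spectral data supplied by the generalized Perron-Frobenius theorem. First I would invoke Theorem \ref{thm general PF} for the countable matrix $A = F^T$: since $A$ is irreducible, aperiodic and recurrent, it yields a finite eigenvalue $\lambda > 0$ together with a strictly positive right eigenvector $t = (t_v)$ satisfying $A t = \lambda t$. The candidate sequence of vectors is then $\mu^{(n)} := \lambda^{-n} t$, $n \in \N_0$. These are non-negative (in fact strictly positive) with finite entries, and they satisfy
\[
A \mu^{(n+1)} = \lambda^{-(n+1)} A t = \lambda^{-(n+1)} \lambda\, t = \lambda^{-n} t = \mu^{(n)},
\]
which is precisely the stationary form of \eqref{eq_inv meas via A_n}. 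Since $\mu^{(n)}_v = \lambda^{-n} t_v < \infty$ for every cylinder set, the converse direction of Theorem \ref{thm inv measures}, in its $\sigma$-finite version, produces a unique tail invariant measure $\mu$ on $X_B$. This settles part (1).

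For part (2) I would compute the total mass $\mu(X_B)$ by summing the measures of the Kakutani-Rokhlin towers. For each fixed $n$, the sets $(X_v^{(n)} : v \in V_n)$ partition $X_B$ into clopen pieces, and by Corollary \ref{cor meas of towers} the measure of the tower $X_v^{(n)}$ is $s^{(n)}_v = \mu^{(n)}_v H^{(n)}_v$. Hence
\[
\mu(X_B) = \sum_{v \in V_n} s^{(n)}_v = \lambda^{-n} \sum_{v \in V_n} t_v H^{(n)}_v = \lambda^{-n} \langle t, H^{(n)} \rangle .
\]
Recalling from Lemma \ref{lem vector H} that $H^{(n)} = F^n H^{(0)}$ with $H^{(0)} = \mathbf{1}$, and that the eigenvector relation $A t = \lambda t$ is exactly the pairing identity $\langle t, F x \rangle = \langle F^T t, x \rangle = \lambda \langle t, x \rangle$, I would peel off the powers of $F$ one at a time:
\[
\langle t, H^{(n)} \rangle = \langle t, F^n \mathbf{1} \rangle = \lambda^n \langle t, \mathbf{1} \rangle = \lambda^n \sum_{v} t_v .
\]
Substituting back gives the level-independent identity $\mu(X_B) = \sum_v t_v$, so that $\mu$ is finite exactly when $\sum_v t_v < \infty$. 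This conclusion is insensitive to the scaling of $t$, consistent with the uniqueness up to constant multiples in Theorem \ref{thm general PF}(b).

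The routine algebra above conceals the one genuine point requiring care, which is the main obstacle: justifying the iterated identity $\langle t, F^n \mathbf{1} \rangle = \lambda^n \sum_v t_v$ as a statement about possibly divergent series of non-negative reals. Each application of $\langle t, F\, \cdot\, \rangle = \lambda \langle t, \cdot \rangle$ interchanges a summation over $V_{n+1}$ with one over $V_n$, and is legitimate by Tonelli's theorem since all the quantities $f^{(n)}_{vw}$, $t_v$ and $H^{(n)}_v$ are non-negative and each height $H^{(n)}_v$ is a genuine finite number, the defining sums $H^{(n+1)}_v = \sum_w f^{(n)}_{vw} H^{(n)}_w$ being finite by condition (iv) of Definition \ref{def GBD}. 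The value $\sum_v t_v$ may itself be $+\infty$, but the identity $\mu(X_B) = \sum_v t_v$ then holds as an equality in $[0,+\infty]$, which is exactly what part (2) asserts.
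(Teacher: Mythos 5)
Your proof is correct and takes essentially the same approach as the paper: part (1) constructs the measure from the Perron--Frobenius right eigenvector through the consistency condition \eqref{eq_inv meas via A_n} and the $\sigma$-finite version of Theorem \ref{thm inv measures}, exactly as the paper does, your normalization $\mu^{(n)}=\lambda^{-n}t$ differing from the paper's $t_v/\lambda^{n-1}$ only by the harmless constant factor $\lambda$. For part (2), where the paper simply asserts $\mu(X_B)=\sum_{v} t_v$, your Kakutani--Rokhlin tower partition together with the Tonelli-justified telescoping identity $\langle t, H^{(n)}\rangle=\lambda^{n}\sum_v t_v$ is a correct detailed verification of that same assertion rather than a genuinely different argument.
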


\begin{proof} (1) Let $V = V_i$ denote the set of vertices at each level 
of the diagram $B$. By Theorem \ref{thm general PF}, find the 
Perron-Frobenius eigenvalue  $\lambda$ for $A$. Take a right
 eigenvector  $t = (t_v)$ such that $At = \lambda t$.  
 
 For every finite path $\ol e(w, v)$ that begins at $w \in V_0$ and 
 terminates at $v \in V_n$, $n \in \N$, we associate the cylinder set
 $[\ol e(w, v)]$ and set
 \be\label{eq inv meas stat BD}
 \mu([\ol e(w, v)]) = \frac{t_v}{\lambda^{n-1}}. 
 \ee

We need to check that this definition of the measures $\mu$ on cylinder
sets is correct; in other words, it satisfies the Kolmogorov consistency 
condition. Indeed, we have
$$
\ol e(w, v) = \bigcup_{u \in V_{n+1}} \ol f_u
$$
where $f_u = \ol e(w, v)e(v,u)$ is the concatenation of the path 
$\ol e(w, v)$ and the edge $e(v, u)$. Applying \eqref{eq inv for ol e}
and the relation $At = \lambda t$, we compute
$$
\mu \big(\bigcup_{u \in V_{n+1}} \ol f_u\big)  = 
\sum_{u \in V_{n+1}} a_{vu}\frac{t_u}{\lambda^{n}} = 
\frac{t_v}{\lambda^{n-1}} = \mu([\ol e(w, v)]).
$$ 

We note that the measure $\mu$ is tail invariant because, for 
any two finite path $\ol e$ and $\ol e'$  with the same terminal vertex, 
$$
\mu([\ol e(w, v)]) = \mu([\ol e'(w', v)]),  
$$
and the value depends on $v \in V_n$ only. Now we can refer to Theorem 
\ref{thm inv measures} to finish the proof of (1).

(2) Since every $t_v$ is finite and positive, the measure $\mu$ is 
$\sigma$-finite, in general. It follows from the definition of $\mu$ as
in \eqref{eq inv meas stat BD} that 
$$
\mu(X_B)  = \sum_{v \in V_0} t_v.
$$
If this sum is finite, $\mu$ can can be made a probability measure.
\end{proof}

\begin{example}[\textit{ERS and ECS Bratteli diagrams}]\label{ex ERS}
It is said that a matrix $A = (a_{i,j})$ satisfies the \textit{equal row 
sum} property (ERS property) if $\sum_j a_{i,j}$ does not depend on 
$i$. In other words, the sum of entries in each row is the same. If this 
sum is $r$ we will say that $A$ belongs to the class $ERS(r)$. 
Similarly, it is said that a matrix $A$ has the \textit{equal column sum}
property (ECS property)  if $\sum_i a_{i,j}$ is a constant independent of
 $j$. If $A$ is in the class $ECS(c)$, then $c$ is a Perron-Frobenius 
 eigenvalue of $A$ with the corresponding constant eigenvector. 
 
 We observe that the following fact holds. Let matrices $A_1$ 
and $A_2$ belong to the classes $ERS(r_1)$ and $ERS(r_2)$ respectively.
Then the product $A_1A_2$ is in the class $ERS(r_1r_2)$.

We say that a generalized Bratteli diagram $B = B(F_n)$, defined by the
 sequence of incidence 
matrices  $(F_n)$, \textit{has the ERS (ECS)  property} if every matrix
 $F_n$ has this property.  It follows then
that the following result is true for  ERS Bratteli diagrams.
 In particular, a  substitution of constant length generates a
  stationary Bratteli diagram with the ECS property.

\begin{lemma} (1) Let $B$ be an ERS generalized Bratteli diagram such 
that incidence matrices $F_n$ belong to $ERS(r_n)$ for all $n \in \N$.
 Then, for every tail invariant probability measure $\mu$, we have 
\be\label{eq mu for ERS}
\sum_{v \in V_{n+1}} \mu^{(n+1)}_v = \frac{1}{r_0\cdots r_n}, 
\quad n \in \N. 
\ee

(2) If $B$ is an ECS generalized Bratteli diagram and $F_n \in 
ECS(c_n)$, then the measure $\mu$ such that the vectors 
$\mu^{(n)}$ have the entries $\mu^{(n)}_v = (c_0\cdots 
c_{n-1})^{-1}, v \in V_n,$ is tail invariant.
\end{lemma}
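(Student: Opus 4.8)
The plan is to reduce both statements to the characterization of tail-invariant measures obtained in Theorem \ref{thm inv measures}: a Borel (or $\sigma$-finite) tail-invariant measure is the same thing as a sequence of non-negative vectors $(\mu^{(n)})$ satisfying $A_n\mu^{(n+1)} = \mu^{(n)}$ with $A_n = F_n^T$, which componentwise reads $\mu^{(n)}_w = \sum_{v \in V_{n+1}} f^{(n)}_{vw}\,\mu^{(n+1)}_v$ for every $w \in V_n$. The point is that the ERS and ECS hypotheses are precisely statements about the row sums and the column sums of the matrices $F_n$, and these are exactly the quantities that surface when one sums this relation or substitutes a constant vector into it.

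For part (1) I would first sum the tail-invariance relation over all $w \in V_n$. Interchanging the two non-negative sums (justified by Tonelli) and using the ERS row sum $\sum_{w \in V_n} f^{(n)}_{vw} = r_n$ gives the recursion
\[
\sum_{w \in V_n}\mu^{(n)}_w = \sum_{v \in V_{n+1}}\Big(\sum_{w \in V_n} f^{(n)}_{vw}\Big)\mu^{(n+1)}_v = r_n \sum_{v \in V_{n+1}}\mu^{(n+1)}_v .
\]
It then remains to anchor the induction at the first level. Since the single-edge cylinder sets $\{[e_0] : e_0 \in E_0\}$ form a countable partition of $X_B$ and each vertex $v \in V_1$ is the range of exactly $\sum_{w \in V_0} f^{(0)}_{vw} = r_0$ edges of $E_0$, the normalization $\mu(X_B) = 1$ forces $r_0 \sum_{v \in V_1}\mu^{(1)}_v = 1$, i.e. $\sum_{v \in V_1}\mu^{(1)}_v = r_0^{-1}$. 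Iterating the recursion down from level $n+1$ to level $1$ then yields $\sum_{v \in V_{n+1}}\mu^{(n+1)}_v = (r_0 \cdots r_n)^{-1}$, as claimed. An equivalent route, which I would mention, is to note that under the ERS hypothesis the tower heights are constant on each level, $H^{(n)}_v = r_0 \cdots r_{n-1}$ by Lemma \ref{lem vector H}, and to feed this into the normalization $\sum_v \mu^{(n)}_v H^{(n)}_v = 1$ underlying Corollary \ref{cor meas of towers}.

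For part (2) the argument is a direct verification. Substituting the proposed constant vectors $\mu^{(n+1)}_v = (c_0 \cdots c_n)^{-1}$ into the right-hand side of the consistency relation and using the ECS column sum $\sum_{v \in V_{n+1}} f^{(n)}_{vw} = c_n$ gives $\sum_{v \in V_{n+1}} f^{(n)}_{vw}\mu^{(n+1)}_v = c_n (c_0 \cdots c_n)^{-1} = (c_0 \cdots c_{n-1})^{-1} = \mu^{(n)}_w$ for every $w \in V_n$, so the relation $A_n \mu^{(n+1)} = \mu^{(n)}$ holds for all $n \in \N_0$ (the case $n=0$ using the empty-product convention $\mu^{(0)}_v \equiv 1$). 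By Theorem \ref{thm inv measures} this sequence determines a tail-invariant measure; since each $\mu^{(n)}_v$ is finite while the total mass is $\sum_{v\in V_0}\mu^{(0)}_v = |V_0|$, the measure is only $\sigma$-finite in general, and I would invoke the $\sigma$-finite version of that theorem.

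I expect the only genuinely delicate point to be the base case in part (1): fixing the total mass at the first level. Cylinder sets ending at a prescribed vertex do not by themselves tile $X_B$, so one must either count the (constant, thanks to ERS) number of incoming edges per vertex or, equivalently, track the tower heights $H^{(n)}_v$; everything else is a matter of summing a non-negative relation and reading off row or column sums. Part (2), by contrast, is essentially a one-line substitution once the characterization of Theorem \ref{thm inv measures} is recalled.
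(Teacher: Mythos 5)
Your proposal is correct. Part (2) is exactly the paper's argument: substitute the constant vectors into the consistency relation \eqref{eq_inv meas via A_n}, use the ECS column sum, and invoke Theorem \ref{thm inv measures}; your added care that the resulting measure is only $\sigma$-finite (so the $\sigma$-finite clause of that theorem is what is actually needed) is a point the paper passes over silently. Part (1), however, is organized differently. You sum the relation $A_n\mu^{(n+1)}=\mu^{(n)}$ over $w\in V_n$ to obtain the recursion $\sum_{w\in V_n}\mu^{(n)}_w=r_n\sum_{v\in V_{n+1}}\mu^{(n+1)}_v$, and you anchor it at level $1$ by counting the $r_0$ incoming edges per vertex in the partition of $X_B$ into single-edge cylinders. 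The paper does no induction at all: it observes that ERS forces the Kakutani--Rokhlin heights to be constant, $H^{(n+1)}_v=r_0\cdots r_n$ (via Lemma \ref{lem vector H}), and then reads off \eqref{eq mu for ERS} in one step from the tower partition identity $\sum_{v\in V_{n+1}}\mu^{(n+1)}_v H^{(n+1)}_v=1$. The two proofs rest on the same two facts --- the ERS row sums and the fact that cylinder sets ending at a fixed level partition $X_B$ --- and indeed your base case is the paper's argument specialized to level $1$, while the ``equivalent route'' you sketch at the end \emph{is} the paper's proof. What your primary route buys is that the measure-theoretic input (a partition of $X_B$) is invoked only once, at the first level, with all higher levels handled purely algebraically through Theorem \ref{thm inv measures}; what the paper's route buys is brevity, since the constant-height observation disposes of every level simultaneously.
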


\begin{proof} (1) Recall that we assume that $H^{(0)}_v = 1$ for all 
$v \in 
V_0$.  The fact that $F_i \in ERS(r_i), i = 0, 1, ..., n$, means that 
$H_v^{(n+1)} = H_u^{(n+1)} = r_0 r_1 \cdots r_n$. Let $\mu$ 
be a tail invariant probability measure. Then 
$$
\sum_{v\in V_{n+1}} \mu^{(n+1)}_v H_v^{(n+1)} =  r_0 r_1 
\cdots r_n \sum_{v\in V_{n+1}} \mu^{(n+1)}_v = 1,
$$
and relation \eqref{eq mu for ERS} follows.

(2) It suffices to check that relation \eqref{eq_inv meas via A_n} of 
Theorem \ref{thm inv measures} holds. Indeed, for $w \in V_n$,
$$
(A\mu^{(n+1)})_w = \sum_{v \in V_{n+1}} a^{(n)}_{w,v} 
\frac{1}{c_0\cdots c_n} = \frac{1}{c_0\cdots c_{n-1}}.
$$
This  proves that the measure $\mu$ is tail invariant. 
\end{proof}
\end{example}

\section{Transition kernels and Markov measures on the path-space
of generalized Bratteli diagrams}\label{sect Markov}

The central themes in the section are harmonic analysis, dynamics, and 
measures on graphs, but with an emphasis on the special case when the 
dynamics is specified by transition between levels in the graph. We 
use \textit{tail invariant measures}, to play a central role, also in subsequent sections in the paper. 
We consider \textit{discrete Markov processes} on the path-space of a generalized
Bratteli diagram. Relative results can be found in 
\cite{DooleyHamachi2003}, \cite{Vershik2015}, and \cite{Renault2018}.

\subsection{Graph induced Markov measures}

 \begin{definition}\label{def Mark meas} 
Let $B = (V, E)$ be  a generalized Bratteli diagram constructed by 
a sequence of incidence matrices $(F_n)$.  Let $q = (q_{v})$ be a 
strictly positive  vector, $q_v >0, v\in V_0$, and let $(P_n)$ 
be a sequence of non-negative infinite matrices with entries 
$(p^{(n)}_{v,e})$ where $v \in V_n, e \in E_{n}, n= 0, 1, 2, ... $. To 
define a
\textit{Markov measure} $m$, we require that the sequence $(P_n)$
 satisfies the  following properties:
\begin{equation}\label{defn of P_n}
(a)\ \ p^{(n)}_{v,e} > 0 \ \Longleftrightarrow \ (s(e) = v); \ \ \ \ (b)\ \  
\sum_{e : s(e) = v} p^{(n)}_{v,e} =1.
\end{equation}
Condition \eqref{defn of P_n}(a) shows that $p^{(n)}_{v,e}$ is positive 
only on the edges outgoing from the vertex $v$, and therefore the
 matrices $P_n$ and $A_n =F_n^T$ share the same set of zero entries. 
 For any cylinder set $[\overline e] = [(e_0, e_1, ... , e_n)]$
generated by the path $\ol e$ with $v =s(e_0) \in V_0$, we set
\begin{equation}\label{eq m([e])}
m([\overline e]) = q_{s(e_0)}p^{(0)}_{s(e_0), e_0} \cdots 
p^{(n)}_{s(e_n), e_n}.
\end{equation}
Relation \eqref{eq m([e])} defines  a measure $m$ of the set $[\ol e]$.
By \eqref{defn of P_n}(b),  this measure satisfies the \textit{Kolmogorov 
consistency condition} and can be extended to the $\sigma$-algebra 
of  Borel sets. 
To emphasize that $m$ is generated by a sequence of \textit{stochastic 
matrices}, we will also write $m = m(P_n)$.
\end{definition}

\begin{remark}\label{rem P_n gnrts kernels}
(1) More generally, we can consider a sequence of matrices $(P_n)$ 
such that, for every $v \in V_n$, 
\be\label{eq finite sums}
\sum_{e \in E_n: s(e) =v} p^{(n)}_{v,e} < \infty, \ \ \ n \in \N_0.
\ee
Then, it follows from Definition \ref{def Mark meas} that the sequence 
of matrices $(P_n)$ determines finite transition kernels: given 
a vertex $v \in V_n$ and a set $B \subset V_{n+1}$, we define  
\be\label{eq-P_n kernel}
P_n(v, B) := \sum_{e \in E_n : s(e) = v, r(e) \in B} 
p^{(n)}_{v, e}. 
\ee

(2) If the matrices $P_n$ satisfy \eqref{eq finite sums}, then, without 
loss of generality, one can normalize $P_n$ such that the sum of entries
in every row is one. 
\end{remark}

Define 
$$
\wh p_n(v, u) = \sum_{e\in E(v,u)} p^{(n)}_{s(e), e} 
$$
where $E(v, u) = \{ e \in E_n : s(e) = v, r(e) = u\}$. Then the 
entries $\wh p_n(v, u)$ form a matrix $\wh P_n$ indexed by the set
$V_n \times V_{n+1}$, $n \in \N_0$. 

The following lemma is a simple observation. 

\begin{lemma}\label{lem stoch wh P_n}
 The matrices $P_n$ and $\wh P_n$ have the same row sums:
$$
\sum_{e : s(e) = v} p^{(n)}_{s(e), e} = \sum_{u \in V_{n+1}}
\wh p_n(v, u), \ \ \ v \in V_n, n \in \N_0.
$$
Hence $\wh P_n$ is a row stochastic matrix if $P_n$ is a Markov matrix.
\end{lemma}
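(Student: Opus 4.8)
The plan is to exploit the partition of the outgoing edges at $v$ according to their range vertex. The key observation is that, for a fixed source $v \in V_n$, every edge $e \in E_n$ with $s(e) = v$ has a uniquely determined range $r(e) \in V_{n+1}$; hence the index set $\{e \in E_n : s(e) = v\}$ decomposes as the disjoint union $\bigsqcup_{u \in V_{n+1}} E(v, u)$, where $E(v,u)$ collects exactly those edges carrying $v$ to $u$.

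First I would note that on the left-hand side $s(e) = v$, so $p^{(n)}_{s(e), e} = p^{(n)}_{v, e}$. Then I would regroup the sum over $\{e : s(e) = v\}$ along the above partition, writing $\sum_{e : s(e) = v} p^{(n)}_{v, e} = \sum_{u \in V_{n+1}} \sum_{e \in E(v, u)} p^{(n)}_{v, e}$. The inner sum is precisely $\wh p_n(v, u)$ by its definition, so the right-hand side equals $\sum_{u \in V_{n+1}} \wh p_n(v, u)$, which establishes the claimed equality of row sums.

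For the final assertion I would invoke condition \eqref{defn of P_n}(b): if $P_n$ is a Markov matrix, then each row sum $\sum_{e : s(e) = v} p^{(n)}_{v, e}$ equals $1$, and by the equality just proved the corresponding row sum $\sum_{u \in V_{n+1}} \wh p_n(v, u)$ of $\wh P_n$ is also $1$, so $\wh P_n$ is row stochastic.

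I do not expect a real obstacle here; the statement is essentially a bookkeeping identity recording that grouping edges by their range does not change the total weight leaving $v$. The only point deserving a word of care is the rearrangement of a possibly infinite sum, but since all entries $p^{(n)}_{v,e}$ are non-negative, the regrouping preserves the value by Tonelli's theorem for series (and under \eqref{eq finite sums} the sums are finite in any case).
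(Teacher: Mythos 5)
Your proof is correct and is exactly the argument the paper intends: the paper states this lemma without proof as a ``simple observation,'' and the intended justification is precisely your decomposition $\{e : s(e)=v\} = \bigsqcup_{u \in V_{n+1}} E(v,u)$, which the authors use explicitly elsewhere (e.g.\ in the proof of Lemma~\ref{lem operators T}). Your added remark on non-negativity justifying the regrouping of a possibly infinite sum is a sound, if routine, point of care.
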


Using the initial vector $q =q^{(0)}$ and the sequence of matrices 
$(P_n)$, one can define another sequence of co-transition vectors 
$(q^{(n)})$ whose entries are assigned to the vertices of 
$V_n, n \in \N$.

\begin{proposition}\label{prop vectors q}
(1) Let $(P_n)$ be a sequence of matrices that defines a 
Markov measure $m = m(P_n)$, and let $q^{(0)}$ be an initial distribution
 on  $V_0$. Then the formula  
\be\label{eq def vect q^k}
q^{(k+1)}_v  = \sum_{e : r(e) = v} q^{(k)}_{s(e)} 
p^{(k)}_{s(e), e}, \quad v \in V_{k+1},
\ee
defines inductively a sequence of positive  vectors $q^{(k)} = 
(q^{(k)}_v : 
v \in V_k),\ k \geq 1$. Equivalently, relation \eqref{eq def vect q^k} 
is represented as  $q^{(k+1)} = q^{(k)}\wh P_k, k \in
 \N_0$.

(2) If $q^{(0)}$ is a probability vector, then all vectors $q^{(k)}$ are 
probability. 
 
\end{proposition}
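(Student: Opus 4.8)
The plan is to prove both claims by induction on $k$, leaning on two structural facts from Definition \ref{def GBD}: each range-fiber $r^{-1}(v)$ is finite (condition (iv), since every incidence matrix has finitely many non-zero entries per row and each $E(w,v)$ is finite), and no incidence matrix has a zero column (condition (vi)). These are exactly the features that survive the passage from finite to countably infinite levels, so the whole argument is a matter of careful bookkeeping rather than any new idea.

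First I would establish well-definedness and positivity in part (1). The base case is $q^{(0)}$, which is strictly positive by hypothesis in Definition \ref{def Mark meas}. Assuming $q^{(k)}$ is strictly positive on $V_k$, the sum $\sum_{e : r(e)=v} q^{(k)}_{s(e)} p^{(k)}_{s(e),e}$ defining $q^{(k+1)}_v$ in \eqref{eq def vect q^k} ranges over the finite set $r^{-1}(v)$, hence is a finite sum of non-negative terms and converges; this is the one place where countability of the levels could cause trouble, and condition (iv) is precisely what rules it out. For strict positivity, condition (vi) supplies at least one edge $e$ with $r(e)=v$, for which $p^{(k)}_{s(e),e}>0$ by \eqref{defn of P_n}(a) and $q^{(k)}_{s(e)}>0$ by the inductive hypothesis, so $q^{(k+1)}_v>0$.

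Next I would verify the matrix reformulation $q^{(k+1)}=q^{(k)}\wh P_k$ as a regrouping of these same finite sums. Computing $(q^{(k)}\wh P_k)_v=\sum_{u\in V_k} q^{(k)}_u\,\wh p_k(u,v)$ and substituting $\wh p_k(u,v)=\sum_{e\in E(u,v)}p^{(k)}_{s(e),e}$, I would use that every edge $e$ with $r(e)=v$ has a unique source, so $s(e)=u$ on $E(u,v)$ and the double sum over $u\in V_k$ and $e\in E(u,v)$ collapses to the single sum over $e\in r^{-1}(v)$. This reproduces \eqref{eq def vect q^k} verbatim.

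Finally, for part (2) I would combine the matrix form with the row-stochasticity of $\wh P_k$ from Lemma \ref{lem stoch wh P_n} (applicable because the $P_k$ are Markov). Total mass is preserved: $\sum_{v\in V_{k+1}} q^{(k+1)}_v = \sum_{u\in V_k} q^{(k)}_u \sum_{v\in V_{k+1}} \wh p_k(u,v) = \sum_{u\in V_k} q^{(k)}_u$, where interchanging the two countable sums is legitimate by Tonelli since all terms are non-negative, and the inner sum is $1$ by row-stochasticity. An induction then yields $\sum_v q^{(k)}_v = \sum_v q^{(0)}_v = 1$ for every $k$. I do not expect a genuine obstacle here; the only care needed throughout is ensuring finiteness of each individual row-sum (condition (iv)) and justifying the rearrangement of the double series by non-negativity.
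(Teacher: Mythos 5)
Your proof is correct and takes essentially the same route as the paper's: both arguments regroup the edge sum over $r^{-1}(v)$ by source vertex to obtain $q^{(k+1)}=q^{(k)}\wh P_k$, and both establish (2) by re-expressing the total sum over $E_k$ (your Tonelli interchange is precisely the paper's re-partitioning of $E_k$ by range versus source) and then invoking the stochasticity condition \eqref{defn of P_n}(b), equivalently the row-stochasticity of $\wh P_k$ from Lemma \ref{lem stoch wh P_n}. The only substantive addition on your side is the explicit strict-positivity argument via condition (vi) and \eqref{defn of P_n}(a), a detail the paper's proof leaves implicit; this is a welcome completion rather than a departure.
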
 

\begin{proof} (1) We first note that $q_v^{(k+1)}$ is well defined
 because  the sum in \eqref{eq def vect q^k} is finite.  Next, 
 the set of all edges from $ r^{-1}(u)$ can be represented as 
 $$
 r^{-1}(u) = \bigcup_{v\in V_n} E(v, u).
 $$
Hence, for  $u \in V_{k+1}$,
$$
\ba
q^{(k+1)}_u & = \sum_{e : r(e) = u} q^{(k)}_{s(e)} 
p^{(k)}_{s(e), e}\\
& = \sum_{v \in V_n} \sum_{e \in E(v, u)}  q^{(k)}_{s(e)} 
p^{(k)}_{s(e), e}\\
& = \sum_{v \in V_n} q^{(k)}_{v} \wh p_n(v, u)\\
\ea 
$$
which proves (1).

We now check that (2) holds.
 Suppose that we proved the statement for $ i = 1, ... ,k$.
We will show that $q^{(k+1)}$ is also a probability vector.
$$
\ba 
\sum_{u\in V_{k+1}}  q^{(k+1)}_u =  & \sum_{u\in V_{k+1}} 
\sum_{e  \in r^{-1}(u)} q^{(k)}_{s(e)} p^{(k)}_{s(e), e}\\
= & \sum_{v\in V_{k}} 
\sum_{e  \in s^{-1}(v)} q^{(k)}_{v} p^{(k)}_{s(e), e}\\
= & \sum_{v\in V_{k}} q^{(k)}_{v}
\sum_{e  \in s^{-1}(v)}  p^{(k)}_{s(e), e}\\
= &  1.
\ea
$$
We remark that, in the above calculation, we used the summation over the
set of  all edges from $E_k$ represented in two different but equivalent 
ways, namely, 
$$
E_k = \bigcup_{u\in V_{k+1}} \bigcup_{e \in r^{-1}(u)} e
=  \bigcup_{v\in V_{k}} \bigcup_{e \in s^{-1}(v)} e.
$$
\end{proof}

Beginning with a vector $q^{(0)}$ and a sequence of probability 
transition kernels $(P_n)$, we constructed the vectors $q^{(n)}$.
We want to find out whether there exists a transition kernel 
$Q_n(u, \cdot): V_{n+1} \to V_n$ such that, for any subsets 
$A \subset V_n$ and $B \subset V_{n+1}$, the relation 
\be\label{eq balance eqn}
\sum_{v \in A} q^{(n)}_v P_n(v, B) = 
\sum_{u \in B} q^{(n+1)}_u Q_n(u, A)
\ee
holds, where $P_n(v, B)$ is defined by \eqref{eq-P_n kernel}.

\begin{theorem}\label{thm exist of Q_n}
For a given positive  vector $q^{(0)}$ and  sequence of probability
 transition kernels $(P_n)$, there exist a sequence of probability
  transition kernels $Q_n$ such that relation 
  \eqref{eq balance eqn} holds.
\end{theorem}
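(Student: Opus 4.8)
The plan is to construct $Q_n$ by an explicit detailed-balance (Bayes-type) formula and then verify that it is a probability kernel satisfying \eqref{eq balance eqn}. First I would reduce the set-valued balance relation to a pointwise identity. Recalling from \eqref{eq-P_n kernel} that $P_n(v,B) = \sum_{u \in B}\wh p_n(v,u)$ (where $\wh p_n$ is the vertex-to-vertex matrix introduced before Lemma \ref{lem stoch wh P_n}), the relation \eqref{eq balance eqn} is additive in both $A$ and $B$, so it suffices to establish it for singletons $A = \{v\}$ and $B = \{u\}$, namely
$$
q^{(n)}_v\,\wh p_n(v,u) = q^{(n+1)}_u\, Q_n(u,v), \qquad v \in V_n,\ u \in V_{n+1}.
$$
This forces the definition
$$
Q_n(u,v) := \frac{q^{(n)}_v\,\wh p_n(v,u)}{q^{(n+1)}_u}, \qquad u \in V_{n+1},\ v \in V_n,
$$
and with this choice \eqref{eq balance eqn} follows for arbitrary $A, B$ simply by summing the singleton identity over $v \in A$ and $u \in B$.

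Next I would check that the formula is well posed. By Proposition \ref{prop vectors q}, starting from the strictly positive vector $q^{(0)}$ the inductively defined co-transition vectors $q^{(k)}$ are again strictly positive, so every denominator $q^{(n+1)}_u$ is a positive real number and $Q_n(u,v) \geq 0$. Moreover each $q^{(n+1)}_u$ is finite, so there is no convergence obstruction in the countable sums appearing below.

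The main point is to verify that $Q_n$ is a \emph{probability} transition kernel, i.e. that each row of $Q_n$ sums to one. Here I would invoke precisely the recursion defining the co-transition vectors, $q^{(k+1)} = q^{(k)}\wh P_k$ from Proposition \ref{prop vectors q}, which in coordinates reads $q^{(n+1)}_u = \sum_{v \in V_n} q^{(n)}_v\,\wh p_n(v,u)$. Fixing $u \in V_{n+1}$ and summing the defining formula over $v$,
$$
\sum_{v \in V_n} Q_n(u,v) = \frac{1}{q^{(n+1)}_u}\sum_{v \in V_n} q^{(n)}_v\,\wh p_n(v,u) = \frac{q^{(n+1)}_u}{q^{(n+1)}_u} = 1,
$$
so each $Q_n(u,\cdot)$ is a genuine probability distribution on $V_n$.

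I expect no serious obstacle: the whole argument is a discrete Bayes'-rule reversal, and the one nontrivial input---that the normalization works out---is exactly the recursion that defines $q^{(n+1)}$. The only points requiring care are the strict positivity of $q^{(n+1)}_u$ (so that $Q_n$ is defined at every vertex) and the additivity step reducing the set-valued statement \eqref{eq balance eqn} to singletons; both are supplied directly by Proposition \ref{prop vectors q} and by the linearity of $P_n(v,\cdot)$ in its second argument via \eqref{eq-P_n kernel}.
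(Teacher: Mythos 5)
Your proof is correct and follows essentially the same route as the paper's: your formula $Q_n(u,v) = q^{(n)}_v\,\wh p_n(v,u)/q^{(n+1)}_u$ is exactly the paper's $\wh q_n(u,v)$, and both your normalization check (via the recursion $q^{(n+1)} = q^{(n)}\wh P_n$ from Proposition \ref{prop vectors q}) and your verification of \eqref{eq balance eqn} are the same computations, merely organized through singletons rather than by expanding the double sum. The only additional step in the paper is to split $\wh q_n(u,v)$ equally among the $|E(v,u)|$ parallel edges, defining edge-level quantities $q^{(n)}_{r(e),e}$; this refinement is not needed for the statement itself, but it feeds into the subsequent uniqueness discussion in Proposition \ref{prop about wh Q_n}.
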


\begin{proof} We first find the sequence of vectors $q^{(n)}, n \in 
\N,$ according to Proposition \ref{prop vectors q}. For fixed vertices
$v \in V_n, u \in V_{n+1}$, we set 
$$
\wh q_n(u, v) = \frac{q_v^{(n)}}{q_u^{(n+1)}} \wh p_n(v, u).
$$
For $e \in E(v, u)$,  we take 
$$
q_{r(e), e}^{(n)}:= \frac{1}{|E(v, u)|} \wh q_n(u, v)
$$
and define 
$$
Q_n(u, A) = \sum_{v \in A} \wh q_n(u, v) = \sum_{v \in A}
\sum_{e \in E(u, v)} q_{r(e), e}^{(n)}.
$$
The fact that $Q_n$ is probability follows from the relations:
$$
\sum_{v \in V_n} \wh q_n (u, v) = \sum_{e \in r^{-1}(u)} 
q_{r(e), e}^{(n)}
$$
and 
$$
\sum_{v \in V_n} \wh q_n (u, v) = \frac{1}{q_u^{(n+1)}}
\sum_{v \in V_n} q_v^{(n)}\wh p_n(v, u) = 1
$$
because of \eqref{eq def vect q^k}, see Proposition 
\ref{prop vectors q}.

Finally, we check that $Q_n$ satisfies  \eqref{eq balance eqn}:
$$
\ba 
\sum_{v \in A} q^{(n)}_v P_n(v, B) & = 
 \sum_{v \in A}\ \sum_{e: r(e) \in B, s(e) = v} q^{(n)}_v 
 p^{(n)}_{s(e), e} \\
 & =  \sum_{v \in A}\  \sum_{u \in B} \ \sum_{e: \in E(v, u)}
 q^{(n)}_v  p^{(n)}_{s(e), e} \\
 & = \sum_{v \in A}\  \sum_{u \in B} q^{(n)}_v \wh p_n(v, u)\\
 & =  \sum_{v \in A}\  \sum_{u \in B}  q^{(n+1)}_u \wh q_n(u, v)\\
 &= \sum_{u \in B}  q^{(n+1)}_u Q(u, A).
\ea
$$
\end{proof}

From the proof of Theorem \ref{thm exist of Q_n} we deduce the 
following  facts. (We use here  the notation introduced above.) 

\begin{proposition} \label{prop about wh Q_n}
(1) The sequence of transition kernels $Q_n$ is 
uniquely determined by 
$q^{(0)}$ and the kernels $(P_n)$ if and only if the Bratteli diagram 
has no multiple edges, i.e., $|E(v, u)| \leq  1$ for all vertices $v$ 
and $u$. 

(2) If $\wh Q_n$ is the infinite positive matrix  defined by its  entries
$\wh q_n (u, v)$, then $q^{(n+1 )} \wh Q_n = q^{(n)}$.

(3) Relation \eqref{eq balance eqn} holds also for the matrices 
$\wh P_n$ and $\wh Q_n$. In particular, for any $v \in V_n, u\in 
V_{n+1}$, $n \in \N_0$,
\be\label{eq wh p and wh q}
q^{(n)}_v \wh p(v, u) =q^{(n+1)}_u \wh q(u, v). 
\ee
\end{proposition}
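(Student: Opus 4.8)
The plan is to dispatch the three parts in the order (3), (2), (1), since each one feeds the next and the genuine content sits in (1). For part (3) I would simply rearrange the defining relation $\wh q_n(u,v) = \frac{q^{(n)}_v}{q^{(n+1)}_u}\,\wh p_n(v,u)$ from the proof of Theorem \ref{thm exist of Q_n}. Clearing the denominator $q^{(n+1)}_u$, which is strictly positive by Proposition \ref{prop vectors q}, yields the pointwise identity \eqref{eq wh p and wh q} at once. Summing that identity over $v\in A$ and $u\in B$ and factoring the two double sums then gives $\sum_{v\in A} q^{(n)}_v \wh p_n(v,B) = \sum_{u\in B} q^{(n+1)}_u \wh q_n(u,A)$, which is exactly the balance relation \eqref{eq balance eqn} for the vertex-indexed matrices $\wh P_n,\wh Q_n$ (note that $\wh Q_n(u,A)=\sum_{v\in A}\wh q_n(u,v)$ coincides with $Q_n(u,A)$).

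For part (2) I would fix $v\in V_n$ and sum the pointwise identity \eqref{eq wh p and wh q} over all $u\in V_{n+1}$. The left side is $\sum_u q^{(n+1)}_u \wh q_n(u,v) = (q^{(n+1)}\wh Q_n)_v$, while the right side is $q^{(n)}_v \sum_u \wh p_n(v,u)$. By Lemma \ref{lem stoch wh P_n} the row sum $\sum_u \wh p_n(v,u)$ equals $\sum_{e:\,s(e)=v} p^{(n)}_{v,e}=1$ because $P_n$ is a probability kernel, so the right side collapses to $q^{(n)}_v$. This proves $q^{(n+1)}\wh Q_n = q^{(n)}$ coordinatewise.

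Part (1) is the substantive claim, and the key is to identify precisely which data the balance equation constrains. First I would show it pins down the vertex-to-vertex aggregates uniquely: taking $B=\{u\}$ a singleton in \eqref{eq balance eqn} and using $P_n(v,\{u\})=\wh p_n(v,u)$ gives $q^{(n+1)}_u Q_n(u,A)=\sum_{v\in A} q^{(n)}_v\,\wh p_n(v,u)$, and dividing by the strictly positive $q^{(n+1)}_u$ forces $Q_n(u,A)=\sum_{v\in A}\wh q_n(u,v)$; in particular every aggregate $\wh q_n(u,v)$ is determined by $q^{(0)}$ and $(P_n)$. The remaining freedom lives one level below, in the edge-indexed entries $q^{(n)}_{r(e),e}$, whose only constraint is $\sum_{e\in E(v,u)} q^{(n)}_{r(e),e}=\wh q_n(u,v)$. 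If $|E(v,u)|\le 1$ for every pair, then each nonempty $E(v,u)$ is a single edge $e$, whence $q^{(n)}_{r(e),e}=\wh q_n(u,v)$ is itself forced and $Q_n$ is unique. Conversely, if some $|E(v,u)|\ge 2$, then the positive number $\wh q_n(u,v)$ can be split among the edges of $E(v,u)$ in infinitely many ways; each split leaves all aggregates, hence \eqref{eq balance eqn} and the probability normalization, intact, yet produces a distinct kernel $Q_n$. This establishes the equivalence.

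The main obstacle is purely conceptual rather than computational: one must keep separate the vertex-level kernel $Q_n(u,\cdot)$, which is always unique, from its finer edge-level refinement $(q^{(n)}_{r(e),e})$, which is the object whose uniqueness is actually at issue. Once that distinction is made explicit, parts (2) and (3) are one-line consequences of the definition and Lemma \ref{lem stoch wh P_n}, and part (1) reduces to the elementary observation that a positive number admits a unique decomposition into a single summand but infinitely many decompositions into two or more positive summands.
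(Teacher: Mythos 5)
Your proof is correct and follows essentially the same route as the paper's: parts (2) and (3) are the same summation arguments based on the identity $q^{(n)}_v \wh p_n(v,u) = q^{(n+1)}_u \wh q_n(u,v)$, and part (1) rests on the same observation that the edge-level entries $q^{(n)}_{r(e),e}$ are forced when $|E(v,u)|\le 1$ and admit infinitely many positive splittings of $\wh q_n(u,v)$ when $|E(v,u)|>1$. Your explicit preliminary step --- that the vertex-level aggregates $\wh q_n(u,v)$ are always uniquely determined by the balance equation --- is a useful clarification of a point the paper leaves implicit by referring back to the construction in Theorem \ref{thm exist of Q_n}.
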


\begin{proof}
(1) Indeed, the result follows from the proof of Theorem 
\ref{thm exist of Q_n}: 
$$
|E(v, u)| =1 \ \Longleftrightarrow \ q^{(n)}_{r(e), e} = 
\wh q_n(u, v)\ \ \forall e \in E(v, u). 
$$ 
Moreover, if the quantities $\wh q_n(u,v)$ are defined as in Theorem 
\ref{thm exist of Q_n}, then there are infinitely many solutions 
 of  the equation 
$$
\sum_{e \in E(v,u)} q_{r(e), e}^{(n)} = \wh q_n(u, v)
$$
if and only if $|E(v, u)| >1$.

(2) We compute
$$
\sum_{u \in V_{n+1}} q^{(n+1)}_u \wh q_n(u,v) 
= \sum_{u \in V_{n+1}} q^{(n)}_v \wh p_n(v,u) 
= q^{(n)}_v
$$
since $\wh P_n$ is a row stochastic matrix.

(3) It can be checked that
$$
\ba 
\sum_{v \in A} q^{(n)}_v \wh P_n(v, B) & = \sum_{v \in A} 
q^{(n)}_v \sum_{u \in B}  \wh p_n(v, u)\\
&= \sum_{u \in B} \sum_{v \in A} q^{(n+1)}_u \wh q_n(u, v)\\
& = \sum_{u \in B}  q^{(n+1)}_u \wh Q_n(u, A).
\ea
$$
Relation \eqref{eq wh p and wh q} is a particular case of the proved 
property.
\end{proof}

The proposition below clarifies the meaning of vectors $q^{(n)}, n \geq 1$. 
Recall that a  measure on the path-space of a Bratteli diagram is 
completely determined by its values on the cylinder sets which are 
represented by finite paths in a Bratteli diagram. For every vertex 
$v \in V_n, n \in \N$, we defined the tower $X_v^{(n)}$ of the 
Kakutani-Rokhlin partition which is 
formed by all cylinder sets that end at $v$.  It turns out that the 
measures of towers $X_v^{(n)}$ are exactly the entries of the vector
 $q^{(n)}$.

\begin{theorem}
Let $m$ be a Markov measure defined by a sequence of Markov matrices 
$(P_n)$. Let $(q^{(k)})$ be a sequence of probability vectors 
constructed accordingly to \eqref{eq def vect q^k}. Then 
$$
m(X_v^{(n)}) = q^{(n)}_v,\quad v \in V_n,  n \in \N_0,
$$
where $X_v^{(n)}$ is the Kakutani-Rokhlin tower corresponding to the 
vertex $v$. 
\end{theorem}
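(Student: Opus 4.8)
The plan is to argue by induction on $n$, exploiting the recursive structure that relates the tower $X_v^{(n+1)}$ to the towers $X_w^{(n)}$ one level below, together with the multiplicative form \eqref{eq m([e])} of the Markov measure.

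First I would dispose of the base case $n = 0$. By \eqref{eq X_v^{(n)}}, the tower $X_v^{(0)}$ (of height $H^{(0)}_v = 1$) is the set of all infinite paths $x$ with $s(x_0) = v$, and this is the disjoint union of the length-one cylinder sets $[e_0]$ over all edges $e_0$ with $s(e_0) = v$. By \eqref{eq m([e])} we have $m([e_0]) = q_v\, p^{(0)}_{v, e_0}$, so summing and using the stochasticity \eqref{defn of P_n}(b) gives
\be
m(X_v^{(0)}) = q_v \sum_{e_0 : s(e_0) = v} p^{(0)}_{v, e_0} = q_v = q^{(0)}_v .
\ee

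Next, for the inductive step, assume $m(X_w^{(n)}) = q^{(n)}_w$ for every $w \in V_n$. The key geometric observation is that a path lies in $X_v^{(n+1)}$ precisely when $r(x_n) = v$; grouping such paths by the vertex $w = s(x_n) \in V_n$ and the edge $x_n = e \in E(w, v)$ gives the disjoint decomposition
\be
X_v^{(n+1)} = \bigsqcup_{w \in V_n} \bigsqcup_{e \in E(w, v)} \{x \in X_B : x_n = e\}.
\ee
The heart of the argument is to evaluate the measure of a single block $\{x : x_n = e\}$ with $e \in E(w,v)$. This set is the disjoint union of cylinder sets $[\ol f\, e]$ over all finite paths $\ol f \in E(V_0, w)$, and the product formula \eqref{eq m([e])} factors the last coordinate off as $m([\ol f\, e]) = m([\ol f])\, p^{(n)}_{w, e}$. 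Summing over $\ol f$ and recognizing $\sum_{\ol f \in E(V_0, w)} m([\ol f]) = m(X_w^{(n)})$ (again by \eqref{eq X_v^{(n)}}) yields $m(\{x : x_n = e\}) = p^{(n)}_{w, e}\, q^{(n)}_w$ via the inductive hypothesis.

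Finally I would assemble the pieces. Summing over $w$ and $e$, and using that $\bigsqcup_{w \in V_n} E(w, v) = r^{-1}(v) \cap E_n$, collapses the double sum into $\sum_{e : r(e) = v} q^{(n)}_{s(e)}\, p^{(n)}_{s(e), e}$, which is exactly the right-hand side of the defining recursion \eqref{eq def vect q^k} for $q^{(n+1)}_v$; this closes the induction. I expect the only real obstacle to be the measure-theoretic bookkeeping: one must check that the blocks, and the cylinder sets within each block, are genuinely disjoint, and that all the interchanges of summation are legitimate. These are harmless here, since $r^{-1}(v)$ is finite by condition (iv) of Definition \ref{def GBD}, each $E(w,v)$ is finite, and the remaining sums are absolutely convergent because $m$ is a probability measure and the $P_n$ are row stochastic. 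Once the factorization \eqref{eq m([e])} is seen to isolate the single factor $p^{(n)}_{w,e}$, the computation reduces precisely to \eqref{eq def vect q^k}.
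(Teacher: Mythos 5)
Your proof is correct and follows essentially the same route as the paper: induction on $n$, with the tower $X_v^{(n+1)}$ decomposed over edges $e$ with $r(e)=v$, the product formula \eqref{eq m([e])} used to factor off $p^{(n)}_{s(e),e}$, and the recursion \eqref{eq def vect q^k} closing the induction. The paper's version is just more terse, leaving implicit the block decomposition and factorization that you spell out.
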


\begin{proof}
The statement can be proved by induction. Indeed, it is trivial for 
$n =0$. For $n = k+1$, we see that
$$
m(X_v^{(k+1)}) = \sum_{e\in E_n : r(e) = v} m(X^{(k)}_{s(e)}) 
p^{(k)}_{s(e), e} = q^{(k+1)}_v,\ \ \ v \in V_{k+1}, 
$$
since $m(X^{(k)}_{s(e)} ) = q^{(k)}_v$ by the induction hypothesis. 
\end{proof}

\subsection{Tail invariant Markov measures}
We will consider here Markov measures that are 
invariant with respect to the tail equivalence relation.

The next result proves that every tail invariant probability measure is, in 
fact, a Markov measure. The sequence of Markov matrices can be 
explicitly described. Recall that $ M_1(B, \mathcal E)$ denotes 
the set of all tail invariant probability measures.

\begin{theorem}\label{thm inv meas is Markov}
Let $\nu \in M_1(B, \mathcal E)$ be a tail invariant probability 
measure on the path-space $X_B$  of a generalized Bratteli diagram
$B = (V, E)$. Then there
exists a sequence of Markov matrices $(P_n)$ such that $\nu = m(P_n)$. 
\end{theorem}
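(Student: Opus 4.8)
The plan is to read off the Markov data $(q^{(0)},(P_n))$ directly from the cylinder values of $\nu$. Since $\nu$ is tail invariant, Theorem \ref{thm inv measures} encodes it completely in the sequence of vectors $\nu^{(n)}=(\nu^{(n)}_v:v\in V_n)$, where $\nu^{(n)}_v=\nu([\ol e])$ for any $\ol e\in E(V_0,v)$, and these vectors satisfy the balance relation \eqref{eq_inv meas via A_n}, i.e.\ $A_n\nu^{(n+1)}=\nu^{(n)}$ with $A_n=F_n^T$. The guiding idea is that the transition probability out of a vertex $v\in V_n$ along an edge $e$ with $s(e)=v$ should be the conditional probability of prolonging a path ending at $v$ by $e$; for a tail invariant measure this equals $\nu^{(n+1)}_{r(e)}/\nu^{(n)}_v$ and depends on $e$ only through $r(e)$.

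Concretely, I would put $q^{(0)}_v:=\nu^{(0)}_v$ for $v\in V_0$, and for every $n\in\N_0$, $v\in V_n$ and $e\in E_n$ set
\begin{equation*}
p^{(n)}_{v,e}:=\frac{\nu^{(n+1)}_{r(e)}}{\nu^{(n)}_v}\ \text{ if } s(e)=v,\qquad p^{(n)}_{v,e}:=0\ \text{ if } s(e)\neq v.
\end{equation*}
That $q^{(0)}$ is a probability vector follows because the sets $\{x:s(x_0)=v\}$, $v\in V_0$, partition $X_B$, whence $\sum_{v\in V_0}\nu^{(0)}_v=\nu(X_B)=1$. To verify that each $P_n$ is a Markov matrix I would check \eqref{defn of P_n}: condition (a) is built into the definition, while for (b) I group the outgoing edges at $v$ by their range vertex and use \eqref{eq_inv meas via A_n},
\begin{equation*}
\sum_{e:s(e)=v}p^{(n)}_{v,e}=\frac{1}{\nu^{(n)}_v}\sum_{u\in V_{n+1}}|E(v,u)|\,\nu^{(n+1)}_u=\frac{1}{\nu^{(n)}_v}\sum_{u\in V_{n+1}}f^{(n)}_{u,v}\,\nu^{(n+1)}_u=\frac{(A_n\nu^{(n+1)})_v}{\nu^{(n)}_v}=1 .
\end{equation*}

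It then remains to confirm that the Markov measure $m=m(P_n)$ produced by \eqref{eq m([e])} agrees with $\nu$. For a cylinder set $[\ol e]$ with $\ol e=(e_0,\dots,e_n)$, write $v_k=s(e_k)\in V_k$ and $v_{n+1}=r(e_n)$, so that $r(e_k)=v_{k+1}=s(e_{k+1})$. The defining product telescopes:
\begin{equation*}
m([\ol e])=q^{(0)}_{v_0}\prod_{k=0}^{n}p^{(k)}_{v_k,e_k}=\nu^{(0)}_{v_0}\prod_{k=0}^{n}\frac{\nu^{(k+1)}_{v_{k+1}}}{\nu^{(k)}_{v_k}}=\nu^{(n+1)}_{v_{n+1}}=\nu([\ol e]),
\end{equation*}
the last equality holding because $r(\ol e)=v_{n+1}$. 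Since two Borel measures on $X_B$ that coincide on every cylinder set are equal, this gives $\nu=m(P_n)$.

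The one genuinely delicate point — and the step I expect to be the main obstacle — is the well-definedness and strict positivity of the ratios $\nu^{(n+1)}_{r(e)}/\nu^{(n)}_v$, together with the ``only if'' direction of condition (a), both of which require $\nu^{(n)}_v>0$ for all $v$ and $n$. I would handle this by a preliminary reduction to full support. The relation \eqref{eq_inv meas via A_n} shows that the null set of vertices is forward closed, while the inclusion $[\ol e\,e]\subseteq[\ol e]$ gives $\nu^{(n+1)}_{r(e)}\le\nu^{(n)}_{s(e)}$, so every positive-mass vertex has only positive-mass parents and at least one positive-mass child. Consequently, deleting all null vertices (and the edges touching them) leaves a valid generalized Bratteli diagram on which $\nu$ is unchanged and strictly positive on every cylinder set; the construction above then applies verbatim, and the deleted edges, if one wishes to retain the original diagram, may be given arbitrary row-stochastic weights without altering the measure of any cylinder set of positive mass.
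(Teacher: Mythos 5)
Your core construction is, in substance, the paper's own proof: the paper defines the entries inductively via \eqref{eq entries of P_n}, but immediately shows this is equivalent to the ratio formula \eqref{eq entry of P_n}, namely $p^{(n)}_{v,e} = \nu^{(n+1)}_{r(e)}/\nu^{(n)}_{s(e)}$, which is exactly your definition; the verification of row-stochasticity via $A_n\nu^{(n+1)} = \nu^{(n)}$ and the telescoping of the cylinder product are likewise identical. So on the main line of argument (all $\nu^{(n)}_v>0$) your proof is correct and matches the paper, which, incidentally, never raises the positivity issue at all and divides by $\nu^{(n)}_v$ without comment.

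Where you go beyond the paper is the degenerate case, and there your final patch has a genuine error. Deleting the null vertices and working on the support subdiagram is fine, but the closing claim, that the deleted edges ``may be given arbitrary row-stochastic weights without altering the measure of any cylinder set of positive mass,'' cannot work for an edge $e$ from a positive-mass vertex $v\in V_n$ to a null child $u$. At such a $v$, the weights you have already assigned to the surviving edges sum to $1$, because the null children contribute nothing to $\sum_{u} f^{(n)}_{u,v}\,\nu^{(n+1)}_u = \nu^{(n)}_v$; hence any positive weight on $e$ either destroys row-stochasticity at $v$ or forces you to shrink the surviving weights, which then breaks $m([\ol e\,])=\nu([\ol e\,])$ on positive-mass cylinders. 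The only consistent choice is $p^{(n)}_{v,e}=0$, and that violates the equivalence in \eqref{defn of P_n}(a), which demands $p^{(n)}_{v,e}>0$ whenever $s(e)=v$. So in the presence of null vertices the conclusion holds only on the support subdiagram, or on $B$ itself after relaxing \eqref{defn of P_n}(a) to the one-sided implication $p^{(n)}_{v,e}>0 \Rightarrow s(e)=v$ --- a caveat that applies equally to the paper's own (silent) treatment.
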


\begin{proof} Given an invariant probability  measure $\nu$, we have 
 the sequence 
of vectors $(\nu^{(n)})$ that satisfies \eqref{eq_inv meas via A_n} and
is uniquely determined by $\nu$. We recall that the entry 
$\nu^{(n)}_v$ gives the measure of a cylinder set which is determined 
by a finite path from $V_0$ to the vertex $v \in V_n$. In the proof, we
 will construct inductively a sequence of Markov matrices
$(P_n)$ such that the corresponding Markov measure $m = m(P_n)$ 
satisfies the property: for all $n \in \N_0$ and $v\in V_n$,  
$\nu([\ol e]) = m([\ol e])$ where $\ol e$ is a  finite path  with 
$r(\ol e) = v$.

For $n =0$, we  define the probability vector $q^{(0)}$ by setting 
\be\label{eq q^0}
q^{(0)}_v = \nu (X^{(0)}_v) = \nu^{(0)}_v, \ \  v \in V_0.
\ee 
To define the entries $p^{(0)}_{v, e}$ of $P_0$ where $v \in V_0$,  we
 set first $p^{(0)}_{v, e} = 0$ if $v \neq s(e)$. Recall that 
\be\label{eq nu^0 via nu^1}
\sum_{v \in V_1} a^{(0)}_{w,v} \nu^{(1)}_v = \nu_w^{(0)}, \quad
a^{(0)}_{w,v} \in A_0.
\ee
(see \eqref{eq_inv meas via A_n}) where  $A_n = F_n^T$. This fact allows us to define the entries of $P_0$ by
\be\label{eq def of P_0}
p^{(0)}_{v_0, e} = \frac{\nu^{(1)}_{v_1}}{q^{(0)}_{v_0}}
= \frac{\nu^{(1)}_{v_1}}{\nu^{(0)}_{v_0}},\quad \forall e\in  
E(v_0, v_1).
\ee
It follows from \eqref{eq def of P_0} that $P_0$ is a Markov matrix 
because 
$$
\ba
\sum_{e : s(e)  = v_0} p^{(0)}_{v_0, e} =  & \ \sum_{v\in V_1} \ \ 
\sum_{e \in E(v_0, v)}   p^{(0)}_{v_0, e}\\
 = & \  \sum_{v\in V_1} \frac{\nu^{(1)}_v}{q^{(0)}_{v_0}} 
 a^{(0)}_{v_0,v} \\
 = & \  1
\ea 
$$
in virtue of \eqref{eq q^0} and \eqref{eq nu^0 via nu^1}. Relation 
\eqref{eq def of P_0} says that the value of the entry 
$p^{(0)}_{v_0, e}$ does not depend
on an edge $e \in E_0(v_0,v_1)$. 
Hence we can denote it by $p^{(0)}_{v_0,v_1}$. 

Assume that the Markov matrices $P_i$ are defined for $i =0, 1, ... ,
 n-1$, and 
their entries  satisfy the property $p^{(i)}_{v_i, e} = p^{(i)}_{v_i, e'} 
= p^{(i)}_{v_i, v_{i+1}}$ for all $e, e' \in E(v_i,v_{i+1})$. We 
define the entries of $P_n$ as follows: $p^{(n)}_{v_n, e} = 0$ if $v_n
 \neq s(e)$, and  
\be\label{eq entries of P_n}
p^{(n)}_{v_n, e} = \frac{\nu_{v_{n+1}}^{(n+1)}}
{q^{(0)}_{v_0}  p^{(0)}_{v_0,v_1} 
\cdots p^{(n-1)}_{v_{n-1},v_n} } ,\quad \forall e \in 
E(v_n, v_{n+1}), \ v_n  \in V_n,  v_{n+1} \in V_{n+1}.
\ee
The meaning of this definition is explained by the following fact.
Let $(v_0, v_1, ..., v_{n+1})$ be a finite sequence of vertices such 
that there exists a path $\ol e$ from $v_0$ to $v_{n+1}$. The 
$\nu$-measure
of the corresponding cylinder set is $\nu_{v_{n+1}}^{(n+1)}$. 
We want to define a Markov matrix $P_n$ such that the Markov measure
${q^{(0)}_{v_0}  p^{(0)}_{v_0,v_1}\cdots p^{(n)}_{v_{n},
v_{n+1}} }$   of $[\ol e]$ is exactly $\nu_{v_{n+1}}^{(n+1)}$.

We claim that relation \eqref{eq entries of P_n} is equivalent to
\be\label{eq entry of P_n}
p^{(n)}_{v_n, e} = \frac{\nu_{v_{n+1}}^{(n+1)}}
{\nu_{v_{n}}^{(n)}},\ \ e \in E(v_n, v_{n+1}). 
\ee
Indeed, by induction 
$$
\nu_{v_{n+1}}^{(n+1)} =  q^{(0)}_{v_0}  p^{(0)}_{v_0,v_1}
\cdots p^{(n)}_{v_{n},
v_{n+1}}  = q^{(0)}\frac{\nu^{(1)}_{v_1}}{\nu^{(0)}_{v_0}}
\frac{\nu^{(2)}_{v_2}}{\nu^{(1)}_{v_1}} \cdots 
\frac{\nu^{(n)}_{v_n}}{\nu^{(n-1)}_{v_{n-1}}} p^{(n)}_{v_n, e} =
\nu^{(n)}_{v_n} p^{(n)}_{v_n, e}.
$$

It remains to check that the matrix $P_n$ is row stochastic:
$$
\ba 
\sum_{e : s(e) = w} p^{(n)}_{w, e} & =  \ \sum_{v\in V_{n+1}} \ \ 
\sum_{e \in E_n(w, v)} \frac{\nu_v^{(n+1)}}{\nu_{v_{n}}^{(n)}} \\
& =  \ \sum_{v\in V_{n+1}}  a^{(n)}_{v_n, v_{n+1}}
\frac{\nu_{v_{n+1}}^{(n+1)}}{\nu_{v_n}^{(n)}} \\
& = 1.
\ea
$$
We used here relation \eqref{eq_inv meas via A_n} of  Theorem 
\ref{thm inv measures}. We remark that this property can be deduced
also from \eqref{eq entries of P_n}. 

\end{proof}

\begin{remark}
(1) We note that the Markov measure $m$ constructed by the invariant 
measure
$\nu$ has the property of equal values on all edges connecting two fixed
vertices: for every $e \in E(w, v)$, $p^{(n)}_{s(e), e} = 
p^{(n)}_{w, v}$ where  $w\in V_n, v\in V_{n+1}, n \in \N_0$. This 
means that, for  every
two finite paths  $\ol e, \ol e'$ such that $s(\ol e) = s(\ol e')$, 
$r(\ol e) = r(\ol e')$, and going through the same vertices $v_0 = 
s(\ol e), v_1, ... v_k = r(\ol e)$, the Markov measure of the 
corresponding cylinder sets coincide.

(2) It can be easily seen from the proof of Theorem 
\ref{thm inv meas is Markov} that the Markov measure $m(P_n)$ is 
uniquely determined by the tail invariant measure $\nu$. 
\end{remark}

Let $\nu\in M_1(B, \mc E)$ be  a tail invariant probability measure.
The following result clarifies the meanings of the vectors  
$q^{(n)}$ and the sequences of matrices $\wh P_n$ and $\wh Q_n$
generated by $\nu$.

\begin{corollary}\label{cor wh Q}
 Let $\nu\in M_1(B, \mc E)$, and let the sequence
of probability vectors $(s^{(n)})$ determine the measures of 
Kakutani-Rokhlin towers $X_v^{(n)}, v \in V_n$ as in Corollary
\ref{cor meas of towers}. Then co-transition probabilities $q^{(n)}$ 
coincide with $s^{(n)}$, i.e., $q^{(n)}_v = 
H_v^{(n)}\nu_v^{(n)}, v \in V_n, n \in \N_0$.
Furthermore, $\wh Q_n = \wh F_n$ for all $n$ where $\wh F_n$ is 
defined  in \eqref{eq wh F_n}.
\end{corollary}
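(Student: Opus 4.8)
The plan is to prove the two assertions in turn, deriving the second from the explicit formula obtained in the first. Throughout I would work with the Markov matrices $(P_n)$ furnished by Theorem \ref{thm inv meas is Markov}, whose entries depend only on the endpoints of an edge and are given by \eqref{eq entry of P_n}, namely $p^{(n)}_{v,u} = \nu^{(n+1)}_u/\nu^{(n)}_v$ for $e \in E(v,u)$ with $v \in V_n$, $u \in V_{n+1}$, together with the initial vector $q^{(0)}_v = \nu^{(0)}_v$ fixed in \eqref{eq q^0}.

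For the first assertion I would argue by induction on $n$ that $q^{(n)}_v = H^{(n)}_v \nu^{(n)}_v$. The base case $n=0$ is immediate since $H^{(0)}_v \equiv 1$. For the inductive step I would insert the closed form of $p^{(n)}_{v,u}$ into the recursion \eqref{eq def vect q^k} for $q^{(n+1)}$, group the edges $e$ with $r(e)=v$ according to their source $w \in V_n$ so that each $E(w,v)$ contributes its cardinality $\ent$, and apply the inductive hypothesis $q^{(n)}_w = H^{(n)}_w \nu^{(n)}_w$. The factor $\nu^{(n)}_w$ then cancels, leaving $\nu^{(n+1)}_v \sum_{w} \ent H^{(n)}_w$, which equals $H^{(n+1)}_v \nu^{(n+1)}_v$ by the height relation of Lemma \ref{lem vector H}. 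Comparing with the definition $s^{(n)}_v = \nu^{(n)}_v H^{(n)}_v$ from Corollary \ref{cor meas of towers} then yields $q^{(n)} = s^{(n)}$, which is exactly the claimed identity $q^{(n)}_v = H_v^{(n)}\nu_v^{(n)}$.

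For the second assertion I would substitute the value just obtained into the formula $\wh q_n(u,v) = (q^{(n)}_v / q^{(n+1)}_u)\,\wh p_n(v,u)$ from the proof of Theorem \ref{thm exist of Q_n}. Since $p^{(n)}_{v,u}$ is constant on each $E(v,u)$, one has $\wh p_n(v,u) = |E(v,u)|\,p^{(n)}_{v,u} = f^{(n)}_{uv}\,\nu^{(n+1)}_u/\nu^{(n)}_v$. Plugging in $q^{(n)}_v = H^{(n)}_v \nu^{(n)}_v$ and $q^{(n+1)}_u = H^{(n+1)}_u \nu^{(n+1)}_u$, every occurrence of $\nu$ cancels and what remains is precisely $\wh q_n(u,v) = (H^{(n)}_v / H^{(n+1)}_u)\, f^{(n)}_{uv}$, which is the defining entry of $\wh F_n$ in \eqref{eq wh F_n}. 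Hence $\wh Q_n = \wh F_n$ for every $n$.

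The computation carries no analytic difficulty; the only point demanding care is index bookkeeping. I would make sure that the cardinality $|E(w,v)|$ is matched with the correct incidence entry (with range at level $n+1$ and source at level $n$), and that the rows and columns of $\wh Q_n$ and $\wh F_n$ are indexed identically (rows by $V_{n+1}$, columns by $V_n$) before asserting the entrywise equality. Getting these conventions aligned is what makes the cancellations in both parts produce exactly the stated formulas rather than transposed or mislabeled ones.
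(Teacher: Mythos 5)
Your proof is correct and follows essentially the same route as the paper: induction on $n$ using the entries $p^{(n)}_{v,u}=\nu^{(n+1)}_u/\nu^{(n)}_v$ from Theorem \ref{thm inv meas is Markov}, the recursion for $q^{(n)}$ (your edge-grouping by source is exactly the matrix form $q^{(n+1)}=q^{(n)}\wh P_n$ used in the paper, via Proposition \ref{prop vectors q}), and the height relation of Lemma \ref{lem vector H}, followed by the same substitution into $\wh q_n(u,v)=(q^{(n)}_v/q^{(n+1)}_u)\,\wh p_n(v,u)$ to get $\wh Q_n=\wh F_n$. Your closing remark on index conventions matches the paper's bookkeeping as well.
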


\begin{proof} We will prove the result by induction. We first find the
matrices $\wh P_n$. It follows from 
\eqref{eq entry of P_n} that entries of $\wh P_n$ are 
$$
\wh p_n(v_{n+1}, v_n) = |E(v_n, v_{n+1})| 
\frac{\nu_{v_{n+1}}^{(n+1)}}{\nu_{v_{n}}^{(n)}} =
a^{(n)}_{v_n, v_{n+1}} \frac{\nu_{v_{n+1}}^{(n+1)}}
{\nu_{v_{n}}^{(n)}}.
$$
Let $q^{(0)}= \nu^{(0)}$ as in the proof of Theorem 
\ref{thm inv meas is Markov}. Since $H^{(0)}_v =1, v \in V_0$, the 
result holds for $n =0$.  Define $q^{(n)} = q^{(0)}
\wh P_0 \cdots \wh P_{n-1}$. Suppose that we proved the statement
for $ i \leq n$. Compute the entries of $q^{(n+1)} = q^{(n)}\wh P_n$:
$$
\ba
q^{(n+1)}_{v_{n+1}} & = \sum_{v_n\in V_n} q^{(n)}_{v_n}
a^{(n)}_{v_n, v_{n+1}} \frac{\nu_{v_{n+1}}^{(n+1)}}
{\nu_{v_{n}}^{(n)}}\\
& = \sum_{v_n\in V_n} H_{v_n}^{(n)}\nu_{v_n}^{(n)}
a^{(n)}_{v_n, v_{n+1}} \frac{\nu_{v_{n+1}}^{(n+1)}}
{\nu_{v_{n}}^{(n)}}\\
& = \nu_{v_{n+1}}^{(n+1)} \sum_{v_n\in V_n} f^{(n)}_{v_{n+1},
v_n} H_{v_n}^{(n)}\\
& = \nu_{v_{n+1}}^{(n+1)} H_{v_{n+1}}^{(n+1)}.
\ea
$$

Having the vectors $q^{(n)}$ determined, we can find the matrices
$\wh Q_n$, see Theorem \ref{thm exist of Q_n} (we use here that
 $F_n^T =  A_n$):
$$
\ba
\wh q_n(v_{n+1}, v_n) & = \frac{q_{v_n}^{(n)}}
{q_{v_{n+1}}^{(n+1)}} \wh p_n(v_n, v_{n+1})\\ 
& = \frac{H_{v_n}^{(n)}\nu_{v_n}^{(n)}}
{H_{v_{n+1}}^{(n+1)}\nu_{v_{n+1}}^{(n+1)}}
a^{(n)}_{v_n, v_{n+1}} \frac{\nu_{v_{n+1}}^{(n+1)}}
{\nu_{v_{n}}^{(n)}}\\
& = f^{(n)}_{v_{n+1}, v_n}\frac{H_{v_n}^{(n)}}
{H_{v_{n+1}}^{(n+1)}}\\
& = \wh f^{(n)}_{v_{n+1}, v_n}.
\ea
$$
This proves the equality $\wh Q_n = \wh F_n$.
\end{proof}

\begin{remark}
The fact proved in Corollary \ref{cor wh Q} says that the matrix 
$\wh Q$ equals to the matrix $\wh F$ independently of a tail invariant
measure $\nu$. In other words, $\wh Q$ does not depend on $\nu$.

\end{remark}

\textit{Stationary Markov measure.} The following observation is useful for the study of Markov measures on
stationary generalized Bratteli diagrams  $B$ with the incidence matrix
$F$.  It is natural to consider a special subset of Markov measures 
$m = m(P)$ on such diagrams, the so called \textit{stationary Markov
 measures}. They
are determined by the property that all matrices $P_n, n\in \mathbb N,$
are the same and equal  to a fixed matrix  $P$. Formula 
\eqref{eq m([e])}, which defines a Markov measure, is transformed then
as follows:
\begin{equation}\label{nu(P)}
m([\overline e]) = q_{s(e_0)}p_{s(e_0), e_0}p_{s(e_1), e_1} \cdots p_{s(e_n), e_n}.
\end{equation}

Suppose that $B$ is a stationary Bratteli diagram  and $\mu$ is a tail
 invariant  measure satisfying \eqref{eq inv meas stat BD}, see Theorem
\ref{thm inv meas stat BD}. In other words, we assume
that the transpose $A$ of the incidence matrix $F$ satisfies the 
conditions
of the Perron-Frobenius theorem: there exists a positive right 
eigenvector  $x = (x_v)$ corresponding to the
 Perron-Frobenius eigenvalue $\lambda$. 

\begin{lemma} Let $B =B(F)$ be a stationary Bratteli diagram such that
the matrix $A= F^T$ satisfies Theorem \ref{thm general PF} and
let  $x = (x_v)$ be the right Perron-Frobenius eigenvector corresponding
to the eigenvalue $\lambda$. Suppose that $\mu$ is a tail invariant  
measure on $X_B$  defined by \eqref{eq inv meas stat BD}. 
Then $\mu$ can be determined as a stationary Markov measure 
$m(P)$  on $X_B$ where the initial distribution $q^{(0)}$ is the 
vector $x$, and the Markov matrix $P$ has the entries
 $$
 p_{s(e), e} = \frac{x_{r(e)}}{\lambda x_{s(e)}},  \ \ \ e \in E.
 $$
 \end{lemma}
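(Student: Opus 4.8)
The plan is to verify the two defining requirements of a stationary Markov measure and then to match the resulting measure with $\mu$ on cylinder sets, where everything reduces to the eigenvalue identity $Ax=\lambda x$ together with one telescoping product. Since a Borel measure on the path-space $X_B$ is completely determined by its values on the cylinder sets (Definition \ref{def path space}), it suffices to check that $m(P)$ and $\mu$ agree there.

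First I would check that the matrix $P$ with entries $p_{s(e),e}=x_{r(e)}/(\lambda x_{s(e)})$ (and $p_{v,e}=0$ when $v\neq s(e)$) is a Markov matrix in the sense of Definition \ref{def Mark meas}. Condition \eqref{defn of P_n}(a) is immediate, since $x$ is strictly positive, so $p_{s(e),e}>0$ exactly on the edges outgoing from $s(e)$. For \eqref{defn of P_n}(b), fix $v\in V$ and group the outgoing edges by their range: there are $a_{vu}$ edges from $v$ to $u$, where $a_{vu}$ is the $(v,u)$-entry of $A=F^T$. Hence
\[
\sum_{e:\,s(e)=v}p_{s(e),e}=\frac{1}{\lambda x_v}\sum_{u}a_{vu}x_u=\frac{(Ax)_v}{\lambda x_v}=1,
\]
using the Perron--Frobenius relation $Ax=\lambda x$ from Theorem \ref{thm general PF}. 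Thus $P$ is row stochastic, and the stationary Markov measure $m(P)$ is well defined.

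Next I would evaluate $m(P)$ on a cylinder. Let $\ol e=(e_0,\dots,e_{n-1})$ be a finite path from $w=v_0\in V_0$ to $v=v_n\in V_n$, and write $v_i=s(e_i)=r(e_{i-1})$ for the vertices it meets. Substituting the entries of $P$ into the stationary formula \eqref{nu(P)} produces a telescoping product,
\[
m([\ol e])=q^{(0)}_{v_0}\prod_{i=0}^{n-1}\frac{x_{v_{i+1}}}{\lambda x_{v_i}}=q^{(0)}_{v_0}\,\frac{x_{v_n}}{\lambda^{n}x_{v_0}},
\]
which is to be compared with $\mu([\ol e])=x_{v_n}/\lambda^{n-1}$ read off from \eqref{eq inv meas stat BD}. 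Once these coincide for every $\ol e$, the identification $\mu=m(P)$ follows, again because cylinder values determine the measure.

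The only delicate point — and the step I would be most careful about — is the bookkeeping of the powers of $\lambda$, i.e. the correct normalization of the initial vector $q^{(0)}$. The telescoping above shows that the bare choice $q^{(0)}_{v_0}=x_{v_0}$ yields $m([\ol e])=x_{v_n}/\lambda^{n}$, which reproduces $\mu$ only up to the overall constant $\lambda$. Exact equality on cylinders is recovered once $q^{(0)}$ is taken to be the level-$0$ marginal of $\mu$: from the consistency relation $A_0\mu^{(1)}=\mu^{(0)}$ of Theorem \ref{thm inv measures} together with the value $\mu^{(1)}_u=x_u$ one computes $\mu^{(0)}_v=(Ax)_v=\lambda x_v$, after which the factor $\lambda$ is absorbed and $m(P)=\mu$ verbatim. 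I would therefore phrase the conclusion as the identification of $\mu$ with the stationary Markov measure determined by $P$ and by the initial distribution proportional to the Perron eigenvector $x$, the proportionality constant being pinned down by this normalization.
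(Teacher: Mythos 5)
Your proof is correct, but it follows a genuinely different route from the paper's. The paper derives the lemma as a specialization of Theorem \ref{thm inv meas is Markov}: since every tail invariant measure is a Markov measure whose transition probabilities are the ratios \eqref{eq entry of P_n}, it suffices to substitute $\mu^{(n)}_v = x_v/\lambda^{n-1}$ (from \eqref{eq inv meas stat BD}) and read off $p_{w,e} = \mu^{(1)}_v/\mu^{(0)}_w = x_v/(\lambda x_w)$; stationarity of $P$ is then automatic because this ratio does not depend on the level. You instead verify everything from scratch: row-stochasticity of $P$ from the eigenvalue relation $Ax=\lambda x$, and then equality of the two measures on cylinder sets via the telescoping product, which is self-contained and never invokes Theorem \ref{thm inv meas is Markov}; what the paper's route buys is brevity and the placement of the lemma inside its general framework for tail-invariant Markov measures. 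Your care with the normalization is also warranted and exposes a genuine factor-of-$\lambda$ slip in the statement: with $q^{(0)}=x$ the telescoping gives $m([\ol e]) = x_{v_n}/\lambda^{n}$ for a path with $n$ edges ending at $v_n\in V_n$, i.e. $m=\mu/\lambda$, whereas exact equality $m(P)=\mu$ requires $q^{(0)}_v = \mu^{(0)}_v = (Ax)_v = \lambda x_v$. This is precisely the choice $q^{(0)}=\nu^{(0)}$ prescribed in the proof of Theorem \ref{thm inv meas is Markov} (and indeed the paper's own computation of $p_{w,e}$ divides by $\mu^{(0)}_w=\lambda x_w$, not by $x_w$), but the paper's three-line proof never reconciles this with the lemma's claim that $q^{(0)}=x$. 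Your repaired conclusion — initial distribution proportional to the Perron eigenvector, with the constant pinned down by the level-$0$ marginal of $\mu$ — is the correct form of the statement.
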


\begin{proof} It was proved in Theorem \ref{thm inv meas is Markov}
that any tail invariant measure is a Markov measure for an 
appropriate choice of the matrices $(P_n)$. In conditions of the lemma,
we can specify that $P_n = P$ and  vectors $\mu^{(n)} = (x_v 
\lambda^{n-1}_{v \in V_n})$. It follows then from 
\eqref{eq entry of P_n} that, for every $e \in E(w, v)$, $w\in V_0,
v\in V_1$, 
$$
p_{w, e} = \frac{\mu_v^{(1)}}{\mu^{(0)}_w} = 
\frac{x_{v}}{\lambda x_{w}}.
$$ 
\end{proof}

\subsection{Existence of finite tail invariant measures}
\label{ssect Existence}

It is well known that every homeomorphism of a compact metric space has
 an invariant probability measure. Rephrasing this statement, we 
 conclude that every (classical) Bratteli diagram has a probability tail
 invariant measure. This follows from the fact that every homeomorphism 
 of a Cantor set admits its realization on the path-space of a Bratteli
 diagram whose orbits are essentially the same as orbits of the tail 
 equivalence relation. We refer to \cite{HermanPutnamSkau1992},
 \cite{Medynets2006}, \cite{BezuglyiKarpel2020} where the reader can
 find more details. 
 
The situation with generalized Bratteli diagrams is more difficult. First of
all, there are Borel automorphisms $T$ of a standard Borel space 
$(X, \B)$ that do not admit a probability invariant measure. Two 
Borel sets, $A$ and $B$, are called \textit{equivalent} ($A \sim B$ in symbols) with respect to $T$ if 
there exists a one-to-one Borel map $f : A \to B$  such that $f(x) $ is 
in the $T$-orbit of $x$, i.e., $(x, f(x)) \in E_T$ where $E_T$ is the
orbit equivalence relation on $X\times X$. It is said that $A 
\preceq B$ if $A \sim B'$ where $B' \subset B$. It can be shown that
$A\sim B$ if and only if $A \preceq B$ and $B \preceq A$. 
A Borel aperiodic automorphism $T$ is called \textit{compressible} if
there is a Borel set $A$ such that $A \sim X$ and $X\setminus A$ is 
a complete section, that it it meets every $T$-orbit. 
It turns out that compressible automorphisms do not admit finite
invariant measures.

\begin{theorem}[\cite{Nadkarni1990, Nadkarni1995}] 
\label{thm compr}
Let $T$ be an aperiodic Borel automorphism of a standard Borel space.
The following are equivalent:

(i) $T$ is not compressible,

(ii) $T$ admits an invariant probability measure.
\end{theorem}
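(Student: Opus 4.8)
The plan is to prove the two implications separately, with (ii)$\Rightarrow$(i) being routine and (i)$\Rightarrow$(ii) carrying all the weight. Two elementary tools are used throughout: the Schr\"oder--Bernstein property $A \sim B \iff (A \preceq B \text{ and } B \preceq A)$ recorded above, and the observation that every $T$-invariant measure is preserved by orbit-bijections. The latter follows by decomposing a Borel bijection $f \colon A \to B$ with graph in $E_T$ along the countable partition $A_n = \{x \in A : f(x) = T^n x\}$: since $f$ restricted to $A_n$ equals $T^n$, invariance gives $\mu(A) = \sum_n \mu(A_n) = \sum_n \mu(T^n A_n) = \mu(B)$.

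First I would settle (ii)$\Rightarrow$(i) in contrapositive form, showing that a compressible $T$ admits no invariant probability measure. If $A \sim X$ with $C := X \setminus A$ a complete section and $\mu$ is $T$-invariant, then the lemma above gives $\mu(A) = \mu(X)$, hence $\mu(C) = 0$. But $C$ meets every orbit, so $X = \bigcup_{n \in \Z} T^{n} C$ and therefore $\mu(X) \le \sum_{n \in \Z} \mu(T^n C) = 0$, contradicting $\mu(X) = 1$. This is exactly Poincar\'e recurrence transported to the Borel category, and it is the short half of the statement.

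The substance is (i)$\Rightarrow$(ii), which I would approach by upgrading the Krylov--Bogolyubov existence argument to the non-compact Borel setting. In the compact case recalled just before the theorem, one averages point masses over orbit segments and extracts a weak-$*$ cluster point; compactness is what forbids mass from escaping, so the limit is a probability measure. Here I would instead use a marker sequence for the aperiodic automorphism $T$ to carve out Borel orbit segments of growing length in a measurable, orbit-uniform way, form the corresponding normalized averaging functionals, and pass to a limit. This always produces a finitely additive $T$-invariant set function; the content is that \emph{non-compressibility} is precisely the hypothesis ruling out the escape of mass to infinity, so that the limiting object is a genuine, countably additive, $T$-invariant probability measure.

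The hard part will be the equivalence between \emph{escape of mass} and \emph{compressibility}, carried out Borel-measurably and uniformly across orbits with no appeal to choice. The pointwise picture is transparent --- no invariant measure forces the dynamics to push a positive amount of mass off to infinity, and that room is exactly what lets one inject $X$ into a proper subset whose complement is a complete section --- but assembling the local compressions (or, dually, the uniform averages) into a single Borel map requires the descriptive-set-theoretic machinery of marker lemmas, Borel matchings, and the Becker--Kechris/Kechris--Miller compressibility dichotomy. This is why the result is cited from Nadkarni rather than reproved in detail here.
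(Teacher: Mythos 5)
First, a point of comparison: the paper does not prove this statement at all --- it is quoted from \cite{Nadkarni1990, Nadkarni1995} and used as a black box --- so your attempt can only be judged on its own merits. Your implication (ii)$\Rightarrow$(i) is correct and complete. The orbit-decomposition lemma is the standard one: for a Borel bijection $f\colon A\to B$ with graph in $E_T$, aperiodicity makes $A_n=\{x\in A: f(x)=T^nx\}$, $n\in\mathbb{Z}$, a genuine Borel partition of $A$, whence $\mu(B)=\sum_n\mu(T^nA_n)=\sum_n\mu(A_n)=\mu(A)$ for any invariant $\mu$; combining $\mu(X\setminus A)=0$ with the fact that $X\setminus A$ is a complete section (so $X=\bigcup_{n\in\mathbb{Z}}T^n(X\setminus A)$) gives the contradiction.

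The direction (i)$\Rightarrow$(ii) is where the entire theorem lives, and there your proposal has a genuine gap, which you yourself concede in the last paragraph. Two concrete problems. (a) ``Pass to a limit'' of normalized averages over marker-induced orbit segments is not available in the Borel category: there is no topology in which these functionals live in a compact set of \emph{measures}, and the most one can extract (via a Banach limit or ultrafilter acting on $[0,1]^{\mathcal{B}}$) is a finitely additive invariant charge. (b) Promoting finite additivity to countable additivity is not a technical afterthought; it is precisely the content of Nadkarni's theorem, and non-compressibility must be injected into that step by a mechanism your sketch never supplies --- naming the heuristic ``non-compressibility rules out escape of mass'' is a restatement of the goal, not an argument. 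It is also worth knowing that Nadkarni's actual proof is not a Krylov--Bogolyubov averaging argument at all: it is a Tarski-type exhaustion argument, in which one shows that the compressible sets form a $\sigma$-ideal, that non-compressibility makes the comparison relation $\preceq$ on the quotient $\sigma$-algebra into a well-behaved dimension-like order, and that a countably additive invariant measure can be built directly from this order structure. So your plan identifies the right obstruction but provides no mechanism for overcoming it; as written, the hard implication remains unproved, and the proposal is a correct proof of only the easy half.
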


It follows from Theorem \ref{thm T4.2} that we can apply Theorem
\ref{thm compr} to generalized Bratteli diagrams.

\begin{corollary}
There exist generalized Bratteli diagrams that do not admit tail invariant
probability measures. 
\end{corollary}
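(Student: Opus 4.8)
The plan is to combine the realization theorem (Theorem \ref{thm T4.2}) with Nadkarni's criterion (Theorem \ref{thm compr}): it suffices to exhibit a single aperiodic Borel automorphism that is compressible and then transport it onto a generalized Bratteli diagram. For the automorphism I would take the translation $T : \Z \to \Z$, $T(n) = n+1$, on the (standard Borel) space $\Z$. It is aperiodic, since $T^k(n) = n+k \neq n$ for $k \neq 0$, and it is compressible: the whole space is a single $T$-orbit, so the orbit equivalence relation is $E_T = \Z \times \Z$; hence, taking $A = \{ n \in \Z : n \geq 1\}$, any Borel bijection $f : A \to \Z$ automatically satisfies $(x, f(x)) \in E_T$, witnessing $A \sim \Z$, while $X \setminus A = \{ n \leq 0 \}$ meets the (unique) orbit and is therefore a complete section. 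By Theorem \ref{thm compr}, $T$ admits no invariant probability measure.

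Next I would invoke Theorem \ref{thm T4.2} to fix an ordered Borel-Bratteli diagram $B = (V, E, \geq)$, a Vershik automorphism $\varphi : X_B \to X_B$, and a Borel isomorphism $h : \Z \to X_B$ with $h \circ T = \varphi \circ h$. Such a $B$ is in particular a generalized Bratteli diagram, so it is the candidate witnessing the corollary, and the goal is to show that it carries no tail-invariant probability measure, i.e.\ that $M_1(B, \E) = \emptyset$.

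The transfer rests on two points. First, the intertwining $h \circ T = \varphi \circ h$ makes the pushforward $\mu \mapsto h_* \mu$ a bijection between $T$-invariant probability measures on $\Z$ and $\varphi$-invariant probability measures on $X_B$. Second, and this is the only delicate step, the orbit equivalence relation of the Vershik map coincides with the tail equivalence relation $\E$ on $X_B$: two infinite paths lie in the same $\varphi$-orbit if and only if they are tail equivalent. This identity holds off the set of cofinal maximal and minimal paths, which Definition \ref{order} excludes and which is in any case countable; I would verify that, under the standing aperiodicity assumption on $\E$, deleting this exceptional set does not change which probability measures are invariant, so that the $\varphi$-invariant probability measures are precisely the members of $M_1(B, \E)$. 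Granting these two points, any $\nu \in M_1(B, \E)$ would be $\varphi$-invariant, and its pushforward under $h^{-1}$ would be a $T$-invariant probability measure on $\Z$, contradicting Theorem \ref{thm compr}. Hence $M_1(B, \E) = \emptyset$, and $B$ is the required generalized Bratteli diagram. The main obstacle is thus the bookkeeping in the second point — matching $\varphi$-orbits with $\E$-classes and controlling the exceptional cofinal paths — while the remaining steps are direct applications of the two cited theorems.
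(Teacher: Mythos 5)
Your proof is correct and follows exactly the route the paper takes: the paper's entire argument for this corollary is the one-line observation that Theorem \ref{thm T4.2} allows Theorem \ref{thm compr} to be applied to generalized Bratteli diagrams. You merely instantiate this with the compressible aperiodic automorphism $T(n)=n+1$ on $\Z$ and spell out the transfer of invariant measures through the Vershik realization (with $\varphi$-orbits agreeing with tail classes because Definition \ref{order} excludes cofinal maximal and minimal paths), details the paper leaves implicit.
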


\begin{example}[\cite{Ferenczi2006}]\label{ex subst DS}
Let $\sigma : \A \to \A^*$ be a substitution on an countably infinite
alphabet, see  Example \ref{ex subst}. We say that a finite word $w$
belongs to the language $\mc L(\sigma)$ of the substitution $\sigma$
if $w$ is a subword of $\sigma^n(a)$ for some $a\in \A$ and $n \in \N$.
Let now $X$ be a subset of one-sided sequences $x = x_0x_1 \cdots $
from $\A^{\N}$ such that every finite word occurring in $x$ is in the
language $\mc L(\sigma)$. 

It can be easily seen that $X$ is a closed subset of the Polish space 
$\A^{\Z}$ (with respect to the product topology). Let $T$ be the 
two-sided shift on $\A^{\Z}$. Then $TX \subset X$. The pair $(X, T)$
is a non-compact symbolic dynamical system associated to the 
substitution $\sigma$. 

Let now $\A = 2\Z$ and the substitution $\sigma_0$ is defined by the
rule:
$$
\sigma_0 : n \to (n-2) nn (n+2),\quad n \in 2\Z.
$$
In \cite{Ferenczi2006}, $\sigma_0$ is called the 
\textit{squared drunken man 
substitution}. The following result was proved there.

\begin{lemma}
The Borel dynamical system $(X, T)$ associated to $\sigma_0$ has 
no finite invariant measure.
\end{lemma}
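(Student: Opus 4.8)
The plan is to realize $(X,T)$ through the stationary Bratteli diagram attached to $\sigma_0$ and then to exploit an explicit eigenvector computation for its incidence matrix. Following Example \ref{ex subst}, I would first form the stationary generalized Bratteli diagram $B = B(M)$, where $M=(m_{nk})$ is the substitution matrix of $\sigma_0$. Reading off $\sigma_0(n) = (n-2)\,n\,n\,(n+2)$ gives $m_{n,n-2}=1$, $m_{n,n}=2$, $m_{n,n+2}=1$, and $m_{nk}=0$ otherwise. Then $F=M$, $A=F^T$, so $A_{wv}=m_{vw}$, and the action on a vector $t=(t_v)$ is $(At)_w = t_{w-2}+2t_w+t_{w+2}$. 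Thus, up to the scalar $4$, the matrix $A$ is the transition matrix of the lazy nearest–neighbor random walk on $2\Z\cong\Z$; this is exactly why Ferenczi calls $\sigma_0$ the squared drunken man substitution, and it is the structural fact driving everything.

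Next I would verify that $A$ fits the hypotheses of the generalized Perron--Frobenius theorem (Theorem \ref{thm general PF}). Irreducibility is immediate since steps $0,\pm 2$ connect all of $2\Z$; aperiodicity follows from the self-loop $m_{n,n}=2>0$; and recurrence holds because each row and each column of $A$ sums to $4$, so $A/4$ is doubly stochastic and coincides with the symmetric random walk on $\Z$, which is recurrent. The eigenvector relation $t_{w-2}+2t_w+t_{w+2}=\lambda t_w$ is solved by the constant vector $t\equiv 1$ with $\lambda=4$ (equivalently $A$ has the $ECS(4)$ property of Example \ref{ex ERS}), and clause (b) of Theorem \ref{thm general PF} shows this is the only positive eigenvector up to scaling. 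Since $s\cdot t=\sum_{v\in 2\Z}1=\infty$, clause (c) identifies $A$ as null-recurrent, matching null recurrence of the one-dimensional walk.

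Now I apply Theorem \ref{thm inv meas stat BD}: the constant eigenvector $t$ yields a tail-invariant measure $\mu$ on $X_B$, and part (2) says $\mu$ is finite iff $\sum_{v} t_v<\infty$; but $\sum_{v\in 2\Z} t_v=\infty$, so $\mu$ is merely $\sigma$-finite. By the uniqueness of the positive eigenvector together with recurrence, the cone of tail-invariant measures (parametrized as in Theorem \ref{thm inv measures} by sequences with $A\mu^{(n+1)}=\mu^{(n)}$) is one-dimensional, so every tail-invariant measure is a scalar multiple of $\mu$ and is therefore infinite. Hence $B(M)$ carries no finite tail-invariant measure. I would then transfer this to $(X,T)$: the substitutive system is Borel isomorphic to the Vershik map $\varphi$ on the ordered diagram of Example \ref{ex subst}, and under such an isomorphism $T$-invariant measures correspond to tail-invariant ($\varphi$-invariant) measures; equivalently, by Theorem \ref{thm compr}, the absence of a finite invariant measure is the statement that $T$ is compressible.

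The hard part is this last transfer, because the realization/isomorphism theorem for substitutions is not automatic over a countable alphabet, as flagged right after Example \ref{ex subst}. I would handle it in one of two ways. Either check directly that the address map sending $x\in X$ to its sequence of $\sigma_0$-desubstitution levels conjugates $T$ to the Vershik map and is a measurable isomorphism off a negligible set, matching invariant with tail-invariant measures; or bypass the isomorphism entirely and argue at the symbolic level, namely that any $T$-invariant probability measure assigns to the letter cylinders $[n]$ frequencies that form a positive left eigenvector of $M$ for eigenvalue $4$ (from the self-similar relation imposed by $\sigma_0$ and its symmetry $n\mapsto n+2$), whence uniqueness forces all frequencies equal and infinitely many equal positive numbers cannot sum to $1$. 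The delicate point in the second route is guaranteeing that such frequencies genuinely exist and are eigen-data; this is precisely where the linear-recurrence structure of $\sigma_0$ must be invoked, and it is the main obstacle I expect to spend effort on.
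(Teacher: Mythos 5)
The paper itself contains no proof of this lemma: Example \ref{ex subst DS} is attributed to \cite{Ferenczi2006}, and the lemma is quoted from there (``The following result was proved there''), so your proposal has to stand on its own. Your matrix analysis is correct ($F=M$, $A=F^{T}$ acts by $(At)_w=t_{w-2}+2t_w+t_{w+2}$, irreducibility, aperiodicity, null recurrence, Perron--Frobenius eigenvalue $4$ with constant eigenvector), and Theorem \ref{thm inv meas stat BD} does give an infinite tail-invariant measure on $B(M)$. But the step where you conclude that \emph{every} tail-invariant measure is a multiple of it is false. Theorem \ref{thm general PF}(b) gives uniqueness of positive eigenvectors only at the eigenvalue $\lambda=4$, whereas by Theorem \ref{thm inv measures} tail-invariant measures correspond to \emph{all} non-negative solutions of $A\mu^{(n+1)}=\mu^{(n)}$, which need not be eigenvector-generated. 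Concretely, for $\theta\neq 0$ put $h_\theta(v)=e^{\theta v}$, $v\in 2\Z$; then $Ah_\theta=\lambda_\theta h_\theta$ with $\lambda_\theta=2+2\cosh(2\theta)>4$, and $\mu^{(n)}_v:=e^{\theta v}\lambda_\theta^{-n}$ solves $A\mu^{(n+1)}=\mu^{(n)}$, so the cone of tail-invariant measures contains a continuum of pairwise non-proportional elements (no contradiction with (b), since $\lambda_\theta$ is not the Perron--Frobenius eigenvalue). The diagram-level conclusion you want is still true, but needs a different argument: since $F\in ERS(4)$, all heights satisfy $H^{(n)}_v=4^n$, so a finite tail-invariant measure $\nu$ has $\nu(X_B)=4^n\sum_v\nu^{(n)}_v$; then $p^{(n)}:=4^n\nu^{(n)}/\nu(X_B)$ are probability vectors with $p^{(n)}=(A/4)\,p^{(n+1)}$, hence $p^{(0)}=(A/4)^n p^{(n)}$, and since $A/4$ is the lazy symmetric walk on $2\Z$, $\sup_{w,v}\bigl((A/4)^n\bigr)_{wv}\leq Cn^{-1/2}\to 0$, forcing $p^{(0)}\equiv 0$, a contradiction. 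Null recurrence enters through this local decay of $n$-step transition probabilities, not through eigenvector uniqueness.

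The second gap is the transfer to $(X,T)$, which you rightly flag as the hard part but do not close, and your first route cannot work as stated: it requires realizing $(X,T)$ as the Vershik map of the \emph{stationary} ordered diagram $B(M)$, and the paper states explicitly at the end of Example \ref{ex subst} that it is not known whether stationary Bratteli diagrams model substitution systems over countable alphabets; the general realization Theorem \ref{thm T4.2} does not help, because the diagram it produces for $T$ is some unspecified one on which your spectral computation gives no information. Your second route is the correct skeleton, and is essentially Ferenczi's own argument: an invariant probability measure yields letter frequencies $f_b=\nu([b])$ with $M^{T}f=4f$, and here the eigenvalue really is the Perron--Frobenius one, so Theorem \ref{thm general PF}(a),(b) forces $f$ to be a positive constant vector, incompatible with $\sum_b f_b=1$ over an infinite alphabet. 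But the step you defer --- that the frequency vector of an arbitrary invariant measure is eigen-data with eigenvalue $4$, which requires the constant-length and desubstitution (recognizability) structure of $\sigma_0$ --- is precisely the substantive content of the proof; without it, what you have is a plan rather than a proof.
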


\end{example} 

We finish this subsection by giving another result on the existence of
finite invariant measures. We recall that a substitution $\sigma : \A
\to \A^*$ is called of \textit{constant length} $L$ if $|\sigma(a)| = L$
for all $a \in \A$. It is obvious that if $M$ is the matrix of substitution 
$\sigma$ of constant length $L$, then the vector $(... , 1, 1, ....)$ is
the right eigenvector corresponding to the eigenvalue $L$. 

\begin{lemma}[\cite{Ferenczi2006}] Let $\sigma$ be a constant length
substitution on a countable alphabet $\A$. Suppose that the matrix
of substitution is irreducible, aperiodic, and positive recurrent. Then
the associated Borel dynamical system $(X, T)$ admits a probability
invariant measure. 
\end{lemma}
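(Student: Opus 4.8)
The plan is to produce a finite tail invariant measure on the path-space of the stationary Bratteli diagram $B = B(M)$ attached to $\sigma$ in Example \ref{ex subst}, and then to transport it to $(X,T)$ through the Vershik model. First I would exploit the constant length hypothesis. Since $|\sigma(a)| = L$ for every $a \in \A$, each row of the substitution matrix $M = (m_{ab})$ sums to $L$, so $M\mathbf 1 = L\mathbf 1$, where $\mathbf 1 = (\ldots,1,1,\ldots)$. Passing to $A = F^T = M^T$ (the matrix used throughout Section \ref{sect basics}), this says that $\mathbf 1$ is a strictly positive \emph{left} eigenvector of $A$ with eigenvalue $L$. Irreducibility, aperiodicity and positive recurrence are invariant under transposition, as they depend only on the strongly connected graph of $A$ and on the class-independent quantities $a_{ii}^{(n)} = (A^T)_{ii}^{(n)}$; hence $A$ itself is irreducible, aperiodic and positive recurrent, and the strictly positive eigenvector $\mathbf 1$ identifies its Perron-Frobenius eigenvalue as $\lambda = L$. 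By uniqueness of the eigenvectors (Theorem \ref{thm general PF}(b)), the left eigenvector $s$ of $A$ is a scalar multiple of $\mathbf 1$, and I normalize so that $s = \mathbf 1$.

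Next I would invoke Theorem \ref{thm inv meas stat BD}. Because $A$ is irreducible, aperiodic and recurrent (positive recurrence implies recurrence), part (1) yields a tail invariant measure $\mu$ on $X_B$ determined by the right Perron-Frobenius eigenvector $t = (t_v)$ of $A$ via \eqref{eq inv meas stat BD}. By part (2), $\mu$ is finite precisely when $\sum_v t_v < \infty$. Here the positive recurrence hypothesis does the decisive work: Theorem \ref{thm general PF}(c) asserts that $s \cdot t = \sum_v s_v t_v < \infty$ if and only if $A$ is positive recurrent, and with $s = \mathbf 1$ this reads exactly $\sum_v t_v < \infty$. Hence $\mu$ is finite and can be normalized to a tail invariant probability measure on $X_B$.

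Finally I would transfer $\mu$ to $(X,T)$. Equipping $B(M)$ with the natural left-to-right order of Example \ref{ex subst}, the associated Vershik map $\varphi$ sends each path to a tail equivalent path, since it alters only an initial segment and leaves all coordinates beyond some level $k$ unchanged; therefore every tail invariant measure is automatically $\varphi$-invariant, and the normalized $\mu$ is a $\varphi$-invariant probability measure. Composing with the identification of the Bratteli-Vershik system $(X_B, \varphi)$ with the substitution subshift $(X, T)$ then pushes $\mu$ forward to a $T$-invariant probability measure.

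The main obstacle is precisely this last identification in the countable-alphabet setting. The classical coding between a constant length substitution subshift and its stationary Bratteli-Vershik model, together with the requirement that the ordered diagram have no cofinal maximal or minimal paths so that $\varphi$ is genuinely a Borel automorphism, is well established only for finite alphabets, as noted in Example \ref{ex subst}. I would therefore need to verify that the coding, and with it the measure-theoretic isomorphism, survives when $\A$ is infinite; alternatively, I would bypass the model entirely by reading off $\mu$ directly as a system of word (cylinder) frequencies on $X$ built from the eigenvector $t$ and the powers $\lambda^{-n}$, and checking shift invariance of the resulting cylinder values by hand.
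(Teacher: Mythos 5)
You should note at the outset that the paper does not prove this lemma at all: it is quoted with attribution to \cite{Ferenczi2006}, and the only in-text support is the one-line remark that $(\ldots,1,1,\ldots)$ is a right eigenvector of $M$ with eigenvalue $L$. So the only question is whether your argument is correct, and it has a genuine gap at its central step. You claim that, because $\mathbf{1}$ is a strictly positive left eigenvector of $A=M^T$ with eigenvalue $L$, the Perron--Frobenius eigenvalue must be $\lambda=L$, and you then invoke the uniqueness statement of Theorem \ref{thm general PF}(b) to conclude $s=\mathbf{1}$. For infinite matrices this inference is invalid: uniqueness in Theorem \ref{thm general PF}(b) holds only among eigenvectors for the eigenvalue $\lambda$ itself, and an irreducible, aperiodic, even positive recurrent matrix can have strictly positive eigenvectors for eigenvalues strictly larger than $\lambda$. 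For example, the simple random walk on $\Z$ has positive eigenvectors $(e^{\theta n})_n$ with eigenvalue $\cosh\theta>1=\lambda$ for every $\theta\neq 0$; worse, if $P$ is an ordinary positive recurrent stochastic matrix admitting a positive eigenvector $h$ with $Ph=\beta h$, $\beta>1$ (such chains exist, e.g.\ nearest-neighbour walks on $\N_0$ with drift toward $0$), then the Doob transform $\tilde p_{ij}=p_{ij}h_j/(\beta h_i)$ is a stochastic matrix --- so $\mathbf{1}$ is a positive eigenvector with eigenvalue $1$ --- which is irreducible, aperiodic, and positive recurrent in the sense of Definition \ref{def PF}, yet whose Perron--Frobenius eigenvalue is $1/\beta<1$. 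Under the paper's Definition \ref{def PF}, ``positive recurrent'' is defined relative to $\lambda=\lim_n (a^{(n)}_{ii})^{1/n}$, which a priori may be strictly smaller than $L$; in that case $s\neq\mathbf{1}$, Theorem \ref{thm general PF}(c) no longer reads $\sum_v t_v<\infty$, and your finiteness conclusion collapses. To close the gap you must show that $\lambda=L$, i.e.\ that the convergence parameter of the stochastic matrix $P=M/L$ equals $1$; this does follow if ``recurrent'' is read in the ordinary Markov-chain sense for $P$ (since then $\sum_n p^{(n)}_{ii}=\infty$ and $p^{(n)}_{ii}\le 1$ force the radius of convergence to be $1$), but that reading, and the step itself, are missing from your argument.

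The second gap is the one you flag yourself, and it cannot be waved away: your transfer of the finite tail invariant measure from $X_B$ to $(X,T)$ rests on identifying the substitution subshift with the Bratteli--Vershik system of the ordered diagram $B(M)$. The paper states explicitly, in Example \ref{ex subst}, that this identification is known for finite alphabets and that ``we do not know whether a similar result holds'' for substitutions on a countable alphabet; Theorem \ref{thm T4.2} goes in the wrong direction (it produces some ordered diagram for a given Borel automorphism, not the specific diagram $B(M)$ for this subshift). So your main route uses as a black box precisely the statement the paper declares open. Your fallback --- defining the measure directly on cylinder sets of $X$ from the eigenvector $t$ and the weights $\lambda^{-n}$ and verifying shift invariance --- is essentially what a self-contained proof (and, in substance, Ferenczi's) must do, but in your write-up it remains a one-sentence sketch rather than an argument.
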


An example that illustrates the above lemma is the following:
$\A = \N_0$ and 
$$
\sigma(0) = 01, \ \sigma(1) = 02, \ \sigma(n) = (n-2)(n+1), \ n \geq 2.
$$

\subsection{Operators generated by transition kernels}
\label{ss operators}

Let $B = B(F_n)$ be a generalized Bratteli diagram. Suppose that  
$(P_n)$ is a sequence of Markov matrices (equivalently, probability
transition kernels), and $q^{(0)}$ is an initial distribution (a discrete 
infinite measure on $V_0$). As was shown above, the matrices $P_n$
and vector $q^{(0)}$ determine the stochastic matrices $\wh P_n$
 (Lemma 
\ref{lem stoch wh P_n}),  vectors $q^{(n)}$ (Proposition 
\ref{prop vectors q}) such that $q^{(n+1)} = q^{(n)} \wh P_n$, and 
dual probability kernels $\wh Q_n$ such that $q^{(n+1)} \wh Q_n =
q^{(n)}$ (Theorem \ref{thm exist of Q_n} and Proposition 
\ref{prop about wh Q_n}). We use these objects to define operators
acting in the weighted $\ell^2$-spaces.

For each $P_n$, we can define a linear operator: if $f$ is a bounded 
function  defined on the set of vertices $V_{n+1}$, then setting
$$
(T_{P_n}f)(v)  = \sum_{e : s(e) = v} p^{(n)}_{v, e} f(r(e)), \ \ v 
\in V_n,
$$  
we obtain a function defined on vertices of $V_n$. For $Q_n$, we have
$$
(T_{Q_n}g)(u)  = \sum_{e : r(e) = v} q^{(n)}_{u, e} g(s(e)), \ \ u \in
 V_{n+1}.
$$  
Similarly, 
$(T_{\wh P_n}f)(v) = \sum_{u \in V_{n+1}} \wh p_n(v, u)f(u)$ and 
$(T_{\wh Q_n}g)(u) = \sum_{v \in V_{n}} \wh q_n(u, v)f(v)$.

\begin{lemma}\label{lem operators T}
For $P_n, \wh P_n$ and $Q_n,  \wh Q_n$ as above, we have
$$
T_{P_n}(f) = T_{\wh P_n}(f), \quad T_{Q_n}(g) = T_{\wh Q_n}(g), 
\ \ n\in \N_0, 
$$
where $f$ is a bounded function on $V_{n+1}$, and $g$ is a bounded 
function on $V_{n}$.
\end{lemma}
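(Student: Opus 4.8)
The plan is to prove both identities by the same device: take the single defining sum over an edge fibre and regroup it according to the other endpoint of each edge, then exploit that the test function is constant along each resulting group. The only structural input beyond the definitions is the finiteness of every fibre $E(v,u)$, guaranteed by clause (iii) of Definition \ref{def GBD}; this makes all the rearrangements of (possibly infinite) sums legitimate, since the regrouping replaces a sum over $s^{-1}(v)$ (resp. $r^{-1}(u)$) by an outer sum over $V_{n+1}$ (resp. $V_n$) of finite inner sums.

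For the first identity I would start from $(T_{P_n}f)(v) = \sum_{e:s(e)=v} p^{(n)}_{v,e} f(r(e))$ and partition the source fibre $s^{-1}(v)$ as the disjoint union $\bigsqcup_{u \in V_{n+1}} E(v,u)$. On each block $E(v,u)$ the range is fixed, $r(e)=u$, so $f(r(e)) = f(u)$ pulls out of the inner sum and leaves $\sum_{e \in E(v,u)} p^{(n)}_{v,e}$. Because $s(e)=v$ on this block, the inner sum is exactly $\wh p_n(v,u)$ by the definition preceding Lemma \ref{lem stoch wh P_n}, and collecting the outer sum gives $\sum_{u \in V_{n+1}} \wh p_n(v,u) f(u) = (T_{\wh P_n}f)(v)$, as desired.

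For the second identity I would argue symmetrically, now partitioning the range fibre $r^{-1}(u) = \bigsqcup_{v \in V_n} E(v,u)$ in $(T_{Q_n}g)(u) = \sum_{e:r(e)=u} q^{(n)}_{u,e}\, g(s(e))$. On each block $s(e)=v$ is constant, so $g(s(e))=g(v)$ comes out and leaves $\sum_{e \in E(v,u)} q^{(n)}_{u,e}$. Here is the one place where I must invoke an earlier construction rather than a mere definition: by the proof of Theorem \ref{thm exist of Q_n} the per-edge weight was set to $q^{(n)}_{r(e),e} = |E(v,u)|^{-1}\,\wh q_n(u,v)$ for every $e \in E(v,u)$, so summing over the $|E(v,u)|$ edges of the bundle returns precisely $\wh q_n(u,v)$. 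Substituting yields $\sum_{v \in V_n} \wh q_n(u,v)\, g(v) = (T_{\wh Q_n}g)(u)$.

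The computation is routine; the only point requiring attention is this asymmetry between the two cases, which is where I expect the mild obstacle to lie. For $P_n$ the collapse of the inner sum is immediate because $\wh p_n(v,u)$ is \emph{defined} as the sum of the $p^{(n)}_{v,e}$ over $E(v,u)$. For $Q_n$ the analogous statement is not a definition but a consequence of how the individual edge weights were distributed across a multiple-edge bundle in Theorem \ref{thm exist of Q_n}; the cleanest route is therefore to record explicitly that those weights were chosen to sum to $\wh q_n(u,v)$ on each $E(v,u)$, and then invoke that fact directly rather than re-deriving it.
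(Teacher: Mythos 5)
Your proof is correct and follows essentially the same route as the paper: the paper's proof also partitions the fibre $s^{-1}(v)$ as $\bigsqcup_{u\in V_{n+1}}E(v,u)$, pulls out $f(r(e))=f(u)$, and identifies the inner sum with $\wh p_n(v,u)$, treating the $Q_n$ case as analogous. Your explicit handling of the second identity, noting that $\sum_{e\in E(v,u)}q^{(n)}_{r(e),e}=\wh q_n(u,v)$ comes from the construction in Theorem \ref{thm exist of Q_n} rather than from a definition, is exactly the detail the paper leaves implicit.
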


\begin{proof} We calculate 
$$
\ba 
T(P_n)f(v)  & = \sum_{e : s(e) = v} p^{(n)}_{v, e} f(r(e)) \\
& = \sum_{u \in V_{n+1}} \sum_{e \in E(v, u)} 
 p^{(n)}_{v, e} f(r(e))\\
& =  \sum_{u \in V_{n+1}} \wh p_n(v, u)f(u) \\
& = T(\wh P_n)f(v).
\ea
$$
The other relation is proved analogously. 
\end{proof}

Let $\mc H_n$ be the linear space of all sequences $f =(f(v) : v \in 
V_n)$  such that 
\be\label{eq H_n}
||f ||^2_{\mc H_n} := \sum_{ v \in V_n} q^{(n)}_v f(v)^2 < \infty.
\ee
Then $\mc H_n$, equipped with this norm, is a Hilbert space with the 
inner product
$$
\langle \varphi, \psi \rangle_{\mc H_n} = \sum_{v\in V_n} \varphi(v)
\psi(v) q^{(n)}_v.
$$

\begin{proposition}\label{prop T_P T_Q}
(1) The operators $T_{P_n} : \mc H_{n+1} \to \mc H_n$ 
and $T_{Q_n} : \mc H_{n} \to \mc H_{n+1}$ are positive and
 contractive for all $n \in \N_0$. 
 
 (2) $(T_P)^* = T_Q$ and $(T_Q)^* = T_P$.
\end{proposition}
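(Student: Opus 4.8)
The plan is to reduce everything to the ``hat'' kernels and then exploit three algebraic identities. First I would invoke Lemma~\ref{lem operators T} to replace $T_{P_n}$ and $T_{Q_n}$ by the matrix operators $T_{\wh P_n}$ and $T_{\wh Q_n}$, so that $(T_{P_n}f)(v)=\sum_{u\in V_{n+1}}\wh p_n(v,u)f(u)$ for $v\in V_n$ and $(T_{Q_n}g)(u)=\sum_{v\in V_n}\wh q_n(u,v)g(v)$ for $u\in V_{n+1}$; these are exactly the objects matched to the weighting vectors $q^{(n)}$ defining $\mc H_n$. The three facts I would record at the outset are: (i) $\wh P_n$ and $\wh Q_n$ are row stochastic (Lemma~\ref{lem stoch wh P_n} and Theorem~\ref{thm exist of Q_n}); (ii) the two marginal identities $q^{(n+1)}_u=\sum_{v}q^{(n)}_v\wh p_n(v,u)$ and $q^{(n)}_v=\sum_u q^{(n+1)}_u\wh q_n(u,v)$, coming from $q^{(n+1)}=q^{(n)}\wh P_n$ (Proposition~\ref{prop vectors q}) and $q^{(n+1)}\wh Q_n=q^{(n)}$ (Proposition~\ref{prop about wh Q_n}(2)); and (iii) the detailed-balance identity $q^{(n)}_v\wh p_n(v,u)=q^{(n+1)}_u\wh q_n(u,v)$ from \eqref{eq wh p and wh q}.

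Positivity is immediate: since every entry $\wh p_n(v,u)$ is non-negative, $f\ge 0$ forces $T_{P_n}f\ge 0$, and likewise for $T_{Q_n}$. For contractivity I would apply Jensen's inequality to the probability measure $u\mapsto\wh p_n(v,u)$ afforded by (i), obtaining $(T_{P_n}f)(v)^2\le\sum_u\wh p_n(v,u)f(u)^2$. Weighting by $q^{(n)}_v$, summing over $v$, and interchanging the (non-negative) sums by Tonelli gives $\norm{T_{P_n}f}_{\mc H_n}^2\le\sum_u f(u)^2\sum_v q^{(n)}_v\wh p_n(v,u)=\sum_u q^{(n+1)}_u f(u)^2=\norm{f}_{\mc H_{n+1}}^2$, where the last step is the first marginal identity in (ii). This simultaneously proves that $T_{P_n}$ is a well-defined contraction $\mc H_{n+1}\to\mc H_n$; the argument for $T_{Q_n}$ is identical, using the row-stochasticity of $\wh Q_n$ and the second marginal identity.

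For the adjoint relation I would compute, for $f\in\mc H_{n+1}$ and $g\in\mc H_n$,
\[
\langle T_{P_n}f,\,g\rangle_{\mc H_n}
=\sum_{v\in V_n}\sum_{u\in V_{n+1}} q^{(n)}_v\,\wh p_n(v,u)\,f(u)\,g(v),
\]
and then substitute the detailed-balance identity (iii) to turn the weight $q^{(n)}_v\wh p_n(v,u)$ into $q^{(n+1)}_u\wh q_n(u,v)$; regrouping yields $\sum_u q^{(n+1)}_u f(u)\sum_v\wh q_n(u,v)g(v)=\langle f,\,T_{Q_n}g\rangle_{\mc H_{n+1}}$. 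Hence $T_{Q_n}\subseteq(T_{P_n})^*$, and since both operators are everywhere-defined bounded operators by the previous paragraph the inclusion is an equality; exchanging the roles of $P$ and $Q$ gives $(T_{Q_n})^*=T_{P_n}$.

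The one point requiring care is the interchange of the two infinite summations, both in the contractivity estimate and in the adjoint computation. In the contractivity step every summand is non-negative, so Tonelli's theorem applies unconditionally. In the adjoint step I would justify Fubini by absolute convergence: after inserting (iii) the double sum is dominated by $\sum_{u,v} q^{(n+1)}_u\wh q_n(u,v)\,\abs{f(u)}\,\abs{g(v)}$, which is finite by the Cauchy--Schwarz inequality in $\mc H_{n+1}$ together with the contractivity of $T_{Q_n}$ just established. Thus I would prove contractivity first precisely so that the boundedness it provides legitimizes the manipulations in the adjoint identity.
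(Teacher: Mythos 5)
Your proof is correct and follows essentially the same route as the paper's: positivity from non-negativity of the entries, the Schwarz/Jensen estimate $(T_{P_n}f)^2 \le T_{P_n}(f^2)$ combined with $q^{(n+1)} = q^{(n)}\wh P_n$ for contractivity, and the detailed-balance identity \eqref{eq wh p and wh q} for the adjoint relation. Your added justifications (Tonelli/Fubini for the sum interchanges and the bounded-everywhere-defined argument upgrading the adjoint inclusion to equality) are sound refinements of steps the paper leaves implicit, not a different method.
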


\begin{proof} It is obvious that  $T_{P_n}(f) > 0$ whenever $f >0$. 
By Schwarz inequality, we have $ T_{P_n}(f)^2 \leq T_{P_n}(f^2)$.
 
It follows from Lemma \ref{lem operators T} that the
operator $T_{P_n}$ is defined on vectors from $\mc H_{n+1}$. One
needs to show that $T_{P_n}(f) \in \mc H_n$ if $f\in \mc H_{n+1}$. 
Indeed, using the equality $q^{(n+1)} = q^{(n)}\wh P_n$, we 
compute
$$
\ba 
|| T_{P_n}(f) ||^2_{\mc H_n} &  =  \sum_{ v \in V_n} q^{(n)}_v
T_{P_n}(f)^2 \\
& \leq \sum_{ v \in V_n} q^{(n)}_v T_{P_n}(f^2)\\
& = \sum_{ v \in V_n} q^{(n)}_v \sum_{u \in V_{n+1}} \wh 
p_n(v, u)f^2(u) \\
& = \sum_{u \in V_{n+1}} f^2(u) \sum_{ v \in V_n} q^{(n)}_v \wh 
p_n(v, u)\\
& = \sum_{u \in V_{n+1}} q^{(n+1)}_u f^2(u) \\
& = || f ||^2_{\mc H_{n+1}}. 
\ea
$$
It follows also from the proof that 
$$
|| T_{P_n}||_{\mc H_{n+1 \to \mc H_n}} \leq 1.
$$
The same proof works for $T_{Q_n}$. 

(2) To prove that $(T_P)^* = T_Q$, we use the equality 
$q^{(n)}_v \wh p_n(v,u) = q^{(n+1)}_u\wh q_n(u, v) $, see Theorem
\ref{thm exist of Q_n}. 
$$
\ba 
\langle f, T_{P_n}(g)\rangle_{\mc H_n} & = \sum_{v\in V_n} f(v)
\big(\sum_{u \in V_{n+1}} \wh p_n(v,u)g(u) \big) q^{(n)}_v\\
&= \sum_{v\in V_n} \sum_{u \in V_{n+1}} f(v) g(u) q^{(n)}_v
\wh p_n(v,u)\\
&=  \sum_{u \in V_{n+1}}  \sum_{v\in V_n}f(v) g(u) q^{(n+1)}_u
\wh q_n(u, v)\\
&= \sum_{u \in V_{n+1}} g(u) T_{Q_n}(f)(u) q^{(n+1)}_u\\
&= \langle T_{Q_n}(f), g\rangle_{\mc H_{n+1}}. 
\ea
$$ 
\end{proof}

The following corollary is immediate. 

\begin{corollary}\label{cor operator T_n}
 Let $T_n = T_{P_n}T_{Q_n}$. Then 

(1) $T_n$ is a 
self-adjoint operator on $\mc H_n$ acting on functions from $\mc H_n$
by the formula
$$
(T_n f)(v) = \sum_{w\in V_n} \sum_{u \in V_{n+1}} \wh p_n(v,u)
\wh q_n(u,w)f(w),
$$

(2) $T_n$ is represented by a row stochastic matrix $\wh T_n$ with 
entries 
$$
\wh t^{(n)}_{v w} =\sum_{u\in V_{n+1}} \wh p_n(v,u)\wh q_n(u,w),
$$

(3)  $q^{(n)}$ is a left eigenvector for the matrix $\wh T_n$ 
corresponding to the eigenvalue $1$.
$n \in \N$.
\end{corollary}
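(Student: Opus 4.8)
The plan is to obtain all three claims by direct composition together with the adjoint and stochasticity facts already in hand, so the work is really one of organizing three short computations. For part (1), self-adjointness is immediate from Proposition \ref{prop T_P T_Q}(2): since $(T_{P_n})^* = T_{Q_n}$ and $(T_{Q_n})^* = T_{P_n}$, taking adjoints of the product gives $(T_n)^* = (T_{P_n}T_{Q_n})^* = (T_{Q_n})^*(T_{P_n})^* = T_{P_n}T_{Q_n} = T_n$ as an operator on $\mc H_n$; this composition is well defined and bounded because each factor is contractive by Proposition \ref{prop T_P T_Q}(1). To produce the explicit kernel I would substitute the $\wh P_n$- and $\wh Q_n$-representations of $T_{P_n}$ and $T_{Q_n}$ supplied by Lemma \ref{lem operators T}, namely $(T_{P_n}g)(v) = \sum_{u} \wh p_n(v,u)\,g(u)$ and $(T_{Q_n}f)(u) = \sum_{w} \wh q_n(u,w)\,f(w)$, and then interchange the two finite sums to arrive at $(T_n f)(v) = \sum_{w}\sum_{u} \wh p_n(v,u)\wh q_n(u,w) f(w)$.

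For part (2), I would simply read off the matrix entries $\wh t^{(n)}_{vw} = \sum_{u} \wh p_n(v,u)\wh q_n(u,w)$ from the kernel in (1) and verify that each row sums to one. Summing over $w$ and interchanging the order of summation gives $\sum_{w} \wh t^{(n)}_{vw} = \sum_{u} \wh p_n(v,u)\big(\sum_{w} \wh q_n(u,w)\big)$; the inner sum equals $1$ because $\wh Q_n$ is a probability (row stochastic) kernel, established in Theorem \ref{thm exist of Q_n} and Proposition \ref{prop about wh Q_n}, and the remaining sum over $u$ equals $1$ because $\wh P_n$ is row stochastic by Lemma \ref{lem stoch wh P_n}.

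For part (3), the identity $q^{(n)}\wh T_n = q^{(n)}$ will follow by chaining the two balance relations for the vectors $q^{(n)}$. Writing out the $w$-th entry and interchanging sums, I would first collapse the inner sum over $v$ using $\sum_{v} q^{(n)}_v \wh p_n(v,u) = q^{(n+1)}_u$, which is exactly $q^{(n+1)} = q^{(n)}\wh P_n$ from Proposition \ref{prop vectors q}, and then collapse the resulting sum over $u$ using $\sum_{u} q^{(n+1)}_u \wh q_n(u,w) = q^{(n)}_w$, which is $q^{(n+1)}\wh Q_n = q^{(n)}$ from Proposition \ref{prop about wh Q_n}(2); together these yield $(q^{(n)}\wh T_n)_w = q^{(n)}_w$.

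There is no deep obstacle here, since every ingredient is already proved; the only point requiring care is the legitimacy of the interchanges of summation. These are justified in each case by the finite-support structure of the incidence matrices (each vertex has finitely many incoming edges) together with the nonnegativity of all the quantities $\wh p_n(v,u)$, $\wh q_n(u,w)$, and $q^{(n)}_v$, so Tonelli's theorem applies and no convergence issue arises. I would note this explicitly when reordering the double sums rather than treat it as automatic.
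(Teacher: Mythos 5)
Your proposal is correct and follows essentially the same route as the paper: part (1) from the adjoint relations in Proposition \ref{prop T_P T_Q}, part (2) from row-stochasticity of $\wh P_n$ and $\wh Q_n$, and part (3) by the same sum interchange collapsing first $\sum_v q^{(n)}_v \wh p_n(v,u) = q^{(n+1)}_u$ and then $\sum_u q^{(n+1)}_u \wh q_n(u,w) = q^{(n)}_w$. You merely spell out details the paper leaves implicit (the adjoint-of-product computation, the explicit row-sum check, and the Tonelli justification, which for nonnegative terms is automatic).
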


\begin{proof}
Statement (1) follows from Proposition \ref{prop T_P T_Q}.
To see that (2) is true, we can use that $\wh P_n$ and $\wh Q_n$ are row
stochastic.
For (3), we compute 
$$
\ba 
\sum_{v\in V_n}  q^{(n)}_v \wh t^{(n)}_{v w} & = 
\sum_{v\in V_n}  q^{(n)}_v \sum_{u\in V_{n+1}} \wh p_n(v,u)\wh 
q_n(u,w)\\
& = \sum_{u\in V_{n+1}} \left(\sum_{v\in V_n} q^{(n)}_v \wh 
p_n(v,u)\right) \wh q_n(u,w)\\
&=\sum_{u\in V_{n+1}} q^{(n+1)}_u \wh  q_n(u,w)\\
& = q^{(n)}_w.
\ea
$$

\end{proof}

\section{ Graph Laplacians and associated harmonic functions}
 \label{sec Laplace}

In this section we consider the main concepts  of the theory of weighted 
networks $(G, c)$ in the case when the graph $G$ is represented by a
 generalized Bratteli diagram. We refer to 
 \cite{BJ-2019} where a similar approach was applied to finite Bratteli 
 diagrams.  The reader can find more information on this subject, for
 example,  in 
\cite{Anandam_2011, Anandam_2012, Cho2014, 
Dutkay_Jorgensen2010, Georgakopoulos2010,
Jorgensen_Pearse2010, Jorgensen_Pearse2013, Kigami2001,
LyonsPeres_2016, Petit2012}. Our  
goal is to show how the notions of weighted networks can be used for
Bratteli diagrams. We plan to study them in details in a forthcoming 
paper.

We use the notation of Section \ref{sect Markov} to fix our setting for
this section: $B = (V, E)$ is a generalized 
Bratteli diagram (see Definition \ref{def GBD}), $(\wh P_n)$ and 
$(\wh Q_n)$ are the sequences of
row stochastic matrices defined by Markov matrices $(P_n)$, 
$(q^{(n)})$ is a sequence of positive vectors where $q^{(n)}$ is 
indexed by vertices of $V_n$ and such that $q^{(n)} \wh P_n =
q^{(n+1)}$, $q^{(n+1)} \wh Q_n =q^{(n)}$.

By definition, a \textit{weighted network} $(G, c)$ is an undirected  
countable connected  graph $G = (V, E)$ without loops together with a
 weight
function $c : E \to [0, \infty)$. In the literature, weighted networks  are 
also called \textit{electrical networks} where $c$ is viewed as a 
conductance function (see Subsection \ref{ssect Literature} for 
references) .   The notation $v \sim w$ means that 
there exists an edge between vertices $v$ and $w$ (we consider the
graphs with single edges between connected vertices). Define the path
space $X_G$ of $G$ as a set of 
all infinite sequences $x= (v_0, v_1, ... )$ such that $e = 
(v_i, v_{i+1})$ is an edge from $E$, $i \in \N_0$. 

We will begin with a generalized Bratteli diagram $B = B(V, E)$ and 
show how to  associate a weighted network $G= G(B)= (V', E')$ to
$B$. It is important to stress that our setting includes the existence 
of Markov matrices $(\wh P_n)$ and $(\wh Q_n)$ defined by 
row stochastic matrices $(P_n)$ and the vectors $(q^{(n)}$ (see their
properties above). 

We set $V' = V= \bigcup_n V_n$ so that all vertices are partitioned into 
levels. Define the set of edges $E'$ by the following rule: $E' = 
\bigcup_n E'_n$ where $E'_n$ is the set of edges between vertices 
of $V_n $ and $V_{n+1}$.  Two vertices, $v \in V_n$ and $u \in 
V_{n+1}$, are connected  by an edge from $E'$ if and only if the set
$E(v, u)\neq \emptyset$ in the Bratteli diagram $B$.
In other words, we replace  the set $E(v, u)$ with a singe edge if
this set is not empty.  
 
Define now weight function $c : E' \to [0, \infty)$. Let $e' = (v, u)$
where $v\in V_n, u \in V_{n+1}$. For this, we fix a vertex $v \in V_n$
and assign a weight $c^{(n)}_{vu}$ for all edges $(v,u)$, $u \in 
V_{n+1}$ and $c^{(n-1)}_{vw}$ for all edges $(w,v)$, $w \in 
V_{n-1}$:
\be\label{eq cond fn}
c^{(n)}_{vu} = \frac{1}{2} q^{(n)}_v \wh p_n(v,u),\quad 
c^{(n-1)}_{vw} = \frac{1}{2} q^{(n)}_v \wh q_{n-1}(v,w). 
\ee

\begin{lemma}\label{lem c(v)}
(1) $c^{(n)}_{vu} = c^{(n)}_{uv}$, i.e., the conductance function 
$c$  is correctly defined on edges from $E'$.

(2) 
$$
c_n(v) = q^{(n)}_v,\ \ \ v\in V_n, \ n\in \N_0.
$$
\end{lemma}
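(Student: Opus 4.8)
The plan is to read off both claims directly from the defining formula \eqref{eq cond fn}, using the symmetry relation \eqref{eq wh p and wh q} for part (1) and the row-stochasticity of the matrices $\wh P_n$, $\wh Q_n$ for part (2). No new estimates are required; the whole content is to invoke the correct already-proven identity and to keep the level bookkeeping straight.

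For part (1), I would observe that a single undirected edge of $E'$ joining $v \in V_n$ to $u \in V_{n+1}$ is assigned a weight in two a priori different ways by \eqref{eq cond fn}. Read as an upward edge issued from $v$, it receives $c^{(n)}_{vu} = \frac12 q^{(n)}_v \wh p_n(v,u)$; read as a downward edge issued from $u$ (that is, applying the second formula in \eqref{eq cond fn} with $n$ replaced by $n+1$), it receives $c^{(n)}_{uv} = \frac12 q^{(n+1)}_u \wh q_n(u,v)$. The assertion $c^{(n)}_{vu} = c^{(n)}_{uv}$ is then precisely relation \eqref{eq wh p and wh q} of Proposition \ref{prop about wh Q_n}(3),
$$
q^{(n)}_v \wh p_n(v,u) = q^{(n+1)}_u \wh q_n(u,v),
$$
so $c$ is unambiguously defined on $E'$ and nothing further is needed.

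For part (2), I would compute the total conductance at $v \in V_n$ by summing over its neighbours, which for $n \geq 1$ split into upward neighbours in $V_{n+1}$ and downward neighbours in $V_{n-1}$:
$$
c_n(v) = \sum_{u \in V_{n+1}} c^{(n)}_{vu} + \sum_{w \in V_{n-1}} c^{(n-1)}_{vw}
= \tfrac12 q^{(n)}_v \sum_{u \in V_{n+1}} \wh p_n(v,u) + \tfrac12 q^{(n)}_v \sum_{w \in V_{n-1}} \wh q_{n-1}(v,w).
$$
Both inner sums equal $1$: the first because $\wh P_n$ is row stochastic (Lemma \ref{lem stoch wh P_n}), and the second because $\wh Q_{n-1}$ is row stochastic, its rows being the probability distributions $\wh q_{n-1}(v,\cdot)$ indexed by $v \in V_n$ (Theorem \ref{thm exist of Q_n}). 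Hence each of the two terms contributes $\frac12 q^{(n)}_v$, and $c_n(v) = q^{(n)}_v$.

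The argument is routine, and the only places deserving care are the index shift $n \mapsto n+1$ in part (1), which turns the upward weight seen from $u$ into the downward weight seen from $v$, and the boundary level $n = 0$ in part (2): there is no level $V_{-1}$, so only the upward sum survives. I expect this boundary case to be the one genuine subtlety, and I would handle it with the convention that level-$0$ vertices carry only their upward contribution, so that the stated identity $c_0(v) = q^{(0)}_v$ is read against that normalization rather than against the two-sided interior count.
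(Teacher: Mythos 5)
Your proof is correct and follows essentially the same route as the paper's: part (1) is the same application of \eqref{eq wh p and wh q} to reconcile the two readings of the edge weight, and part (2) is the same split of $c_n(v)$ into upward and downward sums, each collapsed to $\tfrac12 q^{(n)}_v$ by the row stochasticity of $\wh P_n$ and $\wh Q_{n-1}$. Your remark about the level $n=0$ (where the downward sum is absent, so the two-sided count would give only $\tfrac12 q^{(0)}_v$) flags a genuine boundary case that the paper's proof passes over in silence, and some normalization convention of the kind you describe is indeed needed to make the statement literally true for all $n \in \N_0$.
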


\begin{proof}
(1) Formula \eqref{eq cond fn} defines the value of the function $c$ 
on an edge connecting $w \in V_{n-1} $ and $v \in V_n$ in  two 
(formally different) ways.  In fact, we use \eqref{eq wh p and wh q} to
show that  
$$
c^{(n-1)}_{wv} =  \frac{1}{2} q^{(n-1)}_w \wh p_{n-1}(w,v) 
= \frac{1}{2} q^{(n)}_v \wh q_{n}(v,w) = c^{(n-1)}_{vw}. 
$$

(2) We calculate the sum using statement (1):
$$
\ba 
c_n(v) & = \sum_{u \in V_{n+1}} c^{(n)}_{vu}  +
\sum_{w\in V_{n-1}} c^{(n-1)}_{vw}\\
& = \frac{1}{2} \sum_{u \in V_{n+1}} q^{(n)}_v \wh 
p_{n}(v,u) +  \frac{1}{2} \sum_{w\in V_{n-1}} q^{(n)}_v \wh
 q_{n-1}(v,w)\\
 & = q^{(n)}_v.
\ea
$$
\end{proof}

Lemma \ref{lem c(v)} shows that $c(v) = (c_n(v))$ is finite for every 
$v \in V$. We will omit the index $n$ in $c_n(v)$ if it is clear that 
$v $ is taken from $V_n$.

\begin{definition}\label{def Markov krnl} 
For $G = G(B)$ and the conductance function $c$ as
above, we define a \textit{reversible Markov kernel} $M = \{m(v,u) : 
v, u \in V\}$ by setting
$$
m(v, u)=
\begin{cases}
\dfrac{c^{(n)}_{vu}}{c_{n}(v)} = \frac{1}{2} \wh p_n(v, u), 
& v \in V_n, u\in V_{n+1},\\
\\
\dfrac{c^{(n-1)}_{uv}}{c_{n}(v)} =\frac{1}{2} \wh q_{n-1}(v, u), 
& v \in V_n, u\in V_{n- 1}.\\
\end{cases}
$$
\end{definition}

\begin{remark}
It follows from Lemma \ref{lem c(v)} that $m(v,u)$ can be viewed as 
the probability to get to $u$ from $v$ because $\sum_{u \sim v}
m(v, u) =1$. 

The Markov kernel $M$ is \textit{reversible}, i.e., 
$$
c(v) m(v, u) = c(u) m(u, v), \quad \forall v, u,\ v\sim u. 
$$ 
This fact follows from \eqref{eq wh p and wh q}.
\end{remark}

\begin{definition}
Let $f = (f_n)$ be a function defined on vertices of the graph $G(B)$
(or the Bratteli diagram). Suppose that a Markov kernel $M$ is defined as 
in \ref{def Markov krnl}.  The operator 
$$
(Mf)(v) = \sum_{u\sim v} m(v,u)f(u), \ \ v \in V.
$$
is called a \textit{Markov operator} acting on the weighted network 
$(G,c)$.

Define a \textit{Laplacian operator} $\Delta$:
$$
(\Delta f)(v) = \sum_{u\sim v} c_{vu}(f(v) - f(u)) = c(v)[f(v) - 
(Mf)(v)], \ \ \ v \in V. 
$$

A function $f$ is called \textit{harmonic}, if $M(f) = f$. Equivalently, 
$f$ is harmonic if $\Delta f = 0$. 
\end{definition}

\begin{theorem}\label{thm harmonic}
Let $f = (f_n)$ be a function on the vertex set of $G(B)$. Then $f$ is
harmonic if and only if 
$$
2 f_n = \sum_{u\in V_{n+1}} \wh p_n(v, u) f_{n+1}(u)
+ \sum_{w\in V_{n-1}} \wh q_{n-1}(v, w) f_{n-1}(w), \ \ \forall 
n\in \N,
$$
or, equivalently, $2f_n = \wh P_n f_{n+1} + \wh Q_{n-1} f_{n-1}$.

Similarly, $f$ is harmonic if and only if $D_n f_n = \wh P_n f_{n+1} + 
\wh Q_{n-1} f_{n-1}$ where $D_n$ is the diagonal matrix with entries 
$2q^{(n)}_v, v \in V_n,$ on the main diagonal, $n \geq 1$.

In particular, 
\be\label{eq q M}
q^{(n)} M = \frac{1}{2}( q^{(n+1)} + q^{(n-1)}), \ \ \ n \in \N.
\ee
\end{theorem}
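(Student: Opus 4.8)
The plan is to prove the theorem by unwinding the definitions, since the analytic content has already been absorbed into Lemma~\ref{lem c(v)} (which gives $c_n(v) = q^{(n)}_v$) and the duality relation \eqref{eq wh p and wh q}. First I would establish the basic equivalence. Fix $n \in \N$ and $v \in V_n$. By the level structure of $G(B)$, the neighbors $u \sim v$ lie either in $V_{n+1}$ or in $V_{n-1}$, and the reversible Markov kernel of Definition~\ref{def Markov krnl} assigns $m(v,u) = \frac{1}{2}\wh p_n(v,u)$ to the forward neighbors $u \in V_{n+1}$ and $m(v,w) = \frac{1}{2}\wh q_{n-1}(v,w)$ to the backward neighbors $w \in V_{n-1}$. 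Substituting these weights into the harmonicity equation $(Mf)(v) = f(v)$ and clearing the factor $\frac{1}{2}$ gives precisely $2 f_n(v) = \sum_{u \in V_{n+1}} \wh p_n(v,u) f_{n+1}(u) + \sum_{w \in V_{n-1}} \wh q_{n-1}(v,w) f_{n-1}(w)$, i.e. $2 f_n = \wh P_n f_{n+1} + \wh Q_{n-1} f_{n-1}$ in matrix form. Each step is reversible, so this identity is in turn equivalent to $Mf = f$.

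For the diagonal-matrix version I would instead start from the equivalent condition $\Delta f = 0$ and expand $(\Delta f)(v) = \sum_{u \sim v} c_{vu}(f(v) - f(u))$ over the two adjacent levels. The terms proportional to $f(v)$ collect to the total degree $c_n(v) = q^{(n)}_v$ of Lemma~\ref{lem c(v)}(2), which is what populates the diagonal matrix $D_n$, while the remaining terms, after inserting $c^{(n)}_{vu} = \frac{1}{2}q^{(n)}_v \wh p_n(v,u)$ and $c^{(n-1)}_{vw} = \frac{1}{2}q^{(n)}_v \wh q_{n-1}(v,w)$ from \eqref{eq cond fn}, reassemble into $\wh P_n f_{n+1} + \wh Q_{n-1} f_{n-1}$. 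Since every $q^{(n)}_v$ is strictly positive, one may divide through by the diagonal to pass between this form and the normalized one, so the two characterizations are equivalent.

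The ``in particular'' relation I would treat as a direct dual computation, viewing $q^{(n)}$ as a row vector acting on $M$. A target vertex $u \in V_{n+1}$ receives only the forward weights $\frac{1}{2}\wh p_n(\cdot,u)$, so the corresponding coordinate of $q^{(n)} M$ equals $\frac{1}{2}\sum_{v \in V_n} q^{(n)}_v \wh p_n(v,u) = \frac{1}{2}(q^{(n)}\wh P_n)_u = \frac{1}{2} q^{(n+1)}_u$ by Proposition~\ref{prop vectors q}; a target $w \in V_{n-1}$ receives only the backward weights $\frac{1}{2}\wh q_{n-1}(\cdot,w)$, giving $\frac{1}{2}(q^{(n)}\wh Q_{n-1})_w = \frac{1}{2} q^{(n-1)}_w$ by Proposition~\ref{prop about wh Q_n}(2). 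These two images sit on the disjoint vertex sets $V_{n+1}$ and $V_{n-1}$, so together they yield $q^{(n)} M = \frac{1}{2}(q^{(n+1)} + q^{(n-1)})$.

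All three computations are routine; the only place that demands care is the bookkeeping of the bipartite neighbor structure. One must keep track of the fact that a vertex in $V_n$ is joined forward through $\wh p_n$ and backward through $\wh q_{n-1}$, and use the reversibility identity \eqref{eq wh p and wh q} (equivalently the symmetry $c^{(n-1)}_{vw} = c^{(n-1)}_{wv}$ from Lemma~\ref{lem c(v)}(1)) so that each inter-level edge is counted once and with the correctly indexed weight. I expect this indexing --- rather than any genuine analytic difficulty --- to be the main, if modest, obstacle.
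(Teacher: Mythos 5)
Your proposal is correct and follows essentially the same route as the paper: both express $(Mf)(v)$ via the two-level weights $\frac{1}{2}\wh p_n(v,u)$ and $\frac{1}{2}\wh q_{n-1}(v,w)$ from Definition \ref{def Markov krnl} to get the first equivalence, both pass through the Laplacian together with Lemma \ref{lem c(v)} (i.e.\ $c_n(v)=q^{(n)}_v$) for the diagonal form, and both reduce \eqref{eq q M} to the relations $q^{(n)}\wh P_n = q^{(n+1)}$ and $q^{(n)}\wh Q_{n-1} = q^{(n-1)}$. Your coordinate bookkeeping for $q^{(n)}M$ on the disjoint levels $V_{n+1}$ and $V_{n-1}$ simply spells out what the paper states in one line.
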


\begin{proof} It follows from Definition \ref{def Markov krnl} that
the action of a Markov operator $M$ on a function $f$ can be 
represented in  the following form:
$$
(Mf_n)(v) = \frac{1}{2}\left(\sum_{u\in V_{n+1}} \wh p_n(v, u) 
f_{n+1}(u)+ \sum_{w\in V_{n-1}} \wh q_{n-1}(v, w) f_{n-1}(w)
 \right).
$$
Then we use Definition \ref{def Markov krnl} to deduce the first 
statement.

For the Laplacian operator $\Delta = c(Id - M)$, we obtain that 
$\Delta (f) =0$ if and only if 
$$
c_n(v)(f_n(v) - \frac{1}{2}\left(\wh P_n (f_{n+1})(v) + \wh Q_{n-1} 
(f_{n-1})(v)\right) =0.
$$
By Lemma \ref{lem c(v)} we have $2c_n(v) f_n(v) = (Df_n)_v$ and the
result follows.

To prove \eqref{eq q M}, we note that $ q^{(n+1)} = q^{(n)} \wh
P_n$ and $q^{(n-1)}   = q^{(n)} \wh Q_{n-1}$.
\end{proof}

Our next focus is on the  \textit{path-space} of a weighted network 
$G(B)$ defined by a generalized Bratteli diagram $B$ and measures on
this space. We recall a few well known notions related to \textit{random walks on graphs}.  Let $\Omega$ be a subset of $V_0 \times V_1 \times \cdots$
of infinite paths $\omega = (v_i)_{i \in \N_0}$ such that $(v_i, 
v_{i+1}) $ is an edge for every $i$. By $\Omega_{v}$ we denote the
subset of $\Omega$ formed by all paths beginning at $v \in V_0$. 
Let $\xi_n$ be a random variable $\xi_n : \Omega \to V_n$ defined on
 the  path-space $\Omega$: for $\omega = (v_i)$ we set $\xi_n(x) 
 = v_n$, $n \in \N_0$.
 
The Markov kernel $M$ defines a probability measure $m_v$ on cylinder
sets of $\Omega_v$ for every $v\in V_0$ as follows: 
$$
m_v(\omega : \xi_1(\omega) = v_1, ... , \xi_n(\omega) = v_n \ |\ 
\xi_0 = v)  = m(v, v_1)\cdots m(v_{n-1}, v_n).
$$
Then it is extended to all Borel sets. The sequence of random variables
$(\xi_n)$ defines a Markov chain on $(\Omega_v, m_v)$ such that
$m_v(\xi_{n+1} = y\ |\ \xi_n = z) = m(z, y)$. 
Let $\lambda = (\lambda_v)$ be a positive probability vector on $V_0$.
If $\lambda M = \lambda$, then $\mathbb P = \sum_{v} \lambda_v
m_v$ is a Markov measure on $\Omega$.

We will assume that the transition probability kernel $M$ is 
\textit{irreducible.} 
The  kernel $M$ generates the random walk on the graph 
$G(B)$. It is said that $M$ is \textit{recurrent} if for every  vertex 
$v \in V$ the random walk returns to $v$ infinitely often with probability 
one. Otherwise it is called \emph{transient}.

\begin{remark}
We point out the difference between the path-spaces of a generalized 
Bratteli diagram $B$ and that of the graph $G(B)$. For a Bratteli 
diagram $B$, the path-space $X_B$ is formed by concatenation of 
consecutive edges $(e_0, e_1, ...)$ such that $e_i$ is an edge between 
the levels $V_i$ and $V_{i+1}$. The path-space of $G(B)$ is formed 
by the sequences of vertices of $B$, moreover a vertex $v_i$ is not 
necessarily  from $V_i$ because the transition probability kernel $M$ is
defined on both \textit{incoming} and \textit{outgoing} edges.
\end{remark}

Our next small topic is the \textit{finite energy space} $\mc H_E$ for a 
weighted network $(G(V,E), c)$. Consider
functions on vertices $V$ of the graph $G$. It is said that 
two functions $f$ and $g$ are equivalent if $f -g = \mathrm{const}$.

\begin{definition}
Define the \textit{finite energy space} $\mc H_E$ as the set of equivalence 
classes of functions $f$ on $V$ such that
\be\label{eq-energy norm} 
|| f ||^2_{\mc H_E} := \frac{1}{2}\sum_{(v, u)\in E} c_{vu} 
(f(v) - f(u))^2 < \infty. 
\ee
The space $\mc H_E$ equipped with this norm is a Hilbert space.
\end{definition}

We refer to the papers \cite{Jorgensen2012, JorgensenPearse_2016,
BezuglyiJorgensen_2019} where the reader will find more details about
the finite energy space.

Now we adapt this definition to the case of weighted networks $G(B)$
 defined by a generalized Bratteli diagram $B$. We note that the 
coefficient $1/2$ in \eqref{eq-energy norm} is used because the 
contribution of each edge is counted twice. For a generalized Bratteli 
diagram, we have all edges partitioned in the levels, so that we do not
need this coefficient any more. 

Hence, we can rewrite  \eqref{eq-energy norm} as
\be\label{eq norm in energy}
\ba 
|| f ||^2_{\mc H_E} & = \sum_{n \in \N_0}\ \ 
\sum_{v \in V_n, u \in V_{n+1}} c^{(n)}_{vu}(f_n(v) - 
f_{n+1}(u))^2\\
& = \frac{1}{2}\sum_{n \in \N_0}\ \ \sum_{v \in V_n, u \in V_{n+1}} 
q^{(n)}_{v} \wh p_n(v, u)(f_n(v) - f_{n+1}(u))^2\\
\ea
\ee
where $f = (f_n)$ is a function from $\mc H_E$.

\begin{proposition}\label{prop finite energy}
Suppose that a function $f = (f_n)$ is such that every 
$f_n$  belongs to $ \mc H_n, n \geq 0,$ where the Hilbert space 
$\mc H_n$ is  defined in \eqref{eq H_n}. Then $ f\in \mc H_E$ if and
 only if
$$
\sum_{n \geq 0} \left(||f_n||^2_{\mc H_n} - 2\langle f_n, T_{\wh P_n}
(f_{n+1})\rangle_{\mc H_n} + ||f_{n+1}||^2_{\mc H_{n+1}}
\right) < \infty
$$
where the operator $T_{\wh P_n} : \mc H_{n+1} \to \mc H_n$ is 
defined in Proposition \ref{prop T_P T_Q}. 
\end{proposition}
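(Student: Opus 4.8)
The plan is to start from the second line of \eqref{eq norm in energy} and, inside each level-$n$ block, expand the square $(f_n(v) - f_{n+1}(u))^2 = f_n(v)^2 - 2 f_n(v) f_{n+1}(u) + f_{n+1}(u)^2$, then match the three resulting sums with $\|f_n\|^2_{\mc H_n}$, a cross term, and $\|f_{n+1}\|^2_{\mc H_{n+1}}$. First I would fix $n$ and work with the single block $S_n := \sum_{v\in V_n,\, u \in V_{n+1}} q^{(n)}_v \wh p_n(v,u)(f_n(v) - f_{n+1}(u))^2$, which is the generic summand (up to the factor $\tfrac12$) of the energy norm.

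For the first piece, summing out $u$ and using that $\wh P_n$ is row stochastic (so $\sum_{u} \wh p_n(v,u) = 1$) gives $\sum_v q^{(n)}_v f_n(v)^2 = \|f_n\|^2_{\mc H_n}$. For the last piece, summing out $v$ and invoking $q^{(n+1)} = q^{(n)} \wh P_n$, i.e. $q^{(n+1)}_u = \sum_v q^{(n)}_v \wh p_n(v,u)$, gives $\sum_u q^{(n+1)}_u f_{n+1}(u)^2 = \|f_{n+1}\|^2_{\mc H_{n+1}}$. For the middle piece, recognizing $\sum_u \wh p_n(v,u) f_{n+1}(u) = (T_{\wh P_n} f_{n+1})(v)$ yields $-2\sum_v q^{(n)}_v f_n(v)(T_{\wh P_n} f_{n+1})(v) = -2\langle f_n, T_{\wh P_n}(f_{n+1})\rangle_{\mc H_n}$. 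Thus $S_n = \|f_n\|^2_{\mc H_n} - 2\langle f_n, T_{\wh P_n}(f_{n+1})\rangle_{\mc H_n} + \|f_{n+1}\|^2_{\mc H_{n+1}}$.

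The step that needs care, and the only real obstacle, is the legitimacy of splitting the doubly-indexed series for $S_n$ into these three separate series. The hypothesis that $f_n \in \mc H_n$ for every $n$ makes the first and third sums finite, and the cross term is controlled by the elementary bound $2|f_n(v) f_{n+1}(u)| \le f_n(v)^2 + f_{n+1}(u)^2$, whence $\sum_{v,u} q^{(n)}_v \wh p_n(v,u)\,|2 f_n(v) f_{n+1}(u)| \le \|f_n\|^2_{\mc H_n} + \|f_{n+1}\|^2_{\mc H_{n+1}} < \infty$; absolute convergence of the cross term then justifies the rearrangement by Tonelli for series. (Finiteness of the cross term also follows directly from Cauchy--Schwarz together with the contractivity of $T_{\wh P_n}=T_{P_n}$ supplied by Lemma \ref{lem operators T} and Proposition \ref{prop T_P T_Q}.)

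Finally I would sum over $n$. Since each $S_n$ is itself a weighted sum of squares with positive weights $q^{(n)}_v \wh p_n(v,u) \ge 0$, every $S_n \ge 0$, so the total $\sum_n S_n$ has non-negative summands; and by \eqref{eq norm in energy} we have $\|f\|^2_{\mc H_E} = \tfrac12 \sum_n S_n$. Hence $f \in \mc H_E$, i.e. $\|f\|^2_{\mc H_E} < \infty$, holds if and only if $\sum_n S_n < \infty$, the constant $\tfrac12$ being irrelevant to the finiteness criterion. Substituting the expression for $S_n$ obtained above gives precisely the stated condition, completing the argument.
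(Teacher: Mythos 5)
Your proof is correct and follows essentially the same route as the paper: expand the square in \eqref{eq norm in energy}, use row-stochasticity of $\wh P_n$ for the $f_n^2$ term, the relation $q^{(n+1)} = q^{(n)}\wh P_n$ for the $f_{n+1}^2$ term, and identify the cross term with $\langle f_n, T_{\wh P_n}(f_{n+1})\rangle_{\mc H_n}$. Your additional justification of the term-by-term splitting (absolute convergence of the cross term via $2|ab|\le a^2+b^2$) is a point the paper's computation passes over silently, but it does not change the substance of the argument.
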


\begin{proof}
We apply formula \eqref{eq norm in energy} and compute the norm
of $f$:

$$
\ba 
|| f ||^2_{\mc H_E} & =   \frac{1}{2}\sum_{n \in \N_0}\ \ 
\sum_{v \in V_n, u \in V_{n+1}} 
q^{(n)}_{v} \wh p_n(v, u)(f_n(v) - f_{n+1}(u))^2\\
& = \frac{1}{2}\sum_{n \in \N_0}\  
\sum_{v \in V_n} \ \sum_{u \in V_{n+1}} 
q^{(n)}_{v} \wh p_n(v, u)[f_n(v)^2 -  2f_n(v) f_{n+1}(u)
+ f_{n+1}(u)^2]\\
& = \frac{1}{2}\sum_{n \in \N_0}\  \sum_{v \in V_n} \left(
q^{(n)}_{v} f_n(v)^2  -  2 q^{(n)}_{v} f_n(v) T_{\wh P_n} 
(f_{n+1})(v) + \sum_{u \in V_{n+1}} q^{(n)}_{v} \wh p_n(v, u)
 f_{n+1}(u)^2\right)\\
& = \frac{1}{2}\sum_{n \in \N_0} \left((||f_n||^2_{\mc H_n} - 
2\langle f_n, T_{\wh P_n}(f_{n+1})\rangle_{\mc H_n} + 
\sum_{u\in V_{n+1}} q^{(n+1)}_{u} f_{n+1}(u)^2\  \right) \\
&=  \frac{1}{2}\sum_{n \geq 0} \left(||f_n||^2_{\mc H_n} - 
2\langle f_n, T_{\wh P_n}(f_{n+1})\rangle_{\mc H_n} + 
||f_{n+1}||^2_{\mc H_{n+1}}\right)
\ea
$$

\end{proof}

\section{Graph transition kernels and measures}
\label{sect Trans kernels}

In this section, we discuss the interplay between 
transition (probability) kernels and the corresponding 
measures.  
While our focus below is on the operator theory of transition kernels, we 
stress that their application (in subsequent sections) will be to our study of 
the class of \textit{graph dynamical systems }which are specified by transition between levels in the graph.

\subsection{Definitions of transition  kernels and associated
measures}
\label{ssect kernels and measures}

We first  recall the definition of a transition  kernel and 
introduce linear operators generated by such a kernel. The reader can
find the references in Subsection \ref{ssect Literature}.

Let $(X_i, \A_i), i =1,2,$ be two standard (uncountable) Borel spaces. 
Without loss of generality, one can assume that these spaces  are two
 copies of the same standard Borel space $(X, \A)$. 
 
\begin{definition}\label{def tran pr kern}
A map $R : X_1
\times \A_2 \to [0, \infty)$ is called a \textit{transition  kernel} if
it satisfies the following conditions: 

(i) for every set $C \in \A_2$, the function $x \mapsto R(x, C), x \in 
X_1$, is Borel;

(ii) for every $x \in X_1$, the
map $C \mapsto R(x, C)$ is a $\sigma$-finite Borel measure.

If $R(x, \cdot)$ is a finite measure for every $x \in X_1$, i.e., $0 < R(x,
 X_2) < \infty$, then the kernel $R$ is called \textit{finite}.
If the measure $R(x, \cdot)$ is probability for every $x \in X_1$, i.e.,
 $R(x, X_2) = 1$, then  $R$ is called a \textit{transition probability 
 kernel}.
\end{definition}

Denote by $\mc F(X, \A)= \mc F(X)$ the linear space of bounded Borel
 functions and
by $M(X, \A)= M(X)$ the set of Borel positive $\sigma$-finite measures. 
We  define 
actions of $R$ on these sets (with some abuse of notation, we will use 
the same letter $R$ for these actions).  
For $f \in \mc F(X_2, \A_2)$,  we set
\be\label{eq R acts}
(Rf)(x) = \int_{X_2} R(x, dy) \; f(y).  
\ee
Then, relation \eqref{eq R acts} determines a positive linear operator 
$T_R : \mc F(X_2, \A_2) \to \mc F(X_1, \A_1)$. Applying 
\eqref{eq R acts} to  characteristic functions, we obtain 
\be\label{eq R and chi_A}
R(x, A) = R(\chi_A)(x),\quad A\in \A.
\ee
In particular, $R$ is a probability kernel if $R(\mathbbm 1) =1$ where 
$\mathbbm 1$ is a constant function taking the value $1$.

In contrast to \eqref{eq R acts}, the action of $R$ on measures 
generates a map from  $M(X_1, \A_1)$ to $M(X_2, \A_2)$:
\be\label{eq_R on meas}
(\mu R)(A)  =  \int_{X_1} d\mu(x) R(x, A) = \int_{X_1} d\mu(x_1)
\left(\int_A R(x, dy)\right), 
\ \ \mu \in  M(X_1, \A_1). 
\ee

Writing $R(f)$ and $\mu R$ as in \eqref{eq R acts} and 
\eqref{eq_R on meas}, we stress on the similarity with multiplication
of a matrix and  row and column  vectors.

In the following remark, we provide several examples of transition 
probability kernels. 

\begin{remark}\label{rem 1} (1) Let $(X_1, \A_i, \nu_i), i =1,2,$ 
be standard probability measure spaces. Then, defining $R_1(x, B) = 
\nu_2(B), B \in \A_2$, we obtain a constant transition probability
kernel. We will see below that this kernel determines the product
 measure $\rho= \nu_1 \times  \nu_2$ on $X_1\times X_2$.

(2) For standard Borel spaces $(X_i, \A_i)$, take $\nu_1 =
 \delta_{x_0}, x_0 \in X_1,$ the Dirac measure on 
$(X_1, \A_1)$. Then, setting $R_0(x_0, \cdot) = \nu_2$, where $\nu_2$
is a $\sigma$-finite (or probability)  measure on $(X_2, \A_2)$, we obtain a
$\sigma$-finite (or probability) transition kernel $R_0 : X_1 \times \A_2 
\to [0, 1]$. 

(3) Let $T$ be a positive operator acting on the set of
bounded Borel functions $\mc F(X, \A)$ on a standard Borel space $(X,
\A)$. This means that $T(f) \geq 0$ whenever $f \geq 0$. 
It is said that $T$ has the \textit{Riesz property} if, for every $x \in X$,
 there  exists a Borel measure $\mu_x$ such that 
$$
T(f)(x) = \int_X f(y) \; d\mu_x(y), \quad f \in \mc F(X, \A).
$$ 
The set $(\mu_x : x \in X)$ is called a \textit{Riesz family} of measures
corresponding to $T$.

If $T$ is normalized ($T(\mathbbm 1) = 1$), then every measure 
$\mu_x$ is probability. Observe that the field of measures $x \mapsto
\mu_x$ is Borel in the sense that the function $x\mapsto \mu_x(f)$ is
Borel for every $f \in \mc F(X, \A)$. As a conclusion, we see that 
the Riesz family $(\mu_x)$ determines a transition probability kernel
$R =R_T : X \times \A \to [0, 1]$ by setting $R(x, B) = \mu_x(B)$, 
$B \in \A$.  
\end{remark}

In what follows, we will consider an interaction between transition 
kernels and measures. 

Let $(X, \A, \mu)$ be  a $\sigma$-finite measures space. Denote by 
$\mc D(\mu)$ the collection of Borel sets $C \in \A$ such that 
$\mu(C) < \infty$. Then $\mc D(\mu)$ generates the $\sigma$-algebra
of Borel sets $\A$. By $\mc F(\mu)$ we denote the linear space of
functions $\varphi $ spanned by characteristic functions of the sets 
from $\mc D(\mu)$, i.e.,  
$$
\varphi = \sum_{i=1}^n \alpha_i \chi_{A_i}, \ \mu(A_i) < \infty, 
\quad \alpha_i \in \R.
$$

\begin{definition}\label{def P,Q first} 
For given $\sigma$-finite measure spaces $(X_i, \A_i, \nu_i), i=1,2$, 
 suppose that there are  transition kernels 
$$
P: X_1 \times \A_2 \to [0, \infty)\ \ \mbox{and} \ \  
Q: X_2 \times \A_1 \to [0, \infty)
$$ 
such that, for any bounded Borel function $f$,
$$
\int_{X_1 \times X_2} d\nu_1(x) P(x, dy)\; f(x, y) =
\int_{X_1 \times X_2} d\nu_2(y) Q(y, dx)\; f(x,y),
$$
or in a short form,
\be\label{eq-meas and kernels}
d\nu_1(x) P(x, dy) = d\nu_2(y) Q(y, dx),\ \  \ (x, y) \in  X_1 \times
X_2.
\ee
Then these kernels $P$ and $Q$ are called \textit{associated to the 
measures} $\nu_1$ and $\nu_2$. We will also say that $P$ and $Q$ 
satisfying \eqref{eq-meas and kernels} form a \textit{dual pair} of 
transition kernels, see Figure \eqref{fig:kernels} for illustration. 
\end{definition}

For $f(x, y) = \chi_A(x) \chi_B(y)$,  relation \eqref{eq-meas and kernels}
 defines a Borel $\sigma$-finite measure $\rho$ on $(X_1 \times X_2, 
 \A_1 \times \A_2)$ by the formula
\be\label{eq_rho first def}
\rho(A\times B) = \int_A d\nu_1(x) P(x, B) = \int_B d\nu_2(y) Q(y, A)
\ee
where $A \in \A_1, B \in \A_2$. More precisely, the measure $\rho$ is 
defined by \eqref{eq-meas and kernels} for Borel sets of finite measure,
i.e., $A\in \mc D(\nu_1), B\in \mc D(\nu_2)$, 
and then it is extended to all Borel sets. Obviously, the measure $\rho$ 
is probability if the kernel $P$  (or $Q$) and 
measure $\nu_1$  (or $\nu_2$) are probability. The converse is not true, in 
general.

\begin{figure}[!htb]
\centering
  \includegraphics[width=0.55\textwidth, height=0.35\textheight]
  {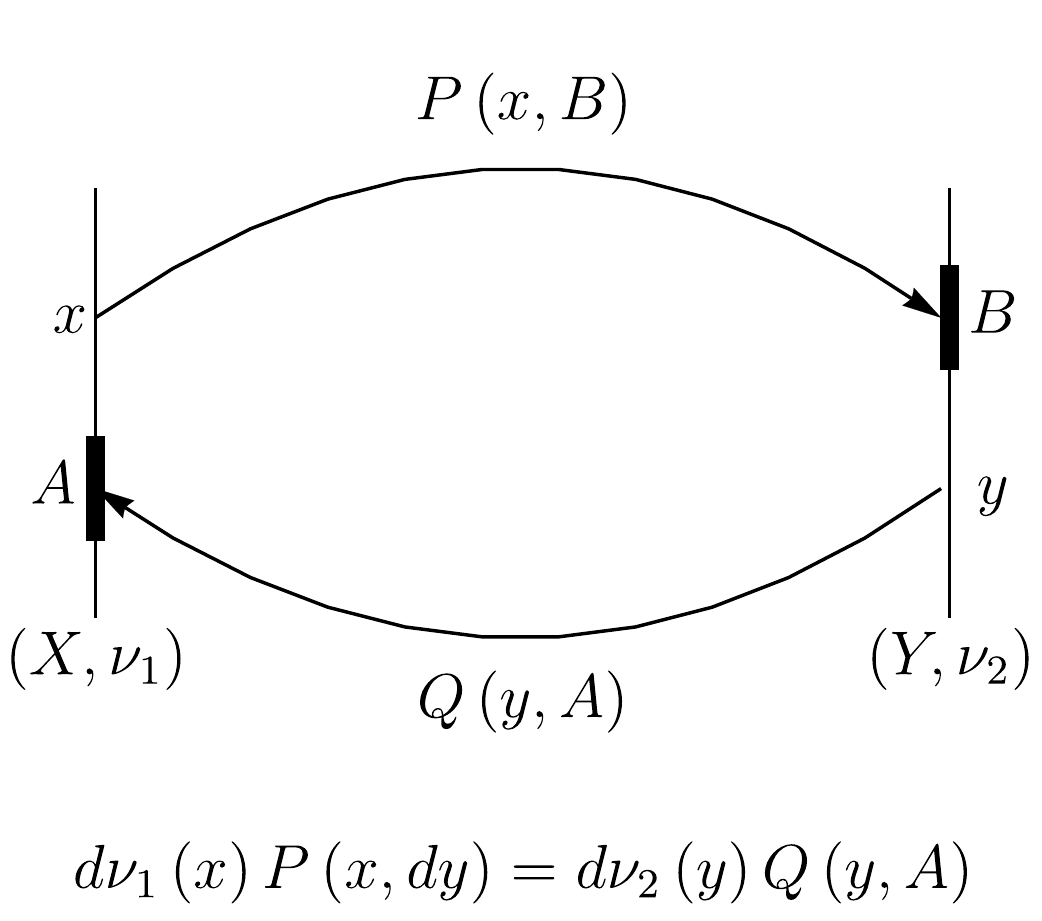}
%  {Page-Fig1-P-Q.jpg}
  \caption{Kernels P and Q in duality, see Definition \ref{def P,Q first}}
  \label{fig:kernels}
\end{figure}

\begin{remark}\label{rem c_i}
(1) If $P(x, B) = \nu_2(B)$ and $Q(y, A) =\nu_1(A)$ are constant 
transition kernels (see Remark \ref{rem 1} (1)), then the corresponding
measure $\rho$ is the product measure $\nu_1 \times \nu_2$. 

(2) Suppose that the kernels $P$ and $Q$ are finite. 
Let $c_1(x) = P(x, X_2)$ and $c_2(y) = Q(y, X_1)$. The functions
$c_1, c_2$ are Borel and take finite values for all $x$ and $y$. In 
general, they may be unbounded. In a number of statements below, we 
will assume that these function are locally integrable, i.e.,
$$
c_i \in L^1_{\mathrm loc}(\nu_i) \Longleftrightarrow 
\int_B c_i(\cdot) \; d\nu_i(\cdot) < \infty,\ \ \ B \in \mc D(\nu_i),
\ \ i =1,2. 
$$

(3) In case of need,  we will assume the property $c_i \in 
L^2_{\mathrm loc}(\nu_i)$. This requirement will be used for the study
of unbounded operators in $L^2$-spaces. 
\end{remark}

Consider the product space $Z := X_1 \times X_2$ equipped 
with the product Borel structure $\mc C$. Denote by $\pi_i $ the natural 
projection from $(Z, \mc C, \rho)$ onto $(X_i, \A_i, \nu_i)$. 
Let $E\subset Z$ be the essential support of the measure $\rho$. This
 set is defined up to a null set. There are  two measurable 
partitions of $E$, $\xi_1$ and $\xi_2$, where $\xi_1$ is formed
by the subsets $E_x := \{(x, y) \in E\}, x \in X_1,$ and $\xi_2$ consists 
of subsets
$E^y := \{(x,y)\in E\}, y \in X_2$ (measurable partitions are discussed 
in many books and articles on descriptive set theory and ergodic theory, 
e.g., \cite{Rohlin1949, Kechris1995, CornfeldFominSinai1982}).
 If needed, we will identify the sections 
 $E_x$ and $E^y$ with the corresponding projections 
$\pi_1(E^y)$ and $\pi_2(E_x)$ onto $X_1$ and $X_2$.

The next result follows directly from Definition \ref{def P,Q first} and 
Remark \ref{rem c_i}.

\begin{lemma}\label{lem loc int}
 Let the kernels $P$ and $Q$ be as Definition 
\ref{def P,Q first} with locally integrable functions $c_1(x) = P(x, X_2)$
 and $c_2(y) = Q(y, X_1)$. Then 
$$
\rho(A \times X_2) = \rho\circ \pi_1^{-1}(A) < \infty, \ \ \ \ \
\rho(X_1 \times B) = \rho\circ \pi_2^{-1}(B) < \infty
$$ 
for all $A \in \mc D(\nu_1), B \in \mc D(\nu_2)$. In particular, 
 $\rho(A \times B) < \infty $ for all $A \in \mc D(\nu_1), B \in \mc 
D(\nu_2)$. 

Moreover, the kernels $P$ and $Q$ define  linear operators
$T_P$ and $T_Q$ such that 
\be\label{eq-T_P loc int}
T_P : \mc F(\nu_2) \ \to \ L^1_{\mathrm loc}(\nu_1),
\quad T_Q : \mc F(\nu_1) \ \to \ L^1_{\mathrm loc}(\nu_2).
\ee
\end{lemma}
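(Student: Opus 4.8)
The plan is to read everything off the defining formula \eqref{eq_rho first def} for $\rho$ together with the hypothesis $c_i \in L^1_{\mathrm{loc}}(\nu_i)$. First I would establish the marginal identity
$$
\rho(A \times X_2) = \int_A c_1(x)\, d\nu_1(x), \qquad A \in \mc D(\nu_1),
$$
and its mirror image $\rho(X_1 \times B) = \int_B c_2(y)\, d\nu_2(y)$ for $B \in \mc D(\nu_2)$. Since \eqref{eq_rho first def} defines $\rho$ directly only on rectangles with both sides of finite measure, and $X_2$ need not lie in $\mc D(\nu_2)$, I would first exhaust $X_2$ by an increasing sequence $B_n \in \mc D(\nu_2)$ with $\bigcup_n B_n = X_2$, using $\sigma$-finiteness of $\nu_2$. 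Then $\rho(A \times B_n) = \int_A P(x, B_n)\, d\nu_1(x)$, and since $P(x, \cdot)$ is a measure we have $P(x, B_n) \uparrow P(x, X_2) = c_1(x)$; continuity from below for $\rho$ on the left and monotone convergence on the right yield the marginal identity. The finiteness assertions follow at once: $\rho(A \times X_2) = \int_A c_1\, d\nu_1 < \infty$ precisely because $\nu_1(A) < \infty$ and $c_1 \in L^1_{\mathrm{loc}}(\nu_1)$, and symmetrically for $\rho(X_1 \times B)$. The identity $\rho \circ \pi_1^{-1}(A) = \rho(A \times X_2)$ is immediate from $\pi_1^{-1}(A) = A \times X_2$, and the ``in particular'' clause is just monotonicity, $\rho(A \times B) \le \rho(A \times X_2) < \infty$.

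For the operator assertion, I would test $T_P$ on the generators of $\mc F(\nu_2)$. For $B \in \mc D(\nu_2)$, formula \eqref{eq R acts} gives $T_P(\chi_B)(x) = \int_{X_2} P(x, dy)\,\chi_B(y) = P(x, B)$, which is Borel in $x$ by condition (i) of Definition \ref{def tran pr kern}. The key point is that for every $A \in \mc D(\nu_1)$,
$$
\int_A (T_P \chi_B)(x)\, d\nu_1(x) = \int_A P(x, B)\, d\nu_1(x) = \rho(A \times B) < \infty
$$
by the first part of the lemma, so $T_P \chi_B \in L^1_{\mathrm{loc}}(\nu_1)$. Since $\mc F(\nu_2)$ is by definition the linear span of such $\chi_B$ and $T_P$ is linear, $T_P$ maps $\mc F(\nu_2)$ into $L^1_{\mathrm{loc}}(\nu_1)$; the computation for $T_Q$ is identical after exchanging the roles of the two factors and invoking the second equality in \eqref{eq_rho first def}.

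There is no serious obstacle here: the argument is bookkeeping with the definitions. The one place that needs a little care—rather than being genuinely hard—is the passage to the marginal $\rho(A \times X_2)$ when $\nu_2$ is infinite, which is why I would write out the $\sigma$-finite exhaustion and the monotone convergence step explicitly. Once the marginal formula is in hand, every remaining assertion is a direct substitution, an application of monotonicity, or linearity.
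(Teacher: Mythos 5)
Your proof is correct and follows essentially the same route as the paper's: the marginal identity $\rho(A\times X_2)=\int_A c_1\,d\nu_1$ combined with local integrability of $c_1, c_2$ gives the finiteness claims, and then $\int_A P(x,B)\,d\nu_1(x)=\rho(A\times B)<\infty$ yields $T_P\chi_B\in L^1_{\mathrm{loc}}(\nu_1)$, extended by linearity. The only difference is that you spell out the $\sigma$-finite exhaustion and monotone-convergence step behind the marginal identity, which the paper simply asserts; this is a welcome clarification, not a change of method.
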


\begin{proof}
Indeed, if $A \in \mc D(\nu_1)$, then
$$
\rho\circ \pi_1(A) =\rho(A \times X_2) = \int_A c_1(x) \; d\nu_1(x) 
< \infty.
$$
Similarly, we have $\rho\circ \pi_2(B) < \infty$ for $B\in 
\mc D(\nu_2)$. 

For the second statement, it suffices to note that  the condition 
$\rho(A \times B) < \infty$, where $A \in \mc D(\nu_1)$, $B \in \mc 
D(\nu_2)$, implies that 
$P(\chi_B)(x) = P(x, B) \in  L^1_{\mathrm loc}(\nu_1)$ and 
$Q(\chi_A)(y) = Q(y, B) \in  L^1_{\mathrm loc}(\nu_2)$. These properties 
are extended to $\mc F(\nu_i)$ by linearity. 
\end{proof}

We remark that \eqref{eq-T_P loc int} can be extended by continuity 
to the set of  functions $g$ such that $supp g \subset A$ where $A$ has 
a finite measure. 

\begin{lemma} In conditions of Lemma \ref{lem loc int}, 
$$
T_P : L^1(c_2\nu_2) \to L^1(\nu_1),\ \ \ \  \
T_Q : L^1(c_1\nu_1) \to L^1(\nu_2).
$$

\end{lemma}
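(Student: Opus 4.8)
The plan is to read off both mapping properties directly from the duality relation \eqref{eq-meas and kernels}, and in fact to show that $T_P$ and $T_Q$ are \emph{contractions} between the indicated spaces. The guiding idea is that integrating $T_P f$ against $\nu_1$ is the same as integrating $|f|$ against the marginal of $\rho$ on $X_2$, and by \eqref{eq-meas and kernels} that marginal is exactly $c_2\,\nu_2$.

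First I would take $f \in L^1(c_2\nu_2)$ and estimate the $L^1(\nu_1)$-norm of $T_P f$. Applying the triangle inequality inside the kernel integral gives
$$
\int_{X_1} |(T_P f)(x)|\, d\nu_1(x) \le \int_{X_1}\!\int_{X_2} |f(y)|\, P(x, dy)\, d\nu_1(x).
$$
The double integral on the right is precisely $\int_{X_1 \times X_2} |f(y)|\, d\rho(x,y)$, by the first representation of $\rho$ in \eqref{eq_rho first def}. Next I would invoke the \emph{second} representation, $d\rho = d\nu_2(y)\, Q(y, dx)$, to rewrite this as
$$
\int_{X_2} |f(y)| \left( \int_{X_1} Q(y, dx) \right) d\nu_2(y) = \int_{X_2} |f(y)|\, c_2(y)\, d\nu_2(y) = || f ||_{L^1(c_2\nu_2)}.
$$
This yields $|| T_P f ||_{L^1(\nu_1)} \le || f ||_{L^1(c_2\nu_2)}$, so $T_P$ maps $L^1(c_2\nu_2)$ into $L^1(\nu_1)$ and is contractive. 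The claim for $T_Q$ follows by the symmetric computation, now starting from the $L^1(\nu_2)$-norm of $T_Q g$, using the second representation of $\rho$ first and the first one afterward to produce $c_1(x) = P(x, X_2)$ and the norm $|| g ||_{L^1(c_1\nu_1)}$.

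The only point requiring care is the passage between the two representations of $\rho$ for the (nonnegative, possibly unbounded) integrand $|f|$. Since the defining identity \eqref{eq-meas and kernels} is stated for \emph{bounded} Borel functions, I would extend it to all nonnegative Borel functions by monotone convergence, after which Tonelli's theorem legitimizes every interchange above without any a priori integrability hypothesis: the absolute values render all integrands nonnegative, so finiteness is never assumed, only concluded. Once finiteness of the final integral is established it also shows $(T_P f)(x)$ is $\nu_1$-a.e. finite and defines a genuine element of $L^1(\nu_1)$, so no separate well-definedness argument is needed. I do not expect a real obstacle here; the content is entirely carried by the marginal identity $\rho\circ\pi_1^{-1} = c_1\nu_1$ and $\rho\circ\pi_2^{-1} = c_2\nu_2$ from Lemma \ref{lem loc int}, and the local-integrability hypotheses on $c_1, c_2$ guarantee these marginals are $\sigma$-finite so that the weighted $L^1$-spaces are well posed.
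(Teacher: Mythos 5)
Your proposal is correct and follows essentially the same route as the paper's proof: apply the triangle inequality under the kernel integral, use the duality relation $d\nu_1(x)P(x,dy) = d\nu_2(y)Q(y,dx)$ to switch representations of $\rho$, and integrate out the first variable to produce $c_2(y) = Q(y,X_1)$. Your added remarks on Tonelli's theorem, monotone-convergence extension to unbounded nonnegative functions, and the explicit contraction bound are refinements of details the paper leaves implicit, not a different argument.
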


\begin{proof}
For the proof we use \eqref{eq-meas and kernels} and 
\eqref{eq_rho first def}: let $f \in L^1(c_2\nu_2)$, then  
$$
\ba 
\int_{X_1} |T_P(f)|(x)\; d\nu_1(x) & = \int_{X_1} \left| \int_{X_2} 
P(x, dy) f(y) \right| \; d\nu_1(x)\\
& \leq  \int_{X_1} \int_{X_2}  |f(y)|\; P(x, dy) d\nu_1(x)\\
& = \int_{X_1} \int_{X_2}  |f(y)|\; Q(y, dx) d\nu_2(y)\\
&= \int_{X_2} |f(y)| Q(y, X_1) \; d\nu_2(y)\\
&\leq \int_{X_2}  |f(y)|c_2(y)\; d\nu_2(y)
\ea
$$
A similar proof works for $T_Q$.
\end{proof}

We recall the following definition and result proved in 
\cite{Simmons2012} (we give here a modified statement adapted to our 
purposes). 

\begin{definition} \label{def cond meas} 
Let $(Z, \mc C, \mu)$ and $(Y, \mc D, \nu)$ be standard 
$\sigma$-finite measure spaces, and let $\pi : Z \to Y$ be a measurable
 function. A system of \textit{conditional measures} of $\mu$ 
with respect to $\pi$ is a collection of measures $\{\mu_y : y \in Y\},$
such that \\
(i) $\mu_y$ is a Borel measure on $\pi^{-1}(y)$,\\
(ii) for every $B \in \mc C$,  $\mu(B) = \int_Y \mu_y(B) \; d\nu(y)$,
i.e., $\mu$ is disintegrated by the conditional measures. 
\end{definition}

\begin{theorem}[\cite{Simmons2012}] \label{thm Simmons}
Let $(Z, \mc C, \mu)$ and $(Y, 
\mc D, \nu)$ be as above. For any measurable function  $\pi : Z \to Y$ 
such that $\mu\circ \pi^{-1} \ll \nu$ there exists a uniquely determined
system of conditional measures $(\mu_y)_{y \in Y}$ which 
disintegrates the measure $\mu$.
\end{theorem}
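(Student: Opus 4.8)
The plan is to combine the Radon--Nikodym theorem with the countable generation of standard Borel $\sigma$-algebras: I build the conditional measures first on a countable generating algebra, then extend them fiberwise, and finally reassemble. First I would reduce to the case of a finite measure $\mu$. Writing $Z = \bigsqcup_{n\in\N} Z_n$ with $\mu(Z_n) < \infty$ (possible since $\mu$ is $\sigma$-finite), it suffices to disintegrate each $\mu|_{Z_n}$ with respect to the same $\nu$ (the hypothesis $\mu|_{Z_n}\circ\pi^{-1}\le \mu\circ\pi^{-1}\ll\nu$ is inherited) and then set $\mu_y = \sum_n (\mu|_{Z_n})_y$, the interchange of sum and integral being justified by monotone convergence. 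So assume $\mu(Z) < \infty$; then $\mu \circ \pi^{-1}$ is a finite measure with $\mu\circ\pi^{-1} \ll \nu$, and I set $h := d(\mu\circ\pi^{-1})/d\nu$, which is $\nu$-a.e. finite.

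Next I would fix a countable algebra $\mc A \subset \mc C$ that generates $\mc C$ and separates points; since $(Z, \mc C)$ is standard Borel it may be realized as a Borel subset of a compact metric space, and $\mc A$ can be taken compatible with this topology. For each $A \in \mc A$ the set function $\lambda_A(E) := \mu(A \cap \pi^{-1}(E))$ is a finite measure on $(Y, \mc D)$ with $\lambda_A(E) \le (\mu\circ\pi^{-1})(E)$ for all $E$, hence $\lambda_A \ll \nu$, so Radon--Nikodym yields $g_A := d\lambda_A/d\nu$ with $0 \le g_A \le h$ $\nu$-a.e. Finite additivity $\lambda_{A \sqcup B} = \lambda_A + \lambda_B$ and the normalization $\lambda_Z = \mu\circ\pi^{-1}$ translate into $\nu$-a.e. identities among the densities $g_A$; as there are only countably many such identities, I can remove a single $\nu$-null set to obtain a conull $Y_0 \subset Y$ on which $A \mapsto g_A(y)$ is a nonnegative, finitely additive, finite set function on $\mc A$.

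The hard part will be upgrading, for each $y \in Y_0$, this finitely additive set function to a genuine Borel measure $\mu_y$ on $Z$ --- a finitely additive function need not extend without some regularity. This is precisely where the standard Borel hypothesis is indispensable. Using the compact metric realization I would establish inner regularity (approximation of each $A \in \mc A$ from inside by compact members of $\mc A$) of $A \mapsto g_A(y)$ simultaneously for $\nu$-a.e. $y$, again through only countably many a.e. estimates; this tightness forces continuity at $\emptyset$, hence countable additivity, and Carath\'eodory extension then produces $\mu_y$. Equivalently, one may define a positive linear functional on a countable dense subspace of $C(Z)$ and invoke the Riesz representation theorem for a.e. $y$.

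It then remains to verify the three properties and uniqueness. That $\mu_y$ is carried by $\pi^{-1}(y)$ follows by applying the construction to sets $\pi^{-1}(E)$: for $E$ in a countable generating algebra of $\mc D$ one gets $g_{\pi^{-1}(E)} = \chi_E\, h$ $\nu$-a.e., so for a.e. $y$ the measure $\mu_y$ assigns full mass to $\pi^{-1}(E)$ whenever $y \in E$; intersecting over this countable family and using that $\mc D$ separates points yields $\mu_y(Z \setminus \pi^{-1}(y)) = 0$. The disintegration identity $\mu(B) = \int_Y \mu_y(B)\, d\nu(y)$ holds by construction for $B \in \mc A$ (where $\mu_y(B) = g_B(y)$); since $y \mapsto \mu_y(B)$ is measurable and both sides are measures in $B$ agreeing on $\mc A$, a monotone class argument propagates the identity to all of $\mc C$. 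Finally, uniqueness is immediate: any two disintegrations $(\mu_y)$ and $(\mu_y')$ satisfy $\int_E \mu_y(A)\, d\nu = \mu(A \cap \pi^{-1}(E)) = \int_E \mu_y'(A)\, d\nu$ for all $A \in \mc A$ and $E \in \mc D$, so they agree on the countable generating algebra $\mc A$ for $\nu$-a.e. $y$, hence coincide as measures by the $\pi$--$\lambda$ theorem.
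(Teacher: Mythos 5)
First, a point of order: the paper does not prove Theorem \ref{thm Simmons} at all. It is quoted, in slightly adapted form, from \cite{Simmons2012}, so there is no internal argument to compare yours against; your attempt has to be measured against the standard proofs of Rokhlin-type disintegration in the literature.

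Measured that way, your outline follows the classical construction --- reduce to finite $\mu$, take Radon--Nikodym derivatives $g_A = d\lambda_A/d\nu$ over a countable generating algebra, discard countably many $\nu$-null sets, upgrade the resulting finitely additive set functions to measures, then extend by a monotone class argument --- and this is a legitimate route. But it contains one genuine gap and two repairable slips. The gap is precisely the step you yourself flag as ``the hard part'': you assert that inner regularity can be arranged ``simultaneously for $\nu$-a.e.\ $y$'' and that tightness then ``forces'' countable additivity, but this is where the entire content of the theorem sits, and the assertion is not an argument. What is needed is, for instance: for each $A$ in the algebra and each rational $\varepsilon>0$, choose a compact $K\subset A$ with $\mu(A\setminus K)<\varepsilon^2$ (defining $g_K$ by the same Radon--Nikodym recipe, since $K$ need not lie in the algebra), apply Markov's inequality to $g_A-g_K\geq 0$ to get $\nu(\{y: g_A(y)-g_K(y)>\varepsilon\})<\varepsilon$, and then run a Borel--Cantelli argument to produce a single conull set of $y$'s on which every $A\mapsto g_A(y)$ is inner regular; only then do the compactness argument for continuity at $\emptyset$ and the Carath\'eodory extension apply. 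The slips: (a) your fiber-concentration argument uses $\mu_y(\pi^{-1}(E))=\chi_E(y)h(y)$, but $\pi^{-1}(E)$ is generally not in $\mc A$, and $\mu_y$ was built only from the values $g_A$, $A\in\mc A$; so this identity is available only after the monotone-class extension of the disintegration identity (or after enlarging $\mc A$ to contain $\pi^{-1}(\mc E)$ for a countable generating algebra $\mc E$ of $\mc D$) --- your steps five and six must be executed in the opposite order. (b) In the $\sigma$-finite case, uniqueness via the $\pi$--$\lambda$ theorem requires the two fiber measures to be $\sigma$-finite along the generating $\pi$-system; this is arranged by adjoining the exhausting sets $Z_n$ to $\mc A$, which you should say explicitly. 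With the compactness step written out and these two points adjusted, the proof is correct.
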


Under conditions of Theorem \ref{thm Simmons}, we will write down
$$
\mu = \int_{Y} \mu_y d\nu(y).
$$

In particular,  Theorem \ref{thm Simmons} determines a system of 
conditional measures for 
any  measure $\rho$ which is defined as in \eqref{eq_rho first def}.

\begin{corollary}\label{cor disint}
 Let $(X,\A_i, \nu_i), i=1,2,$ be $\sigma$-finite standard
measure spaces, and let $\rho$ be defined by \eqref{eq_rho first def}.
Then $\rho\circ\pi^{-1}_i \ll \nu_i$ and the measure $\rho$ admits its
disintegration with respect to $\pi_1$ and $\pi_2$: the families 
$(\delta_x \times P(x, \cdot))_{x \in X_1}$ and $(\delta_y \times 
Q(y, \cdot))_{y \in X_2}$ are 
the corresponding systems of conditional measures.
\end{corollary}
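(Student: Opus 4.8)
The plan is to read the statement as a direct application of the disintegration theorem, Theorem \ref{thm Simmons}, after first verifying its hypothesis and then identifying the resulting conditional measures explicitly.

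First I would establish the absolute continuity $\rho \circ \pi_i^{-1} \ll \nu_i$. For $i = 1$ and any $A \in \A_1$, the definition \eqref{eq_rho first def} of $\rho$ gives $(\rho\circ\pi_1^{-1})(A) = \rho(A \times X_2) = \int_A P(x, X_2)\, d\nu_1(x)$. Since the integrand is a non-negative Borel function and the integral is taken over $A$, it vanishes whenever $\nu_1(A) = 0$ (regardless of whether $P(x, X_2)$ is finite); hence $\rho\circ\pi_1^{-1} \ll \nu_1$. The same computation with $Q$ in place of $P$, using the second expression in \eqref{eq_rho first def}, yields $\rho\circ\pi_2^{-1} \ll \nu_2$. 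With this in hand, Theorem \ref{thm Simmons} applies (both $\rho$ and the $\nu_i$ are $\sigma$-finite on standard spaces) and produces, for each $i$, a \emph{unique} system of conditional measures disintegrating $\rho$ over $\pi_i$.

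Next I would show that the explicit family $(\de_x \times P(x,\cdot))_{x \in X_1}$ is the system of conditional measures for $\pi_1$, by checking it against Definition \ref{def cond meas} and then invoking uniqueness. Condition (i) is immediate: $\de_x \times P(x, \cdot)$ is carried by $\{x\} \times X_2 = \pi_1^{-1}(x)$. For condition (ii), on a rectangle $C = A \times B$ one has $(\de_x \times P(x,\cdot))(A \times B) = \chi_A(x)\, P(x, B)$, so that $\int_{X_1} (\de_x \times P(x,\cdot))(A\times B)\, d\nu_1(x) = \int_A P(x, B)\, d\nu_1(x) = \rho(A \times B)$, which is precisely \eqref{eq_rho first def}. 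The required measurability in $x$ of the integrand is guaranteed by condition (i) of Definition \ref{def tran pr kern} for the kernel $P$.

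The one genuine piece of work is to pass from rectangles to arbitrary Borel sets $C \in \A_1 \times \A_2$, i.e.\ to verify both that $x \mapsto (\de_x \times P(x,\cdot))(C)$ is Borel and that the identity $\rho(C) = \int_{X_1} (\de_x \times P(x,\cdot))(C)\, d\nu_1(x)$ persists. I would handle this with a monotone class (Dynkin) argument: the collection of $C$ on which both assertions hold contains the rectangles $A \times B$ with $A \in \mc D(\nu_1),\, B \in \mc D(\nu_2)$, is closed under the appropriate monotone operations, and these rectangles generate the product $\sigma$-algebra; the $\sigma$-finiteness of $\rho$ then allows extension to all of $\A_1 \times \A_2$. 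This is where one must be careful with the $\sigma$-finite bookkeeping, but it presents no essential difficulty. By the uniqueness clause of Theorem \ref{thm Simmons}, the family $(\de_x \times P(x,\cdot))_{x}$ therefore coincides with the conditional measures disintegrating $\rho$ over $\pi_1$. The identical argument with the roles of $\nu_1, P$ and $\nu_2, Q$ interchanged, using the second equality in \eqref{eq_rho first def}, gives the disintegration over $\pi_2$ via $(\de_y \times Q(y, \cdot))_{y \in X_2}$.
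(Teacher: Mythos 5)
Your proof is correct and follows essentially the same route as the paper's: establish $\rho\circ\pi_i^{-1}\ll\nu_i$, invoke Theorem \ref{thm Simmons} for existence and uniqueness of the disintegration, and identify the conditional measures with $(\delta_x\times P(x,\cdot))$ and $(\delta_y\times Q(y,\cdot))$ via the defining properties of transition kernels. The only difference is that you spell out details the paper leaves implicit (the explicit absolute-continuity computation and the monotone class extension from rectangles to all Borel sets), which is a welcome but not substantively different elaboration.
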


We observe that the measures $P(x, \cdot)$ and $Q(y, \cdot)$ are 
defined on $X_2$ and $X_1$, respectively. So that we need to identify 
the spaces $\{x\} \times X_2$ and $X_1 \times \{y\}$ with $X_2$ and 
$X_1$ if we want to treat these families of  measures as conditional 
measures.

\begin{proof}
We first note that if a measure $\rho$ is defined as in  
\eqref{eq_rho first def}, then the condition $\nu_i(C) =0$ implies that
$\rho\circ\pi^{-1}_i(C) =0$ for $i = 1,2$. Hence, we can apply 
Theorem \ref{thm Simmons}. It claims the disintegrating system of
conditional measures is unique and therefore $P(x, dy)$ is a measure
on $\pi_1^{-1}(x)$ and $Q(y, dx)$ is a measure on $\pi_2^{-1}(y)$. 
Since $P$ and $Q$ are transition kernels, they satisfy the properties
(i) and (ii) of Definition \ref{def cond meas}. 
\end{proof}

It follows from Corollary \ref{cor disint} that the measure $\rho$
on the product space $X_1\times X_2$ admits disintegrations 
$(\rho_x)$ and $(\rho^y)$ with respect to the projection maps
$\pi_1 : Z \to X_1$ and $\pi_2 : Z \to X_2$:
$$
\rho = \int_{X_1}\rho_x\; d\nu_1(x) = \int_{X_2}\rho^y \; d\nu_2(y).
$$

\subsection{Three sets}\label{ssect three sets}

In the next definition we discuss relations between transition kernels
$P, Q$ and measures $\nu_1, \nu_2$, and $\rho$.

\begin{definition}\label{def_three sets}
  Let $(X_i, \A_i), i=1,2, $ be standard Borel spaces.

(i) For a Borel $\sigma$-finite measure $\rho$ on $(X_1 \times X_2,
 \A_1 \times \A_2)$, we define the set 
 $$
 \ba 
 F(\rho) = \{(\nu_1, \nu_2) : & \ \mathrm{there\ are\ finite\ 
  transition \
 kernels}\ P\ \mbox{and}\ Q \ \mathrm{such \ that} \ \\
 & d\rho(x, y) =d\nu_1(x) P(x, dy) = d\nu_2(y) Q(y, dx)\}.
 \ea
 $$

(ii) If $\nu_1$ and  $\nu_2$ are $\sigma$-finite measures 
on Borel spaces $(X_1, \A_1)$ and  $ X_2,  \A_2)$, respectively, then 
define the set 
$$
\ba
L(\nu_1, \nu_2) = \{\rho : &\ \mathrm{there\ are\ finite\ transition \
 kernels}\ P\ \mbox{and}\ Q \ \mathrm{such \ that} \ \\
& \ \mathrm{\eqref{eq_rho first def} \ holds}\}
 \ea
$$ 
where $\rho$ is a measure on the product space $Z = X_1\times X_2$.

(iii) If $P$ and $Q$ are two transition kernels as in Definition 
\ref{def tran pr kern}, then we set
$$
M(P, Q) = \{(\nu_1, \nu_2) : \mathrm{such \ that} \ 
d\nu_1(x) P(x, dy) = d\nu_2(y) Q(y, dx)\}
$$ 
\end{definition}

One of our purposes is to characterize the elements of these sets. It will 
be done in this and next sections.

 A few obvious facts are included in the following remark.

\begin{remark}
(1) For a given measure $\rho$ on $Z =X_1 \times X_2$, every
pair $(\nu_1, \nu_2) \in F(\rho)$ generates a subset
$F_{\nu_1,\nu_2}(\rho)$ of  $F(\rho)$ whose 
elements $(\nu'_1, \nu'_2)$ are the measures absolutely continuous
with respect to $\nu_1$ and $\nu_2$ (see also Theorem 
\ref{thm meas pairs} below). 

It is important to observe that if one additionally requires that the kernels 
$P$ and $Q$ in the definition of $F(\rho)$ must be finite, then these 
kernels are determined uniquely by the measures $\nu_1$ and $\nu_2$.
This fact follows, in general,  from
Theorem \ref{thm Simmons} (though it can be proved directly).

(2) It follows from Theorem \ref{thm Simmons}  that 
the set $F(\rho)$ is not empty for any measure $\rho$ as the pair
 $(\rho\circ\pi_1^{-1}, \rho\circ\pi_2^{-1})$ is  always in $F(\rho)$. 

(3) On the other hand, in the definition of the set  $L(\nu_1, \nu_2)$
one can take different pairs of kernels $P, Q$ and $P', Q'$ such that the 
corresponding measures $\rho$ and $\rho'$ are in the same 
set $L(\nu_1, \nu_2)$.
Moreover,  these pairs of kernels can be chosen such that $\rho$ and 
$\rho'$ are mutually singular measure. 

(4) It follows from the definitions of the sets
$F(\rho)$ and $L(\nu_1, \nu_2)$ that if $(\mu_1, \mu_2)$ is in 
$F(\rho)$,
then the set $L(\mu_1, \mu_2)$ will contain the measure $\rho$.
 Conversely, if $\rho \in L(\nu_1, \nu_2)$, the $(\nu_1, \nu_2) \in
  F(\rho)$.
  
(5) The following   \textit{question} is interesting: 
When does the set $M(P,Q)$ contain a pair of positive measures?

It can be shown that, even in the case of matrices, there are matrices  
$P$ and $Q$ such that $M(P, Q)$ contains only the pair $(0, 0)$. 
For this, one can take $P = \begin{pmatrix}
1 & 0\\
0 & 1\\
\end{pmatrix}$ and $P = \begin{pmatrix}
0 & 1\\
1 & 0\\
\end{pmatrix}$.
\end{remark}

Suppose that $(\nu'_1, \nu'_2)$ is  a pair of measures on $X_1$ and
 $X_2$
that defines the same measure $\rho$ as the pair $(\nu_1, \nu_2)$, 
i.e., $(\nu'_1, \nu'_2) \in F(\rho)$. What can be said about relations 
between these two
pairs $(\nu_1, \nu_2)$ and $(\nu'_1, \nu'_2)$? 

\begin{theorem}\label{thm meas pairs}
Suppose that a measure $\rho$ is defined on $(X_1, \A_1, \nu_1)
 \times (X_2, \A_2, \nu_2)$ as in \eqref{eq_rho first def}. 
 Suppose that some  measures
$\nu'_1$ and $\nu'_2$ form a pair from $F(\rho)$. Then the 
corresponding measures  $\nu_1, \nu'_1$ and $\nu_2, \nu'_2$  are 
pairwise equivalent, i.e., there are positive functions $f_1$ and  $f_2$ such that
$$
d\nu'_i = f_1 d\nu_i, \ \ i =1,2.
$$
\end{theorem}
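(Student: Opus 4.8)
The plan is to realize both $\nu_1$ and $\nu_1'$ as measures equivalent to a single canonical measure on $X_1$, namely the marginal $\mu_1 := \rho\circ\pi_1^{-1}$ of $\rho$, and to argue symmetrically on $X_2$ with $\mu_2 := \rho\circ\pi_2^{-1}$. The whole point is that the \emph{finiteness} of the transition kernels built into the definition of $F(\rho)$ (Definition \ref{def_three sets}) forces the relevant Radon--Nikodym densities to be strictly positive, which upgrades the absolute continuity supplied by Corollary \ref{cor disint} to genuine equivalence of measures.

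First I would compute the marginal $\mu_1$ in two ways. Since $(\nu_1,\nu_2)\in F(\rho)$, there are finite kernels $P,Q$ realizing \eqref{eq_rho first def}; taking $B=X_2$ there gives, for every $A\in\mc D(\nu_1)$,
\[
\mu_1(A) = \rho(A\times X_2) = \int_A P(x,X_2)\, d\nu_1(x) = \int_A c_1(x)\, d\nu_1(x),
\]
with $c_1(x)=P(x,X_2)$, an identity that extends to all Borel $A$ by $\sigma$-finiteness of $\nu_2$ together with the local integrability of $c_1$ recorded in Lemma \ref{lem loc int}. Because $P$ is a \emph{finite} kernel, $0<c_1(x)<\infty$ for every $x$ (Definition \ref{def tran pr kern}), so $d\mu_1=c_1\,d\nu_1$ has a strictly positive finite density, and hence $\mu_1\sim\nu_1$. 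The hypothesis $(\nu_1',\nu_2')\in F(\rho)$ now furnishes finite kernels $P',Q'$ with $d\rho(x,y)=d\nu_1'(x)P'(x,dy)$; repeating the computation with $B=X_2$ yields $d\mu_1=c_1'\,d\nu_1'$, where $c_1'(x)=P'(x,X_2)\in(0,\infty)$, so that $\mu_1\sim\nu_1'$ as well.

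By transitivity of equivalence we obtain $\nu_1\sim\nu_1'$, and the chain rule for Radon--Nikodym derivatives gives the explicit density
\[
f_1 := \frac{d\nu_1'}{d\nu_1} = \frac{d\nu_1'}{d\mu_1}\,\frac{d\mu_1}{d\nu_1} = \frac{c_1}{c_1'},
\]
which is strictly positive and finite. The argument for the second coordinate is entirely parallel: using the dual form $\rho(X_1\times B)=\int_B Q(y,X_1)\,d\nu_2(y)$ and its primed analogue, one finds $\mu_2\sim\nu_2$ and $\mu_2\sim\nu_2'$, whence $\nu_2\sim\nu_2'$ with $f_2=c_2/c_2'$, where $c_2(y)=Q(y,X_1)$ and $c_2'(y)=Q'(y,X_1)$. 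The only step demanding care is the passage from finite-measure sets $A\in\mc D(\nu_1)$ to arbitrary Borel sets in the marginal identity, which is routine given $\sigma$-finiteness; the substantive observation is simply that finiteness of the kernels makes all four densities $c_1,c_1',c_2,c_2'$ strictly positive, and it is exactly this positivity that delivers equivalence rather than mere absolute continuity.
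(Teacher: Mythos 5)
Your proof is correct and follows essentially the same route as the paper's: both arguments express the marginal $\rho\circ\pi_1^{-1}$ as $c_1\,d\nu_1 = c_1'\,d\nu_1'$ with $c_1(x)=P(x,X_2)$, $c_1'(x)=P'(x,X_2)$, and conclude $d\nu_1'/d\nu_1 = c_1/c_1'$ (and symmetrically on $X_2$). Your version is if anything slightly cleaner, since you make explicit the strict positivity and finiteness of the densities coming from the definition of a finite kernel, and you skip the paper's informal opening step involving quotients of kernels $P(x,dy)/P'(x,dy)$.
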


\begin{proof}
Since $(\nu_1, \nu_2)$ and $(\nu'_1, \nu'_2)$ are in $F(\rho)$, there are finite transition kernels  $P, Q$ and $P', Q'$, respectively, such that 
\eqref{eq-meas and kernels} holds. Then 
$$
\frac{d\nu'_1}{d\nu_1}(x) = \frac{P(x, dy)}{P'(x, dy)}, \quad
\frac{d\nu'_2}{d\nu_2}(y) = \frac{Q(y, dx)}{Q'(y, dx)}.
$$
We see that the quotients of measures in the right-hand sides of the 
both equations do not depend on $dy$ and $dx$, respectively. Hence 
one can take apply these relations to any sets. 
We consider the positive Borel  functions $c_1(x) = P(x, X_2)$ and 
$c'_1(x) = P'(x, X_2)$. Then relation \eqref{eq_rho first def} leads to
$$
\rho(A\times X_2) = \rho\circ \pi^{-1}_1(A) = 
\int_A c_1(x) \; d\nu_1(x)
$$
or 
$$
c_1(x)=  \frac{d\rho\circ\pi_1^{-1}}{d\nu_1}(x).
$$
In the same way we  obtain similar relations for $\nu'_1$ and $P'$ and
deduce that 
$$
c'_1(x)=  \frac{d\rho\circ\pi_1^{-1}}{d\nu'_1}(x).
$$
Hence, setting $f_1(x) = c_1(x) (c'_1(x))^{-1}$, we see that
$f_1(x)$ is the Radon-Nikodym derivative for the measures $\nu'_1$ and
$\nu_1$. 

Analogously, we can define $c_2(y) = Q(y, X_1)$, $c'_2(y) = 
Q'(y, X_1)$ and  show that 
$$
c_2(y)=  \frac{d\rho\circ\pi_2^{-1}}{d\nu_2}(y).
$$
Therefore 
$f_2(y) = c_2(y) (c'_2(y))^{-1}$ is the 
Radon-Nikodym derivative for $\nu_2$ and $\nu'_2$. Also, the above
argument proves that the measures $P(x, \cdot), P'(x, \cdot)$ and 
$Q(y, \cdot), Q'(y, \cdot)$ are equivalent. Moreover, the functions
$f_1(x)$ and $f_2(y)$ give the Radon-Nikodym derivatives of these 
measures. 
This proves the result.
\end{proof}

\begin{corollary}\label{cor prob kernel}
Let $\rho$ be a $\sigma$-finite measure on $(X_1 \times X_2, \A_1
\times \A_2)$, and a pair of measures $(\nu_1, \nu_2)$ is in $F(\rho)$.
Then there exists a pair $(\nu_1', \nu_2') \in F(\rho)$ such that
$\nu_i' \sim \nu_i$, and the
corresponding kernels $P'$ and $Q'$ are probability.
\end{corollary}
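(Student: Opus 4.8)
The plan is to produce $\nu_1', \nu_2'$ and the probability kernels $P', Q'$ by an explicit renormalization of the data already furnished by the hypothesis $(\nu_1, \nu_2) \in F(\rho)$. By the definition of $F(\rho)$ there are \emph{finite} transition kernels $P, Q$ with $d\rho(x, y) = d\nu_1(x) P(x, dy) = d\nu_2(y) Q(y, dx)$. Following Remark \ref{rem c_i}, I would set $c_1(x) = P(x, X_2)$ and $c_2(y) = Q(y, X_1)$; because the kernels are finite (Definition \ref{def tran pr kern}), one has $0 < c_i < \infty$ pointwise. The first step is to define the normalized kernels $P'(x, dy) := c_1(x)^{-1} P(x, dy)$ and $Q'(y, dx) := c_2(y)^{-1} Q(y, dx)$, which are probability kernels since $P'(x, X_2) = 1$ and $Q'(y, X_1) = 1$ for every $x$ and $y$.

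Next I would compensate for this normalization at the level of the base measures by setting $d\nu_1' := c_1 \, d\nu_1$ and $d\nu_2' := c_2 \, d\nu_2$. The key verification is that $(\nu_1', \nu_2')$ still disintegrates $\rho$ through $P', Q'$: indeed $d\nu_1'(x) P'(x, dy) = c_1(x) \, d\nu_1(x) \cdot c_1(x)^{-1} P(x, dy) = d\nu_1(x) P(x, dy) = d\rho(x, y)$, and the symmetric cancellation yields $d\nu_2'(y) Q'(y, dx) = d\rho(x, y)$. Hence both identities required in the definition of $F(\rho)$ hold, so $(\nu_1', \nu_2') \in F(\rho)$ and the realizing kernels $P', Q'$ are probability.

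It then remains to check $\nu_i' \sim \nu_i$. Since $c_i$ is a Borel density with $0 < c_i < \infty$ everywhere, both $c_i$ and $c_i^{-1}$ are finite and positive, so $\nu_i$ and $\nu_i'$ are mutually absolutely continuous; $\sigma$-finiteness of $\nu_i'$ follows by intersecting a $\sigma$-finite exhaustion of $(X_i, \nu_i)$ with the level sets $\{c_i \le k\}$, whose union is all of $X_i$ precisely because $c_i$ is finite. I expect the only point genuinely needing care to be this finiteness and positivity of the normalizing factors $c_i$, as it is what simultaneously guarantees that $\nu_i'$ stays \emph{equivalent} to $\nu_i$ (not merely absolutely continuous) and that $P', Q'$ are well-defined probability kernels; this is supplied by the built-in requirement that the kernels witnessing membership in $F(\rho)$ be finite. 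One may alternatively read the equivalence directly off Theorem \ref{thm meas pairs}, which identifies $c_i = d(\rho \circ \pi_i^{-1})/d\nu_i$.
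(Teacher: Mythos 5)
Your proof is correct and is essentially the paper's own argument: both normalize the finite kernels by $c_1(x) = P(x, X_2)$ and $c_2(y) = Q(y, X_1)$ to obtain the probability kernels $P', Q'$, and compensate by passing to the equivalent measures $d\nu_i' = c_i\, d\nu_i$. The only difference is that you spell out verifications the paper leaves implicit (the cancellation of densities, positivity and finiteness of the $c_i$, and $\sigma$-finiteness of the $\nu_i'$), which is fine.
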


\begin{proof}
It follows from  $(\nu_1, \nu_2)\in F(\rho)$ that there exist
finite transition kernels $P(x, \cdot)$ and $Q(y, \cdot)$ with the 
property $d\nu_1(x) P(x, dy) = d\nu_2(y)Q(y, dx)$. For
$c_1(x) = P(x, X_2)$ and $c_2(y) = Q(y, X_1)$, we define
$d\nu_1'(x) = c_1(x) d\nu_1(x)$ and $d\nu_2'(y) = c_2(y) d\nu_2(y)$.
Then $(\nu_1', \nu_2') \in F(\rho)$ where the corresponding kernels 
$$
P'(x, dy) = \frac{1}{c_1(x)}P(x, dy), \ \ \ \ Q'(y, dx) =
\frac{1}{c_2(y)}Q(y, dx)
$$
are probability kernels.
\end{proof}

\begin{lemma}\label{lem_P,Q abs cont}
For given measure spaces $(X_i, \A_i, \nu_i), i=1,2,$  let $\rho$ be a
 measure  from
the set $L(\nu_1, \nu_2)$. Then, for $\nu_1$-a.e. $x\in X_1$, the 
measure $P(x, \cdot)$ is absolutely continuous with respect to $\nu_2$.
Similarly, $Q(y, \cdot) \ll \nu_1$ for $\nu_2$-a.e $y \in X_2$.
\end{lemma}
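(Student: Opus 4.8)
The plan is to establish the correctly quantified statement by decomposing the \emph{entire} fiber measure $P(x,\cdot)$ at once, rather than testing it against one null set at a time (which only yields the quantifier-swapped conclusion). First I would invoke the measurable version of the Lebesgue decomposition: for $\nu_1$-a.e.\ $x$ write
\[
P(x, \cdot) = P^{a}(x, \cdot) + P^{s}(x, \cdot), \qquad P^{a}(x, \cdot) \ll \nu_2, \quad P^{s}(x, \cdot) \perp \nu_2,
\]
where $x \mapsto P^{a}(x, B)$ and $x \mapsto P^{s}(x, B)$ are Borel for each $B \in \A_2$; this is available because $X_2$ is standard Borel and $P$ takes values in $\sigma$-finite measures. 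The gain is that the target ``$P(x,\cdot) \ll \nu_2$ for $\nu_1$-a.e.\ $x$'' becomes the single scalar assertion $P^{s}(x, X_2) = 0$ for $\nu_1$-a.e.\ $x$, so no choice of null set enters and the exceptional set is produced once and for all.

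Next I would realize the singular parts as one Borel set in the product. Using the measurable selection that accompanies the decomposition, I would choose a jointly measurable family $x \mapsto N_x \in \A_2$ with $\nu_2(N_x) = 0$ and $P^{s}(x, X_2 \setminus N_x) = 0$, and set $G := \{(x,y) \in X_1 \times X_2 : y \in N_x\}$. Since $P^{a}(x, N_x) = 0$, the $x$-section mass is exactly $P(x, N_x) = P^{s}(x, X_2)$. Disintegrating $\rho$ along $\pi_1$ as in Corollary~\ref{cor disint}, with conditional measures $\delta_x \times P(x,\cdot)$, gives
\[
\rho(G) = \int_{X_1} P(x, N_x)\, d\nu_1(x) = \int_{X_1} P^{s}(x, X_2)\, d\nu_1(x),
\]
so it suffices to prove $\rho(G) = 0$.

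Finally I would bring in the dual kernel $Q$, which is where the hypothesis $\rho \in L(\nu_1,\nu_2)$ must be used essentially. Disintegrating the same $G$ along $\pi_2$, again by Corollary~\ref{cor disint} with conditional measures $\delta_y \times Q(y,\cdot)$, gives $\rho(G) = \int_{X_2} Q(y, G^{y})\, d\nu_2(y)$ with $G^{y} = \{x : y \in N_x\}$; combining this with the duality identity \eqref{eq-meas and kernels} applied to $\chi_G$ is what I would use to force $\rho(G)=0$, hence $P^{s}(x,X_2)=0$ for $\nu_1$-a.e.\ $x$. The symmetric statement $Q(y,\cdot) \ll \nu_1$ for $\nu_2$-a.e.\ $y$ then follows verbatim by exchanging the roles of $(X_1,\nu_1,P)$ and $(X_2,\nu_2,Q)$.

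The main obstacle is precisely this last vanishing: one must show that the two-sided kernel structure of \eqref{eq-meas and kernels} prevents $\rho$ from concentrating on the ``graph of null fibers'' $G$. Equivalently, the crux is to establish $\rho \ll \nu_1 \times \nu_2$, so that $\frac{d\rho}{d(\nu_1\times\nu_2)}$ exists and its fiber restriction is the density of $P(x,\cdot)$; here I would try to leverage the finiteness of $P$ and $Q$ (Remark~\ref{rem c_i}) together with the marginal absolute continuity $\rho\circ\pi_i^{-1}\ll\nu_i$ already recorded in Corollary~\ref{cor disint}. The measurability of the Lebesgue decomposition in the parameter $x$ is a second, but routine, technical point; the genuine content lies in using the duality with $Q$ to upgrade the one-sided estimate into the fiberwise conclusion.
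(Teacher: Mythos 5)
Your reduction via the measurable Lebesgue decomposition (your steps 1--3) is sound, and it correctly shows that the uniform-null-set conclusion you are after is equivalent to $\rho(G)=0$, i.e.\ to $\rho\ll\nu_1\times\nu_2$. But the crux step you leave open --- forcing $\rho(G)=0$ from the duality \eqref{eq-meas and kernels} --- cannot be carried out, because the strengthened statement you are trying to prove is false under the lemma's hypotheses. Take $X_1=X_2=[0,1]$ with $\nu_1=\nu_2=\mathrm{Leb}$, and let $\rho$ be the measure on the diagonal, $\rho(A\times B)=\mathrm{Leb}(A\cap B)$. Then $P(x,\cdot)=\delta_x$ and $Q(y,\cdot)=\delta_y$ are probability transition kernels satisfying $d\nu_1(x)\,P(x,dy)=d\nu_2(y)\,Q(y,dx)$, so $\rho\in L(\nu_1,\nu_2)$; yet $P(x,\cdot)\perp\nu_2$ for \emph{every} $x$. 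In your notation one may take $N_x=\{x\}$, so $G$ is the diagonal, $(\nu_1\times\nu_2)(G)=0$ but $\rho(G)=1$; in particular $\rho\not\ll\nu_1\times\nu_2$, and applying \eqref{eq-meas and kernels} to $\chi_G$ merely reproduces the identity $\int_{X_1}P(x,G_x)\,d\nu_1(x)=\int_{X_2}Q(y,G^y)\,d\nu_2(y)$ (both sides equal $1$ here), with no vanishing. So the ``genuine content'' you isolate at the end is not a hard step awaiting the right estimate: it is exactly the point where the claim breaks.

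For comparison, the paper's own proof is the elementary argument you set out to improve: fix a single $C$ with $\nu_2(C)=0$, compute $\rho(X_1\times C)=\int_C Q(y,X_1)\,d\nu_2(y)=0$, hence $\int_{X_1}P(x,C)\,d\nu_1(x)=0$ and $P(x,C)=0$ for $\nu_1$-a.e.\ $x$ --- with the exceptional set depending on $C$. Your diagnosis of that proof is accurate: it establishes only this per-set (quantifier-swapped) statement. But the example above shows that this per-set statement is the most that is true in the stated generality, so the lemma has to be read with the weaker quantifier, or else supplemented by an additional hypothesis such as $\rho\ll\nu_1\times\nu_2$ --- which is precisely what your scheme would need and is strictly stronger than membership in $L(\nu_1,\nu_2)$. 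In short: the measurable-decomposition machinery in your proposal is correct and clarifies what the strong statement would require, but the central vanishing step is an irreparable gap rather than a missing lemma.
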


\begin{proof}
Let $C$ be a subset of $X_2$ with $\nu_2(C) = 0$. Then 
$$
\rho(X_1  \times C) = \int_C Q(y, X_1)\; d\nu_2(y)=  0. 
$$
Therefore $\int_{X_1} P(x, C) \; d\nu_1(x) = 0$. This means that 
$P(x, C) = 0$ for $\nu_1$-a.e. $x\in X_1$, hence $P(x, \cdot) \ll
\nu_2$. 

The other result is proved analogously. 
\end{proof}

For a $\sigma$-finite measure $\rho$ on $X_1 \times X_2$ and a fixed
 set 
$B\in \A_2$ such that $\rho(X_1 \times B) >0$, we define a 
$\sigma$-finite 
measure  $\tau_B$ on $(X_1, \A_1)$ by setting $\tau_B(C) = \rho(C
\times B)$. Similarly, we define a measure $\tau^A$ on $(X_2, \A_2)$
when a set $A \in \A_1$ is fixed. It follows from these definitions that
$\tau_B(A) = \tau^A(B)$.

\begin{theorem}\label{thm  on F(rho)} 
Let $\rho$ be a measure on the product space $(X_1 \times X_2, 
\A_1 \times \A_2)$. Then a pair of measures $(\nu_1, \nu_2)$ defined
on $X_1$ and $X_2$, respectively, is in $F(\rho)$ if and only if 
\be\label{eq tau ll nu}
\rho\circ \pi^{-1}_1 \ll \nu_1,\ \ \rho\circ \pi^{-1}_2 \ll \nu_2.
\ee
\end{theorem}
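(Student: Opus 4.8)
The plan is to treat the two implications separately; the forward direction is a direct computation from the defining identity, while the converse is where the real content lies and rests on the disintegration theorem (Theorem \ref{thm Simmons}).

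For the forward implication, suppose $(\nu_1,\nu_2)\in F(\rho)$, so that there are finite transition kernels $P,Q$ with $d\rho(x,y)=d\nu_1(x)P(x,dy)=d\nu_2(y)Q(y,dx)$. Projecting onto the first coordinate and using \eqref{eq_rho first def}, I would compute $\rho\circ\pi_1^{-1}(A)=\rho(A\times X_2)=\int_A P(x,X_2)\,d\nu_1(x)$ for $A\in\mc D(\nu_1)$. Since the right-hand side is an integral against $\nu_1$, any $\nu_1$-null set is $\rho\circ\pi_1^{-1}$-null, giving $\rho\circ\pi_1^{-1}\ll\nu_1$; the symmetric computation for $\pi_2$ and $Q$ yields $\rho\circ\pi_2^{-1}\ll\nu_2$. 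This is essentially the content already recorded in Corollary \ref{cor disint}.

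For the converse, assume \eqref{eq tau ll nu}. I would apply Theorem \ref{thm Simmons} with $Z=X_1\times X_2$, base space $X_1$, projection $\pi_1$, and reference measure $\nu_1$: since $\rho\circ\pi_1^{-1}\ll\nu_1$, there is a unique system of conditional measures $(\rho_x)_{x\in X_1}$ with $\rho=\int_{X_1}\rho_x\,d\nu_1(x)$ and each $\rho_x$ concentrated on the fiber $\pi_1^{-1}(x)$, which I identify with $X_2$. Setting $P(x,\cdot):=\rho_x$ under this identification defines a candidate kernel: the measurability of $x\mapsto P(x,B)$ and the property that $B\mapsto P(x,B)$ is a Borel measure are exactly conditions (i)--(ii) of Definition \ref{def cond meas}, hence $P$ satisfies Definition \ref{def tran pr kern}. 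The disintegration formula applied to rectangles then gives $\rho(A\times B)=\int_A P(x,B)\,d\nu_1(x)$, i.e.\ \eqref{eq_rho first def}. Repeating the argument with $\pi_2$ and $\nu_2$ produces $Q$, and since both $d\nu_1\,P$ and $d\nu_2\,Q$ reconstruct the same measure $\rho$, the duality \eqref{eq-meas and kernels} holds; thus $(\nu_1,\nu_2)\in F(\rho)$.

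The step I expect to be the main obstacle is verifying that the kernels produced by disintegration are \emph{finite} in the sense of Definition \ref{def tran pr kern}, rather than merely $\sigma$-finite. Writing $c_1(x)=P(x,X_2)=\rho_x(X_2)$, the disintegration identity forces $\rho\circ\pi_1^{-1}(A)=\int_A c_1\,d\nu_1$, so $c_1$ is the Radon--Nikodym derivative $d(\rho\circ\pi_1^{-1})/d\nu_1$ and is therefore finite for $\nu_1$-a.e.\ $x$. Consequently $P(x,\cdot)$ is a finite measure off a $\nu_1$-null set, and after discarding (or restricting away from) the exceptional set where $c_1\in\{0,\infty\}$ one obtains an honest finite transition kernel; the symmetric bookkeeping handles $Q$. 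Managing these null sets and confirming that the fiber measures genuinely patch together into kernels satisfying \eqref{eq-meas and kernels} is the only delicate point---everything else reduces to Theorem \ref{thm Simmons} and the marginal computation above.
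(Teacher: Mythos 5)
Your proposal is correct, and its forward direction coincides with the paper's (the same marginal computation $\rho\circ \pi^{-1}_1(A) = \int_A P(x, X_2)\, d\nu_1(x)$, already implicit in Corollary \ref{cor disint}). The converse, however, takes a genuinely different route from the paper's own proof. The paper does not invoke the disintegration theorem there: for each fixed $B \in \mc D(\nu_2)$ it forms the slice measure $\tau_B(\cdot) = \rho(\cdot \times B)$, observes $\tau_B \ll \rho\circ\pi_1^{-1} \ll \nu_1$, and defines the kernel set-by-set via Radon--Nikodym derivatives, $P(x,B) = \frac{d\tau_B}{d\nu_1}(x)$ (symmetrically $Q(y,A) = \frac{d\tau^A}{d\nu_2}(y)$); it must then verify by hand that $B \mapsto P(x,B)$ is countably additive (using $\sigma$-additivity of $\rho$) and that $P(x,X_2)$, $Q(y,X_1)$ are finite a.e. Your route instead applies Theorem \ref{thm Simmons} once per projection, which hands you the entire measurable family $(\rho_x)$ at once: measurability in $x$ and countable additivity in the set variable come for free from the definition of a system of conditional measures, and uniqueness of the kernels is an added bonus; the only residual check is finiteness, which you settle exactly as the paper does, by identifying $c_1 = d(\rho\circ\pi_1^{-1})/d\nu_1$ --- and you are in fact slightly more careful, flagging the null set where $c_1 \in \{0,\infty\}$ must be discarded. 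What each approach buys: the paper's construction is more elementary, resting only on the Radon--Nikodym theorem; yours is shorter and structurally cleaner but leans on the heavier disintegration theorem, which is nonetheless fully within the paper's toolkit (it is stated as Theorem \ref{thm Simmons} and used in Corollary \ref{cor disint} and Theorem \ref{thm_P determines Q}). One caveat applies equally to both arguments: concluding a.e.\ finiteness of the derivative of $\rho\circ\pi_1^{-1}$ with respect to $\nu_1$ tacitly presumes $\rho\circ\pi_1^{-1}$ is $\sigma$-finite, a point neither you nor the paper makes explicit.
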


\begin{proof}
If $(\nu_1, \nu_2) \in F(\rho)$, then there are finite transition kernels
$P$ and $Q$ such that $d\rho(x, y) = d\nu_1(x) P(x, dy) = d\nu_2(y) 
Q(y, dx)$. Then the measures $\rho\circ \pi^{-1}_i$ are defined by
$$
\rho\circ \pi^{-1}_1(A) = \int_A P(x, X_2)\; d\nu_1(x),\ \ 
 \rho\circ \pi^{-1}_2(B) = \int_B Q(y, X_1)\; d\nu_2(y).
$$
These equations show that $\rho\circ \pi^{-1}_i \ll \nu_i, i=1,2$. In 
particular, we can use the kernels $P$ and $Q$ to represent the 
measures
$$
\tau_B(A) = \int_A P(x, B)\; d\nu_1(x), \ \ \tau^A(B) = \int_B
 Q(y, A)\; d\nu_2(y)
$$

To prove that the converse holds for a given $\rho$, we take the
 measures 
$\tau_B(\cdot) = \rho(\cdot \times B)$ and  $\tau^A(\cdot) = 
\rho(A \times \cdot)$ where the sets $B\in \A_2$ and $A\in \A_1$ are 
of finite measure. Then, it is clear from \eqref{eq tau ll nu} that 
$$
\tau_B \ll \rho\circ \pi^{-1}_1 \ll \nu_1,\ \ \tau^A \ll 
\rho\circ \pi^{-1}_2 \ll \nu_2.
$$
Hence, we can define the Radon-Nikodym derivatives 
\be\label{eq_tau_A tau_B}
P(x, B) = \frac{d\tau_B}{d\nu_1}(x), \ \ Q(y, A) = \frac{d\tau^A}
{d\nu_2}(y), 
\ee
where $B\in \A_2$ and $A\in \A_1$ are fixed subsets. 
 To see that 
$P$ and $Q$ are finite  transition kernels, one needs to check that they
 define finite measures $P(x,\cdot)$ and $Q(y, \cdot)$ on $X_2$ and
  $X_1$ respectively. We remark   that 
$\tau_{B_1 \cup B_2}(A) = \rho(A \times (B_1\cup B_2)) =
\tau_{B_1}(A) + \tau_{B_2}(A)$ where $B_1 \cap B_2 =\emptyset$. 
This property can be extended to sigma-additivity using the fact that 
$\rho$ is a measure on the product space. The functions  $P(x, X_2)$
 and $Q(y, X_1)$ are  finite a.e. because they are the Radon-Nikodym 
 derivatives. Hence, using the kernels $P$ and $Q$, we see that  
 $(\nu_1, \nu_2) \in  F(\rho)$. 

\end{proof}

\begin{theorem}\label{thm_P determines Q}
Let $(X_1, \A_1)$ and $(X_2, \A_2)$ be   standard  Borel spaces, and
let  $P : X_1\times \A_2 \to [0,1]$ be a probability transition kernel.
 Suppose 
that $\nu_1$ is a Borel $\sigma$-finite measure on $(X_1, \A_1)$. 
Then there exist a Borel 
$\sigma$-finite measure $\rho$ on $Z = X_1 \times X_2$, a Borel 
$\sigma$-finite measure $\nu_2$ on $(X_2, \A_2)$,  and a transition
 kernel $Q : X_2\times \A_1 \to [0,1]$ such that $\rho \circ
 \pi^{-1}_i = \nu_i, i =1,2 $, 
\be\label{eq P_Q dual}
d\nu_1(x) P(x, dy) = d\nu_2(y) Q(y, dx),
\ee 
and 
\be\label{eq P-Q on nu}
\nu_1 P = \nu_2,\ \ \ \nu_2 Q = \nu_1.
\ee
The objects $\rho, \nu_2$, and $Q$ are uniquely defined by $P$ and 
$\nu_1$.
\end{theorem}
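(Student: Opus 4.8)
The plan is to build the joint measure $\rho$ directly from the data $(P,\nu_1)$, read off $\nu_2$ as a marginal, and then recover $Q$ by disintegration. First I would define
$$
\rho(A\times B) = \int_A P(x,B)\, d\nu_1(x), \qquad A\in\A_1,\ B\in\A_2,
$$
and check that this extends to a Borel measure on $(X_1\times X_2,\A_1\times\A_2)$: countable additivity in $B$ follows from the countable additivity of each measure $P(x,\cdot)$ together with monotone convergence under the integral sign, while the Borel measurability of $x\mapsto P(x,B)$ from Definition \ref{def tran pr kern}(i) makes the integrals meaningful. Since $P$ is a probability kernel, $\rho(A\times X_2)=\int_A P(x,X_2)\,d\nu_1(x)=\nu_1(A)$, so $\rho\circ\pi_1^{-1}=\nu_1$ exactly; writing $X_1=\bigsqcup_n A_n$ with $\nu_1(A_n)<\infty$ then exhibits $X_1\times X_2=\bigsqcup_n(A_n\times X_2)$ as a countable cover by sets of finite $\rho$-measure, so $\rho$ is $\sigma$-finite.

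Next I would set $\nu_2:=\rho\circ\pi_2^{-1}$, equivalently $\nu_2(B)=\int_{X_1}P(x,B)\,d\nu_1(x)=(\nu_1 P)(B)$, which already gives the first half of \eqref{eq P-Q on nu}. To produce $Q$ I would invoke the disintegration machinery of Theorem \ref{thm Simmons} (as in Corollary \ref{cor disint}) applied to $\pi_2:(X_1\times X_2,\rho)\to(X_2,\nu_2)$: since $\rho\circ\pi_2^{-1}=\nu_2\ll\nu_2$, there is a unique system of conditional measures $(\rho^y)$ with $\rho=\int_{X_2}\rho^y\,d\nu_2(y)$, and because $\nu_2$ is exactly the push-forward, these conditional measures are probability measures supported on $\pi_2^{-1}(y)$. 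Identifying $\pi_2^{-1}(y)$ with $X_1$, I would define $Q(y,A):=\rho^y(A\times\{y\})$; this is a probability transition kernel satisfying $d\rho(x,y)=d\nu_2(y)\,Q(y,dx)$, which combined with the defining formula for $\rho$ gives the duality \eqref{eq P_Q dual}. The relation $\nu_2 Q=\nu_1$ then drops out by integrating the duality identity: $(\nu_2 Q)(A)=\rho(A\times X_2)=\nu_1(A)$.

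For uniqueness, $\rho$ is forced by $(P,\nu_1)$ because its values are prescribed on the $\pi$-system of measurable rectangles; $\nu_2$ is forced by the requirement $\rho\circ\pi_2^{-1}=\nu_2$; and $Q$ is forced as the essentially unique disintegration of $\rho$ along $\pi_2$ relative to $\nu_2$, by the uniqueness clause of Theorem \ref{thm Simmons}. The step I expect to be the real obstacle is verifying that $\nu_2=\nu_1 P$ is $\sigma$-finite: push-forwards of $\sigma$-finite measures need not be $\sigma$-finite (for example $\nu_1 P$ can pile infinite mass onto a single point when $P(x,\cdot)\equiv\delta_{y_0}$ and $\nu_1(X_1)=\infty$), and without $\sigma$-finiteness of $\nu_2$ the disintegration theorem does not apply. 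I would therefore either carry the condition that $\nu_1 P$ is $\sigma$-finite as the genuine hypothesis underlying the statement, or construct an explicit $\sigma$-finite exhaustion of $X_2$ adapted to $P$ and $\nu_1$; securing this $\sigma$-finiteness is the crux on which the whole production of $Q$ rests.
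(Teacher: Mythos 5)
Your construction is the same as the paper's: the authors also define $\rho$ on rectangles by $\rho(A\times B)=\int_A P(x,B)\,d\nu_1(x)$ (their \eqref{eq rho via P}), set $\nu_2:=\rho\circ\pi_2^{-1}$, invoke Theorem \ref{thm Simmons} to disintegrate $\rho$ along $\pi_2$ and read off $Q$ from the conditional measures $\rho^y$ (their \eqref{eq Q via rho}), then verify \eqref{eq P-Q on nu} and deduce uniqueness from the uniqueness of disintegration. So as a blind reconstruction of the argument, your proposal is on target and every step you carry out matches theirs.

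The more interesting point is the obstacle you flag at the end, because it is real and the paper does not address it. The paper's proof simply declares ``Define $\nu_2:=\rho\circ\pi_2^{-1}$'' and immediately applies Theorem \ref{thm Simmons}, whose hypotheses (via Definition \ref{def cond meas}) require the base measure on $X_2$ to be $\sigma$-finite; no verification of $\sigma$-finiteness of $\nu_2$ is offered anywhere. Your example shows it can genuinely fail: take $X_1=\R$, $\nu_1=$ Lebesgue measure, and $P(x,\cdot)\equiv\delta_{y_0}$; this satisfies every hypothesis of the theorem, yet $\nu_2=\nu_1 P$ assigns mass $+\infty$ to every set containing $y_0$ and $0$ otherwise, so it is not $\sigma$-finite, the disintegration theorem does not apply, and in fact no disintegration of $\rho$ with respect to $\rho\circ\pi_2^{-1}$ exists (one can exhibit a dual pair in this example only by taking $\nu_2=\delta_{y_0}$ and $Q(y_0,\cdot)=\nu_1$, violating both the claim $\nu_2=\rho\circ\pi_2^{-1}$ and the claim $Q:X_2\times\A_1\to[0,1]$). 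So the statement as printed is false without a supplementary hypothesis, and your proposed repair --- carrying ``$\nu_1 P$ is $\sigma$-finite'' as an explicit assumption --- is exactly what is needed; under that assumption your observation that the conditional measures $\rho^y$ are automatically probability measures a.e.\ (because $\int_B\rho^y(Z)\,d\nu_2(y)=\rho(\pi_2^{-1}(B))=\nu_2(B)$ for all $B$) closes the argument. The paper's remark that ``the kernel $Q$ need not be finite'' is a symptom of the same unaddressed issue, not a resolution of it.
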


\begin{proof}
We first define a measure $\rho$ on rectangles $A\times B \in
\A_1 \times \A_2$ from $Z$ by setting
\be\label{eq rho via P}
\rho(A \times B) := \int_A (\{\delta_x\} \times P(x, B))\; d\nu_1(x)
\ee
where  $A$ is a set of finite measure $\nu_1$, and $B\in 
\A_2$ (we recall that $P(x, B) \leq P(x, X_2) =1$).
Then $\rho$ can be extended to a Borel $\sigma$-finite 
measure on the the product $\sigma$-algebra  $\A_1 \times \A_2$. 
By construction,  we have $\rho\circ\pi^{-1}_1 = \nu_1$ (see also 
Proposition \ref{cor_P,Q is 1} for more details).  

Let $E$ be the essential support of the measure $\rho$.
Define $\nu_2 := \rho\circ\pi^{-1}_2$.  We can disintegrate $\rho$
with respect to the measurable partitions $\{E_x : x \in X_1\}$ and 
$\{E^y : y \in X_2\}$ and measures $\nu_1$ and $\nu_2$. By 
Theorem \ref{thm Simmons}, 
\be\label{eq_disint rho}
\int_{X_1} \rho_x \; d\nu_1(x) =\rho = \int_{X_2} \rho^y \; 
d\nu_2(y).
\ee
By uniqueness of disintegrating family of measures, $\rho_x(\cdot)$ is
 supported by $E_x$ and can be identified with $P(x, \cdot)$.
 The measure $\rho^y$ is supported by the set
$E^y$.  We use the identification of $E^y$ with its projection onto 
$X_1$ to define a transition kernel $Q(y, A)$. It follows from 
 \eqref{eq_disint rho} that  we can set
$Q : X_2\times \A_1 \to [0,1]$ by letting 
\be\label{eq Q via rho}
\{\delta_y\} \times Q(y, A) = \rho^y(A), \ \ A\in \A_1.
\ee
In general, the kernel $Q : X_2\times \A_1 \to [0,1]$ need not to be 
finite.

Since the pairs $(\nu_1, P)$ and $(\nu_2, Q)$ generate the same
measure $\rho$, we see that equality \eqref{eq P_Q dual} holds. 

It remains to check that \eqref{eq P-Q on nu} holds: 
$$
\ba 
\nu_2 (B) &  = \rho(X_1 \times B)\\
& = \int_{X_1} P(x, B)\; d\nu_1(x)\\
&= \int_B \; d(\nu_1P)\\
&= (\nu_1P)(B), 
\ea
$$
and
$$
\ba 
(\nu_2Q)(A) & =   = \int_{X_2} Q(\chi_A)\; d\nu_2(y)\\
& = \int_{X_2} Q(y, A)\; d\nu_2(y)\\
& = \int_{A} P(x, X_2)\; d\nu_1(x)\\
&= \nu_1(A).
\ea
$$
We used \eqref{eq R and chi_A} here.

The fact that the measures $\rho, \nu_2$, and the kernel $Q$ are 
uniquely defined by $P$ and $\nu_1$ is based on the uniqueness of 
disintegration and follows directly from \eqref{eq rho via P} 
and \eqref{eq Q via rho}.
\end{proof}

Using a similar approach, we can show that the following corollaries 
hold. The proofs are straightforward and follows from the statements 
considered above in this section.

\begin{corollary}\label{cor prob P}
In conditions of Theorem \ref{thm_P determines Q}, suppose that 
the measure $\nu$ and kernel $P$ are probability. Then the measures
$\rho$ and $\nu_2$, and the transition kernel $Q$ are also probability.

If $c_1(x) =P(x, X_2)$ is in $L^1(\nu_1)$, then the kernel 
$Q(y, \cdot)$ is finite.
Moreover, the function $c_2(y)= Q(y, X_1)$ is locally integrable with
respect to $\nu_2$.  
\end{corollary}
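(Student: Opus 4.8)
The plan is to extract all four conclusions directly from the three identities that $\rho$, $\nu_2$, and $Q$ satisfy by their very construction in Theorem \ref{thm_P determines Q}: the defining formula \eqref{eq rho via P} (which yields $\rho\circ\pi_1^{-1}=\nu_1$), the relation $\nu_2=\rho\circ\pi_2^{-1}$, and the duality \eqref{eq P_Q dual} together with the balance relations \eqref{eq P-Q on nu}. First I would compute the total mass of $\rho$. By \eqref{eq rho via P}, $\rho(X_1\times X_2)=\int_{X_1}P(x,X_2)\,d\nu_1(x)$; when $P$ is a probability kernel and $\nu_1$ a probability measure this integral equals $\nu_1(X_1)=1$, so $\rho$ is a probability measure, and pushing forward under $\pi_2$ gives $\nu_2(X_2)=\rho(X_1\times X_2)=1$. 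Hence $\nu_2$ is probability.

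The crux is to show that $Q$ is a genuine probability kernel, i.e. $Q(y,X_1)=1$. Rather than re-opening the disintegration by which $Q$ was defined in \eqref{eq Q via rho}, I would integrate the dual relation $d\rho=d\nu_2(y)\,Q(y,dx)$ over $X_1\times C$ for an arbitrary $C\in\A_2$, obtaining $\rho(X_1\times C)=\int_C Q(y,X_1)\,d\nu_2(y)$. Since $\nu_2=\rho\circ\pi_2^{-1}$, the left-hand side is exactly $\nu_2(C)$, so $\int_C Q(y,X_1)\,d\nu_2(y)=\int_C 1\,d\nu_2(y)$ for every $C$; uniqueness of the Radon-Nikodym derivative then forces $Q(y,X_1)=1$ for $\nu_2$-a.e.\ $y$. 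Adjusting $Q$ on a $\nu_2$-null set, which alters neither $\rho$ nor the dual relation, upgrades this to a probability kernel.

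For the second assertion, which needs only $c_1=P(\cdot,X_2)\in L^1(\nu_1)$, I would argue as follows. Then $\rho(X_1\times X_2)=\int_{X_1}c_1\,d\nu_1<\infty$, so $\rho$, and therefore its marginal $\nu_2$, are finite. The same integration of the dual relation now gives $\int_{X_2}Q(y,X_1)\,d\nu_2(y)=\rho(X_1\times X_2)<\infty$, so $c_2=Q(\cdot,X_1)$ lies in $L^1(\nu_2)$. In particular $c_2$ is finite $\nu_2$-a.e., meaning $Q(y,\cdot)$ is a finite measure for almost every $y$; and since $\int_B c_2\,d\nu_2\le\int_{X_2}c_2\,d\nu_2<\infty$ for every $B$ of finite measure, we obtain $c_2\in L^1_{\mathrm{loc}}(\nu_2)$.

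The main obstacle is the gap between the \emph{a.e.}\ conclusion $Q(y,X_1)=1$ and the pointwise requirement in the definition of a probability kernel, together with the need to verify that this argument leans only on $\nu_2$ being the \emph{exact} marginal $\rho\circ\pi_2^{-1}$ (which forces the conditional masses to equal $1$) and not on any finiteness of $\nu_1$. I would stress that the Radon-Nikodym/uniqueness step is legitimate precisely because Theorem \ref{thm_P determines Q} guarantees that $\nu_2$ is $\sigma$-finite; without that, the integral identity $\int_C Q(y,X_1)\,d\nu_2=\nu_2(C)$ would not pin down $Q(y,X_1)$.
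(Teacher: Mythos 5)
Your proof is correct and follows essentially the route the paper intends: the paper omits the argument as ``straightforward'' from Theorem \ref{thm_P determines Q}, and your steps are exactly the ones it relies on elsewhere --- your Radon--Nikodym argument for $Q(y,X_1)=1$ $\nu_2$-a.e.\ is precisely the paper's Proposition \ref{cor_P,Q is 1}(ii), and your second paragraph reproduces the computation the paper uses to prove Proposition \ref{prop finite kernels} (integrating the dual relation $d\nu_1(x)P(x,dy)=d\nu_2(y)Q(y,dx)$ against $\chi_B(y)$). The only point worth flagging is cosmetic: like the paper, you obtain the probability/finiteness of $Q(y,\cdot)$ only $\nu_2$-a.e., and your remark about modifying $Q$ on a null set is the standard fix.
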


\begin{corollary} \label{cor rho determ P Q}
Suppose that $\rho$ is a $\sigma$-finite Borel measure on the direct
product $Z$ of standard Borel spaces $(X_1, \A_1)$ and $(X_2, \A_2)$. 
Let   $\nu_i := \rho\circ \pi^{-1}_i$ be measures on $(X_i, \A_i), i 
=1,2$. Suppose that $\rho$ is disintegrated, $\rho = \int_{X_1} \rho_x
\; d\nu_1(x) = \int_{X_2} \rho^y\; d\nu_2(y)$ with respect to 
the projections $\pi_1$ and $\pi_2$ such that the fiber measures 
$\rho_x$ and $\rho^y$  are finite. Then there are  finite transition 
 kernels $P : X_1 \times \A_2 \to \R_+$ and 
$Q : X_2 \times \A_1 \to \R_+$ such that \eqref{eq P-Q on nu} holds
and
\be\label{eq_differential of rho}
d\rho(x, y) = P(x, dy)d\nu_1(x) = Q(y, dx) d\nu_2(y).
\ee
\end{corollary}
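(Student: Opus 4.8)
The plan is to read off the kernels $P$ and $Q$ directly from the two disintegrations of $\rho$ that are furnished by hypothesis. Since $\rho = \int_{X_1} \rho_x \, d\nu_1(x)$ with each fiber measure $\rho_x$ supported on $\pi_1^{-1}(x) = \{x\} \times X_2$, I would identify $\rho_x$ with a finite Borel measure on $X_2$ and set $P(x, B) := \rho_x(\{x\} \times B)$ for $B \in \A_2$; symmetrically, using $\rho = \int_{X_2} \rho^y \, d\nu_2(y)$ and the support of $\rho^y$ on $X_1 \times \{y\}$, I would set $Q(y, A) := \rho^y(A \times \{y\})$ for $A \in \A_1$. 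Both are finite measures in the set variable precisely because the fiber measures are assumed finite.

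First I would check that $P$ and $Q$ are genuine transition kernels in the sense of Definition \ref{def tran pr kern}. Property (ii), that $B \mapsto P(x, B)$ is a finite Borel measure, is immediate from the definition of $\rho_x$. Property (i), the Borel dependence of $x \mapsto P(x, B)$, is exactly the measurability built into the disintegration supplied by Theorem \ref{thm Simmons}, which is why the hypothesis that $\rho$ be disintegrated along both projections is needed. This measurability is the only genuine subtlety: once $(\rho_x)$ is a \emph{measurable} field of finite measures, identifying each $\rho_x$ with a measure on $X_2$ is harmless and $P$ inherits Borel measurability in $x$; likewise for $Q$.

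Next I would establish the duality \eqref{eq_differential of rho} by evaluating $\rho$ on a rectangle $A \times B$ in two ways. Because $\rho_x$ is carried by $\{x\} \times X_2$, one has $\rho_x(A \times B) = \chi_A(x) P(x, B)$, so $\rho(A \times B) = \int_A P(x, B) \, d\nu_1(x)$; the analogous computation with $(\rho^y)$ gives $\rho(A \times B) = \int_B Q(y, A) \, d\nu_2(y)$. Equating the two integral expressions is precisely \eqref{eq_rho first def}, i.e.\ the differential identity $d\rho(x,y) = P(x, dy) \, d\nu_1(x) = Q(y, dx) \, d\nu_2(y)$.

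Finally, to obtain \eqref{eq P-Q on nu} I would specialize these rectangle identities. Taking $A = X_1$ yields $(\nu_1 P)(B) = \int_{X_1} P(x, B) \, d\nu_1(x) = \rho(X_1 \times B) = \rho \circ \pi_2^{-1}(B) = \nu_2(B)$, so $\nu_1 P = \nu_2$; taking $B = X_2$ gives symmetrically $\nu_2 Q = \nu_1$, using \eqref{eq R and chi_A} to write $Q(y, A) = Q(\chi_A)(y)$. The whole argument is the direct inverse of the construction in Theorem \ref{thm_P determines Q}, and the only point requiring care, namely the Borel measurability of the kernels in their point arguments, is handled by the uniqueness and measurability clauses of the disintegration theorem.
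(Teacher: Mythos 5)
Your proof is correct and follows essentially the same route the paper intends: the paper declares this corollary "straightforward" from the preceding results, namely reading the kernels $P(x,\cdot)$ and $Q(y,\cdot)$ off the two disintegrations (as in Theorem \ref{thm_P determines Q} and Corollary \ref{cor disint}), verifying \eqref{eq_differential of rho} on rectangles, and specializing $A = X_1$, $B = X_2$ to obtain \eqref{eq P-Q on nu}. You have simply supplied the details the authors omitted, including the correct observation that measurability of $x \mapsto P(x,B)$ is already built into the disintegration of Theorem \ref{thm Simmons}.
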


\begin{corollary} (1) 
Let $(X_i, \A_i, \nu_i), i=1,2$ be two $\sigma$-finite
measure spaces and let $P : X_1 \times \A_2$ be a transition kernel such
that $P(x, \cdot) \ll \nu_2$ for $\nu_1$-a.e. $x$. Denote
$$
\varphi(x, y) = \frac{P(x,dy)}{d\nu_2(y)}.
$$
Then the formula 
$$
Q(y, A) = \int_A \varphi(x, y)\; d\nu_1(x), \quad A \in \A_1,
$$
determines a dual transition kernel associated to $(\nu_1, \nu_2)$. 

(2) For a given pair of kernels $P$ and $Q$ such that $P(x, \cdot) \ll
\nu_1$ and $Q(y, \cdot) \ll\nu_2$, we have that $(\nu_1, \nu_2) \in 
M(P,Q) $ if and only if 
$$
\frac{P(x,dy)}{d\nu_2(y)} = \frac{Q(y,dx)}{d\nu_1(x)}.
$$
\end{corollary}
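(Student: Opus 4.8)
The plan is to handle both parts through a single device: realize the kernels as densities against the product measure $\nu_1 \times \nu_2$ on $X_1 \times X_2$ and then exploit uniqueness of the Radon--Nikodym derivative. For part (1) I would first form the $\sigma$-finite measure $\rho$ on $X_1 \times X_2$ by $d\rho(x,y) = d\nu_1(x)\, P(x, dy)$, exactly as in \eqref{eq_rho first def}. The crucial observation is that the hypothesis $P(x, \cdot) \ll \nu_2$ for $\nu_1$-a.e.\ $x$ forces $\rho \ll \nu_1 \times \nu_2$: if $(\nu_1 \times \nu_2)(N) = 0$ for $N \in \A_1 \times \A_2$, then by Tonelli the $x$-sections $N_x = \{y : (x,y)\in N\}$ satisfy $\nu_2(N_x) = 0$ for $\nu_1$-a.e.\ $x$, whence $P(x, N_x) = 0$ for those $x$, and therefore $\rho(N) = \int_{X_1} P(x, N_x)\, d\nu_1(x) = 0$.

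Consequently there is a jointly $(\A_1 \times \A_2)$-measurable density $\varphi = \frac{d\rho}{d(\nu_1 \times \nu_2)} \geq 0$. Comparing the two expressions $d\rho = d\nu_1(x)\, P(x, dy)$ and $d\rho = \varphi(x,y)\, d\nu_1(x)\, d\nu_2(y)$ --- for instance by testing against products $\chi_A(x)\chi_B(y)$ together with a monotone-class argument, or by invoking uniqueness of disintegration (Theorem \ref{thm Simmons}) --- shows that for $\nu_1$-a.e.\ $x$ one has $P(x, dy) = \varphi(x,y)\, d\nu_2(y)$; that is, $\varphi(x, \cdot)$ is precisely the fiberwise derivative $\frac{P(x, dy)}{d\nu_2(y)}$ named in the statement. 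Securing this \emph{jointly measurable} version of $\varphi$ is the step I expect to be the main obstacle, since the naive fiberwise Radon--Nikodym derivatives are only defined $x$-by-$x$ and carry no a priori measurability in $x$.

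With such a $\varphi$ in hand, the rest of part (1) is routine. I would verify that $Q(y, A) := \int_A \varphi(x,y)\, d\nu_1(x)$ is a transition kernel in the sense of Definition \ref{def tran pr kern}: for fixed $y$ the map $A \mapsto Q(y, A)$ is a $\sigma$-finite Borel measure on $X_1$ (integration of the nonnegative $\varphi(\cdot, y)$ against the $\sigma$-finite $\nu_1$), and for fixed $A$ the map $y \mapsto Q(y, A)$ is Borel by joint measurability of $\varphi$ and Tonelli. The duality \eqref{eq-meas and kernels} is then immediate, because both $d\nu_1(x)\, P(x, dy)$ and $d\nu_2(y)\, Q(y, dx)$ equal $\varphi(x,y)\, d\nu_1(x)\, d\nu_2(y) = d\rho(x,y)$; testing against $f(x,y) = \chi_A(x)\chi_B(y)$ and exchanging the order of integration by Tonelli confirms the identity. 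Hence $Q$ is dual to $P$ associated to $(\nu_1, \nu_2)$.

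For part (2) I would express both sides as densities against $\nu_1 \times \nu_2$. Under $P(x, \cdot) \ll \nu_2$ and $Q(y, \cdot) \ll \nu_1$, the argument of the first two paragraphs produces jointly measurable functions $\varphi(x,y) = \frac{P(x, dy)}{d\nu_2(y)}$ and $\psi(x,y) = \frac{Q(y, dx)}{d\nu_1(x)}$ with $d\nu_1(x)\, P(x, dy) = \varphi \, d(\nu_1 \times \nu_2)$ and $d\nu_2(y)\, Q(y, dx) = \psi \, d(\nu_1 \times \nu_2)$. By Definition \ref{def_three sets}, the membership $(\nu_1, \nu_2) \in M(P,Q)$ asserts exactly that these two measures on $X_1 \times X_2$ coincide; since each is absolutely continuous with respect to $\nu_1 \times \nu_2$, uniqueness of the Radon--Nikodym derivative renders this equivalent to $\varphi = \psi$ holding $(\nu_1 \times \nu_2)$-a.e., which is precisely the asserted identity $\frac{P(x, dy)}{d\nu_2(y)} = \frac{Q(y, dx)}{d\nu_1(x)}$. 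Both implications fall out of this single equivalence, completing the proof.
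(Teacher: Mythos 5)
Your proposal is correct, but it takes a genuinely different route from the paper. The paper's proof of part (1) is a direct computation: it tests the duality \eqref{eq-meas and kernels} against $f(x,y)=\chi_A(x)\chi_B(y)$, substitutes the definition of $Q(y,A)$, and formally cancels $d\nu_2(y)$ against the denominator of $\varphi(x,y)=P(x,dy)/d\nu_2(y)$ under a Fubini exchange; part (2) is then dismissed with ``follows from (1).'' In doing so the paper implicitly assumes what you correctly single out as the real obstacle: that the fiberwise Radon--Nikodym derivatives $x\mapsto dP(x,\cdot)/d\nu_2$ admit a \emph{jointly} measurable version, without which neither the definition of $Q$ nor the Fubini exchange is literally meaningful. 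Your construction supplies exactly this foundation: you form $\rho$ with $d\rho(x,y)=d\nu_1(x)P(x,dy)$, show $\rho\ll\nu_1\times\nu_2$ by a Tonelli argument on null sets, take $\varphi=d\rho/d(\nu_1\times\nu_2)$ as the global density, and identify it fiberwise with $P(x,dy)/d\nu_2(y)$ by uniqueness (via a monotone-class argument or the disintegration theorem the paper quotes as Theorem \ref{thm Simmons}). After that, the kernel properties of $Q$, the duality (both measures equal $\varphi\,d(\nu_1\times\nu_2)$), and part (2) (equality of two measures absolutely continuous with respect to $\nu_1\times\nu_2$ is equivalent to a.e.\ equality of their densities) all fall out of uniqueness of the Radon--Nikodym derivative. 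What the paper's approach buys is brevity; what yours buys is rigor at the genuinely delicate point (joint measurability), a unified mechanism for both parts, and an actual proof of part (2), which the paper leaves to the reader; note also that you tacitly correct the misprint in the hypotheses of part (2) (which should read $P(x,\cdot)\ll\nu_2$ and $Q(y,\cdot)\ll\nu_1$). The only technical points worth flagging are small: the Radon--Nikodym theorem applied to $\rho$ (which need not be $\sigma$-finite a priori) should be taken in its extended form yielding a $[0,\infty]$-valued density, with finiteness a.e.\ recovered afterwards from the fiberwise $\sigma$-finiteness of $P(x,\cdot)$; and $\sigma$-finiteness of $Q(y,\cdot)$ holds only for $\nu_2$-a.e.\ $y$, which is harmless after modification on a null set.
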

\begin{proof}
(1) Since all objects are well defined, we need to check only that, for any 
Borel bounded function $f(x, y)$,
$$
\iint_{X\times Y}d\nu_1(x) P(x, dy) f(x, y)= \iint_{X\times Y} d\nu_2(y) Q(y, dx) f(x, y).$$
It suffices to show that this relation holds for $f(x, y) = \chi_A(x)
\chi_B(y)$:
$$
\ba 
\int_{B} d\nu_2(y) Q(y, A) & =  \int_{B} d\nu_2(y) \int_A \varphi(x, y)\; 
d\nu_1(x)\\
 & =  \int_{B} d\nu_2(y) 
\int_A \frac{P(x,dy)}{d\nu_2(y)}d\nu_1(x) \\
& = \int_{B} \int_A P(x,dy) d\nu_1(x)\\
& = \int_A d\nu_1(x) P(x, B).
\ea 
$$

Statement (2) follows from (1).
\end{proof}

\begin{corollary}\label{cor equiv i - iii}
 Let $(X_i, \A_i, \nu_i)$ be standard 
$\sigma$-finite measure spaces, and let $P :\  X_1 \times \A_2 \to [0, 
\infty)$ and $ Q : \ X_2 \times \A_1 \to [0,\infty)$ be transition 
kernels, $i =1,2$. The following are equivalent:

(i) there exists a  measure $\rho$ on $X_1 \times X_2$ such 
that 
$$
\rho(A \times B) = \int_A P(x, B)\; d\nu_1(x) = \int_B Q(y, A)\; 
d\nu_2(y), \ \ A\in \A_1, B\in \A_2,
$$

(ii) $d\nu_1(x)P(x, dy) = d\nu_2(y)Q(y, dx) $, 

(iii) there exists a measure $\rho$ on $X_1 \times X_2$ such that,
 for any $B \in \A_2$ and  $A \in \A_1$, the kernels
$P$ and $Q$ are Radon-Nikodym derivatives:
$$
P(x, B) =\frac{\rho(dx, B)}{d\nu_1(x)},\ \ \ Q(y, A) =
\frac{\rho(A, dy)}{d\nu_2(y)}.
$$
\end{corollary}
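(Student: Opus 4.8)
The plan is to prove the three statements equivalent by a cycle of implications $(i) \Rightarrow (iii) \Rightarrow (ii) \Rightarrow (i)$, observing that each transition amounts to reinterpreting one and the same rectangle identity in a different language: as an integral formula, as a Radon--Nikodym derivative, or as a differential relation tested against functions. The only genuine analytic content is the well-definedness of $\rho$ as a $\sigma$-finite measure on the product $\sigma$-algebra together with the identification of the kernels with Radon--Nikodym derivatives; both are available from the disintegration results already in place (Theorem \ref{thm Simmons}, Corollary \ref{cor disint}) and from the auxiliary measures $\tau_B, \tau^A$ introduced earlier in this section.

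First I would treat $(i) \Rightarrow (iii)$. Assuming $\rho$ satisfies the rectangle identity of $(i)$, fix $B \in \A_2$ and set $\tau_B(A) := \rho(A \times B)$ on $(X_1, \A_1)$; this is a $\sigma$-finite measure, and the first equality in $(i)$ reads $\tau_B(A) = \int_A P(x, B)\, d\nu_1(x)$ for all $A \in \A_1$. By the uniqueness clause of the Radon--Nikodym theorem this says exactly $\tau_B \ll \nu_1$ with $d\tau_B/d\nu_1 = P(\cdot, B)$, which in the notation of the statement is $P(x, B) = \rho(dx, B)/d\nu_1(x)$. The symmetric computation with $\tau^A(B) := \rho(A \times B)$ and the second equality of $(i)$ yields $Q(y, A) = \rho(A, dy)/d\nu_2(y)$, so $(iii)$ holds with the same $\rho$.

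Next, for $(iii) \Rightarrow (ii)$, unwinding the derivative notation shows that $P(x, B) = d\tau_B/d\nu_1$ is equivalent to $\int_A P(x, B)\, d\nu_1(x) = \rho(A \times B)$, and likewise $\int_B Q(y, A)\, d\nu_2(y) = \rho(A \times B)$, for all finite-measure rectangles. Testing the product-space measures $d\nu_1(x) P(x, dy)$ and $d\nu_2(y) Q(y, dx)$ against $f(x,y) = \chi_A(x)\chi_B(y)$ therefore produces the common value $\rho(A \times B)$; extending by linearity to simple functions and by monotone convergence to all nonnegative Borel $f$ gives the differential identity $(ii)$.

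Finally, $(ii) \Rightarrow (i)$ is essentially the definition unwound: testing $(ii)$ against $f(x,y) = \chi_A(x)\chi_B(y)$ returns the two integral expressions in $(i)$, whose common value I take as $\rho(A \times B)$ on rectangles of finite measure. The step requiring care is verifying that this set function extends to a genuine $\sigma$-finite Borel measure on $\A_1 \times \A_2$; the fibered structure of the kernels supplies $\sigma$-additivity in each variable, and uniqueness of the extension from the generating semiring of rectangles follows from $\sigma$-finiteness, precisely as in the construction of $\rho$ in the proof of Theorem \ref{thm_P determines Q}. I expect the main obstacle to be purely bookkeeping, namely ensuring all integrals are finite so that the Radon--Nikodym derivatives exist and the extension is unambiguous; this is guaranteed by the standing local-integrability hypotheses on $c_1(x) = P(x, X_2)$ and $c_2(y) = Q(y, X_1)$.
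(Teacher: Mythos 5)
Your proof is correct and takes essentially the route the paper intends: the paper gives no explicit argument for this corollary, asserting only that it follows from the earlier results of the section, and your cycle of implications is precisely an unwinding of those results --- the identification of $P(\cdot,B)$ and $Q(\cdot,A)$ as Radon--Nikodym derivatives of the marginal measures $\tau_B(\cdot)=\rho(\cdot\times B)$ and $\tau^A(\cdot)=\rho(A\times\cdot)$, together with the construction of $\rho$ from a measure and a kernel. The only cosmetic remark is that the extension step in (iii)~$\Rightarrow$~(ii) is really a monotone-class/$\pi$--$\lambda$ argument requiring the $\sigma$-finiteness you already flag, rather than bare linearity plus monotone convergence; this matches the level of bookkeeping the paper itself leaves implicit.
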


\begin{proposition}\label{prop finite kernels}
Let $(X_i, \A_i, \nu_i), i=1,2,$ be two $\sigma$-finite measure spaces, and 
let $P$ and $Q$ be a pair of dual transition kernels, i.e., 
$d\nu_1(x) P(x, dy) = d\nu_2(y) Q(y, dx)$. Suppose that $P(\chi_B) \in 
L^1(\nu_1)$ for every $B\in \mc D(\nu_2)$. Then  $c_2(y) = Q(y, X_1)$ 
is  finite  a.e. and locally integrable. 
\end{proposition}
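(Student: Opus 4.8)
The plan is to show that the hypothesis $P(\chi_B) \in L^1(\nu_1)$ is, via the duality relation, essentially a restatement of the local integrability of $c_2$, and then to extract finiteness a.e.\ from local integrability using the $\sigma$-finiteness of $\nu_2$.

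First I would fix $B \in \mc D(\nu_2)$ and note that $P(\chi_B)(x) = P(x, B)$, so the hypothesis reads $\int_{X_1} P(x, B)\, d\nu_1(x) < \infty$. To rewrite this integral in terms of $c_2(y) = Q(y, X_1)$, I would exhaust $X_1$ by an increasing sequence $A_n$ of sets of finite $\nu_1$-measure, which is possible since $\nu_1$ is $\sigma$-finite. For each $n$ the defining duality $d\nu_1(x) P(x, dy) = d\nu_2(y) Q(y, dx)$, applied on the finite-measure rectangle $A_n \times B$ as in \eqref{eq_rho first def}, yields $\int_{A_n} P(x, B)\, d\nu_1(x) = \int_B Q(y, A_n)\, d\nu_2(y)$.

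Letting $n \to \infty$, monotone convergence applies on both sides: on the left $P(x, B) \geq 0$ and $A_n \uparrow X_1$, while on the right $Q(y, A_n) \uparrow Q(y, X_1) = c_2(y)$ because $Q(y, \cdot)$ is a measure and so is continuous from below. This gives $\int_B c_2(y)\, d\nu_2(y) = \int_{X_1} P(x, B)\, d\nu_1(x) < \infty$ for every $B \in \mc D(\nu_2)$, which is precisely the statement that $c_2 \in L^1_{\mathrm loc}(\nu_2)$. Finiteness a.e.\ then follows at once: if $c_2$ were equal to $+\infty$ on a set of positive $\nu_2$-measure, the $\sigma$-finiteness of $\nu_2$ would furnish a subset $B$ of finite positive measure on which $c_2 = \infty$, forcing $\int_B c_2\, d\nu_2 = \infty$ and contradicting the local integrability just established.

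The argument presents no genuine obstacle; the only point requiring care is the interchange of limit and integral on the right-hand side. This is handled by monotone convergence once one observes that the increasing exhaustion $A_n \uparrow X_1$ forces $Q(y, A_n) \uparrow Q(y, X_1)$ pointwise for each fixed $y$, so that the nonnegative integrands converge monotonically to $c_2$.
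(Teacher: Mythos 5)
Your proof is correct and follows essentially the same route as the paper's: both reduce the statement to the identity $\int_{X_1} P(x,B)\,d\nu_1(x) = \int_B c_2(y)\,d\nu_2(y)$ for $B\in \mc D(\nu_2)$ and then read off local integrability and a.e.\ finiteness. The only difference is cosmetic — the paper obtains this identity in one step by applying the duality relation to the bounded Borel function $f(x,y)=\chi_B(y)$, whereas you recover it from the finite-rectangle form \eqref{eq_rho first def} via an exhaustion $A_n \uparrow X_1$ and monotone convergence, a harmless but unnecessary detour.
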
  

\begin{proof}
The condition that $P$ and $Q$ form a dual pair can be written in the 
form
$$
\int_{X_1}\int_{X_2} d\nu_1(x) P(x, dy) f(x,y) = 
\int_{X_1}\int_{X_2} d\nu_2(y) Q(y, dx) f(x,y)
$$
for every bounded Borel function. Take a set $B\in \mc D(\nu_2)$. Then 
for $f(x, y) = \chi_B(y)$, we get 
$$
\int_{X_1} d\nu_1(x) \; \int_{X_2} P(x, dy) \chi_B(y) = 
\int_{X_2} d\nu_2 (y)\chi_B(y) \int_{X_1}  Q(y, dx) 
$$
or
$$
\int_{X_1} d\nu_1(x) \; P(x, B) =
\int_{B} d\nu_2 (y) \; c_2(y).
$$
Since the left hand side here is finite, we conclude that $c_2$ is finite
a.e. and locally integrable. 
\end{proof}

\begin{remark} Obviously, the condition $P(\chi_B) \in  L^1(\nu_1)$ for every $B\in \mc D(\nu_2)$ from Proposition \ref{prop finite kernels} holds
 for a probability transition kernel $P$. It follows that the dual transition
 kernel $Q$ is finite. By normalizing $Q$ and replacing the measure 
 $\nu_2$ by an equivalent measure, we can always assume that the 
 two kernels $P$ and $Q$ are probability.

\end{remark}

\subsection{More results on kernels and measures; examples}

Let $\rho$ and $\rho'$ are two equivalent measures on $Z = X 
\times X_2$. What can be said about the relation between the  
transition kernels $P, Q$ and $P', Q'$ 
generated by these measures? In the following lemma, we consider a
partial case when the Radon-Nikodym derivative for the measures
$\rho$ and $\rho'$ has some special form.

\begin{lemma}\label{lem rho and rho' equiv} 
(1) Let $\rho$ and $\rho'$ are two equivalent measures on $Z = X_1 
\times X_2$. Suppose that there exist two Borel functions $f : X_1 \to
(0,\infty)$ and $g : X_2 \to (0, \infty)$ such that 
$$
\frac{d\rho'}{d\rho}(x, y) = f(x) g(y).
$$
Let $P(x, \cdot) = \rho_x(\cdot)$ and $P'(x, \cdot) = \rho'_x(\cdot)$
be the corresponding transition kernels, see Corollary 
\ref{cor rho determ P Q} . 
Then 
$$
\frac{P'(x, dy)}{P(x,dy)} = k_x g(y)
$$
where the coefficient $k_x  = f(x) \dfrac{d\nu_1}{d\nu'_1}(x)$ 
depends on $x$ only.

In particular, if $\rho'$ and $\rho$ are probability measures such that
$d\rho'(x, y) = f(x) d\rho(x, y)$, then $P'(x, dy)= P(x, dy)$ and 
$f(x)$ is the Radon-Nikodym $\dfrac{d\nu'}{d\nu}(x)$.

(2)  If $d\nu'_1(x) := \varphi(x) d\nu_1(x)$ where $\varphi$ is a Borel
positive function, then, in 
notation of Corollary \ref{cor rho determ P Q}, we obtain 
$$
P(x, dy)d\nu'_1(x) = Q'(y, dx)d\nu_2(y)
$$
where $ Q'(y, dx) := \varphi(x) Q(y, dx)$.
\end{lemma}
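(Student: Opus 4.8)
The plan is to handle the two parts separately: part (1) rests on the uniqueness of disintegration (Theorem \ref{thm Simmons}), while part (2) is a direct substitution into the duality identity.

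For part (1), I would begin from the hypothesis $d\rho'(x,y) = f(x)g(y)\,d\rho(x,y)$ and first compute the marginal $\nu'_1 = \rho'\circ\pi_1^{-1}$. Using the disintegration $d\rho(x,y) = P(x,dy)\,d\nu_1(x)$ from Corollary \ref{cor rho determ P Q}, for $A \in \A_1$ one obtains
$$
\nu'_1(A) = \int_{A\times X_2} f(x)g(y)\,d\rho(x,y) = \int_A f(x)\Big(\int_{X_2} g(y)\,P(x,dy)\Big)\,d\nu_1(x),
$$
so that, writing $G(x) := \int_{X_2} g(y)\,P(x,dy) = (T_P g)(x)$, the Radon--Nikodym derivative is $\frac{d\nu'_1}{d\nu_1}(x) = f(x)G(x)$. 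Next I would compute $\rho'(A\times B)$ in two ways. The disintegration of $\rho'$ with respect to $\pi_1$ and $\nu'_1$ gives $\rho'(A\times B) = \int_A P'(x,B)\,d\nu'_1(x)$; on the other hand, substituting $d\rho' = fg\,d\rho$ and changing the integrating measure from $\nu_1$ to $\nu'_1$ via $d\nu_1 = (fG)^{-1}\,d\nu'_1$ yields
$$
\rho'(A\times B) = \int_A \frac{1}{G(x)}\Big(\int_B g(y)\,P(x,dy)\Big)\,d\nu'_1(x).
$$
Since the disintegrating family is unique (Theorem \ref{thm Simmons}), comparison forces $P'(x,B) = \frac{1}{G(x)}\int_B g(y)\,P(x,dy)$, i.e. $\frac{P'(x,dy)}{P(x,dy)} = k_x g(y)$ with $k_x = 1/G(x)$; and indeed $k_x = f(x)\,(f(x)G(x))^{-1} = f(x)\frac{d\nu_1}{d\nu'_1}(x)$, as claimed.

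The \emph{in particular} clause is then the specialization $g\equiv 1$: here $G(x) = P(x,X_2) = 1$ because, in the disintegration normalization where $\rho,\rho'$ are probability measures and $\nu_1 = \rho\circ\pi_1^{-1}$, the fiber measure $P(x,\cdot) = \rho_x$ is itself a probability measure $\nu_1$-a.e.; hence $k_x = 1$, $P'=P$, and $\frac{d\nu'_1}{d\nu_1} = f$.

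For part (2), the argument is immediate from the duality $P(x,dy)\,d\nu_1(x) = Q(y,dx)\,d\nu_2(y)$: multiplying both sides by $\varphi(x)$ and regrouping,
$$
P(x,dy)\,d\nu'_1(x) = \varphi(x)\,P(x,dy)\,d\nu_1(x) = \varphi(x)\,Q(y,dx)\,d\nu_2(y) = Q'(y,dx)\,d\nu_2(y),
$$
where $Q'(y,dx) := \varphi(x)Q(y,dx)$; one checks only that $Q'(y,A) = \int_A \varphi(x)\,Q(y,dx)$ is again a genuine transition kernel (Borel in $y$, $\sigma$-finite in $A$). The main obstacle is confined to part (1), namely correctly identifying the normalizing factor $G(x) = (T_P g)(x)$ and invoking uniqueness of disintegration to pin down $P'$; the bookkeeping in part (2) is routine.
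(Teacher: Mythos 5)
Your proof is correct and in essence follows the same route as the paper's: the paper simply equates the two expressions $d\rho'(x,y)=P'(x,dy)\,d\nu'_1(x)$ and $d\rho'(x,y)=f(x)g(y)P(x,dy)\,d\nu_1(x)$ and rearranges, which is exactly the comparison of disintegrations of $\rho'$ over $\nu'_1$ that you carry out, with uniqueness of disintegration (Theorem \ref{thm Simmons}) supplying the identification of $P'$; part (2) is the same one-line substitution into the duality relation in both versions. The only difference is one of explicitness: your computation of the marginal density $\dfrac{d\nu'_1}{d\nu_1}=f\cdot(T_Pg)$, and hence the closed form $k_x = 1/(T_Pg)(x)$, makes rigorous what the paper treats as a formal division of differentials, and likewise your observation that $P(x,X_2)=1$ a.e.\ (fibers over the marginal are probability measures) is the precise content behind the paper's appeal to $\rho,\rho'$ being probability in the ``in particular'' clause.
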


\begin{proof}
It follows from Corollary  \ref{cor equiv i - iii} (ii) that 
$$
P'(x, dy) d\nu'_1(x) = f(x) g(y) P(x, dy) d\nu_1(x).
$$
This equality can be written as in (1). Assuming that $g(y) =1$ and 
using the fact that the measures $\rho$ and $\rho'$ are probability,
we obtain the other statement from (1). 

Relation (2) follows from Corollary \ref{cor equiv i - iii}.
\end{proof}

\begin{lemma} Suppose that $P$ and $P'$ are finite transition kernels
on $X_1 \times \A_2$. 
Let $(nu, P)$ and $(\nu', P')$ be pairs that determine measures $\rho
= \int P(x, \cdot) \; d\nu$ and $\rho' \int P'(x, \cdot) \; d\nu'$ on 
$X_1 \times X_2$. Then $\rho = \rho'$ if and only if 
$\int_A c\; d\nu = \int_A c'\;d\nu'$ where $c(x) = P(x, X_2)$,
$c'(x) = P'(x, X_2)$. In particular, if $P$ and $P'$ are probability, then
$\rho = \rho'$ if and only if $\nu= \nu'$.
\end{lemma}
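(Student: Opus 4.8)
The plan is to compute everything from the defining formula \eqref{eq_rho first def}, under which $\rho(A\times B)=\int_A P(x,B)\,d\nu(x)$ and $\rho'(A\times B)=\int_A P'(x,B)\,d\nu'(x)$ for $A$ of finite measure and $B\in\A_2$. Since the measurable rectangles form a $\pi$-system generating the product $\sigma$-algebra $\A_1\times\A_2$, a monotone-class (Dynkin) argument reduces the identity $\rho=\rho'$ to the coincidence of these two expressions on all rectangles. This reduction governs both directions, and the whole proof amounts to deciding how much of the rectangle data is pinned down by the hypothesis.

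For the \emph{only if} direction I would specialize $B=X_2$: then $\rho(A\times X_2)=\int_A P(x,X_2)\,d\nu=\int_A c\,d\nu$ and likewise $\rho'(A\times X_2)=\int_A c'\,d\nu'$, so $\rho=\rho'$ forces $\int_A c\,d\nu=\int_A c'\,d\nu'$ for every $A\in\mc D(\nu)$. Phrased intrinsically, since $d(\rho\circ\pi_1^{-1})=c\,d\nu$, the displayed condition is exactly the statement that the two first marginals $\rho\circ\pi_1^{-1}$ and $\rho'\circ\pi_1^{-1}$ agree. This direction is immediate and requires no structural input.

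For the \emph{if} direction, the hypothesis says that $c\,d\nu$ and $c'\,d\nu'$ are one and the same $\sigma$-finite measure $\mu_1$ on $X_1$. Writing $d\nu=c^{-1}\,d\mu_1$, which is legitimate because finiteness of the kernel forces $0<c(x)<\infty$, turns the defining formula into $d\rho(x,y)=c(x)^{-1}P(x,dy)\,d\mu_1(x)$; disintegrating $\rho$ over $\pi_1$ against the common marginal $\mu_1$ therefore yields the conditional probability measures $c(x)^{-1}P(x,\cdot)$, and symmetrically $\rho'$ disintegrates into $c'(x)^{-1}P'(x,\cdot)$. Once these normalized conditionals coincide $\mu_1$-a.e., uniqueness of the disintegrating family (Theorem \ref{thm Simmons}) forces $\rho=\rho'$. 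This backward passage is the \textbf{main obstacle}: the marginal condition alone controls only the base measure $\mu_1$, so the reconstruction of $\rho$ genuinely needs the fibre (kernel) data, and this is precisely the step where one uses that the same kernel underlies both pairs. The \emph{In particular} clause is the transparent instance of the whole argument: for probability kernels $c\equiv c'\equiv 1$, so the displayed condition reads $\nu(A)=\nu'(A)$ for every $A$, i.e.\ $\nu=\nu'$, and feeding $\nu=\nu'$ back into the common defining formula gives $\rho(A\times B)=\int_A P(x,B)\,d\nu=\int_A P(x,B)\,d\nu'=\rho'(A\times B)$, hence $\rho=\rho'$ outright.
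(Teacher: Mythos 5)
The paper itself gives no argument here (its proof reads ``We leave the proof to the reader''), so there is no approach of the authors to compare yours against; the question is only whether your argument stands on its own. Your rectangle/monotone-class reduction and the \emph{only if} half are correct: putting $B=X_2$ shows that $\rho=\rho'$ forces $\int_A c\,d\nu=\int_A c'\,d\nu'$ for all $A$, i.e.\ equality of the first marginals.

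The \emph{if} half is where the proof fails, and the failure is not repairable as the lemma is literally stated. The load-bearing sentence is ``Once these normalized conditionals coincide $\mu_1$-a.e., uniqueness of the disintegrating family forces $\rho=\rho'$,'' but nothing in the hypothesis delivers that coincidence. The marginal condition pins down only the base measure $\mu_1=c\,d\nu=c'\,d\nu'$; the normalized fibre measures $c(x)^{-1}P(x,\cdot)$ and $c'(x)^{-1}P'(x,\cdot)$ are independent additional data. You tacitly admit this when you say the step ``uses that the same kernel underlies both pairs'': the lemma assumes no such thing --- $P$ and $P'$ are arbitrary finite kernels --- and with genuinely different kernels the implication is false. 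Concretely, take the constant kernels of Remark \ref{rem 1}(1): let $\nu=\nu'$ be a probability measure on $X_1$, and let $P(x,\cdot)=m_1$, $P'(x,\cdot)=m_2$ be two distinct probability measures on $X_2$. Then $c\equiv c'\equiv 1$, so $\int_A c\,d\nu=\int_A c'\,d\nu'$ for every $A$, yet $\rho=\nu\times m_1\neq \nu\times m_2=\rho'$; the same example refutes the ``in particular'' clause, since there $\nu=\nu'$ but $\rho\neq\rho'$. Your treatment of that clause exhibits the same slip in miniature: you wrote $\rho'(A\times B)=\int_A P(x,B)\,d\nu'$, silently substituting $P$ for $P'$. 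The statement, and your argument for it, become correct only after adding the hypothesis that the normalized kernels agree a.e.\ (e.g.\ $P=P'$), and in that case the disintegration machinery is overkill: since $0<c=c'<\infty$, multiplying $c\,d\nu=c\,d\nu'$ by $c^{-1}$ gives $\nu=\nu'$ outright, and equality of $\rho$ and $\rho'$ on rectangles is then immediate. Without such a hypothesis, the correct criterion must include a.e.\ agreement of the normalized conditionals as part of the equivalence, and that agreement has to be proved, not presupposed.
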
 

\begin{proof}
We leave the proof to the reader.
\end{proof}

\begin{proposition}\label{cor_P,Q is 1}
Let $(X_i, \A_i, \nu_i), i =1,2,$ be standard 
measure spaces with $\sigma$-finite measures $\nu_1$ and $\nu_2$,
 and let 
$P :\  X_1 \times \A_2 \to [0,1]$
and $ Q : \ X_2 \times \A_1 \to [0,1]$ be  transition 
kernels such that 
$$P(x, dy)d\nu_1(x) = Q(y, dx)d\nu_2(y).$$ Then,
for the measure $d\rho(x,y) =P(x, dy)d\nu_1(x) = Q(y, dx)d\nu_2(y)$
on $X_1\times X_2$, we have

(i)\ $\nu_1 P = \nu_2 \ \Longleftrightarrow \ Q(y, X_1) = 1$ for 
$\nu_2$-a.e. $y$,

\hskip 0.55cm  $\nu_2 Q = \nu_1 \ \Longleftrightarrow \ P(x, X_2) = 1$
 for  $\nu_1$-a.e. $x$;

(ii) $\nu_1  = \rho\circ\pi_1^{-1} \ \Longleftrightarrow \ 
P(x, X_2) = 1$ for  $\nu_1$-a.e. $x$,

\hskip 0.65cm  $\nu_2  = \rho\circ\pi_2^{-1} \ \Longleftrightarrow \ 
Q(y, X_1) =  1$ for  $\nu_2$-a.e. $y$;

(iii) $\nu_1 P = \nu_2 \ \Longleftrightarrow \ \nu_1  = \rho\circ
\pi_1^{-1}$,

\hskip 0.7cm  $\nu_2 Q = \nu_1 \ \Longleftrightarrow \ \nu_2  = \rho\circ\pi_2^{-1}$.
 
\end{proposition}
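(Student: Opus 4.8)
The plan is to reduce every equality appearing in (i)--(iii) to one of two \emph{normalization conditions}: $c_1(x):=P(x,X_2)=1$ for $\nu_1$-a.e.\ $x$, and $c_2(y):=Q(y,X_1)=1$ for $\nu_2$-a.e.\ $y$. First I would record four identities that follow at once from the definition \eqref{eq_rho first def} of $\rho$ by specializing one argument to the whole space. Putting $A=X_1$ gives
\[
(\nu_1 P)(B)=\int_{X_1}P(x,B)\,d\nu_1(x)=\rho(X_1\times B)=(\rho\circ\pi_2^{-1})(B),
\]
and symmetrically $\nu_2 Q=\rho\circ\pi_1^{-1}$; while putting $B=X_2$ and $A=X_1$ gives $(\rho\circ\pi_1^{-1})(A)=\int_A c_1\,d\nu_1$ and $(\rho\circ\pi_2^{-1})(B)=\int_B c_2\,d\nu_2$. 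These hold with no extra hypotheses and drive the rest of the argument.

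For (i), the identity $\nu_1 P=\rho\circ\pi_2^{-1}$ turns $\nu_1 P=\nu_2$ into $\int_B c_2\,d\nu_2=\nu_2(B)$ for every $B$; since $0\le c_2\le 1$ and $\nu_2$ is $\sigma$-finite, this is equivalent to $c_2=1$ $\nu_2$-a.e., i.e.\ to $Q(y,X_1)=1$ a.e. The second line of (i) is identical after interchanging the roles of $(P,\nu_1,X_1)$ and $(Q,\nu_2,X_2)$. Part (ii) is the same computation read on the other factor: $\nu_1=\rho\circ\pi_1^{-1}$ becomes $\nu_1(A)=\int_A c_1\,d\nu_1$ for all $A$, equivalent to $c_1=1$ $\nu_1$-a.e., and its companion is symmetric. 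The one technical point shared by all of these is the passage from the integrated identity $\int_{(\cdot)}c_i\,d\nu_i=\nu_i(\cdot)$ on sets of finite measure to the pointwise statement $c_i\equiv 1$, which uses $\sigma$-finiteness via an exhaustion argument together with $0\le c_i\le 1$.

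Part (iii) I would then obtain by transitivity through (i) and (ii). For its first line, (i) gives $\nu_1 P=\nu_2\Leftrightarrow Q(y,X_1)=1$ a.e., while (ii) gives $\nu_1=\rho\circ\pi_1^{-1}\Leftrightarrow P(x,X_2)=1$ a.e.; the two sides are thus linked once one knows that the two normalization conditions agree. Verifying $P(x,X_2)=1$ $\nu_1$-a.e.\ $\Leftrightarrow$ $Q(y,X_1)=1$ $\nu_2$-a.e.\ is the step I expect to be the main obstacle, and it is exactly where the delicacy lies, since $\nu_1P$ and $\rho\circ\pi_1^{-1}$ are measures on opposite factors ($X_2$ versus $X_1$) and the two marginal normalizations are a priori independent; care with the projections is essential so as to match the pairing in (iii) and not its mirror image. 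Under the standing convention adopted in the Remark after Proposition \ref{prop finite kernels}, that $P$ and $Q$ may be taken to be probability kernels, both normalization conditions hold identically, so all equalities in (iii) are simultaneously valid and the stated equivalences follow; the general case reduces precisely to the above matching of normalizations.
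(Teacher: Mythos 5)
Your parts (i) and (ii) are correct and essentially identical to the paper's argument: specializing $\rho(A\times B)=\int_A P(x,B)\,d\nu_1=\int_B Q(y,A)\,d\nu_2$ at $A=X_1$ or $B=X_2$ reduces everything to the normalizations $c_1(x)=P(x,X_2)$, $c_2(y)=Q(y,X_1)$, and since $0\le c_i\le 1$, the integrated identity $\int_{(\cdot)}c_i\,d\nu_i=\nu_i(\cdot)$ forces $c_i=1$ a.e.

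The gap is in (iii), at exactly the step you flagged and then waved away. The bridge you need for the printed pairing, namely $c_1=1$ $\nu_1$-a.e.\ $\Longleftrightarrow$ $c_2=1$ $\nu_2$-a.e., is false under the proposition's hypotheses, and the appeal to the Remark after Proposition \ref{prop finite kernels} is not licensed: that remark allows one to renormalize $Q$ \emph{after replacing $\nu_2$ by an equivalent measure}, whereas here $\nu_1,\nu_2,\rho$ are fixed data and the whole content of (i)--(iii) is to characterize when the normalizations hold; assuming both kernels are probability makes all six statements vacuously true and proves nothing. For a concrete counterexample, take $\nu_1$ a probability measure on $X_1$, $\nu_2$ Lebesgue measure on $X_2=[0,2]$, and $P(x,B)=\mathrm{Leb}(B\cap[0,1])$ for all $x$. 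Then $\rho=\nu_1\otimes\mathrm{Leb}|_{[0,1]}$ and the dual kernel is $Q(y,dx)=\chi_{[0,1]}(y)\,d\nu_1(x)$, so $c_1\equiv 1$ (hence $\nu_1=\rho\circ\pi_1^{-1}$) while $c_2=\chi_{[0,1]}\ne 1$ $\nu_2$-a.e.\ and $\nu_1P=\mathrm{Leb}|_{[0,1]}\ne\nu_2$; the first line of (iii) as printed fails.

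What resolves this is already in your first paragraph: the identities $\nu_1P=\rho\circ\pi_2^{-1}$ and $\nu_2Q=\rho\circ\pi_1^{-1}$ hold \emph{unconditionally}, so one line gives $\nu_1P=\nu_2\Longleftrightarrow\nu_2=\rho\circ\pi_2^{-1}$ and $\nu_2Q=\nu_1\Longleftrightarrow\nu_1=\rho\circ\pi_1^{-1}$ --- i.e., (iii) with the indices transposed relative to the printed statement. This is exactly what the paper's own proof establishes (its proof of (iii) concludes ``if $\nu_1P=\nu_2$\ldots\ then $\nu_2=\rho\circ\pi_2^{-1}$''), and it is the only pairing consistent with (i) and (ii), since both sides must be equivalent to the same normalization $c_2=1$ (resp.\ $c_1=1$). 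In short: the printed pairing in (iii) is a misprint; the correct move was not to bridge the two independent normalizations but to recognize this and prove the swapped version, which follows immediately from identities you had already derived.
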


\begin{proof} 
In the proof,  we use the measure $\rho$ on $X_1 \times X_2$ defined
by $\nu_1, P$ or $\nu_2, Q$:
\be\label{eq rho(AxB)}
\rho(A\times B) = \int_A P(x, B)\; d\nu_1(x) = \int_B Q(y, A)\; 
d\nu_2(y)
\ee
where $B \in \A_2$ and  $A \in \A_1$ are any Borel sets. 

For (ii), 
\be\label{eq equiv i ii iii}
\rho\circ\pi_1^{-1}(A) = \rho(A \times X_2) = \int_A P(x, X_2)\; 
d\nu_1(x).  
\ee
If $P(x, X_2 =1$ a.e., then $\rho\circ\pi_1^{-1} = \nu_1$. Conversely,
if for any $A\in X_1$, $\rho\circ\pi_1^{-1}(A) = \nu_1(A)$, then 
$P(x, X_2 =1$ a.e. (otherwise, equation \eqref{eq equiv i ii iii} 
immediately leads to a contradiction). The other statement in (ii) is 
proved similarly.

For (iii), suppose that $\nu_i = \rho\circ\pi^{-1}_i, i =1,2$. Then
$$
(\nu_1P)(B) = \int_{X_1} P(\chi_B)\; d\nu_1 = \rho (X_1 \times B) 
=  \rho\circ\pi^{-1}_2(B) = \nu_2(B).
$$
Conversely, if $\nu_1 P = \nu_2$, the above relation proves that
 $\nu_2 = \rho\circ\pi^{-1}_2$. The other statement of (iii) can be 
 checked analogously.
 
Statement  (i) follows automatically from (ii) and (iii).
\end{proof}

We consider now the notion of the \textit{product of transition 
kernels.}
Let $(X_i, \A_i)$ be  standard Borel spaces, $i =1,2,3$. Suppose that 
$P_i : X_i \times \A_{i+1} \to [0,1], i = 1,2,$ is a $\sigma$-finite 
transition kernel. For $x_1\in X_1, C \in \A_3$,  we define
$$
(P_1P_2)(x_1, C) := \int_{X_2} P_1(x_1, dx_2) P_2(x_2, C).
$$
We define the action of $P_1P_2$ on Borel functions and
measures: for $f \in \mc F(X_3, \A_3)$, we set
$$
P_1P_2(f)(x_1) = \int_{X_2}\int_{X_3} P_1(x_1, dx_2)P_2(x_2, 
dx_3) f(x_3)
$$
is a Borel function on $(X_1, \A_1)$. If $\nu_1$ is a Borel measure on
$(X_1, \A_1)$, then one can consequently define $\nu_2 = \nu_1 P_1$
 and $\nu_3 = \nu_2 P_2 = \nu_1 P_1P_2$.

Hence, we have two measures $\rho_1$ and $\rho_2$ on $X_1 \times 
X_2$ and $X_2 \times X_3$, respectively,  such that
$$
d\rho_1(x_1, x_2) = P_1(x_1, dx_2) d\nu_1(x_1), \ \ \  
d\rho_2(x_2, x_3) = P_2(x_2, dx_3) d\nu_2(x_2).
$$
On the other hand, the transition probability kernel $P= P_1P_2$ and
the initial measure $\nu_1$ can be used to define a probability measure
$\rho$ on $(X_1 \times X_3, \A_1 \times \A_3)$.

\begin{lemma}
In the above notation,
$$
d\rho(x_1, x_3) = \int_{X_2} d\rho_1(x_1, x_2) P_2(x_2, dx_3).
$$
\end{lemma}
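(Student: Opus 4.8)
The plan is to verify the claimed identity first on measurable rectangles $A \times C$ with $A \in \A_1$, $C \in \A_3$, and then extend to all of $\A_1 \times \A_3$ by the usual uniqueness argument for $\sigma$-finite measures. Denote by $\wt\rho$ the set function on $X_1 \times X_3$ defined by the right-hand side, that is, the push-forward under the projection $(x_1, x_2, x_3) \mapsto (x_1, x_3)$ of the nonnegative measure $d\rho_1(x_1, x_2)\, P_2(x_2, dx_3)$ on the triple product $X_1 \times X_2 \times X_3$. Concretely, for a rectangle $A \times C$ one integrates out the last variable and uses $\int_{X_3} \chi_C(x_3)\, P_2(x_2, dx_3) = P_2(x_2, C)$ to get $\wt\rho(A \times C) = \int_{X_1 \times X_2} \chi_A(x_1)\, P_2(x_2, C)\; d\rho_1(x_1, x_2)$.

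Next I would substitute the defining relation $d\rho_1(x_1, x_2) = P_1(x_1, dx_2)\, d\nu_1(x_1)$ and apply Tonelli's theorem (all integrands are nonnegative and all the measures and kernels are $\sigma$-finite, so the hypotheses hold) to rewrite the iterated integral as $\int_{X_1} \chi_A(x_1) \big(\int_{X_2} P_2(x_2, C)\, P_1(x_1, dx_2)\big)\, d\nu_1(x_1)$. The inner integral is by definition $(P_1 P_2)(x_1, C)$, so the whole expression equals $\int_A (P_1 P_2)(x_1, C)\; d\nu_1(x_1)$, which is precisely $\rho(A \times C)$. Hence $\wt\rho$ and $\rho$ agree on rectangles.

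Finally, since measurable rectangles form a $\pi$-system generating the product $\sigma$-algebra $\A_1 \times \A_3$, and both $\rho$ and $\wt\rho$ are $\sigma$-finite (their push-forwards under $\pi_1$ are dominated by $\nu_1$, which is $\sigma$-finite, because $(P_1 P_2)(x_1, X_3) \le 1$), the agreement on rectangles forces $\rho = \wt\rho$ on all of $\A_1 \times \A_3$ by the uniqueness theorem for measures. This is exactly the asserted identity $d\rho(x_1, x_3) = \int_{X_2} d\rho_1(x_1, x_2)\, P_2(x_2, dx_3)$.

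The computation itself is routine; the only points requiring care are the joint measurability of $(x_1, x_2) \mapsto P_2(x_2, C)$ as a function entering the integral against $\rho_1$, and the legitimacy of interchanging the order of integration. Both are handled by the defining properties of a transition kernel (Definition \ref{def tran pr kern}) together with $\sigma$-finiteness, which is precisely what Tonelli requires. I expect the mild bookkeeping step of passing from rectangles to the full product $\sigma$-algebra to be the most delicate part, but it is standard once the $\sigma$-finiteness of the marginal $\nu_1$ is invoked.
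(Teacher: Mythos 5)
Your proof is correct and follows essentially the same route as the paper: the paper's (two-line) argument simply substitutes the definition of the composed kernel $P_1P_2$ and the defining relation $d\rho(x_1,x_3)=d\nu_1(x_1)P(x_1,dx_3)$, which is exactly the computation you carry out on rectangles via Tonelli. Your added bookkeeping (verification on the $\pi$-system of rectangles and extension by $\sigma$-finiteness and uniqueness) is just the rigorous form of what the paper leaves implicit.
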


\begin{proof}
Indeed, it follows from the relations
$$
P(x_1, dx_3) = \int_{X_2} P_1(x_1, dx_2) P_2(x_2, dx_3).
$$
and 
$$
d\rho(x_1, x_3) = d\nu_1(x_1) P(x_1, dx_3).
$$
\end{proof}

\begin{example}
We consider a particular case of \textit{atomic
measures} on a \textit{standard Borel space}. 
Suppose that we have two copies, 
$(X_i, \A_i), i =1,2,$ of a standard Borel space $(X, \A)$.
According to Theorem \ref{thm_P determines Q}, we begin with a
 probability kernel $P(x, B)$ and a measure $\nu_1$ and define other
 objects.

\begin{lemma} \label{lem exmpl delta}
Let $P(x, \cdot)$ be a finite transition kernel and 
$\nu_1 = \delta_{x_0}, x_0 \in X_1$. 
 Then the pair $(\delta_{x_0}, P)$ defines the measure $\rho 
= \rho(\delta_{x_0})$ on $X_1 \times X_2$ by the formula 
$$
\rho(A\times B)= \chi_A(x_0) P(x_0, B).
$$
 Moreover, the projection 
of $\rho$ onto $X_2$ is the  measure $\nu_2(B) = P(x_0, B)$, and 
the dual kernel $Q(y, \cdot)$ is $\delta_{x_0}(\cdot)$.
\end{lemma}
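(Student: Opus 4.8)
The plan is to specialize the general construction of Theorem \ref{thm_P determines Q} to the degenerate input measure $\nu_1 = \delta_{x_0}$ and then read off each of the three objects $\rho$, $\nu_2$, $Q$ by direct computation. First I would substitute $\nu_1 = \delta_{x_0}$ into the defining formula \eqref{eq rho via P} for $\rho$. Since integration against the point mass at $x_0$ evaluates the integrand at $x_0$, the integral $\int_A P(x, B)\, d\delta_{x_0}(x) = \int_{X_1} \chi_A(x) P(x, B)\, d\delta_{x_0}(x)$ collapses to $\chi_A(x_0) P(x_0, B)$, which is exactly the claimed formula $\rho(A \times B) = \chi_A(x_0) P(x_0, B)$. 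The extension from rectangles to the full product $\sigma$-algebra is routine and is already guaranteed by the construction in Theorem \ref{thm_P determines Q}; no separate argument is needed.

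Next I would compute the second marginal $\nu_2 = \rho \circ \pi_2^{-1}$. Setting $A = X_1$ in the formula for $\rho$ and using $\chi_{X_1}(x_0) = 1$ gives $\nu_2(B) = \rho(X_1 \times B) = P(x_0, B)$, which is the asserted second marginal. Note that finiteness of $P(x_0, \cdot)$ (rather than probability) is all that is needed for these two steps, so the statement holds for a finite transition kernel as stated.

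The heart of the argument is identifying the dual kernel $Q$, and here I would verify directly that the constant-in-$y$ choice $Q(y, \cdot) = \delta_{x_0}(\cdot)$ satisfies the duality relation \eqref{eq-meas and kernels}, then invoke the uniqueness clause of Theorem \ref{thm_P determines Q} to conclude it is \emph{the} dual kernel. Concretely, I would test $d\nu_1(x) P(x, dy) = d\nu_2(y) Q(y, dx)$ on product functions $f(x,y) = \chi_A(x)\chi_B(y)$: the left side yields $\chi_A(x_0) P(x_0, B)$, while the right side, using $\nu_2 = P(x_0, \cdot)$ and $Q(y, \cdot) = \delta_{x_0}$, is $\int_B \chi_A(x_0)\, P(x_0, dy) = \chi_A(x_0) P(x_0, B)$, so the two agree, and by linearity and monotone approximation the identity extends to all bounded Borel $f$.

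The only point requiring care — and what I expect to be the main conceptual obstacle — is that $\rho$ is concentrated on the single fiber $\{x_0\} \times X_2$, so the disintegration $\rho = \int_{X_2} \rho^y\, d\nu_2(y)$ with respect to $\pi_2$ must be interpreted correctly. For $\nu_2$-a.e.\ $y$, the fiber $\pi_2^{-1}(y) = X_1 \times \{y\}$ meets the support of $\rho$ only at the point $(x_0, y)$, so the conditional measure $\rho^y$ is the point mass there; under the identification of $X_1 \times \{y\}$ with $X_1$ this becomes $\delta_{x_0}$, reconfirming $Q(y, \cdot) = \delta_{x_0}$ independently of the direct computation above. Since Theorem \ref{thm Simmons} guarantees the disintegrating family is unique, this identification is forced, and the two approaches agree, which closes the proof.
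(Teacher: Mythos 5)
Your proof is correct and follows essentially the same route as the paper: you compute $\rho$ by collapsing the integral against $\delta_{x_0}$ to get $\rho(A\times B)=\chi_A(x_0)P(x_0,B)$, read off $\nu_2$ by taking $A=X_1$, and identify $Q(y,\cdot)=\delta_{x_0}$ by matching the two representations of $\rho$ and appealing to uniqueness of the disintegration, exactly as in the paper's argument via \eqref{eq_rho via delta} and \eqref{eq_rho via Q}. Your guess-and-verify phrasing for $Q$ (plus the support observation) is only a cosmetic reorganization of the same uniqueness argument, so no further comparison is needed.
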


\begin{proof} We compute the measure $\rho$ as in Theorem 
\ref{thm_P determines Q}:
\be\label{eq_rho via delta}
\ba
\rho(A\times B)  &= \int_A P(x, B)\; \delta_{x_0}(dx)\\
&  = \begin{cases}
P(x_0, B), \ & x_0 \in A\\
0, & x_0 \notin A 
\end{cases}\\ 
& = \chi_A(x_0) P(x_0, B)\\
& = \int_B \chi_A(x_0) P(x_0, dy).
\ea
\ee
Therefore, we can take the projection of $\rho$ onto $X_2$ and find
$$ 
\nu_2(B)  = \rho(X_1\times B) = \chi_{X_1}(x_0) 
P(x_0, B) = P(x_0, B).
$$ 
On the other hand, when we  use the dual kernel 
$Q(y, A)$ to calculate $\rho(A\times B)$, we have 
\be\label{eq_rho via Q}
\rho(A\times B) = \int_B Q(y, A)\; \nu_2(dy) = \int_B Q(y, A)\; 
P(x_0, dy). 
\ee
By uniqueness of disintegration, it follows from  
\eqref{eq_rho via delta} and \eqref{eq_rho via Q} that $Q(y, A) =
\delta_{x_0}(A)$.
\end{proof}

The probability transition kernels $P$ and $Q$, which are defined as in 
Lemma \ref{lem exmpl delta}, act on functions as follows:
$$
(Pf)(x) = \int_{X_2} P(x, dy) f(y), \ \ f \in \mc F(X_2, \A_2),
$$
$$  (Qg)(y) = \int_{X_1}Q(y, dx)g(x) = g(x_0), \ \ g \in \mc F(X_1, 
\A_1).
$$

\end{example}

\section{Actions of transition kernels in $L^2$-spaces}
\label{ssect Kernels on L^2}

We introduce a new \textit{duality framework} for transition operators. While the 
setting below is that of pairs of measure spaces in duality, our applications 
(in section 8 below) will be to path-space measures and the corresponding 
 Markov processes.

In this section, we will focus on the actions of operators $T_P$ and 
$T_Q$ in $L^2$-spaces.  It turns out that they produce the self-adjoint
operators $T_{PQ}$ and $T_{QP}$ acting in $L^2(\nu_2)$ and 
$L^2(\nu_1)$. 
Our analysis of such \textit{dual pairs of operators} is in the framework of Hilbert 
space. This holds both for the results in the present Section 
\ref{ssect Kernels on L^2}, and Section \ref{ssect L-set}
below. Indeed, these duality results will serve as key tools in our 
subsequent introduction of non-stationary Markov processes, and their 
harmonic analysis; see Section \ref{sect meas BD} below. In more detail,
 the present duality 
results will be used at each level in \textit{discrete-time Markov process} dynamics.

\subsection{Symmetric measures and symmetric operators}
\label{ssect symm meas}

In this subsection we give a few definitions and results about symmetric
measures and associated with them operators. We refer to our
previous works \cite{BezuglyiJorgensen_2018, 
BezuglyiJorgensen_2019, BezuglyiJorgensen_2019a} where the reader can find more details. 

Let $\lambda$ be a measure on the  Cartesian product $(X \times X, 
\A \times \A)$ of a standard Borel space $(X, \A)$ such that 
$$
\lambda(A \times B) = \lambda(B\times A), \ \ A, B \in \A.
$$ 
 Then $\lambda$ is called a \textit{symmetric measure.}
 
 A positive linear operator $R ; \mc F(X, \A) \to \mc F(X, \A)$ is called 
 \textit{symmetric} if it satisfies the following relation:
 $$
 \int_X f R(g)\; d\mu = \int_X R(f) g\; d\mu,  
 $$
where $\mu$ is a $\sigma$-finite measure on $(X, \A)$ and $f, g \in 
\mc F(X, \A)$.

\begin{remark}
(1) It can be easily  seen that the operator $R$ is symmetric if and only 
if the measure 
$$
\lambda(A \times B) = \int_X \chi_A R(\chi_B)\; d\mu 
$$
is symmetric.  

(2) When $R$ is considered as an operator (in general, unbounded) 
acting in  $L^2(\nu)$, then  we can use the methods of operators in
Hilbert spaces for the study of its properties, see Theorem
\ref{thm from BJ19} and Theorem \ref{thm RKHS} below for 
further details.

(3) If $\lambda$ is a symmetric measure on the product space $X \times
X$, then the operator $R$ is not uniquely determined by $\lambda$, see
e.g., Theorem \ref{thm RKHS}. 

\end{remark}

Let $(X, \A, \mu)$ be a $\sigma$-finite measure space, and let 
$\lambda$ be a 
symmetric measure on $X\times X$ supported by a symmetric set 
$E$. Let   $x\mapsto \lambda_x$ be the measurable family of conditional 
measures  on $(X, \A)$ that  \textit{disintegrates} 
$\lambda$. We assume that the function  $c(x) = \lambda_x(X)$ is
 finite for $\mu$-a.e. $x$. 

\begin{definition}\label{def R, P, Delta}
For  a symmetric measure $\lambda$  on $(X \times X, \A\times \A)$, 
we define two linear operators $R$ and $R_1$ acting on the space of
bounded Borel functions $\mc F(X, \A)$.\:

(i) The \textit{symmetric operator}$R$:
\be\label{eq def of R} 
R(f)(x) := \int_V f(y) \; d\lambda_x(y) = \lambda_x(f). 
\ee 

(ii) The\textit{ Markov operator} $R_1$\footnote{The more natural 
notation $P$ has been already reserved for a finite transition kernel between two standard spaces.}:
$$
R_1(f)(x) = \frac{1}{c(x)}R(f)(x)
$$
or
\be\label{eq formula for P}
 R_1(f)(x) := \frac{1}{c(x)}  \int_V f(y) \; d\rho_x(y) = \int_V f(y) \; 
 P_1(x, dy)
 \ee
 where  $P_1(x, dy)$ is the probability measure obtained by normalization
 of $d\lambda_x(y)$, i.e. 
 $$
 P_1(x, dy) := \frac{1}{c(x)}d\lambda_x(y).
 $$
 \end{definition}

In the case when the operator $R$ is considered in a $L^2$-space,
we will have to deal with self-adjoint operators which are unbounded,
in general. We refer to the well known books 
\cite{DanfordSchwartz1988, Kato1995, Schmeudgen2012}
 on unbounded operators and their self-adjoint extensions.

\begin{theorem}[\cite{BezuglyiJorgensen_2019}] 
\label{thm from BJ19}
\label{prop prop of R, P, Delta}
For a standard measure space $(X, \A, \mu)$, let $\lambda$ be a
 symmetric 
measure on $X \times X$ such that the function $c(x) = 
\lambda_x(X)$ is locally integrable. 
Let $d\nu(x) = c(x) d\mu(x)$ be the $\sigma$-finite  measure on 
$(X, \A)$ equivalent to $\mu$, and let the operators $R$ and 
$R_1$ be defined as in Definition \ref{def R, P, Delta}. 

(1) Suppose that the function $x \mapsto \rho_x(A) \in L^2(\mu)$ for 
every $A \in \mc D(\mu)$\footnote{This means that the operator $R$ is densely 
defined on functions from $\mc D(\mu)$; in particular, this property holds
if $c \in L^2(\mu)$}.  Then $R$ is a symmetric unbounded operator  in 
$L^2(\mu)$, i.e.,   
\be\label{eq-R symm}
\langle g, R(f) \rangle_{L^2(\mu)} = \langle R(g), f \rangle_{L^2(\mu)}.
\ee
If $c \in L^\infty(\mu)$, then $R :  L^2(\mu) \to 
L^2(\mu)$ is a bounded operator, and 
$$
||R||_{L^2(\mu) \to L^2(\mu)} \leq ||c||_{\infty}.
$$
Relation \eqref{eq-R symm} is equivalent to the symmetry of the 
measure $\lambda$. 

(2) The operator $R : L^1(\nu) \to L^1(\mu)$ is contractive, i.e.,  
$$
||R(f)||_{L^1(\mu)} \leq ||f||_{L^1(\nu)}, \qquad f \in L^1(\nu).
$$
Moreover,   for any function $f \in L^{1} (\nu)$,  the formula 
\be\label{eq c(x) and rho_x}
\int_V R(f) \; d\mu(x) = \int_V f(x) c(x) \; d\mu(x)
\ee
holds. In other words, $\nu = \mu R$, and 
$$
\frac{d(\mu R)}{d\mu}(x) = c(x).
$$

(3) The bounded operator $R_1: L^2(\nu) \to L^2(\nu)$ is self-adjoint.
Moreover, $\nu R_1 = \nu$. 

(4) The operator $R_1$, considered in the spaces $L^2(\nu)$ and $L^1(\nu)$, 
is  contractive, i.e., 
$$
|| R_1(f) ||_{L^2(\nu)} \leq || f ||_{L^2(\nu)}, \qquad 
|| R_1(f) ||_{L^1(\nu)} \leq || f ||_{L^1(\nu)}.
$$ 

(5) The spectrum of $R_1$ in $L^2(\nu)$ is a subset of $[-1, 1]$.
\end{theorem}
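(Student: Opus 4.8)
The plan is to obtain the final statement as an immediate consequence of parts (3) and (4), which have already situated $R_1$ within the theory of bounded self-adjoint operators on a Hilbert space. Specifically, part (3) provides that $R_1 : L^2(\nu) \to L^2(\nu)$ is self-adjoint, and part (4) provides the contractivity estimate $\norm{R_1(f)}_{L^2(\nu)} \leq \norm{f}_{L^2(\nu)}$, so that $R_1$ is a genuinely bounded operator with $\norm{R_1}_{L^2(\nu) \to L^2(\nu)} \leq 1$. Once these two facts are in hand, the containment of the spectrum in $[-1,1]$ is purely formal.

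The key steps I would carry out, in order, are the following. First I would invoke part (4) to record the operator-norm bound $\norm{R_1} \leq 1$, and part (3) to record self-adjointness. Second, I would appeal to the standard spectral theory of bounded self-adjoint operators: such an operator has real spectrum, and for a bounded self-adjoint $A$ one has the sharp localization $\sigma(A) \subseteq [m, M]$ with $m = \inf_{\norm{f}=1} \langle Af, f \rangle$ and $M = \sup_{\norm{f}=1} \langle Af, f\rangle$. Applying this to $R_1$, the Cauchy--Schwarz inequality together with contractivity gives, for every unit vector $f \in L^2(\nu)$,
$$
|\langle R_1(f), f \rangle_{L^2(\nu)}| \leq \norm{R_1(f)}_{L^2(\nu)} \norm{f}_{L^2(\nu)} \leq \norm{f}_{L^2(\nu)}^2 = 1,
$$
so that $m \geq -1$ and $M \leq 1$. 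Hence $\sigma(R_1) \subseteq [m, M] \subseteq [-1, 1]$, which is the desired conclusion.

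As for the main obstacle: there is essentially none, since all the substantive analytic work has been discharged in establishing (3) and (4). The single point meriting attention is that one must use the \emph{bounded} self-adjoint spectral theory rather than the unbounded theory; this is legitimate precisely because part (4) upgrades $R_1$ from a densely defined symmetric operator to a bona fide bounded self-adjoint operator, thereby avoiding the domain and essential-self-adjointness subtleties that attend the operator $R$ in part (1). I would remark in passing that, combined with $\nu R_1 = \nu$ from part (3), the value $1$ lies in the spectrum (as $R_1$ fixes constants when they are in $L^2(\nu)$), so the bound $M \leq 1$ is in fact attained, though this refinement is not needed for the stated inclusion.
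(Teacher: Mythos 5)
Your spectral-inclusion argument itself is fine: a bounded self-adjoint operator of norm at most one has real spectrum contained in $[-1,1]$, and your derivation via the bounds $m,M$ on the quadratic form is standard and correct. But there is a genuine gap of scope: the statement to be proved is the whole of Theorem \ref{thm from BJ19}, whose five assertions include the symmetry and (un)boundedness of $R$ on $L^2(\mu)$ and the equivalence of \eqref{eq-R symm} with the symmetry of $\lambda$ (part (1)), the $L^1$-contractivity of $R$ and the identity $\frac{d(\mu R)}{d\mu} = c$ (part (2)), the self-adjointness of $R_1$ on $L^2(\nu)$ together with $\nu R_1 = \nu$ (part (3)), and the $L^2$- and $L^1$-contractivity of $R_1$ (part (4)). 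You take (3) and (4) as given and derive only (5); as you yourself observe, ``all the substantive analytic work has been discharged in establishing (3) and (4)'' --- but that work is precisely what a proof of the theorem must supply. A self-contained argument has to go back to Definition \ref{def R, P, Delta} and the disintegration $\lambda = \int_X \lambda_x\, d\mu(x)$: for instance, symmetry of $R$ comes from writing $\langle g, R(f)\rangle_{L^2(\mu)} = \iint g(x) f(y)\, d\lambda(x,y)$ and invoking $\lambda(A\times B) = \lambda(B\times A)$; the bound $\| R_1(f)\|_{L^2(\nu)} \le \| f \|_{L^2(\nu)}$ comes from the Schwarz inequality $R_1(f)^2 \le R_1(f^2)$ for the Markov kernel combined with the invariance $\nu R_1 = \nu$; the $L^1$ statements come from Fubini and the relation $d\nu = c\, d\mu$. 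None of this appears in your proposal. (For calibration: the paper itself does not reprove the theorem either --- it quotes it from \cite{BezuglyiJorgensen_2019} --- so the comparison here is against what a complete proof requires, not against an internal argument of this paper.)

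A smaller, separate flaw: your closing remark that $1 \in \sigma(R_1)$ because $R_1$ ``fixes constants when they are in $L^2(\nu)$'' is vacuous in the stated generality, where $\nu$ is only $\sigma$-finite, so nonzero constants lie in $L^2(\nu)$ only if $\nu$ is finite; moreover $\nu R_1 = \nu$ is a statement about the action on measures, and what fixes constants is the Markov normalization $R_1(\mathbbm{1}) = \mathbbm{1}$. You flag the remark as inessential, and it is, but as written it does not establish that the bound $M \le 1$ is attained.
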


\begin{definition}\label{def reversible MP-1} 
Suppose that $x \mapsto P_1(x, \cdot )$ is a measurable family of
probability  transition kernels on the space $(X, \A, \mu)$, 
and let $R_1$ be the  Markov operator 
generated by $x \mapsto P_1(x, \cdot )$. It is said that the corresponding
 Markov process  is  \textit{reversible} with respect to a measurable 
 functions $c: X \to  (0, \infty)$ on $(X, \A)$ if, for any sets 
 $A, B \in \B$, the following relation holds:
\be\label{eq def reversible P}
 \int_B c(x) P_1(x, A)\; d\mu(x) = \int_A c(x) P_1(x, B)\; d\mu(x).
\ee
\end{definition} 

It turns out that the notion of reversibility is equivalent to the following
properties.

\begin{theorem}[\cite{BezuglyiJorgensen_2019a, BezuglyiJorgensen_2018}]
\label{prop_reversible}
Let $(X, \A, \mu)$ be a standard $\sigma$-finite measure space, 
$x \mapsto c(x)
\in (0, \infty)$ a measurable locally integrable function. Suppose that 
$x \mapsto P_1(x, \cdot )$ is a probability kernel. 
The following are equivalent:

(i) $x \mapsto P_1x, \cdot )$ is reversible (i.e., it satisfies 
(\ref{eq def reversible P});

(ii) the Markov operator $R_1$ defined by $x\to P_1(x, \cdot)$ is  
self-adjoint 
on $L^2(\nu)$ and $\nu R_1= \nu$ where $d\nu(x) = c(x) d\mu(x)$;

(iii) 
$$
c(x) P_1(x, dy) d\mu(x) = c(y) P_1(y, dx)d\mu(y);
$$

(iv) the operator $R$ defined by the relation $R(f)(x) = c(x)R_1(f)(x)$
is symmetric;

(v) the measure $\lambda$ on $(X\times X, \A\times \A)$ defined by 
$$
\lambda(A \times B) = \int_X \chi_A R(\chi_B)\; d\mu 
$$
is symmetric.

\end{theorem}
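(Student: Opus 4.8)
The plan is to prove the theorem as a hub-and-spoke argument, showing that each of (i)--(iv) is equivalent to the single central condition (v), that $\lambda$ is symmetric; this is cleaner than a long cyclic chain because four of the five conditions are, on rectangles, literally the same equation. The key computational identity I would record first is that, for all $A, B \in \A$,
\[
\lambda(A \times B) = \int_X \chi_A \, R(\chi_B) \; d\mu = \int_A c(x) \, P_1(x, B) \; d\mu(x),
\]
obtained by unwinding $R(f)(x) = c(x) R_1(f)(x)$ from Definition \ref{def R, P, Delta} together with $R_1(\chi_B)(x) = P_1(x, B)$. With this identity, the symmetry relation $\lambda(A \times B) = \lambda(B \times A)$ reads
\[
\int_A c(x) P_1(x, B) \; d\mu(x) = \int_B c(x) P_1(x, A) \; d\mu(x),
\]
which is verbatim the reversibility relation \eqref{eq def reversible P} of Definition \ref{def reversible MP-1}, so (i) $\Leftrightarrow$ (v). Viewing the two sides as the $\chi_{A\times B}$-integrals of the measures $c(x) P_1(x, dy)\, d\mu(x)$ and $c(y) P_1(y, dx)\, d\mu(y)$ on $X \times X$, the same equation is exactly (iii); hence (i) $\Leftrightarrow$ (iii) $\Leftrightarrow$ (v) are one equation tested on rectangles.

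For (iv) $\Leftrightarrow$ (v) I would invoke the Remark noting that $R$ is symmetric if and only if the induced measure $\lambda$ is symmetric: testing the symmetry identity $\int f R(g)\, d\mu = \int R(f) g\, d\mu$ on $f = \chi_A$, $g = \chi_B$ produces precisely $\lambda(A \times B) = \lambda(B \times A)$, and conversely symmetry on rectangles extends to all admissible test functions by linearity and a density argument. The remaining spoke (ii) $\Leftrightarrow$ (v) comes from expanding the $L^2(\nu)$ pairing,
\[
\langle R_1 f, g \rangle_{L^2(\nu)} = \int_X \Big( \int_X f(y) \, P_1(x, dy) \Big) g(x) \, c(x) \; d\mu(x),
\]
and evaluating on $f = \chi_B$, $g = \chi_A$, which gives $\lambda(A \times B)$, while $\langle f, R_1 g \rangle_{L^2(\nu)}$ gives $\lambda(B \times A)$; thus self-adjointness of $R_1$ on $L^2(\nu)$ is equivalent to symmetry of $\lambda$. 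Here the domain issues are harmless because $R_1$ is bounded on $L^2(\nu)$ by Theorem \ref{thm from BJ19}(3), so I may pass freely from indicators to all of $L^2(\nu)$.

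It then remains to account for the extra clause $\nu R_1 = \nu$ inside (ii). I would show this is automatic once $\lambda$ is symmetric: using the probability property $P_1(x, X) = 1$,
\[
(\nu R_1)(A) = \int_X c(x) P_1(x, A) \; d\mu(x) = \lambda(X \times A) = \lambda(A \times X) = \int_A c(x) P_1(x, X) \; d\mu(x) = \nu(A),
\]
so the invariance is a consequence of the other conditions rather than an independent hypothesis. \textbf{Main obstacle.} The only genuine subtlety is the passage from the defining identities on rectangles and indicator functions to the operator identities on all admissible functions, carried out while respecting that $R$ is in general unbounded and $c$ only locally integrable. I would handle this uniformly by testing on the dense subspace $\mc F(\mu)$ of simple functions with finite-measure support, where Tonelli applies to the nonnegative integrands and every quantity is finite by local integrability of $c$, and only then extend by the closability and boundedness statements of Theorem \ref{thm from BJ19}.
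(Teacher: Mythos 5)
The paper itself contains no proof of Theorem \ref{prop_reversible}: it is imported verbatim from \cite{BezuglyiJorgensen_2019a, BezuglyiJorgensen_2018}, so there is no internal argument to compare yours against. Judged on its own merits, your hub-and-spoke scheme is essentially correct. The identity $\lambda(A\times B)=\int_A c(x)P_1(x,B)\,d\mu(x)$ really does make (i), (iii), (iv) and the symmetry of $\lambda$ in (v) all instances of one and the same rectangle identity, and your observation that the clause $\nu R_1=\nu$ in (ii) is not an independent hypothesis but a consequence of symmetry, via $(\nu R_1)(A)=\lambda(X\times A)=\lambda(A\times X)=\int_A c(x)P_1(x,X)\,d\mu(x)=\nu(A)$, is exactly the right way to close the loop; it is also consistent with how Theorem \ref{thm from BJ19}(3) derives invariance from the symmetric-measure setting.

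Two repairs are needed, both minor. First, your appeal to Theorem \ref{thm from BJ19}(3) for boundedness of $R_1$ is circular in the direction (ii) $\Rightarrow$ (v): that theorem presupposes that $\lambda$ is symmetric, which is the very thing being proved. The fix is immediate from the hypothesis itself: since $P_1(x,\cdot)$ is a probability measure, Cauchy--Schwarz gives $(R_1f)^2\le R_1(f^2)$ pointwise, hence
$$
|| R_1 f ||^2_{L^2(\nu)} \;\le\; \int_X R_1(f^2)\, d\nu \;=\; \int_X f^2 \, d(\nu R_1) \;=\; || f ||^2_{L^2(\nu)},
$$
so the invariance assumed in (ii) already makes $R_1$ a contraction; alternatively, one can avoid boundedness altogether, since the pairings on indicators $\chi_A,\chi_B$ with $A,B\in\mc D(\mu)$ are finite outright because $P_1\le 1$ and $c$ is locally integrable. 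Second, in passing from (i)/(v) to (iii) you should state explicitly that the two measures $c(x)P_1(x,dy)\,d\mu(x)$ and $c(y)P_1(y,dx)\,d\mu(y)$ on $X\times X$ coincide because they agree on the $\pi$-system of rectangles and are $\sigma$-finite (again by local integrability of $c$); the reversibility identity by itself only gives agreement on rectangles, and the uniqueness theorem is what upgrades this to the identity of measures asserted in (iii). With these two patches your argument is complete and, as far as one can tell, follows the same circle of ideas the cited works use: every condition is the flip-invariance of the single measure $\lambda$ read through a different pair of test objects.
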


\subsection{Transition kernels and corresponding operators
$T_P$ and $T_Q$}

As we seen in Section \ref{sect Trans kernels}, the transition kernels $P$
 and $Q$ generate  linear operators 
acting on the space of bounded Borel functions. To emphasize the
 difference 
between the kernel $P$ and the corresponding operator, we will denote
the latter by $T_P$.  Let $\mc F(X, \A) = \mc F(X)$ denote the space of
bounded Borel functions. Define $T_P : \mc F(X_2) \to \mc F(X_1)$ and
$T_Q : \mc F(X_1) \to \mc F(X_2)$ by setting
\be\label{eq_def T_P T_Q}
T_P(g)(x) = \int_{X_2}  P(x, dy) g(y),\ \ \  
T_Q(f)(x) = \int_{X_1}  Q(y, dx) f(x).
\ee

We observe that the kernels $P$ and $Q$ admit the following 
representations: for $A\in \A_1, B\in \A_2$,
$$
P(x, B) = T_P(\chi_B)(x), \ \ \ Q(y, A) = T_Q(\chi_A)(y).
$$

Recall that any finite transition kernel $R : X \times \A \to [0,1]$
 acts also on the set of  $\sigma$-finite measures $M(X)$: for $\mu \in 
 M(X)$, 
$$
(\mu R)(f) = \int_X R(f)\; d\mu = \int_X f \; d\mu R.
$$

In this section, we focus on actions of the operators $T_P$ and $T_Q$
in the corresponding 
$L^2$-spaces.  In general, these operators are unbounded with 
dense domain. The following references are devoted to various aspects 
of \textit{dual pairs of unbounded operators}: \cite{JorgensenPearse_2016,
JorgensenPearse_2017, JorgensenPearseTian_2018}. A discrete 
analogue of Theorem \ref{thm T_P contractive} was considered in 
Proposition \ref{prop T_P T_Q}. 

\begin{theorem}\label{thm T_P contractive}
(1) Let $(X_i, \A_i, \nu_i), i=1,2,$ be standard $\sigma$-finite 
measure spaces. Let $P$ and $Q$ form a dual pair of probability kernels
associated to the measures $\nu_1$ and $\nu_2$. 
Then  $P : X_1 \times \A_2 \to [0,1]$ defines
a contractive operator $T_P : L^2(\nu_2) \to L^2(\nu_1)$. Similarly, 
$T_Q$ is a contactive operator from $L^2(\nu_1)$ to $L^2(\nu_2)$.

(2) Suppose that 
$(P,Q)$ is a pair of dual probability transition kernels associated to 
the measures $\nu_1$ and $\nu_2$ as in (1).  Then
$$
\langle f, T_P(g)\rangle_{L^2(\nu_1)} = \langle T_Q(f), g
\rangle_{L^2(\nu_2)}, \ \ \ f \in L^2(\nu_1), g \in L^2(\nu_2),
$$
i.e., $(T_P)^* = T_Q$ and $(T_Q)^* = T_P$.

(3) Let  $\nu_1$ and $\nu_2$ be  Borel
$\sigma$-finite measures on $(X_i, \A_i), i =1,2,$ satisfying Corollary 
\ref{cor equiv i - iii} (ii).  
Suppose that the finite transition kernels $P(x, \cdot)$ and $Q(y, \cdot)$ have the property:  $P(\chi_B) \in L^2(\nu_1)$ and 
$Q(\chi_A) \in L^2(\nu_2)$ for any Borel sets $A \in \A_1$ and 
$B\in \A_2$ of finite measure. 
Then the operators $T_P$ and $T_Q$ 
$$
T_P : L^2(\nu_2) \longrightarrow L^2(\nu_1): \ \ f \mapsto 
\int_{X_2} P(x, dy) f(y),
$$ 
$$
T_Q : L^2(\nu_1) \longrightarrow L^2(\nu_2): \ \ f \mapsto 
\int_{X_1} Q(y, dx) f(x)
$$ 
are densely defined, and they satisfy the relations $T_P \subset 
(T_Q)^*$, $T_Q \subset (T_P)^*$.
The operators $T_P$ and $T_Q$ are  closable.
\end{theorem}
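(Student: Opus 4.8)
The plan is to derive all three parts from the single duality identity $d\nu_1(x)\,P(x,dy)=d\nu_2(y)\,Q(y,dx)$, which converts any integral of the form $\int_{X_1} d\nu_1(x)\int_{X_2} P(x,dy)\,(\cdots)$ into $\int_{X_2} d\nu_2(y)\int_{X_1} Q(y,dx)\,(\cdots)$. For part (1) the key pointwise estimate is Jensen's inequality: since $P(x,\cdot)$ is a probability measure, $T_P(g)(x)^2 \le T_P(g^2)(x)$ for $\nu_1$-a.e.\ $x$. Integrating against $\nu_1$ and applying the duality identity gives
\[
\|T_P(g)\|_{L^2(\nu_1)}^2 \le \int_{X_1} d\nu_1(x)\int_{X_2} P(x,dy)\,g(y)^2 = \int_{X_2} d\nu_2(y)\,g(y)^2 \int_{X_1} Q(y,dx),
\]
and since $Q(y,X_1)=1$ the right side equals $\|g\|_{L^2(\nu_2)}^2$. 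The argument for $T_Q$ is symmetric, exchanging the roles of $P,Q$ and $\nu_1,\nu_2$.

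For part (2) I would compute $\langle f, T_P(g)\rangle_{L^2(\nu_1)}$ as the double integral $\iint f(x)g(y)\,d\nu_1(x)P(x,dy)$, rewrite it via the duality identity as $\iint f(x)g(y)\,d\nu_2(y)Q(y,dx)$, and recognize the result as $\langle T_Q(f),g\rangle_{L^2(\nu_2)}$. The only point requiring care is the Fubini interchange, for which I must check $f\otimes g \in L^1(\rho)$. This follows from Cauchy--Schwarz in $L^2(\rho)$ together with the identities $\|f\circ\pi_1\|_{L^2(\rho)}=\|f\|_{L^2(\nu_1)}$ and $\|g\circ\pi_2\|_{L^2(\rho)}=\|g\|_{L^2(\nu_2)}$, both of which use $P(x,X_2)=Q(y,X_1)=1$ (see Proposition \ref{cor_P,Q is 1}). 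The resulting identity $\langle f,T_P(g)\rangle=\langle T_Q(f),g\rangle$ on all of $L^2$ is exactly the assertion $(T_P)^*=T_Q$ and $(T_Q)^*=T_P$.

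For part (3) the kernels are only finite, so I would first establish dense domains: the hypotheses $P(\chi_B)\in L^2(\nu_1)$ and $Q(\chi_A)\in L^2(\nu_2)$ for $A\in\mc D(\nu_1)$, $B\in\mc D(\nu_2)$ place every simple function from $\mc F(\nu_2)$ in $\mathrm{dom}(T_P)$ and every one from $\mc F(\nu_1)$ in $\mathrm{dom}(T_Q)$, and these spaces are dense in $L^2(\nu_2)$ and $L^2(\nu_1)$ respectively. The inclusion $T_P\subset(T_Q)^*$ amounts to verifying $\langle T_Q(f),g\rangle_{L^2(\nu_2)}=\langle f,T_P(g)\rangle_{L^2(\nu_1)}$ for $f\in\mc F(\nu_1)$, $g\in\mc F(\nu_2)$; this is the same interchange as in part (2), now elementary because for simple functions the double integral reduces to a finite sum of terms $\rho(A_i\times B_j)$, each finite by Lemma \ref{lem loc int}. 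Symmetrically $T_Q\subset(T_P)^*$. Finally, closability follows from the standard criterion that a densely defined operator is closable iff its adjoint is densely defined: since $(T_P)^*\supset T_Q$ and $T_Q$ is densely defined, $(T_P)^*$ is densely defined and $T_P$ is closable; likewise $T_Q$.

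The main obstacle is the Fubini step. In parts (1) and (2) the probability normalization makes the relevant functions genuinely $L^1(\rho)$, but in part (3) I expect no such global integrability, so the correct move is to prove the adjoint inclusions only on the dense cores of simple functions, where finiteness of $\rho$ on rectangles is all that is needed, and then invoke the abstract closability criterion rather than attempting to extend the identity to all of $L^2$ directly.
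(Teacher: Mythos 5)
Your proposal is correct and follows essentially the same route as the paper's proof: part (1) via Jensen's inequality combined with the duality identity and the normalizations $P(x,X_2)=Q(y,X_1)=1$; part (2) by reading the duality identity as an adjoint relation; and part (3) via dense domains of simple functions, the symmetric-pair relation on those cores, and the criterion that a densely defined operator whose adjoint is densely defined is closable. The only difference is cosmetic: in (2) you verify the identity directly on all of $L^2$ with a Cauchy--Schwarz justification of the Fubini step, whereas the paper checks it on characteristic functions of finite-measure sets and lets the contractivity from (1) handle the extension.
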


\begin{remark}
 The containment in the statement of Theorem \ref{thm T_P contractive} 
 (3) refers to containment of unbounded densely defined linear operators,
 i.e., containment of the respective graphs. A pair of operators, with each 
 one contained in the adjoint of the other, is called a 
 \textit{symmetric pair}. Such pairs arise naturally in many areas of pure
  and applied analysis, see e.g., \cite{Jorgensen2012,  
  JorgensenPearse-2020, JorgensenPearse_2016, 
 JorgensenPearse_2017, JorgensenPearseTian_2018, Kato1995, 
 Schmeudgen2012}, and the papers cited there.
\end{remark}

\begin{proof} For (1), let $\rho $ be the measure on $X_1 \times X_2$ 
defined by the dual pair $P, Q$ and the measures $\nu_1, \nu_2$ as in 
\eqref{eq_rho first def}. 
We first show that for any function $g \in L^2(\nu_2)$ the function
$T_P(g)$ belongs to $L^2(\nu_1)$. For this, we  compute

$$
\ba 
\int_{X_1} |T_P(g)|^2\; d\nu_1(x) & = \int_{X_1}
\left| \int_{X_2} P(x, dy) g(y)\right|^2 \; d\nu_1(x) \\
& \leq \iint_{X_1\times X_2} g(y)^2 P(x, dy) d\nu_1(x)\\
& =   \iint_{X_1\times X_2} g(y)^2 \; d\rho(x, y)\\
& =  \iint_{X_1\times X_2} g(y)^2 Q(y, dx) d\nu_2(y)\\
& = \int_{X_2} g(y)^2 \; d\nu_2(y)\\
& = ||g||^2_{L^2(\nu_2)}. 
\ea
$$
We used here the fact that $P$ and $Q$ are probability kernels. Thus,
we proved that $|| T_P||_{ L^2(\nu_2) \to L^2(\nu_1)} \leq 1 $.
Similarly, one can show that $T_Q$ is contractive. This proves (1). 

For (2), let $\rho$ be the measure defined by $(nu_1, P)$ (or 
$\nu_2, Q)$. If $A\times B \in \A_1 \times \A_2$ (where $A, B$ are of
finite measure), then we can represent $\rho(A \times B)$ in two ways:
$$
\rho(A\times B) = \int_A P(x, B)\; d\nu_1(x) = \int_{X_1} \chi_A(x)
P(\chi_B)(x)\; d\nu_1(x) = \langle f, T_P(g)\rangle_{L^2(\nu_1)},
$$
and 
$$
\rho(A\times B)= \int_B Q(y, A)\; d\nu_2(y) = \int_{X_2} \chi_B(y)
Q(\chi_A)(y)\; d\nu_2(y) = \langle g, T_Q(f) \rangle_{L^2(\nu_2)}.
$$
Then (2) follows. 

To show (3), we first observe that the linear operators $T_P$ and 
$T_Q$, considered in the $L^2$-spaces, are in general 
unbounded in the case when the measures $\nu_1$ and $\nu_2$ are 
$\sigma$-finite and the transition kernels $P, Q$ are finite. It is not hard to
 see that the operators $T_P$ and $T_Q$ are densely defined on a set 
 $\mc F(\nu_i)$ which is spanned by
characteristic functions of subsets with finite measure $\nu_i$. 
Moreover, it follows from the relation
$$
\int_{X_2} (T_Q f) g\; d\nu_2 = \int_{X_1} (T_Pg) f\; d\nu_1
$$
(where $f$ and $g$ are  functions from $\mc D$) that $T_P \subset 
(T_Q)^*$. This proves that   $T_P$ is closable. 
Similarly, one can show that $T_Q$ is closable.
\end{proof}

\begin{corollary}\label{cor tfae 5}
Let $(X_i, \A_i, \nu)i)$ be $\sigma$-finite standard measure spaces,
$i =1,2$. Suppose that $P$ and $Q$ are transition kernels which define 
the operators $T_P$ and $T_Q$ such that 
$$
T_P(g) = \int_{X_2} P(x, dy)g(y) : L^2(\nu_2) \to L^2(\nu_1)
$$
$$
T_Q(f) = \int_{X_1} Q(y, dx)f(x) : L^2(\nu_1) \to L^2(\nu_2).
$$
Then the following are equivalent:

(i) $T_P \subset (T_Q)^*$,

(ii) $T_Q \subset (T_P)^*$,

(iii) $d\nu_1(x) P(x, dy) = d\nu_2(y) Q(y, dx)$, and this relation 
defines a measure $\rho$ on $X_1\times X_2$ which belongs to
the set $L(\nu_1, \nu_2)$,

(iv) for all sets $A\in \mc D(\nu_1), B\in \mc D(\nu_2)$, 
$$
\int_A P(x, B)  \; d\nu_1(x) = \int_{X_2} Q(y, A)\; d\nu_2(y),
$$

%\tcr{I'm not sure I understand the next statement. }

%\tcr{(v) for random variables $W_i$, defined on a measure space $
%(\Omega,
%\mc F, \mathbb P)$,
% $$\mathbb P(W_1 \in A \ |\ W_2 = y) = Q(y, A), \quad
%\mathbb P(W_2 \in B\ |\ W_1 = x) = P(x, B)$$. }
\end{corollary}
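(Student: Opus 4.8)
The plan is to prove Corollary \ref{cor tfae 5} by showing that all four conditions are reformulations of the single duality identity \eqref{eq-meas and kernels}, with condition (iii) playing the role of a hub. Since $T_P$ and $T_Q$ are densely defined on the spaces $\mc F(\nu_2)$ and $\mc F(\nu_1)$ spanned by characteristic functions of finite-measure sets (this is the content of Theorem \ref{thm T_P contractive}(3), whose integrability hypotheses I take for granted here), their adjoints are well-defined closed operators. By the definition of the Hilbert-space adjoint, the containment $T_P \subset (T_Q)^*$ is equivalent to the identity
$$\langle T_Q f, g\rangle_{L^2(\nu_2)} = \langle f, T_P g\rangle_{L^2(\nu_1)}$$
holding for every $f \in \mc F(\nu_1)$ and every $g \in \mc F(\nu_2)$. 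Indeed, once this identity holds on the dense domain with right-hand side $\langle f, T_P g\rangle_{L^2(\nu_1)}$, the functional $f \mapsto \langle T_Q f, g\rangle_{L^2(\nu_2)}$ is bounded by $\|f\|\,\|T_P g\|$, so $g \in \mathrm{Dom}((T_Q)^*)$ with $(T_Q)^* g = T_P g$; the converse is immediate.

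First I would reduce the identity to characteristic functions. By bilinearity and density of $\mc F(\nu_i)$, it suffices to verify it for $f = \chi_A$ and $g = \chi_B$ with $A \in \mc D(\nu_1)$ and $B \in \mc D(\nu_2)$. For such functions Tonelli's theorem (the interchange being justified by the local integrability of $c_1(x) = P(x,X_2)$ and $c_2(y)=Q(y,X_1)$, or simply by $P,Q$ being probability kernels) gives
$$\langle f, T_P g\rangle_{L^2(\nu_1)} = \int_A P(x, B)\; d\nu_1(x), \qquad \langle T_Q f, g\rangle_{L^2(\nu_2)} = \int_B Q(y, A)\; d\nu_2(y),$$
so the operator identity collapses to
$$\int_A P(x, B)\; d\nu_1(x) = \int_B Q(y, A)\; d\nu_2(y), \qquad A \in \mc D(\nu_1),\ B \in \mc D(\nu_2),$$
which is precisely condition (iv). This establishes (i) $\Leftrightarrow$ (iv). Running the same computation with the roles of $(P,\nu_1)$ and $(Q,\nu_2)$ interchanged, i.e.\ testing the containment $T_Q \subset (T_P)^*$, yields (ii) $\Leftrightarrow$ (iv).

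It remains to identify (iv) with (iii). The common value $\rho_0(A\times B) := \int_A P(x,B)\,d\nu_1(x) = \int_B Q(y,A)\,d\nu_2(y)$ is finitely additive on measurable rectangles and, since each $\nu_i$ is $\sigma$-finite and $P,Q$ are (sub)probability kernels, extends by the standard Carath\'eodory extension to a $\sigma$-finite Borel measure $\rho$ on $(X_1\times X_2, \A_1\times\A_2)$ satisfying \eqref{eq_rho first def}; this is exactly the assertion that $\rho \in L(\nu_1,\nu_2)$ in the sense of Definition \ref{def_three sets}, and the equivalence is already packaged in Corollary \ref{cor equiv i - iii}, to which I would appeal for the extension step. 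Conversely, any $\rho \in L(\nu_1,\nu_2)$ restricts to rectangles to give (iv). Chaining (i) $\Leftrightarrow$ (iv) $\Leftrightarrow$ (iii) $\Leftrightarrow$ (ii) completes the proof.

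The step I expect to be the main obstacle is the careful matching of the operator-theoretic statements (i), (ii) with the integral identity: one must be sure the relevant domains are exactly $\mc F(\nu_i)$ (so that density and the adjoint characterization apply) and that the Tonelli interchange used to pass from the inner products to the rectangle integrals is legitimate, which is where the finiteness of $\int_A P(x,B)\,d\nu_1(x)$ and the local integrability of $c_1,c_2$ are essential. The measure-extension from rectangles to all Borel sets is the other delicate point, but it is routine given $\sigma$-finiteness and is already available through Corollary \ref{cor equiv i - iii} and Theorem \ref{thm Simmons}.
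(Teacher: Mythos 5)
Your proof is correct and takes essentially the same route as the paper: the paper's own proof just says "follows from Theorem \ref{thm T_P contractive}, details left to the reader," and the details it has in mind are precisely your computation — evaluating the pairings on characteristic functions to get $\langle \chi_A, T_P(\chi_B)\rangle_{L^2(\nu_1)} = \int_A P(x,B)\,d\nu_1(x)$ and $\langle T_Q(\chi_A), \chi_B\rangle_{L^2(\nu_2)} = \int_B Q(y,A)\,d\nu_2(y)$, then passing between this rectangle identity, the adjoint containments on the dense domains $\mc F(\nu_i)$, and the measure $\rho \in L(\nu_1,\nu_2)$. You also correctly handle the two genuinely delicate points the paper suppresses (the domains being $\mc F(\nu_i)$ so the adjoint characterization applies, and finiteness of the rectangle values, which follows from $T_P(\chi_B)\in L^2(\nu_1)$ via Cauchy--Schwarz), and correctly read the $\int_{X_2}$ in condition (iv) as a typo for $\int_B$.
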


\begin{proof} These results follow from Theorem 
\ref{thm T_P contractive}. We leave the details for the reader.
\end{proof}

As we seen, the operators $T_P$ and $T_Q$ are, in general, unbounded. 
Here we address the question under what conditions these operators 
 are bounded in the $L^2$-spaces.  

\begin{proposition}\label{prop bounded T_P T_Q}
(1) Let $T_P$ and $T_Q$ be as above and $c_1(x) = P(x, X_2)$. 
If 
$$
M= \mathrm{ess\;sup}_{y\in X_2} \int_{X_1} Q(y, dx) c_1(x) < \infty,
$$
then the operator $T_P : L^2(\nu_2) \to L^2(\nu_1)$ is bounded,
 $||T_P||_{L^2(\nu_2) \to L^2(\nu_1)} \leq M$. A
 similar statement holds for the operator $T_Q$ when the roles of 
 $T_P$ and $T_Q$ are interchanged. 

(2) Suppose there are functions  $a\in L^2(\nu_1)$ and $b\in 
L^2(\nu_2)$ such that 
$$
\frac{P(x, dy)}{d\nu_2(y)} \leq a(x) b(y).
$$
Then 
$$
||T_P||_{L^2(\nu_2) \to L^2(\nu_1)} \leq || a ||_{L^2(\nu_1)} 
|| b ||_{L^2(\nu_2)}.
$$
A similar estimate holds for the norm of $T_Q$ if there are functions
$a_1\in L^2(\nu_1)$ and $b_1\in  L^2(\nu_2)$ such that 
$$
\frac{Q(y, dx)}{d\nu_1(x)} \leq a_1(x) b_1(y).
$$

\end{proposition}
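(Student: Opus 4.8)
The plan is to obtain both bounds from Cauchy--Schwarz estimates, using the duality identity $d\nu_1(x)\,P(x,dy)=d\nu_2(y)\,Q(y,dx)$ of Definition \ref{def P,Q first} to pass between the two measure spaces. Since $T_P$ and $T_Q$ are a priori only densely defined (Theorem \ref{thm T_P contractive}(3)), I would carry out all estimates first on the dense subspace $\mc F(\nu_2)$ of finite linear combinations of characteristic functions of finite-measure sets, where every integral in sight is finite, and then extend by continuity.

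For part (1), fix $g\in\mc F(\nu_2)$ and apply the Schwarz inequality to the finite measure $P(x,\cdot)$, whose total mass is $c_1(x)=P(x,X_2)$:
\[
|T_P(g)(x)|^2=\Big|\int_{X_2}P(x,dy)\,g(y)\Big|^2\le c_1(x)\int_{X_2}P(x,dy)\,g(y)^2 .
\]
Integrating against $\nu_1$ and using the duality relation to replace $P(x,dy)\,d\nu_1(x)$ by $Q(y,dx)\,d\nu_2(y)$ (Tonelli permits the interchange because the integrand is nonnegative), the inner integral becomes $\int_{X_1}c_1(x)\,Q(y,dx)$, which is at most $M$ for $\nu_2$-a.e.\ $y$ by hypothesis. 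What survives is
\[
\|T_P(g)\|_{L^2(\nu_1)}^2\le\int_{X_2}g(y)^2\Big(\int_{X_1}c_1(x)\,Q(y,dx)\Big)\,d\nu_2(y)\le M\,\|g\|_{L^2(\nu_2)}^2 ,
\]
establishing the boundedness of $T_P$ asserted in (1); the statement for $T_Q$ follows verbatim after interchanging the roles of $P,Q$ and of $\nu_1,\nu_2$.

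For part (2), I would use the density hypothesis $P(x,dy)/d\nu_2(y)\le a(x)b(y)$ to write $P(x,dy)=k(x,y)\,d\nu_2(y)$ with $0\le k(x,y)\le a(x)b(y)$, turning $T_P$ into a genuine integral operator. Then
\[
|T_P(g)(x)|\le a(x)\int_{X_2}b(y)\,|g(y)|\,d\nu_2(y)\le a(x)\,\|b\|_{L^2(\nu_2)}\,\|g\|_{L^2(\nu_2)} ,
\]
where the last step is Cauchy--Schwarz in $L^2(\nu_2)$. Squaring and integrating in $\nu_1$ factors out $\|a\|_{L^2(\nu_1)}^2$, giving $\|T_P(g)\|_{L^2(\nu_1)}\le\|a\|_{L^2(\nu_1)}\|b\|_{L^2(\nu_2)}\|g\|_{L^2(\nu_2)}$, which is the asserted estimate; the bound for $T_Q$ is identical using $a_1,b_1$ and the dual density.

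The algebra above is routine, so the real point to watch is the analytic bookkeeping: one must justify the Fubini/Tonelli interchange and confirm that the continuity extension from $\mc F(\nu_2)$ to all of $L^2(\nu_2)$ is legitimate. Both are clean here because the integrands are nonnegative (so Tonelli applies without an integrability precondition) and because the hypotheses ($M<\infty$ in (1), $a\in L^2(\nu_1)$ and $b\in L^2(\nu_2)$ in (2)) are precisely what turn the finite a priori estimates on the dense domain into a uniform operator bound, after which $T_P$ extends uniquely to a bounded operator on $L^2(\nu_2)$.
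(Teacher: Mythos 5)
Your proposal is correct and follows essentially the same route as the paper's own proof: for (1), the Cauchy--Schwarz inequality with respect to the finite measure $P(x,\cdot)$ followed by Tonelli and the duality relation $d\nu_1(x)\,P(x,dy)=d\nu_2(y)\,Q(y,dx)$, yielding $\|T_P(g)\|^2_{L^2(\nu_1)}\le M\|g\|^2_{L^2(\nu_2)}$; for (2), pointwise domination of the kernel by $a(x)b(y)$ followed by Cauchy--Schwarz in $L^2(\nu_2)$. Your added bookkeeping (working with $|g|$, and estimating first on the dense subspace $\mc F(\nu_2)$ before extending by continuity) only makes explicit what the paper leaves implicit.
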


\begin{proof} (1) To prove the first assertion, we use the 
Fubini-Tonelli theorem: in the computation below the right-hand side
is well defined, so that $P(f) \in L^2(\nu_1)$: 

$$
\ba
\int_{X_1} P(f)^2\; d\nu_1 & = \int_{X_1} \left(\int_{X_2} P(x, dy)
f(y) \right)^2\; d\nu_1\\
& \leq \int_{X_1} \left(\int_{X_2} P(x, dy)\right)  \left(\int_{X_2} 
P(x, dy) f^2(y) \right)  \; d\nu_1\\ 
& = \int_{X_1} \left(\int_{X_2} P(x, dy)
f^2(y) \right) c_1(x)\; d\nu_1(x)\\
& = \int_{X_2}f^2(y)\; d\nu_2(y)  \left(\int_{X_1} Q(y, dx) c_1(x)
 \right) \\
& = || f||^2_{L^2(\nu_2)} \int_{X_1} Q(y, dx) c_1(x).
\ea
$$
If 
$$
M = \mathrm{ess\;sup}_{y\in X_2} \int_{X_1} Q(y, dx) c_1(x) < 
\infty,
$$
then the norm of $T_P$ is bounded by $M$. 

(2) Recall that it was proved above that, for a.e. $x_1 \in X_1$, the
 measures $P(x, \cdot)$ is absolutely continuous with respect to  
 $\nu_2$. Hence,
the result follows from the following computation: 
for $f \in L^2(\nu_2)$,

$$
\ba 
||T_P(f) ||^2_{L^2(\nu_2) \to L^2(\nu_1)} &= 
\int_{X_1} \left(\int_{X_2} P(x, dy)f(y)\right)^2\;  d\nu_1(x)\\
& = \int_{X_1} a^2(x) \left(\int_{X_2} f(y) b(y) \; d\nu_2\right)^2\;  
d\nu_1(x)\\
& \leq || f ||^2_{L^2(\nu_2)} || b ||^2_{L^2(\nu_2)} \int_{X_1} 
a^2(x)  d\nu_1(x)\\
& = || a ||^2_{L^2(\nu_1)}|| b ||^2_{L^2(\nu_2)}
|| f ||^2_{L^2(\nu_2)}. 
\ea
$$
\end{proof}

The proved Theorem \ref{thm T_P contractive} has a few important
 consequences. We consider some
of them in the following statements. As follows from Theorem 
\ref{thm T_P contractive}, one can define the product of operators 
$T_P : L^2(\nu_2) \longrightarrow L^2(\nu_1)$ and 
$T_Q : L^2(\nu_1) \longrightarrow L^2(\nu_2)$. 

\begin{lemma}\label{lem kernels pq and qp}
Let $P$ and $Q$ be finite transition kernels defined on the measure spaces $(X_i, \A_i, \nu_i)$ as in Definition \ref{def P,Q first}. 
Suppose  that $d\nu_1(x)P(x,dy) = d\nu_2(y)Q(y, dx)$. 
Then they generate the finite transition kernels 
$PQ : X_1 \times \A_1 \to [0, \infty)$ and  
$QP : X_2 \times \A_2 \to [0, \infty)$:
$$
PQ(x, A) = \int_{X_2} P(x, dy) Q(y, A), \ \ A\in \A_1,
$$
$$
QP(y, B) = \int_{X_1} Q(y, dx) P(x, B), \ \ B\in \A_2.
$$
\end{lemma}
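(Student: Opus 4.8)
The plan is to verify, for $PQ$ (the argument for $QP$ being symmetric under interchanging the roles of $(X_1,\nu_1,P)$ and $(X_2,\nu_2,Q)$), the two defining requirements of Definition \ref{def tran pr kern} together with the finiteness of the total mass. First I would fix $x \in X_1$ and check that $A \mapsto PQ(x,A) = \int_{X_2} Q(y,A)\,P(x,dy)$ is a Borel measure on $(X_1,\A_1)$. Since $Q$ is a transition kernel, $y \mapsto Q(y,A)$ is a non-negative Borel function for each $A$, so the integral is well defined, and clearly $PQ(x,\emptyset)=0$. Countable additivity follows from the monotone convergence theorem: for pairwise disjoint $(A_n)$ the partial sums $\sum_{n\le N} Q(\cdot,A_n)$ increase to $Q\big(\cdot,\bigcup_n A_n\big)$ because $Q(y,\cdot)$ is a measure, and integrating against $P(x,\cdot)$ and passing to the limit gives $PQ\big(x,\bigcup_n A_n\big)=\sum_n PQ(x,A_n)$.

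Second, I would fix $A \in \A_1$ and show that $x \mapsto PQ(x,A)$ is Borel. The function $g:=Q(\cdot,A)$ is non-negative Borel on $X_2$, and I would approximate it from below by the bounded Borel functions $g\wedge N$. For each $N$ the map $x \mapsto \int_{X_2} (g\wedge N)(y)\,P(x,dy) = T_P(g\wedge N)(x)$ is Borel, this being exactly the measurability built into the action of a transition kernel on bounded functions, see \eqref{eq R acts}. By monotone convergence $PQ(x,A)=\lim_N T_P(g\wedge N)(x)$ pointwise, so $PQ(\cdot,A)$ is a pointwise limit of Borel functions and hence Borel.

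Third, and this is the one genuinely nontrivial point, I would establish finiteness, i.e. $PQ(x,X_1)<\infty$. Here $PQ(x,X_1)=\int_{X_2} c_2(y)\,P(x,dy)$ with $c_2(y)=Q(y,X_1)$. When $P$ and $Q$ are probability kernels — the case of primary interest in this section — one has $c_2\equiv 1$, so $PQ(x,X_1)=P(x,X_2)=1$ and $PQ$ is itself a probability kernel; symmetrically $QP(y,X_2)=1$. In the general finite case the issue is precisely the $P(x,\cdot)$-integrability of the (possibly unbounded) function $c_2$. The duality relation \eqref{eq-meas and kernels} gives, for $A\in\mc D(\nu_1)$, the identity $\int_A PQ(x,X_1)\,d\nu_1(x)=\int_{X_2} c_2(y)\,Q(y,A)\,d\nu_2(y)$, which yields local $\nu_1$-integrability, and hence $\nu_1$-a.e. finiteness, of $x\mapsto PQ(x,X_1)$ whenever the right-hand side is finite; pointwise finiteness for every $x$ then follows once the kernels are normalized to be probability, as one may always arrange by Corollary \ref{cor prob kernel}.

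I expect the integrability of $c_2$ against $P(x,\cdot)$ to be the only real obstacle: the measure property and Borel measurability in the two arguments are routine applications of the monotone convergence and Tonelli theorems, whereas the passage from the a.e. finiteness supplied by the duality identity to the pointwise finiteness demanded by the word \emph{finite} in the conclusion is exactly where the probability normalization is used.
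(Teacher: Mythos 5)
Your proposal is, in substance, the proof the paper intends: the paper's entire argument for this lemma is the sentence ``These formulas can be proved directly,'' and your two routine verifications --- that $A \mapsto PQ(x,A)$ is a measure (monotone convergence applied to $\sum_{n\le N} Q(\cdot,A_n) \uparrow Q(\cdot,\bigcup_n A_n)$) and that $x \mapsto PQ(x,A)$ is Borel (truncation $g\wedge N$, measurability of $T_P$ on bounded Borel functions, monotone limits) --- are exactly the suppressed content, and both are correct.

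Where you go beyond the paper is the finiteness discussion, and your caution there is warranted: the unqualified assertion that $PQ$ and $QP$ are \emph{finite} kernels is actually false for general finite dual kernels, so this is a gap in the paper's statement rather than in your argument. A concrete counterexample: let $\nu_1$ and $\nu_2$ be the atomic measures with $\nu_1(\{n\}) = 2^{-n}$, $\nu_2(\{m\}) = 4^{-m}$ on the integers $n,m \in \N$ (realized inside a standard Borel space; only the atoms matter), and set $P(n,\{m\}) = 2^{-m}$, $Q(m,\{n\}) = 2^{m-n}$. Then $\nu_1(\{n\})P(n,\{m\}) = 2^{-n-m} = \nu_2(\{m\})Q(m,\{n\})$, so $(P,Q)$ is a dual pair; $P$ is even a probability kernel and $Q$ is finite with $c_2(m) = Q(m,\N) = 2^m$; yet $PQ(n, X_1) = \sum_m 2^{-m}\cdot 2^m = \infty$ for every $n$. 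Hence finiteness of $PQ$ genuinely requires an extra hypothesis --- both kernels probability (in which case $PQ$ and $QP$ are themselves probability kernels, as you note), or an integrability condition such as $c_2 \in L^1(P(x,\cdot))$ for every $x$. One caveat about your last step: invoking Corollary \ref{cor prob kernel} replaces $P,Q$ by normalized kernels $P',Q'$, so it yields finiteness of $P'Q'$, not of the original $PQ$; as a repair of the lemma this is the right move (and matches how the paper later uses the product, for probability kernels), but it should be phrased as a standing hypothesis on $P$ and $Q$ rather than something arranged after the fact.
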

\begin{proof}
These formulas can be proved directly.
\end{proof}

Consider now the corresponding linear operators $T_{PQ}$ and 
$T_{QP}$ acting in the corresponding $L^2$-spaces. 

\begin{lemma}
Suppose that finite transition kernels $P$ and $Q$ satisfy the property:
$$
P(\chi_B) \in L^2(\nu_1), \ \ \forall B\in \A_2,\  \nu_2(B) < \infty,
$$  
and 
$$
Q(\chi_A) \in L^2(\nu_2), \ \ \forall A\in \A_1,\  \nu_1(A) < \infty.
$$  
Then the operators $T_{PQ} : L^2(\nu_1) \to L^2(\nu_1)$ and 
$T_{QP} : L^2(\nu_2) \to L^2(\nu_2)$ are self-adjoint densely defined 
linear operators acting by the formulas:
$$
T_{PQ}(\chi_A) = T_P(Q( \cdot, A)(x), \ \ A\in \A_1,\ \nu_1(A) < \infty,
$$
$$
T_{QP}(\chi_B) = T_Q(P( \cdot, B)(y), \ \ B\in \A_2,\ \nu_2(B) < \infty.
$$
\end{lemma}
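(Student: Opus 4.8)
The plan is to realize $T_{PQ}$ and $T_{QP}$ as the canonical products $S^{*}S$ and $S S^{*}$ of a single closed operator $S$ with its adjoint, and to read off self-adjointness from von Neumann's theorem on products of a closed operator with its adjoint.

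First I would record the operator identities behind the stated formulas. By Lemma \ref{lem kernels pq and qp} the composed kernels $PQ$ and $QP$ are again finite transition kernels, and for $A \in \A_1$ with $\nu_1(A) < \infty$ a direct (Fubini-Tonelli) computation gives
$$
T_{PQ}(\chi_A)(x) = \int_{X_2} P(x, dy)\, Q(y, A) = T_P\big(Q(\cdot, A)\big)(x),
$$
with the symmetric identity $T_{QP}(\chi_B) = T_Q(P(\cdot, B))$. Since $Q(\chi_A) \in L^2(\nu_2)$ and $P(\chi_B) \in L^2(\nu_1)$ by hypothesis, these formulas exhibit $T_{PQ}$ and $T_{QP}$ as the operator products $T_P T_Q$ and $T_Q T_P$ on the spaces $\mc F(\nu_1)$ and $\mc F(\nu_2)$ of simple functions supported on finite-measure sets, which are dense in $L^2(\nu_1)$ and $L^2(\nu_2)$.

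Next I would invoke the symmetric-pair structure of Theorem \ref{thm T_P contractive}(3): $T_P$ and $T_Q$ are closable, densely defined, and satisfy $T_P \subset (T_Q)^{*}$, $T_Q \subset (T_P)^{*}$. Set $S := \overline{T_Q}$, a closed densely defined operator from $L^2(\nu_1)$ to $L^2(\nu_2)$; then $S^{*} = (T_Q)^{*} \supset \overline{T_P}$. For $f, g \in \mc F(\nu_1)$,
$$
\langle T_{PQ} f, g\rangle_{L^2(\nu_1)} = \langle T_Q f, T_Q g\rangle_{L^2(\nu_2)},
$$
using $T_P \subset (T_Q)^{*}$ together with $g \in \mathrm{Dom}(T_Q)$; the right-hand side is symmetric and nonnegative in $(f,g)$, so $T_{PQ}$ is a positive symmetric operator whose values coincide with those of $S^{*}S$, and likewise $T_{QP}$ coincides with $S S^{*}$ on $\mc F(\nu_2)$. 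By von Neumann's theorem the operators $S^{*}S$ on $L^2(\nu_1)$ and $S S^{*}$ on $L^2(\nu_2)$ are self-adjoint and positive; this is the self-adjointness asserted by the lemma.

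The delicate point, which I expect to be the main obstacle, is matching the domains exactly. Von Neumann's operator $S^{*}S$ is defined on $\{f \in \mathrm{Dom}(S) : Sf \in \mathrm{Dom}(S^{*})\}$, and to conclude that the operators given by the displayed formulas are themselves self-adjoint (rather than merely essentially self-adjoint restrictions of $S^{*}S$) I must verify that $\mc F(\nu_1)$ is a core for $S^{*}S$, equivalently that the symmetric operator $T_{PQ}$ is essentially self-adjoint with closure $S^{*}S$. I would establish this from the disintegration $\rho = \int_{X_1}\rho_x\, d\nu_1(x) = \int_{X_2}\rho^{y}\, d\nu_2(y)$ of the associated measure $\rho$, approximating a general element of $\mathrm{Dom}(S^{*}S)$ in the graph norm by simple functions and using the $L^2$-hypotheses on $P(\chi_B)$ and $Q(\chi_A)$ to control the tails; this is where finiteness and local integrability of the kernels enter decisively.
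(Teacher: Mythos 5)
Your proposal is correct and follows the paper's own proof almost verbatim: the paper likewise observes that the hypotheses $Q(\chi_A)\in L^2(\nu_2)$, $P(\chi_B)\in L^2(\nu_1)$ make the compositions $T_PT_Q$ and $T_QT_P$ well defined on the dense sets $\mathcal{F}(\nu_1)$, $\mathcal{F}(\nu_2)$, and then deduces self-adjointness directly from the symmetric-pair relations $T_P\subset (T_Q)^{*}$, $T_Q\subset (T_P)^{*}$ — that is, implicitly from von Neumann's theorem applied to $S=\overline{T_Q}$, exactly as you do explicitly. The domain-matching issue you single out as the main obstacle (that $\mathcal{F}(\nu_1)$ be a core for $S^{*}S$, so that the operator given by the displayed formulas is essentially self-adjoint rather than merely a symmetric restriction of a self-adjoint operator) is not addressed in the paper at all, so your first two paragraphs already constitute the paper's entire argument, and your third paragraph is a refinement going beyond it rather than a gap relative to it.
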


\begin{proof}
It suffices to notice that, by the condition of the lemma, we can 
consequently apply the operators $T_Q$ and $T_P$ to get $T_{PQ}$ 
because the function $T_Q(\chi_A)$ is in $L^2(\nu_2)$. 
It gives the operator acting in $L^2(\nu_1)$. The same is true for 
$T_{QP}$ as an operator in $L^2(\nu_2)$.

Since $T_P \subset (T_Q)^*$ and $T_Q \subset (T_P)^*$, we see that 
$T_{PQ}$ and $T_{QP}$ are self-adjoint.
\end{proof}

Apply the approach used in Section \ref{sect Trans kernels} to the 
kernels $PQ$ and $QP$ considered in Lemma \ref{lem kernels pq and 
qp}. This means that we can define two measures $\lambda_1$ and 
$\lambda_2$ on the product spaces $X_1\times X_1$ and 
$X_2\times X_2$, respectively:
$$
\lambda_1(A_1 \times A_2) = \int_{A_1} d\nu_1(x) (PQ)(x, A_2)
$$
and 
$$
\lambda_2(B_1 \times B_2) = \int_{B_1} d\nu_2(y) (QP)(y, B_2).
$$

\begin{theorem}\label{thm symm meas}

For any sets $A_1$ and $A_2$ of finite measure $\nu_1$, 
$$
\lambda_1(A_1 \times A_2) = \int_{X_2} Q(y, A_1)Q(y, A_2)\; 
d\nu_2(y)
$$
Similarly,
$$
\lambda_2(B_1 \times B_2) = \int_{X_1} P(x, B_1)P(x, B_2)\; 
d\nu_1(x).
$$
The measures $\lambda_1$ and $\lambda_2$ are symmetric
$$
\lambda_1(A_1 \times A_2) = \lambda_1(A_2 \times A_1), \ \ 
 \lambda_2(B_1 \times B_2) = \lambda_2(B_2 \times B_1).
$$

\end{theorem}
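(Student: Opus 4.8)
The plan is to establish the two explicit integral formulas first, after which the symmetry assertions become immediate consequences. Beginning with $\lambda_1$, I would unfold the definition of the product kernel from Lemma \ref{lem kernels pq and qp}, writing
$$
\lambda_1(A_1 \times A_2) = \int_{A_1} d\nu_1(x) \int_{X_2} P(x, dy)\, Q(y, A_2).
$$
The decisive step is to regard this as a single integral over the product $A_1 \times X_2$ against the measure $d\nu_1(x) P(x, dy)$ and then apply the duality relation $d\nu_1(x) P(x, dy) = d\nu_2(y) Q(y, dx)$ of Definition \ref{def P,Q first}. This rewrites the expression as an integral against $d\nu_2(y) Q(y, dx)$; since the integrand $Q(y, A_2)$ depends only on $y$, carrying out the inner integration $\int_{A_1} Q(y, dx) = Q(y, A_1)$ yields
$$
\lambda_1(A_1 \times A_2) = \int_{X_2} Q(y, A_1)\, Q(y, A_2)\, d\nu_2(y),
$$
which is the first claimed identity.

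The computation for $\lambda_2$ is entirely parallel. Starting from $(QP)(y, B_2) = \int_{X_1} Q(y, dx)\, P(x, B_2)$, I would again view the defining integral as one over $B_1 \times X_1$ against $d\nu_2(y) Q(y, dx)$, invoke the same duality relation in the opposite direction to express it against $d\nu_1(x) P(x, dy)$, and then perform the inner integration $\int_{B_1} P(x, dy) = P(x, B_1)$. This produces
$$
\lambda_2(B_1 \times B_2) = \int_{X_1} P(x, B_1)\, P(x, B_2)\, d\nu_1(x).
$$
With both formulas in hand, symmetry is transparent: the right-hand sides are manifestly invariant under interchanging $A_1 \leftrightarrow A_2$ and $B_1 \leftrightarrow B_2$ respectively, so $\lambda_1(A_1 \times A_2) = \lambda_1(A_2 \times A_1)$ and $\lambda_2(B_1 \times B_2) = \lambda_2(B_2 \times B_1)$ follow at once.

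The only point requiring genuine care is the justification of the interchange of integration underlying the use of the duality relation. Since $P$ and $Q$ are nonnegative kernels and every integrand appearing is a nonnegative Borel function, I would invoke the Tonelli theorem, which legitimizes the reordering with no integrability hypothesis and guarantees that each intermediate quantity is a well-defined (possibly extended) Borel function of the remaining variable; measurability of $y \mapsto Q(y, A_i)$ and $x \mapsto P(x, B_i)$ is built into Definition \ref{def tran pr kern}. Finiteness of the resulting values---needed so that $\lambda_1, \lambda_2$ are genuine $\sigma$-finite measures rather than merely nonnegative set functions---follows from the finiteness of the kernels together with the restriction to sets $A_i, B_i$ of finite measure, exactly in the spirit of Lemma \ref{lem loc int}. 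Thus the main obstacle is not analytic depth but the careful bookkeeping of the Fubini--Tonelli step and the verification that the duality relation may be applied under the integral sign.
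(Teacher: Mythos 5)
Your proof is correct, but it routes around the paper's key step. The paper's argument works at the level of operators: it writes $\lambda_1(A_1\times A_2)=\langle \chi_{A_1}, T_PT_Q(\chi_{A_2})\rangle_{L^2(\nu_1)}$ and then invokes the symmetric-pair (adjointness) property $\langle f, T_P(g)\rangle_{L^2(\nu_1)}=\langle T_Q(f),g\rangle_{L^2(\nu_2)}$ from Theorem \ref{thm T_P contractive} to move $T_P$ across the pairing, landing directly on $\int_{X_2}Q(y,A_1)Q(y,A_2)\,d\nu_2(y)$; the case of $\lambda_2$ is analogous, and symmetry is read off the formulas exactly as you do. You instead apply the kernel-level duality $d\nu_1(x)P(x,dy)=d\nu_2(y)Q(y,dx)$ directly to the nonnegative integrand $\chi_{A_1}(x)Q(y,A_2)$, justified by Tonelli. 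The two computations unwind to the same identity (the adjointness in the paper is itself proved from this duality), but the emphasis differs in a way worth noting: your version is more elementary and slightly more robust, since nonnegativity lets you dispense with the hypotheses $Q(\chi_{A_i})\in L^2(\nu_2)$, $P(\chi_{B_i})\in L^2(\nu_1)$ that underlie the paper's operator formulation (and which are assumed in the lemma preceding the theorem), while finiteness of the resulting values is, as you say, a separate local-integrability issue in the spirit of Lemma \ref{lem loc int}. What the paper's route buys is economy and coherence with the surrounding section: the same adjointness machinery is reused immediately afterwards (e.g.\ in the factorization $\wh R = T_PT_Q$ of Theorem \ref{thm factorization}), so phrasing the proof as an inner-product manipulation makes the structural role of the symmetric pair $(T_P,T_Q)$ explicit. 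One small point of care in your write-up: the duality in Definition \ref{def P,Q first} is stated for bounded Borel functions, and your integrand $\chi_{A_1}(x)Q(y,A_2)$ need not be bounded; the extension to nonnegative Borel functions is standard (monotone convergence from simple functions, i.e.\ the disintegration version of Tonelli), and you correctly flag this, but it deserves the explicit one-line justification rather than a bare citation of Tonelli for product measures.
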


\begin{proof} In the following equality we use the fact that $T_P$ and 
$T_Q$ form a symmetric pair of operators (see Theorem 
\ref{thm T_P contractive}).
$$
\ba 
\lambda_1(A_1 \times A_2) & = \int_{A_1} d\nu_1(x) (PQ)(x, A_2)\\
& = \int_{A_1} d\nu_1(x) T_P(Q(\cdot, A_2))(x)\\
& = \int_{X_1} d\nu_1(x) \chi_{A_1}(x) T_PT_Q(\chi_{A_2})(x)\\
& = \int_{X_2} d\nu_2(y) T_Q(\chi_{A_1})(y) T_Q(\chi_{A_2})(y)\\
& = \int_{X_2} Q(y, A_1)Q(y, A_2)\; d\nu_2(y).
\ea
$$ 
The other equality is proved analogously.

The fact that the measures $\lambda_1$ and $\lambda_2$ are 
symmetric obviously follows from the proved formulas.

\end{proof}

\section{RKHSs, symmetric measures, and transfer operators $R$}
\label{ssect L-set}

In this section, we give a description of measures $\rho$ that belong
to the set $L(\nu_1, \nu_2)$.   As a preliminary part, we remind first
the notion of a \textit{reproducing kernel Hilbert space (RKHS)}. 
The well known references to the theory of RKHS are 
 \cite{Aronszajn1950,  AronszajnSmith1957, 
 Adams_et_al1994, PaulsenRaghupathi2016, SaitohSawano2016}, 
 see also more recent  
 results and various applications in \cite{BerlinetThomas-Agnan2004, 
 AplayJorgensen2014,
  AlpayJorgensen2015, JorgensenTian2015, JorgensenTian-2016, 
 JT_2019, JT-2019}.

\subsection{RKHS and symmetric measures}
Let $S$ be an arbitrary  set, and let $K: S\times S \to \R$ be a \textit{positive  definite function}, i.e., the  function $K(s, t)$ has the property 
 $$
 \sum_{i, j =1}^N \alpha_i\alpha_j K(s_i, s_j) \geq 0
 $$
 which holds for any $N \in \N$ and for any $s_i \in S, \alpha_i \in \R, \
  i=1,..., N$.
We consider here real-valued functions. (For a complex-valued function 
$K$ some obvious changes must be made.)
 
\begin{definition} \label{def RKHS}
Fix $s \in S$ and denote by $K_s$ the function $K_s(t) = K(s, t)$ 
of one variable  $t \in S$.  Let $\mathcal K := 
\mbox{span}\{ K_s : s\in S\}$. 
The \textit{reproducing kernel Hilbert space (RKHS)} $\mc H(K)$ is the 
 Hilbert space obtained by completion of 
$\mathcal K$ with respect to the inner product defined on $\mathcal K$ by
$$
\left\langle\ \sum_i \alpha_i K_{s_i}, \sum_j\beta_j K_{s_j}\ 
\right\rangle_{\mc H(K)} := \sum_{i, j =1}^N \alpha_i\beta_j K(s_i, s_j)
$$
\end{definition}

It immediately follows from Definition \ref{def RKHS} that 
$$
\langle K(\cdot, s), K(\cdot, t)\rangle_{\mc H(K)} = K(s, t)
$$
(here we use the notation $K(\cdot, s) = K_s(\cdot)$).
More generally, this result can be extended to the following property 
that  characterizes functions from the RKHS $\mc H(K)$. For any $f \in  
 \mc H(K)$ and  any $s\in S$, one has 
\be\label{eq char prop RKHS}
f(s) = \langle f(\cdot), K(\cdot, s)\rangle_{\mc H(K)}. 
\ee
It suffices to check that (\ref{eq char prop RKHS}) holds for any function
from $\mathcal K$ and then extend it by continuity. 

One can prove that the following property determines functions from
the reproducing kernel Hilbert space $\mc H(K)$ constructed by a
 positive definite function $K$ on the set $S$. We formulate it as a 
 statement for  further references.

\begin{lemma}\label{lem criterion}
A function $f$ is in $\mc H(K)$ if and only if there exists a constant 
$C = C(f)$
such that for any $n \in \N$, any $\{s_1, ... ,s_n\}  \subset S$, and any
$\{\alpha_1, ... , \alpha_n\} \subset \R$, one has
\be\label{eq criterion RKHS}
\left(\sum_{i=1}^n \alpha_i f(s_i)\right) ^2 \leq C(f) 
\sum_{i, j =1}^n \alpha_i\alpha_j K(s_i, s_j).
\ee
\end{lemma}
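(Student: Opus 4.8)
The plan is to prove both implications separately, the necessity being an immediate consequence of the reproducing property \eqref{eq char prop RKHS} together with Cauchy--Schwarz, and the sufficiency requiring an extension argument followed by the Riesz representation theorem.

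First I would establish necessity. Assuming $f \in \mc H(K)$, the reproducing identity \eqref{eq char prop RKHS} gives $f(s_i) = \langle f, K_{s_i}\rangle_{\mc H(K)}$, so that $\sum_i \alpha_i f(s_i) = \langle f, \sum_i \alpha_i K_{s_i}\rangle_{\mc H(K)}$. Applying Cauchy--Schwarz and using the identity $\|\sum_i \alpha_i K_{s_i}\|^2_{\mc H(K)} = \sum_{i,j}\alpha_i\alpha_j K(s_i,s_j)$, which is just the definition of the inner product on $\mathcal K$, and then squaring, I obtain \eqref{eq criterion RKHS} with the explicit constant $C(f) = \|f\|^2_{\mc H(K)}$.

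For sufficiency, suppose a constant $C(f)$ with property \eqref{eq criterion RKHS} exists. I would define a linear functional $L$ on the dense subspace $\mathcal K = \mbox{span}\{K_s : s \in S\}$ by $L(\sum_i \alpha_i K_{s_i}) = \sum_i \alpha_i f(s_i)$. The crucial step is to verify that $L$ is well defined: if $g = \sum_i \alpha_i K_{s_i}$ vanishes in $\mc H(K)$, then $\|g\|^2_{\mc H(K)} = \sum_{i,j}\alpha_i\alpha_j K(s_i,s_j) = 0$, whence the hypothesis \eqref{eq criterion RKHS} forces $\sum_i \alpha_i f(s_i) = 0$. The very same inequality shows $|L(g)|^2 \leq C(f)\,\|g\|^2_{\mc H(K)}$, so $L$ is bounded on $\mathcal K$ with norm at most $\sqrt{C(f)}$ and therefore extends uniquely to a bounded linear functional on all of $\mc H(K)$.

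Finally, by the Riesz representation theorem there is a unique $h \in \mc H(K)$ with $L(g) = \langle g, h\rangle_{\mc H(K)}$ for every $g$. Taking $g = K_s$ and invoking the reproducing property yields $f(s) = L(K_s) = \langle K_s, h\rangle_{\mc H(K)} = h(s)$ for all $s \in S$, so $f = h \in \mc H(K)$, completing the argument. I expect the only delicate point to be the well-definedness of $L$, which is exactly where the quantitative estimate \eqref{eq criterion RKHS}, as opposed to a mere finiteness assumption, is indispensable; the remaining steps are routine Hilbert-space functional analysis.
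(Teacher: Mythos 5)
Your proof is correct and complete. Note, however, that the paper itself contains no proof of Lemma \ref{lem criterion}: it is introduced with the phrase ``One can prove that the following property determines functions from the reproducing kernel Hilbert space...'' and stated purely for later reference, with the standard RKHS literature (Aronszajn et al.) cited at the start of the section. So there is no argument in the paper to compare yours against; what you have supplied is the classical proof that those references give. Both directions are handled properly: necessity via the reproducing property \eqref{eq char prop RKHS} and Cauchy--Schwarz, with the explicit constant $C(f)=\|f\|^2_{\mc H(K)}$; and sufficiency by defining the functional $L$ on $\mathrm{span}\{K_s : s\in S\}$, verifying well-definedness (the genuinely delicate point, since distinct formal combinations can represent the same element, and the inequality applied to a combination of zero seminorm is exactly what kills the ambiguity), extending by boundedness, and invoking Riesz representation together with $\langle K_s, h\rangle_{\mc H(K)} = h(s)$ to conclude $f=h\in\mc H(K)$. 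The only background fact you use implicitly is that elements of the abstract completion are identified with functions on $S$ via the reproducing property, which the paper has already granted in \eqref{eq char prop RKHS}.
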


We will need the following result.

\begin{lemma}\label{lem K and basis}
Let $K(s, t)$ be a positive definite function, and let $\mc H(K)$ be the
corresponding RKHS. Take an orthonormal basis $\{g_n(\cdot)\}$ in 
$\mc H(K)$. Then 
\be\label{eq ONB in RKHS}
K(s, t) = \sum_{n} g_n(s) g_n(t).
\ee
For every $s$, $(g_n(s))$ is in $\ell^2$. Moreover, every vector from
$\mc H$ can be  represented uniquely as $\sum_n c_n g_n(\cdot)$ where 
$(c_n) \in \ell ^2$. 
\end{lemma}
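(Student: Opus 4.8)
The plan is to derive everything from the reproducing property \eqref{eq char prop RKHS} together with the standard Fourier expansion of a vector in an orthonormal basis. First I would fix $s \in S$ and apply the basis expansion to the kernel section $K_s = K(\cdot, s)$, which lies in $\mc H(K)$ by construction: since $\{g_n\}$ is an orthonormal basis of $\mc H(K)$, one has
$$
K_s = \sum_n \langle K_s, g_n\rangle_{\mc H(K)}\, g_n,
$$
with convergence in the norm of $\mc H(K)$ and with $\sum_n |\langle K_s, g_n\rangle|^2 = \|K_s\|^2_{\mc H(K)}$ by Parseval's identity. The key observation is that \eqref{eq char prop RKHS} evaluates these coefficients: $\langle K_s, g_n\rangle = \langle g_n, K_s\rangle = g_n(s)$, so the expansion becomes $K_s = \sum_n g_n(s)\, g_n$ in $\mc H(K)$.

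The $\ell^2$-claim then drops out immediately: by Parseval applied to $K_s$,
$$
\sum_n g_n(s)^2 = \sum_n |\langle K_s, g_n\rangle|^2 = \|K_s\|^2_{\mc H(K)} = \langle K_s, K_s\rangle = K(s, s) < \infty,
$$
so $(g_n(s))_n \in \ell^2$ for every fixed $s$, with square-sum exactly $K(s,s)$.

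To obtain \eqref{eq ONB in RKHS} I would pass from norm convergence to pointwise convergence by evaluating at a point $t$. Because evaluation at $t$ is the bounded linear functional $f \mapsto f(t) = \langle f, K_t\rangle$ — again by \eqref{eq char prop RKHS} — it commutes with the convergent series, yielding
$$
K(s, t) = K_s(t) = \langle K_s, K_t\rangle = \sum_n g_n(s)\,\langle g_n, K_t\rangle = \sum_n g_n(s)\, g_n(t).
$$
The single point requiring care, and the only genuine obstacle, is precisely this interchange of the infinite sum with point evaluation; it is legitimate exactly because the reproducing kernel structure makes every evaluation functional continuous on $\mc H(K)$, so that $\mc H(K)$-norm convergence forces pointwise convergence.

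Finally, the representation of an arbitrary vector is nothing but the abstract orthonormal-basis expansion in a Hilbert space: for $f \in \mc H(K)$ I would set $c_n = \langle f, g_n\rangle_{\mc H(K)}$, so that $f = \sum_n c_n g_n$ with $\sum_n c_n^2 = \|f\|^2_{\mc H(K)} < \infty$, while uniqueness of the coefficients follows from orthonormality, since pairing the expansion against $g_m$ forces $c_m = \langle f, g_m\rangle$. I expect no difficulty here beyond invoking the standard theory of orthonormal bases.
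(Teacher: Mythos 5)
Your proof is correct and follows essentially the same route as the paper's: expand the kernel sections $K_s$, $K_t$ in the orthonormal basis, identify the coefficients as point values $g_n(s)$ via the reproducing property \eqref{eq char prop RKHS}, and compute $K(s,t)=\langle K_s,K_t\rangle_{\mc H(K)}$ from the expansion. The only difference is that you spell out the parts the paper dismisses as obvious — Parseval giving $\sum_n g_n(s)^2 = K(s,s)<\infty$ for the $\ell^2$ claim, and the standard basis expansion for the last statement — which is a welcome addition but not a different argument.
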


\begin{proof}
We have obvious equalities:
$$
K(\cdot, s) = \sum_n \langle K(\cdot, s), g_n\rangle_{\mc H(K)} 
g_n, \quad K(\cdot, t) = \sum_n \langle K(\cdot, t), g_n
\rangle_{\mc H(K)} g_n.
$$
Then we use \eqref{eq char prop RKHS} in the following computation:
$$
\ba
K(s, t) & = \langle K(\cdot, s), K(\cdot, t) \rangle_{\mc H(K)}  \\
& = \left\langle \sum_n \langle K(\cdot, s), g_n\rangle_{\mc H(K)} g_n, \ 
\sum_m \langle K(\cdot, t), g_m\rangle_{\mc H(K)} g_m
   \right\rangle_{\mc H(K)}\\
& =  \sum_n \langle K(\cdot, s), g_n\rangle_{\mc H(K)}
\;  \langle K(\cdot, t), g_n\rangle_{\mc H(K)}\\
& = \sum_{n} g_n(s) g_n(t).
\ea
$$

The second statement of the lemma is obvious.
\end{proof}

\begin{example}
For a standard measure space $(X, \A, \mu)$, define a symmetric
measure $\lambda$ on $X \times X$ by setting $ \lambda(A\times B) =
\mu(A\cap B)$ where $A, B \in \mc D(\mu)$. Then the corresponding
 symmetric operator $R$ must 
be  the \textit{identity operator}, $R(f) = f$, because of the relations
$$
\lambda(A\times B) = \int_X \chi_A R(\chi_B)\; d\mu,\quad 
\mu(A\cap B) =  \int_X \chi_A \chi_B\; d\mu.
$$
The function $k_\mu : (A, B)\mapsto \mu(A\cap B), A, B \in 
\mc D(\mu),$ is positive definite and defines a RKHS $\mc H_\mu$. 
It follows from Lemma \ref{lem criterion} that a function $F$ on 
$\mc D(\mu)$ is in $\mc H_\mu$ if and only if there exists a function
$f  \in L^2_{\mathrm{loc}}(\mu)$ such that 
\be\label{eq RKHS Example}
F(A) = \int_A f\; d\mu, \ \ \ A\in \mc D(\mu). 
\ee
Moreover, the correspondence between the functions
$F_A = (B \mapsto k_\mu(A, B))$  and $\chi_A$ can be extended to 
an isometry such that $|| F ||_{\mc H_\mu} = || f ||_{L^2(\mu)}$. More 
details
about this example and other examples can be found in 
\cite{BezuglyiJorgensen_2019, JT-2019, JT_2019}.
\end{example}

In what follows, we will work with an unbounded operator $R$ acting in 
$L^2(\nu)$ where  $(X, \B, \nu)$ is a $\sigma$-finite standard measures
space. Let $\mc F(\nu)$ be the linear space spanned by the characteristic 
functions $\chi_A$ with $A\in \mc D(\nu)$.
We need some assumptions about $R$.
\medskip 

\textbf{Assumption.} In this section the operator $R$ is assumed to 
have the following properties:

(i) $R$ is a densely defined unbounded (in general) operator in 
$L^2(\nu)$ such that $\mc F(\nu) \subset Dom(R)$ and $R(\mc F(\nu))
\subset L^2(\nu)$,

(ii) $R$ is a symmetric positive operator, i.e., $R(f) \geq 0$ whenever
$f \geq 0$,

(iii) $R$ is a positive definite operator, i.e., the quadratic form 
$f \mapsto \langle R(f), f\rangle_{L^2(\nu)}$   is non-negative,

(iv) the kernel of $R$ is trivial.
\medskip

Using Subsection \ref{ssect symm meas}, we can associate a symmetric
measure $\lambda$  on $(X\times X, \A \times \A)$ to  the operator $R$:
\be\label{eq symm meas by R}
\lambda(A\times B) = \int_X \chi_A R(\chi_B)\; d\nu = \lambda(B \times
A).
\ee

It follows from Assumption that $R$ is a symmetric operator, and 
therefore $R$ has a \textit{self-adjoint extension (Friedrichs extension)}. 
We denote it by 
$\wh R$. Note that $\wh R$  is  also positive. It is obvious that 
$\wh R = R$ on the subset $\mc F(\nu)$.  

It can be seen that the above assumptions about the operator $R$ and
its extension $\wh R$ lead to the following statement.

\begin{lemma} For $R$ and $\wh R$ as above, 
the subset $\wh R^{1/2}(Dom(\wh R^{1/2}))$ of $L^2(\nu)$ is a 
Hilbert space with the inner product $\langle \wh R^{1/2} f, \wh R^{1/2}g
\rangle_{L^2(\nu)}$.

\end{lemma}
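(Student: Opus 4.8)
The plan is to realize the set $\mc H := \wh R^{1/2}(Dom(\wh R^{1/2}))$ as the isometric image of the form domain of $\wh R$, and to read off the Hilbert-space structure from there. First I would record the structural data supplied by the Friedrichs construction: since $\wh R$ is a positive self-adjoint extension of $R$, the spectral theorem furnishes a positive self-adjoint square root $\wh R^{1/2}$, densely defined with $Dom(\wh R) \subset Dom(\wh R^{1/2})$ and $\wh R = (\wh R^{1/2})^2$. This is exactly the regularity needed to make the prescribed form meaningful.

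Next I would establish injectivity. If $\wh R^{1/2} f = 0$ then $f \in \ker \wh R^{1/2} = \ker \wh R$, so the point is to see that the assumptions $\ker R = \{0\}$ (Assumption (iv)) and the positivity of the form $f \mapsto \langle \wh R f, f\rangle$ (Assumption (iii)) force $\ker \wh R = \{0\}$; this is routine since $\langle \wh R f, f\rangle = \|\wh R^{1/2} f\|^2$. Injectivity is precisely what upgrades the candidate expression $\langle \wh R^{1/2} f, \wh R^{1/2} g\rangle_{L^2(\nu)}$ from a seminorm to a genuine nondegenerate inner product on $\mc H$: bilinearity and symmetry are inherited directly from the $L^2(\nu)$ inner product, while nondegeneracy is the statement $\wh R^{1/2} f = 0 \Rightarrow f = 0$.

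I would then exhibit the isometry. The map $\Phi : f \mapsto \wh R^{1/2} f$ is a linear bijection of $Dom(\wh R^{1/2})$ onto $\mc H$, and by construction $\langle \Phi f, \Phi g\rangle_{L^2(\nu)} = \langle \wh R^{1/2} f, \wh R^{1/2} g\rangle_{L^2(\nu)}$, so $\Phi$ is an isometry once $Dom(\wh R^{1/2})$ is equipped with the form norm $f \mapsto \|\wh R^{1/2} f\|_{L^2(\nu)}$. Completeness of $\mc H$ is therefore equivalent to completeness of the form domain in this form norm, and the content of the lemma is reduced to that completeness.

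The hard part is exactly this completeness. I would seek to deduce it from the closedness of the quadratic form $q(f,g) = \langle \wh R^{1/2} f, \wh R^{1/2} g\rangle_{L^2(\nu)}$ associated with the self-adjoint operator $\wh R$ — which is the property the Friedrichs extension is built to guarantee — together with the fact that $\wh R^{1/2}$ is a closed operator. Concretely, for a $q$-Cauchy sequence $(f_n)$ the images $\wh R^{1/2} f_n$ are $L^2(\nu)$-Cauchy and converge to some $u$; closedness should let me identify $u = \wh R^{1/2} f$ with $f \in Dom(\wh R^{1/2})$, so that $u \in \mc H$ and convergence holds in $\mc H$. I expect the delicate point to be keeping the limit inside the range rather than merely in its $L^2(\nu)$-closure, since closedness of the Friedrichs form is with respect to the graph norm $\|f\|_{L^2(\nu)}^2 + q(f,f)$ rather than the bare form norm; this is where the self-adjointness of $\wh R$ (as opposed to mere symmetry of $R$) is indispensable. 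If $\wh R$ is bounded below the two norms are equivalent and the argument closes directly, and otherwise one must interpret $\mc H$ as the completion of $Dom(\wh R^{1/2})$ in the form norm, with $\Phi$ extending to a unitary onto it.
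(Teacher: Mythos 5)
Your reduction is the right one, and your final paragraph puts its finger on the real obstruction --- but that obstruction is fatal to the lemma as stated, not merely a delicate step you failed to close. As you say, the map $\Phi : f \mapsto \wh R^{1/2}f$ identifies $\mc H := \wh R^{1/2}(Dom(\wh R^{1/2}))$, carrying the inner product the lemma prescribes (which is just the $L^2(\nu)$ inner product of the image vectors), with the form domain in the bare form norm; hence completeness of $\mc H$ is equivalent to the range of $\wh R^{1/2}$ being closed in $L^2(\nu)$. Since $\wh R^{1/2}$ is self-adjoint and (granting injectivity) has dense range, a closed range would force $\wh R^{1/2}$ to be surjective, hence boundedly invertible by the closed graph theorem, i.e. $0 \notin \sigma(\wh R)$ --- and nothing in the paper's Assumptions (i)--(iv) gives this. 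Concretely, take $R$ to be multiplication by $x$ on $L^2([0,1], dx)$, defined on $\mc F(\nu)$: it is symmetric, positivity preserving, positive definite, and has trivial kernel; $\wh R$ is multiplication by $x$ on all of $L^2$, $\wh R^{1/2}$ is multiplication by $\sqrt{x}$, and its range is a proper dense subspace of $L^2([0,1])$ (the function $x^{-1/4}$ lies in the $L^2$-closure of the range but not in the range, since $x^{-3/4} \notin L^2$). So $\mc H$ with the stated inner product is not complete, and no argument can fill the gap you flagged; your closing dichotomy --- either $\wh R$ is bounded below by a positive constant, or one must pass to the completion --- is the correct resolution, not a fallback.

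For comparison: the paper offers no proof of this lemma at all (it is prefaced by ``It can be seen that \dots''), and in the proof of Theorem \ref{thm RKHS} the isometry onto $\mc H(\lambda)$ is in any case only established on dense subspaces and then ``extended by completion,'' so the object actually used there is the completion you propose --- equivalently the $L^2(\nu)$-closure of the range, which is all of $L^2(\nu)$ when $\wh R^{1/2}$ is injective. Two smaller corrections to your write-up. First, nondegeneracy is not where injectivity enters: with the inner product as written, positive definiteness is automatic on any subspace of $L^2(\nu)$; injectivity of $\wh R^{1/2}$ would only be needed to make a pairing defined through \emph{preimages} well defined. Second, the inference ``$\ker R = \{0\}$ forces $\ker \wh R = \{0\}$'' is not routine: an injective symmetric operator can have a non-injective Friedrichs extension, so the identity $\langle \wh R f, f\rangle_{L^2(\nu)} = \|\wh R^{1/2}f\|^2_{L^2(\nu)}$ alone does not settle it, and injectivity of $\wh R^{1/2}$ would need its own argument from the specific assumptions.
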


Our main result of this section is formulated as follows.

\begin{theorem}\label{thm RKHS} Let $\wh R$ be the  Friedrichs 
extension of  a symmetric positive operator $R$ acting in  $L^2(\nu)$. 
Denote by $\lambda = \lambda(R)$ the symmetric measure defined by 
\eqref{eq symm meas by R}. Then 
$$
K(A, B):= \lambda(A \times B), \quad A, B \in \mc D(\nu)
$$
is a positive definite function defined on the set $\mc D(\nu)$ and 
generates the RKHS $\mc H(\lambda)$. The elements $\wh f(A)$ of 
$\mc H(\lambda)$ are signed measures on $\mc D(\nu)$ which are 
defined by the formula
\be\label{eq f via R}
\wh f(A) = \int_X R(x, A) f(x)\; d\nu(x) = \langle R(\chi_A), 
f\rangle_{L^2(\nu)}
\ee
where $f \in Dom(\wh R^{1/2})$. They are absolutely continuous 
with respect to $\nu$ and 
$$
\frac{d\wh f}{d\nu} = R(f).
$$
Moreover,
\be\label{eq isomorphism}
|| \wh R^{1/2}(f) ||_{L^2(\nu)} = || \wh f ||_{\mc H(\lambda)}, 
\ee
and therefore the map  $ \wh R^{1/2}(Dom(\wh R^{1/2}))  \ni 
\wh R^{1/2}(f)  \longrightarrow  
\wh f \in \mc H(\lambda)$, is an isometric isomorphism between these
Hilbert spaces. 
\end{theorem}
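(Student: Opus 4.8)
The plan is to realize $\mc H(\lambda)$ as the image of the form domain of the Friedrichs extension $\wh R$ under $\wh R^{1/2}$, transporting the $L^2(\nu)$ energy inner product to the RKHS inner product. First I would record that, by \eqref{eq symm meas by R}, $K(A,B)=\lambda(A\times B)=\langle \chi_A, R(\chi_B)\rangle_{L^2(\nu)}$ for $A,B\in\mc D(\nu)$. For any $A_1,\dots,A_N\in\mc D(\nu)$ and $\al_1,\dots,\al_N\in\R$, setting $\vp=\sum_i \al_i\chi_{A_i}\in\mc F(\nu)$ gives $\sum_{i,j}\al_i\al_j K(A_i,A_j)=\langle \vp, R(\vp)\rangle_{L^2(\nu)}\ge 0$ by Assumption (iii). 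Hence $K$ is positive definite and, by Definition \ref{def RKHS}, generates $\mc H(\lambda)$, in which $\{K_A:A\in\mc D(\nu)\}$ spans a dense subspace.

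Next I would identify $\wh f$ on the dense domain $\mc F(\nu)$. For $f=\sum_j\beta_j\chi_{B_j}$ define $\wh f(A)=\langle R(\chi_A),f\rangle_{L^2(\nu)}$. Since $\wh f(A)=\sum_j\beta_j K(A,B_j)=\big(\sum_j\beta_j K_{B_j}\big)(A)$, the element $\wh f$ is exactly $\sum_j\beta_j K_{B_j}\in\mc H(\lambda)$, so $f\mapsto\wh f$ maps $\mc F(\nu)$ into $\mc H(\lambda)$ and realizes \eqref{eq f via R}. Because $f\in Dom(R)$ and $R$ is symmetric, I can rewrite $\wh f(A)=\langle \chi_A, R(f)\rangle_{L^2(\nu)}=\int_A R(f)\,d\nu$, which exhibits $\wh f$ as a signed measure absolutely continuous with respect to $\nu$ with $d\wh f/d\nu=R(f)$, proving the density claim for such $f$.

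The heart of the argument is the isometry, valid for $f\in\mc F(\nu)$:
$$\|\wh f\|_{\mc H(\lambda)}^2=\sum_{i,j}\beta_i\beta_j K(B_i,B_j)=\langle f, R(f)\rangle_{L^2(\nu)}=\|\wh R^{1/2}f\|_{L^2(\nu)}^2,$$
where the last equality uses that the Friedrichs extension satisfies $\wh R f=R f$ and $\langle \wh R f, f\rangle=\|\wh R^{1/2}f\|^2_{L^2(\nu)}$ for $f\in Dom(R)\supset\mc F(\nu)$. Thus $\wh R^{1/2}f\mapsto\wh f$ is isometric from $\wh R^{1/2}(\mc F(\nu))$ onto $\mathrm{span}\{K_A\}$. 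Since $\mc F(\nu)$ is a form core for $\wh R$, the set $\wh R^{1/2}(\mc F(\nu))$ is dense in the Hilbert space $\wh R^{1/2}(Dom(\wh R^{1/2}))$ (supplied by the preceding Lemma), while $\mathrm{span}\{K_A\}$ is dense in $\mc H(\lambda)$. A linear isometry between dense subspaces extends uniquely to a unitary, giving \eqref{eq isomorphism} with $\wh R^{1/2}f\mapsto\wh f$. Passing to the limit in $\wh f(A)=\langle R(\chi_A),f\rangle_{L^2(\nu)}$ — continuous in $f$ because $R(\chi_A)\in L^2(\nu)$ is fixed — extends the formula, the measure property, and the absolute continuity to all $f\in Dom(\wh R^{1/2})$.

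The delicate point, and the main obstacle, is this last passage: one must certify that $\wh R^{1/2}(\mc F(\nu))$ is dense in the energy Hilbert space and that the target is complete. This rests on $\mc F(\nu)$ being a form core for $\wh R$ — i.e. dense in $Dom(\wh R^{1/2})$ in the graph norm of $\wh R^{1/2}$ — which is built into the Friedrichs construction, together with the Lemma certifying that $\wh R^{1/2}(Dom(\wh R^{1/2}))$ is a Hilbert space. Care is also needed because $R(f)$ need not be literally defined for $f$ only in the form domain, so for such $f$ the identity $d\wh f/d\nu=R(f)$ must be read via the Radon--Nikodym theorem applied to the continuously extended signed measure $\wh f$.
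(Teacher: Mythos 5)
Your proof is correct and rests on the same core identity as the paper's, namely
$\|\wh\varphi\|^2_{\mc H(\lambda)} = \langle \varphi, R(\varphi)\rangle_{L^2(\nu)} = \|\wh R^{1/2}(\varphi)\|^2_{L^2(\nu)}$ for $\varphi\in\mc F(\nu)$, but the two arguments complete this identity to the full statement by different mechanisms. The paper does not use density of $\wh R^{1/2}(\mc F(\nu))$ for the membership half: it shows directly that $\wh f \in \mc H(\lambda)$ for \emph{every} $f \in Dom(\wh R^{1/2})$ via the RKHS membership criterion of Lemma \ref{lem criterion}, since Cauchy--Schwarz gives
$\bigl(\sum_i \alpha_i \wh f(A_i)\bigr)^2 = \langle \wh R^{1/2}(\varphi), \wh R^{1/2}(f)\rangle^2_{L^2(\nu)} \leq \|\wh R^{1/2}(f)\|^2_{L^2(\nu)} \sum_{i,j}\alpha_i\alpha_j K(A_i, A_j)$
with no approximation whatsoever; surjectivity is then proved separately by a Riesz-representation argument in the energy space. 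You instead obtain membership and surjectivity simultaneously from the abstract principle that an isometry between dense subspaces of two Hilbert spaces extends to a unitary, and then recover the concrete formula \eqref{eq f via R} by a limit (correctly exploiting that convergence in RKHS norm implies pointwise convergence, and that form-norm convergence contains $L^2(\nu)$-convergence). Your route is shorter and more structural; its cost is that the claim ``$\mc F(\nu)$ is a form core for $\wh R$'' becomes load-bearing for everything, including membership. Be aware that this claim is not literally ``built into the Friedrichs construction'' under the paper's Assumption: the Assumption only requires $\mc F(\nu) \subset Dom(R)$, so the canonical form core of $\wh R$ is the form closure of $Dom(R)$, which contains but need not equal that of $\mc F(\nu)$; your argument is airtight only if $\wh R$ is read as the Friedrichs extension of $R$ restricted to $\mc F(\nu)$. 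This reading is fair --- the paper's own surjectivity step, which extends $\tau_F$ from $\mc F(\nu)$ to $Dom(\wh R^{1/2})$ ``by the definition of the Friedrichs extension,'' silently uses the same density --- so it is a shared hypothesis rather than a gap peculiar to your proof. Likewise, both arguments lean on the preceding, unproven Lemma that $\wh R^{1/2}(Dom(\wh R^{1/2}))$ equipped with the $L^2(\nu)$ inner product is complete: you cite it explicitly, while the paper uses it implicitly when invoking the Riesz theorem.
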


\begin{proof}
We first show that $K: (A, B) \mapsto \lambda(A \times B)$ is a positive 
definite function, $A, B \in \mc D(\nu)$. Indeed, for any collection of 
sets $A_1, ..., A_k$ from $\mc D(\nu)$ and any real numbers $\alpha_1, 
...,  \alpha_k$, we consider two families of functions: $\mc G(\nu)
 =\mathrm{span}\{K(\cdot, A) : A \in \mc D(\nu)\}$ and   
 $\mc F(\nu) =\mathrm{span}\{\chi_A : A \in \mc D(\nu)\}$.  
  Then, for 
  $$\varphi(x) = \sum_{i=1}^k \alpha_i \chi_{A_i}(x),$$
  we compute 
 \be\label{eq-lambda pdf}
 \ba
 \sum_{i, j}^k \alpha_i\alpha_j \lambda(A_i\times A_j) &= 
 \sum_{i, j}^k \alpha_i\alpha_j  \int_{X} \chi_{A_i} R(\chi_{A_j})
 \; d\nu\\
 & = \int_X \left(\sum_{i=1}^k \alpha_i \chi_{A_i}\right)
R \left(\sum_{j=1}^k \alpha_j \chi_{A_j}\right)\; d\nu\\
& = \langle \varphi, R(\varphi)\rangle_{L^2(\nu)}\\
& \geq 0
\ea 
 \ee
 because $R$ is a positive defined operator. Therefore, the function 
 $K : (A, B) \mapsto \lambda(A \times B)$ is positive definite and 
 defines a RKHS $\mc H(\lambda) = \mc  H$ as in  Definition 
 \ref{def RKHS}.

It follows from the definition of functions $\wh f$, see \eqref{eq f via R},
 that 
$$
\wh f(A) = \int_A R(f)\; d\nu, \quad f\in Dom(\wh R^{1/2}.
$$ 
This means that, for fixed $f$, the signed measure $\wh f(A)$ is absolutely
 continuous with respect to $\nu$ and  $R(f)$ is the Radon-Nikodym
  derivative  $\dfrac{d\wh f}{d\nu}$.   
  
The computation used in \eqref{eq-lambda pdf} shows that the following 
equalities are true:
 for the  function $\varphi\in \mc F(\nu)$ as above,
\be\label{eq norm in RKHS}
|| \varphi ||^2_{\mc H(\lambda)} = \sum_{i, j}^k \alpha_i\alpha_j 
\lambda(A_i\times A_j) = \langle \varphi, R(\varphi)\rangle_{L^2(\nu)}
= || \wh R^{1/2}(\varphi) ||^2_{L^2(\nu)}.
\ee
 
Next, we observe that $\wh \chi_A(\cdot) =
 K(\cdot, A)$ and $K(A, B) = \langle \chi_A, 
 \chi_B\rangle_{\mc H(\lambda)}$. By definition, the family  $\mc G(\nu)$ 
 is dense in  the RKHS
$\mc H(\lambda)$, and $\mc F(\nu)$ is dense in $L^2(\nu)$.
These two families are in  the one-to-one correspondence $K(\cdot, A) 
\longleftrightarrow \wh R^{1/2}(\chi_A)$ and satisfy the property 
\be\label{eq-iso rkhs}
|| K(\cdot, A)||_{\mc H(\lambda)} = || \wh R^{1/2}(\chi_A) 
||_{L^2(\nu)},\ \ \ A\in \mc D(\nu).
\ee
Our goal is to extend this isometry to the closures of $\mc 
G(\nu)$ and $\wh R^{1/2}(\mc F(\nu))$ and show
that the Hilbert 
spaces $\mc H(\lambda)$ and $Dom(\wh R^{1/2})$ are isometrically 
isomorphic.

Prove that $\wh f(A) $ is a function from $\mc H(\lambda)$. Let $f$ be
the corresponding  function from  $Dom(\wh R^{1/2})$ which defines
$\wh f$. By Lemma \ref{lem criterion}, the function 
$$
\wh f(A) = \langle R(\chi_A), f\rangle_{L^2(\nu)}
$$
 belongs to $\mc H(\lambda)$ if and only if, for $A_i,\in \mc  D(\nu)$ 
 and $\alpha_i \in \R,  i=1,..., k$,
$$
\ba
\left(\sum_{i= 1}^k \alpha_i \wh f(A_i) \right)^2 & =
\left(\sum_{i= 1}^k \alpha_i \langle R(\chi_{A_i}), f 
\rangle_{L^2(\nu)} \right)^2\\
&= \langle R(\varphi), f\rangle^2_{L^2(\nu)}\\
& =\langle\wh R^{1/2}(\varphi), \wh R^{1/2}(f)
\rangle^2_{L^2(\nu)}\\
& \leq  ||\wh R^{1/2}(f)||^2_{L^2(\nu)}
 ||\wh R^{1/2}(\varphi)||^2_{L^2(\nu)}\\
 & = ||\wh R^{1/2}(f)||^2_{L^2(\nu)}  \sum_{i, j}^k \alpha_i\alpha_j 
 \lambda(A_i\times A_j) 
\ea
$$
 where $\varphi(x) = \sum_{i=1}^k \alpha_i \chi_{A_i}(x)$. 
 Denoting $||\wh R^{1/2}(f)||^2_{L^2(\nu)}$ by $C(f)$, we can 
use the criterion formulated in   Lemma \ref{lem criterion}. Hence, 
$\wh f \in \mc H(\lambda)$. 

Furthermore, we note that, as the operator $\wh R^{1/2}$ is closed,
relation \eqref{eq-iso rkhs} can be extended by completion to the
 equality  
  $$
  ||\wh f ||_{\mc H(\lambda)} =  || \wh R^{1/2}(f) ||_{L^2(\nu)},
  $$
  which holds for all functions $\wh f$  from  $\mc H (\lambda)$. 
  
We show that $\wh f$, defined by \eqref{eq f via R}, satisfies 
the reproducing property  for the kernel
$K(A, B)$, i.e., $ \wh f(A) =    \langle K(\cdot, A), \wh 
f\rangle_{\mc H(\lambda)}$ where the inner product in $\mc H(\lambda)$
is defined as in Definition \ref{def RKHS}. For this, we use 
\eqref{eq isomorphism} and  the facts
that $R$ is symmetric and that $Dom(R) \subset Dom(\wh R^{1/2})$:
$$
\ba 
\wh f(A) & = \int_A R(f)(x) \; d\nu(x)\\
& = \langle R(f), \chi_A\rangle_{L^2(\nu)}\\
& = \langle \wh R^{1/2}(f), \wh R^{1/2}(\chi_A)\rangle_{L^2(\nu)}\\
& = \langle K(\cdot, A),  \wh f\rangle_{\mc H(\lambda)}, 
\ea
$$
where $A\in \mc D(\nu).$  
  
It remains to prove that the map $f \mapsto \wh f$ from $Dom(\wh R^{1/2})$ to $\mc H(\lambda)$ is 
onto, i.e.,   for every $F \in \mc H(\lambda)$, there exists a unique
function $f \in Dom(\wh R^{1/2})$ such that $f = \wh f$.
For this, we note first that 
$$
\left(\sum_{i =1}^n \alpha_i F(A_i)\right)^2 \leq 
C\sum_{i,j =1}^n \alpha_i \alpha_j K(A_i, A_j)= C  \langle \varphi, R(\varphi)\rangle_{L^2(\nu)}
$$
where $\varphi =\sum_{i=1}^n \alpha_i \chi_{A_i} \in 
Dom(\wh R^{1/2})$. Therefore, $F$ defines a linear continuous 
functional $\tau$ on the space $\mc F(\nu)$:
$$
\tau_F(\varphi) = \sum_{i =1}^n \alpha_i F(A_i),
$$
and 
$$
|\tau_F(\varphi)|^2 \leq C  \langle \varphi, R(\varphi)\rangle_{L^2(\nu)}.
$$
By the definition of the Friedrichs extension, the latter can be extended
to functions from $Dom(\wh R^{1/2})$, so that  
$$
|\tau_F(\varphi)|^2 \leq C || \wh R^{1/2} ||^2_{L^2(\nu)}.
$$
By the Riesz theorem, there exists a unique function $ f \in 
Dom(\wh R^{1/2})$ such that 
$$
\tau_F(g) = \langle \wh R^{1/2}(g), \wh R^{1/2}(f) 
\rangle_{L^2(\nu)}.
$$
Show that $ F = \wh f$ considered as functions from $\mc H(\lambda)$.  
Indeed, we use the reproducing property of the generating family of
functions $(K\cdot, A)$ and the fact that  $R$ is symmetric to deduce
that 
$$
\ba 
F(A) & = \langle F, K(\cdot, A)\rangle_{\mc H(\lambda)}\\
& = \langle \wh R^{1/2} f, \wh R^{1/2} \chi_A
\rangle_{L^2(\nu)}\\
& = \int_{X_1} R(f) \chi_A\; d\nu\\
& = \wh f(A), 
\ea
$$
where $A\in \mc D(\nu)$. As was mentioned above, the isometric property 
$ ||F ||_{\mc H(\lambda)} =  || \wh R^{1/2}(f) ||_{L^2(\nu)}$ 
can be proved by taking the closure of dense subsets in $\mc H(\lambda)$
and $Dom(\wh R^{1/2})$.

This proves that  $\mc H(\lambda)$ and $\wh R(Dom(\wh R^{1/2}))$ are 
isometric  Hilbert spaces.

\end{proof}

\begin{remark}
(1) We note that the definition of functions $\wh f$ from 
$\mc H(\lambda)$ as in \eqref{eq f via R} is analogous to the formula 
\eqref{eq RKHS Example} where the operator $R$ was the identity 
operator. 
On the other hand, Theorem \ref{thm RKHS} states the the RKHS $\mc
 H(\lambda)$ can
be defined as the set $\{ \wh f(A) = \langle \chi_A, 
R(f) \rangle_{L^2(\nu)} : f \in Dom(\wh R^{1/2}) \}$.

(2) Let $R$ be a symmetric  positive definite 
unbounded operator in $L^2(\nu)$  which
 determines a symmetric measure $\lambda$ (see Assumption). Let 
$\wh R$ be the Friedrichs extension of $R$. Then $\wh R$ is also a positive 
definite  self-adjoint  operator. Hence, in its turn, it defines a symmetric
 measure $\wh  \lambda$ on $X \times X$. It is not hard to see that, 
 in fact, $\lambda =  \wh \lambda$.
\end{remark}

\subsection{Spectral properties  and factorization of $R$}
Suppose that we have  now two $\sigma$-finite measure spaces
$(X_i, \A_i, \nu_i), i=1,2.$ 
We will use Theorem \ref{thm RKHS} to describe measures $\rho$ 
that belong to the set $L(\nu_1, \nu_2)$. We recall that $\rho$ is in 
$L(\nu_1, \nu_2)$ if there is a pair of finite transition kernels $P$ and
$Q$ such that \eqref{eq_rho first def} holds. 
Let $R$ be a positive  symmetric operator acting on
functions from $L^2(\nu_1)$ and satisfying  Assumption formulated 
before Theorem \ref{thm RKHS}. Beginning with $(X_1, \A_1, \nu_1, R)$, 
we define the symmetric measure $\lambda_1$ on $(X_1 \times X_1,
\A_1 \times \A_1)$ by setting 
\be\label{eq lambda_1}
\lambda_1(A \times B) = \langle \chi_A, R(\chi_B)
\rangle_{L^2(\nu_1)}, \ \ A, B \in \mc D(\nu_1),
\ee
and  then extend it to all Borel sets of $X_1\times X_1$.

Using the symmetric measure $\lambda_1$, construct the RKHS 
$\mc H(\lambda_1)$ as in Theorem \ref{thm RKHS}.   
Fix an orthonormal basis (ONB) $\{\wh k_n\}$ in the RKHS 
$\mc H(\lambda_1)$. As follows from \eqref{eq f via R},  there are
 functions $\{f_n\}\in Dom(\wh R^{1/2}) \subset L^2(\nu_1)$ 
 such that 
\be\label{eq_basis k_n}
\wh k_n(A) = \langle R(\chi_A), f_n\rangle_{L^2(\nu_1)} = 
\langle \chi_A, R(f_n)\rangle_{L^2(\nu_1)}, \ \ \ \  A \in 
\mc D(\nu_1).
\ee
Choose an ONB $\{\varphi_n\}$ in the Hilbert space $L^2(\nu_2)$.
We will consider the two bases in the Hilbert spaces $\mc H(\lambda_1)$ 
and $L^2(\nu_2)$ which  are formed by 
non-negative functions, i.e., $f_n \geq 0$ and $\varphi_n \geq 0$. 

\begin{definition}\label{def_P and Q via bases}
We use the objects $R, (\wh k_n)$, and  $(\varphi_n)$ to define two 
transition kernels $P : X_1 \times \A_2 \to [0, \infty)$ and $Q :
X_2 \times \A_1 \to [0, \infty)$. For this, we set
\be\label{eq def Q via basis} 
Q(y, A) := \sum_{n=1}^\infty \wh k_n(A) \varphi_n(y) = 
 \sum_{n=1}^\infty \langle\chi_A, R(f_n)\rangle_{L^2(\nu_1)}
  \varphi_n(y) , \quad A \in \mc D(\nu_1),
\ee
and
\be\label{eq def P via basis}
\ba 
P(x, B) & := \sum_{n=1}^\infty \left(\int_B \varphi_n \; d\nu_2\right)
R(f_n)(x)\\
& = \sum_{n=1}^\infty \langle \chi_B, \varphi_n 
\rangle_{L^2(\nu_2)} R(f_n)(x) , \quad B \in \mc D(\nu_2).
\ea
\ee

\end{definition}

\begin{remark} (1) 
We first note that Theorem \ref{thm RKHS} establishes
a correspondence between orthonormal bases in $\mc H(\lambda)$ and
$\wh R^{1/2}(Dom \wh R^{1/2}))$. This means that one can take the 
functions $(f_n)$ such that $(\wh R^{1/2}(f_n))$ is an ONB.
Moreover, since $Dom(\wh R) \subset Dom(\wh R^{1/2})$ and $R(g) = 
\wh R(g)$ on a dense subset, we can take $f_n$ such that $R(f_n)$ is 
defined.

(2) As mentioned in Lemma \ref{lem K and basis}, the sequence 
$(\wh k_n(A))$ is in $\ell^2 $ for every set $A \in \mc D(\nu_1)$. Hence 
The kernels $Q$ and $P$ take finite values on the sets $A$ and $B$ of
 finite measure.
 
 (3) One can show that the operators $T_P$ and $T_Q$ defined by the 
 kernels $P$ and $Q$ do not depend on the choice of bases $(\wh k_n)$ 
 and $(\varphi_n)$.
 
(4) It can be shown that $P(x, B)$ and $Q(y, A)$, which are defined on sets
of finite measure, can be extended to the measures $P(x, \cdot)$ 
and $Q(y, \cdot)$ on $(X_2, \A_2)$ and 
$(X_1, \A_1)$. Obviously, these measures are absolutely 
continues with respect to $\nu_2$ and $\nu_1$, respectively.
We leave the details for the reader. 

\end{remark}

Recall that the transition kernels $P$ and $Q$ generate a pair of linear
 operators $T_P$ and $T_Q$ acting on bounded Borel functions by
$$
T_Q(f)(y) = \int_{X_1} Q(y, dx) f(x), \qquad T_P(g)(y) = \int_{X_2}
 P(x, dy) g(y). 
$$
We will consider these operators in the corresponding $L^2$-spaces.
It follows from our previous results that $T_P$ and $T_Q$ are , 
in general, unbounded  densely defined operators such that 
$$
T_P : L^2(\nu_2) \to L^2(\nu_1), \qquad T_Q : L^2(\nu_1) \to
 L^2(\nu_2),
$$
see Theorem \ref{thm T_P contractive}. 
In Proposition \ref{prop P-Q via basis}, we give exact formulas for the
operators $T_P$ and $T_Q$ in the case when the kernels are taken
from Definition \ref{def_P and Q via bases}.

Recall that the subspace $\mc F(\nu_1)$ of simple functions from
 $L^2(\nu_1)$ belongs to the domain of $T_Q$, and $\mc F(\nu_2)
\subset L^2(\nu_1)$ is in the domain of  $T_P$.

\begin{proposition}\label{prop P-Q via basis}
(1) 
The  kernels $Q$ and $P$, defined by \eqref{eq def Q via basis} and
\eqref{eq def P via basis},  form a dual pair  of kernels  associated to the
 measures $\nu_1$ and $\nu_2$ (see Definition \ref{def P,Q first}).
 
 (2) The dual pair of transition kernels defines a measure $\rho$ on
 $X_1 \times X_2$ such that, for $A \in \mc D(\nu_1)$ and 
 $B \in \mc D(\nu_2)$,
 
 $$
 \rho(A \times B) = \int_{B} Q(y, A)\; d\nu_2(y) 
  = \sum_{n=1}^\infty \wh k_n(A) \langle \chi_B, \varphi_n
  \rangle_{L^2(\nu_2)}.
$$

 (3) The transition kernels $P$ and $Q$  generate the 
operators $T_P$ and $T_Q$ that form a symmetric pair of operators, i.e., 
$$
\langle g, T_Q(f)\rangle_{L^2(\nu_2)} = 
\langle T_P(g), f\rangle_{L^2(\nu_1)},
$$
and $T_P \subset (T_Q)^*$, $T_Q \subset (T_P)^*$ on the corresponding
domains. 
\end{proposition}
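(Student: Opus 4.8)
The plan is to reduce all three parts to the single duality identity $d\nu_1(x)P(x,dy)=d\nu_2(y)Q(y,dx)$ of Definition \ref{def P,Q first}: once this is in hand, part (2) is just the common value of the two sides, and part (3) is an instance of Theorem \ref{thm T_P contractive}. By \eqref{eq R and chi_A} it suffices to verify the identity on rectangles, i.e. to show that for $A\in\mc D(\nu_1)$ and $B\in\mc D(\nu_2)$ one has $\int_A P(x,B)\,d\nu_1(x)=\int_B Q(y,A)\,d\nu_2(y)$, both equal to $\sum_n\wh k_n(A)\langle\chi_B,\varphi_n\rangle_{L^2(\nu_2)}$. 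First I would record the two convergence facts that make the defining series \eqref{eq def Q via basis} and \eqref{eq def P via basis} legitimate: by Lemma \ref{lem K and basis} applied to the reproducing kernel $K(A,B)=\lambda_1(A\times B)$, the sequence $(\wh k_n(A))_n$ lies in $\ell^2$ with $\sum_n\wh k_n(A)^2=\lambda_1(A\times A)<\infty$; and since $\{\varphi_n\}$ is an ONB of $L^2(\nu_2)$ and $\chi_B\in L^2(\nu_2)$, Parseval gives $(\langle\chi_B,\varphi_n\rangle_{L^2(\nu_2)})_n\in\ell^2$ with $\ell^2$-norm $\nu_2(B)^{1/2}$.

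For the $Q$-side the computation is clean: because $(\wh k_n(A))_n\in\ell^2$, the series $\sum_n\wh k_n(A)\varphi_n$ converges in $L^2(\nu_2)$ to $Q(\cdot,A)$, and pairing against $\chi_B$ (continuous in $L^2(\nu_2)$) yields $\int_B Q(y,A)\,d\nu_2(y)=\sum_n\wh k_n(A)\langle\varphi_n,\chi_B\rangle_{L^2(\nu_2)}$ with no further justification. This already defines $\rho$ rigorously from the pair $(\nu_2,Q)$. I would then recover the dual kernel $P$ as the disintegration of $\rho$ along $\pi_1$ via Corollary \ref{cor rho determ P Q}, and check that it agrees with \eqref{eq def P via basis} through the term-by-term computation $\int_A P(x,B)\,d\nu_1(x)=\sum_n\langle\chi_B,\varphi_n\rangle_{L^2(\nu_2)}\langle\chi_A,R(f_n)\rangle_{L^2(\nu_1)}$, using $\int_A R(f_n)\,d\nu_1=\wh k_n(A)$ from \eqref{eq_basis k_n}; the interchange of $\int_A$ and $\sum_n$ is licensed by absolute convergence, since Cauchy--Schwarz in $\ell^2$ gives $\sum_n|\langle\chi_B,\varphi_n\rangle_{L^2(\nu_2)}|\,|\wh k_n(A)|\le\nu_2(B)^{1/2}\lambda_1(A\times A)^{1/2}<\infty$. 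Both routes produce the common value $\sum_n\wh k_n(A)\langle\chi_B,\varphi_n\rangle_{L^2(\nu_2)}$, which is the duality on rectangles (hence (1) after the standard $\sigma$-additive extension) and is exactly the formula asserted for $\rho(A\times B)$ in (2); membership $\rho\in L(\nu_1,\nu_2)$ then follows from \eqref{eq_rho first def} and Corollary \ref{cor equiv i - iii}.

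For (3), I would feed the duality relation into the Fubini-type argument of Theorem \ref{thm T_P contractive}(2): for simple $f\in\mc F(\nu_1)$ and $g\in\mc F(\nu_2)$,
\[
\langle g,T_Q(f)\rangle_{L^2(\nu_2)}=\iint g(y)f(x)\,d\nu_2(y)Q(y,dx)=\iint g(y)f(x)\,d\nu_1(x)P(x,dy)=\langle T_P(g),f\rangle_{L^2(\nu_1)},
\]
which is the symmetric-pair identity. The containments $T_P\subset(T_Q)^*$ and $T_Q\subset(T_P)^*$ are then immediate because $\mc F(\nu_1)$ and $\mc F(\nu_2)$ are dense in the respective $L^2$-spaces, precisely as in Theorem \ref{thm T_P contractive}(3).

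I expect the genuinely delicate step to be the bookkeeping around the $P$-side series: verifying that $P(x,B)$ is finite for $\nu_1$-a.e.\ $x$, Borel in $x$, and $\sigma$-additive in $B$, so that $P(\cdot,B)$ is a bona fide transition kernel rather than merely an a.e.-defined function. The numerical duality is essentially a one-line Parseval computation, but promoting $P$ to a kernel requires the measurability and countable-additivity arguments flagged in Definition \ref{def_P and Q via bases} and its remark. The cleanest way to avoid the awkward $P$-side interchange altogether is the route indicated above: build $\rho$ from the unproblematic $(\nu_2,Q)$ side, where convergence is in $L^2(\nu_2)$, and then obtain $P$ by disintegration through Corollary \ref{cor rho determ P Q}; I would treat this as the technical core of the argument.
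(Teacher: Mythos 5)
Your proof is correct, and its computational core coincides with the paper's: both arguments rest on the identity $\wh k_n(A)=\langle\chi_A,R(f_n)\rangle_{L^2(\nu_1)}=\int_A R(f_n)\,d\nu_1$ from \eqref{eq_basis k_n} together with a Parseval-type exchange of sums and integrals, which is exactly the chain of equalities displayed in the paper's proof of parts (1) and (2). You differ from the paper in two genuine ways. First, in organization: the paper defines both kernels by their series \eqref{eq def Q via basis}--\eqref{eq def P via basis} and verifies duality by direct computation, whereas you propose to build $\rho$ from the unproblematic $(\nu_2,Q)$ side (where $\sum_n\wh k_n(A)\varphi_n$ converges in $L^2(\nu_2)$ because $(\wh k_n(A))_n\in\ell^2$) and then recover $P$ by disintegration via Corollary \ref{cor rho determ P Q}; this buys a cleaner treatment of the $P$-side convergence and kernel-promotion issues, which the paper relegates to the remark after Definition \ref{def_P and Q via bases} (``we leave the details for the reader''). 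Your explicit Tonelli/Cauchy--Schwarz justifications for the interchanges (valid here since $f_n\ge 0$, $\varphi_n\ge 0$, hence $R(f_n)\ge 0$ and $\wh k_n(A)\ge 0$) make rigorous what the paper's displayed computation leaves implicit. Second, for part (3) you deduce the symmetric-pair identity from the already-established duality via the generic Fubini argument of Theorem \ref{thm T_P contractive}(2), rather than redoing the explicit basis computation as the paper does; this is shorter and more conceptual, though you should note that invoking Theorem \ref{thm T_P contractive}(3) for the adjoint containments still requires $P(\cdot,B)\in L^2(\nu_1)$ and $Q(\cdot,A)\in L^2(\nu_2)$ --- the $Q$-side follows from your $\ell^2$ observation, while the $P$-side is precisely the delicate point you flag, and it is the same point the paper itself takes for granted in the paragraph preceding the proposition.
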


\begin{proof} (1) To prove the result, we need to show that, for any sets
$A \in \mc D(\nu_1)$ and  $B\in \mc D(\nu_2)$, the property
$$
\int_A P(x, B)\; d\nu_1(x)  = \int_B  Q(y, A)\; d\nu_2(y)
$$
holds. 
In the computation below, we use relations \eqref{eq_basis k_n}, 
\eqref{eq def Q via basis}, \eqref{eq def P via basis}, and the fact that 
$R$ is symmetric:

$$
\ba 
\int_B Q(y, A) \;  d\nu_2(y) & = \int_{X_2} \chi_B(y) 
\sum_{n=1}^\infty k_n(A) \varphi_n(y)\; d\nu_2(y)\\
&= \int_{X_2} \chi_B(y) \sum_{n=1}^\infty \langle R(\chi_A), f_n
\rangle_{L^2(\nu_1)}\varphi_n(y) \; d\nu_2(y)\\
& = \sum_{n=1}^\infty \langle \chi_B, \varphi_n\rangle_{L^2(\nu_2)}
\int_{X_1} R(\chi_A) f_n\; d\nu_1\\
& =\sum_{n=1}^\infty \langle \chi_B, \varphi_n\rangle_{L^2(\nu_2)}
\int_A R(f_n)\; d\nu_1\\
& = \int_A \left( \sum_{n=1}^\infty \langle \chi_B, \varphi_n\rangle_{L^2(\nu_2)} R(f_n)\right) \; d\nu_1\\
& = \int_A   P(x, B)\; d\nu_1(x).
\ea
$$

(2) It follows from (1) that the measure $\rho$ can be defined as we did
before:
$$
 \rho(A \times B)= \int_A P(x, B)\; d\nu_1(x)  = \int_B  Q(y, A)\; 
 d\nu_2(y).
$$
To prove the result we compute

$$
\ba 
\int_B  Q(y, A)\;d\nu_2(y) &= \int_{X_2} \chi_B(y) 
\left(\sum_{n=1}^\infty \wh k_n(A) \varphi_n(y)\right) \; d\nu_2(y)\\
&= \int_{X_2} \chi_B(y)  \left(\sum_{n=1}^\infty \langle R(\chi_A),
f_n\rangle_{L^2(\nu_1)} \varphi_n(y)\right) \; d\nu_2(y)\\
& = \sum_{n=1}^\infty \left(  \int_B \langle R(\chi_A),
f_n\rangle_{L^2(\nu_1)} \varphi_n(y)\right) \; d\nu_2(y) \\
& =  \sum_{n=1}^\infty \langle R(\chi_A),f_n\rangle_{L^2(\nu_1)} 
\langle\chi_B, \varphi_n\rangle_{L^2(\nu_2)}\\
& = \sum_{n=1}^\infty \langle\chi_A, R(f_n)
  \rangle_{L^2(\nu_1)} \langle \chi_B, \varphi_n\rangle_{L^2(\nu_2)}\\
 & =\sum_{n=1}^\infty \wh k_n(A) \langle \chi_B, \varphi_n
  \rangle_{L^2(\nu_2)}.
\ea
$$

(3) 
The proof of this statement is similar to that given in (1). We sketch it here.
 Since 
$$
T_Q(f)(y) = \int_{X_1} Q(y, dx) f(x) = \sum_{n=1}^\infty 
 \langle f,  R(f_n)\rangle_{L^2(\nu_1)} \varphi_n(y),
$$ 
and
$$
T_P(g)(y) = \int_{X_2} P(x, dy) g(y) = \sum_{n=1}^\infty
 \langle g,  \varphi_n \rangle_{L^2(\nu_2)} R(f_n)(x)
$$
we have
$$
\ba
\langle g, T_Q(f)\rangle_{L^2(\nu_2)} & = \int_{X_2} g(y) \left(
\sum_{n=1}^\infty \varphi_n(y)
\int_{X_1} R(f_n)(x) f(x) \; d\nu_1(x) \right)\ d\nu_2(y)\\
& = \sum_{n=1}^\infty \langle g, \varphi_n \rangle_{L^2(\nu_2)}
\langle R(f_n), f \rangle_{L^2(\nu_1)}\\
& =\int_{X_1} \left(\sum_{n=1}^\infty \langle g, \varphi_n
\rangle_{L^2(\nu_2)} R(f_n)(x)\right) f(x) \; d\nu_1(x)\\
& = \langle T_P(g), (f)\rangle_{L^2(\nu_1)}.
\ea
$$
\end{proof}

\begin{remark}\label{rem 7.10}
We proved in Proposition \ref{prop P-Q via basis} that the measures 
$P(x, \cdot)$ and $Q(y, \cdot)$ have the property
$$
\frac{P(x, dy)}{d\nu_2(y)} = \sum_{n=1}^\infty 
(Rf_n)(x) \varphi_n(y) =  \frac{Q(y, dx)}{d\nu_1(x)}.
$$
\end{remark}

\begin{theorem}\label{thm factorization} Let the kernels $P$ and $Q$ be
defined as in Definition \ref{def_P and Q via bases}. Then 
$\wh R = T_PT_Q$, i.e., the corresponding
operators $T_P$ and $T_Q$ factorize the operator $\wh R :  L^2(\nu_1) 
\to L^2(\nu_1)$.
\end{theorem}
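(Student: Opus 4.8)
The plan is to combine the explicit series representations of $T_P$ and $T_Q$ obtained in Proposition \ref{prop P-Q via basis} with the isometry of Theorem \ref{thm RKHS}, thereby reducing the identity $\wh R = T_PT_Q$ to a Parseval expansion in a suitable orthonormal basis. First I would recall that, on the dense domains of simple functions, Proposition \ref{prop P-Q via basis} gives
\[
T_Q(f)(y) = \sum_{n} \langle f, R(f_n)\rangle_{L^2(\nu_1)} \varphi_n(y), \qquad T_P(g)(x) = \sum_{n} \langle g, \varphi_n\rangle_{L^2(\nu_2)} R(f_n)(x).
\]
Composing these and using that $\{\varphi_n\}$ is orthonormal in $L^2(\nu_2)$, the cross terms collapse and I would obtain
\[
(T_PT_Q f)(x) = \sum_{m} \langle T_Q f, \varphi_m\rangle_{L^2(\nu_2)} R(f_m)(x) = \sum_{m} \langle f, R(f_m)\rangle_{L^2(\nu_1)} R(f_m)(x),
\]
so that the whole statement is reduced to identifying the right-hand side with $\wh R(f)$.

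The key identifications come next. I would set $e_m := \wh R^{1/2}(f_m)$, so that $R(f_m) = \wh R(f_m) = \wh R^{1/2}(e_m)$. By Theorem \ref{thm RKHS} the map $\wh R^{1/2}(f)\mapsto \wh f$ is an isometric isomorphism carrying $e_m$ to $\wh k_m$; since $\{\wh k_n\}$ is an orthonormal basis of $\mc H(\lambda_1)$, the family $\{e_m\}$ is an orthonormal basis of the closure $\overline{\wh R^{1/2}(Dom(\wh R^{1/2}))}$ in $L^2(\nu_1)$, a subspace containing every $\wh R^{1/2}(f)$. Using self-adjointness of $\wh R^{1/2}$ I would then rewrite, for $f \in Dom(\wh R^{1/2})$,
\[
\langle f, R(f_m)\rangle_{L^2(\nu_1)} = \langle f, \wh R^{1/2}(e_m)\rangle_{L^2(\nu_1)} = \langle \wh R^{1/2}(f), e_m\rangle_{L^2(\nu_1)},
\]
whence $(T_PT_Q f)(x) = \sum_m \langle \wh R^{1/2}f, e_m\rangle_{L^2(\nu_1)}\, \wh R^{1/2}(e_m)(x)$.

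To conclude, I would look at the partial sums $S_N := \sum_{m\le N}\langle \wh R^{1/2} f, e_m\rangle e_m$. By Parseval these converge in $L^2(\nu_1)$ to $\wh R^{1/2}f$ (which lies in the span of $\{e_m\}$), while $\wh R^{1/2}(S_N)$ converges to $T_PT_Qf$ by the previous display. Since $\wh R^{1/2}$ is a closed operator, these two facts force $\wh R^{1/2}f \in Dom(\wh R^{1/2})$ and $\wh R^{1/2}(\wh R^{1/2}f) = T_PT_Qf$, i.e. $\wh R(f) = T_PT_Q(f)$, as claimed. The main obstacle is the bookkeeping of domains and convergence for the unbounded operators $\wh R^{1/2}$ and $\wh R$: I must ensure the defining series converge in the relevant norms, that the $f_m$ can be chosen so that $R(f_m)=\wh R(f_m)=\wh R^{1/2}(e_m)$ is genuinely defined (using $Dom(\wh R)\subset Dom(\wh R^{1/2})$ and the agreement of $R$ with $\wh R$ on a dense set, as in the remark following Theorem \ref{thm RKHS}), and that the interchange of $\wh R^{1/2}$ with the infinite sum is justified only through closedness rather than boundedness. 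Establishing that $\{e_m\}$ is an orthonormal basis of the closed range of $\wh R^{1/2}$ is the crux of the argument, and it rests precisely on the isometric isomorphism of Theorem \ref{thm RKHS}.
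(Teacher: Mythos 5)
Your proof is correct, but it is not the route of the paper's designated proof; rather, it is a fleshed-out version of the alternative argument that the paper itself sketches in the remark immediately following Theorem \ref{thm factorization}. The paper's main proof works weakly: it reduces the claim to the scalar identity $\langle \chi_{A_1}, T_PT_Q(\chi_{A_2})\rangle_{L^2(\nu_1)} = \langle \chi_{A_1}, R(\chi_{A_2})\rangle_{L^2(\nu_1)}$ for $A_1, A_2 \in \mc D(\nu_1)$, obtained by writing the left-hand side as $\langle T_Q(\chi_{A_1}), T_Q(\chi_{A_2})\rangle_{L^2(\nu_2)} = \int_{X_2} Q(y,A_1)\,Q(y,A_2)\; d\nu_2(y)$ via the symmetric-pair property of Proposition \ref{prop P-Q via basis}(3), expanding with \eqref{eq def Q via basis} and orthonormality of $(\varphi_n)$ to get $\sum_n \wh k_n(A_1)\wh k_n(A_2)$, and then invoking Lemma \ref{lem K and basis} to recognize this as $K(A_1,A_2) = \lambda_1(A_1\times A_2) = \langle \chi_{A_1}, R(\chi_{A_2})\rangle_{L^2(\nu_1)}$; density of $\mc F(\nu_1)$ and self-adjointness of $T_PT_Q$ then finish the argument. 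The advantage of that route is that every infinite sum is a scalar series, convergent by Cauchy--Schwarz in $\ell^2$, so no domain or strong-convergence questions for the unbounded operators ever arise. Your route --- composing the series representations of $T_P$ and $T_Q$, identifying $\{e_m\} = \{\wh R^{1/2}(f_m)\}$ as an orthonormal basis through the isometry of Theorem \ref{thm RKHS}, and then combining Parseval with closedness of $\wh R^{1/2}$ applied to partial sums --- proves the identity strongly, vector by vector, and in fact supplies exactly the justification that the paper's remark leaves implicit when it interchanges $\wh R^{1/2}$ with an infinite sum. What you give up is the painless convergence bookkeeping of the weak formulation (your appeal to ``the previous display'' for $L^2$-convergence of $\wh R^{1/2}(S_N)$ inherits the same looseness as the paper's own series definitions of $T_P$ and $T_Q$); what you gain is a rigorous strong-form conclusion that simultaneously shows $f \in Dom(\wh R)$ and locates $T_PT_Qf = \wh R f$ there.
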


\begin{proof} 
To obtain the result, it suffices to show that, for any sets $A_1, A_2 \in 
\mc D(\nu_1)$,
\be\label{eq proof of factor of R}
\langle \chi_{A_1}, T_PT_Q(\chi_{A_2})\rangle_{L^2(\nu_1)} =
\langle \chi_{A_1}, R(\chi_{A_2})\rangle_{L^2(\nu_1)}.
\ee
Then relation \eqref{eq proof of factor of R} can be extended to all
functions from  dense sets $\mc F(\nu_1)$ and $\mc F(\nu_2)$. 
Since the operator $T_PT_Q$ is self-adjoint, we obtain the equality 
$\wh R = T_PT_Q$.

In the computation below we will use Proposition \ref{prop P-Q via basis}
(3), relation \eqref{eq def Q via basis}, Lemma \ref{lem K and basis}, and
Theorem \ref{thm RKHS}. Recall that $(\varphi_n)$ is an ONB in 
$L^2(\nu_2)$.

$$
\ba 
\langle \chi_{A_1}, T_PT_Q(\chi_{A_2})\rangle_{L^2(\nu_1)} & =
\langle \chi_{A_1}, (T_Q)^*T_Q(\chi_{A_2})\rangle_{L^2(\nu_1)} \\
&= \langle T_Q(\chi_{A_1}), T_Q(\chi_{A_2})\rangle_{L^2(\nu_2)} \\
& = \int_{X_2} Q(y, A_1) Q(y, A_2)\; d\nu_2(y)\\
&= \int_{X_2} \left(\sum_n \wh k_n(A_1)\varphi_n(y) \right)  
\left(\sum_m \wh k_m(A_2)\varphi_m(y)\right) d\nu_2(y)\\
&= \sum_n \wh k_n(A_1) \wh k_n(A_2)\\
&= K(A_1, A_2)\\
&= \lambda_1(A_1 \times A_2)\\
& = \langle \chi_{A_1}, R(\chi_{A_2})\rangle_{L^2(\nu_1)}.
\ea
$$

\end{proof}

\begin{remark} In this remark, we show another proof of the Theorem
\ref{thm factorization}. It is based on the direct computation of
 $T_P(T_Q(f))$ using the formulas given in 
\eqref{eq def Q via basis} and \eqref{eq def P via basis} and Remark 
\ref{rem 7.10}. As in the proof of Theorem \ref{thm factorization}
we take an arbitrary  function $f \in \mc F(\nu_1)$ and note that 
$\wh R$ and $R$ coincide on this set.
$$
\ba 
T_P(T_Q(f))(x) & = \int_{X_2} P(x, dy) T_Q(f)(y)\\
&= \int_{X_2} P(x, dy) \int_{X_1} Q(y, dx) f(x)\\ 
&=  \int_{X_2} P(x, dy) \left( \sum_{n=1}^\infty 
 \langle f,  R(f_n)\rangle_{L^2(\nu_1)} \varphi_n(y)\right) \\
 & =\int_{X_2} \left(\sum_{m=1}^\infty (Rf_m)(x) \varphi_m(y) \right) 
\left( \sum_{n=1}^\infty 
 \langle f,  R(f_n)\rangle_{L^2(\nu_1)} \varphi_n(y)\right) d\nu_2(y)\\
 & =  \sum_{n=1}^\infty  \langle f,  R(f_n)\rangle_{L^2(\nu_1)} 
 (Rf_n)(x)\\
 & = \wh R^{1/2} \left( \sum_{n=1}^\infty   \langle \wh R^{1/2}(f),  
 \wh R^{1/2}(f_n)\rangle_{L^2(\nu_1)}   \wh R^{1/2}(f_n) \right)\\
& = \wh R(f).
\ea
$$
\end{remark}

\begin{lemma} Let $\lambda_1$ on $X_1 \times X_1$ be the symmetric
measure defined by the operator $R : L^2(\nu_1) \to L^2(\nu_1)$ as
in (\ref{eq lambda_1}). Then 
$$
\lambda_1(A_1 \times A_2) = \int_{X_2} Q(y, A_1)Q(y, A_2)\; d\nu_2(y),
\quad A_1, A_2 \in \mc D(\nu_1).
$$

In particular, 
$$
\lambda(A \times A) < \infty \ \Longleftrightarrow \ Q(\cdot, A) 
\in L^2(\nu_2),
$$ 
for  $A \in \mc D(\nu_1)$.
\end{lemma}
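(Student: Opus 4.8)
The plan is to read this off almost immediately from the factorization $\wh R = T_P T_Q$ established in Theorem \ref{thm factorization}, so that essentially no new computation is required. First I would note that for $A_1, A_2 \in \mc D(\nu_1)$ the characteristic functions $\chi_{A_1}, \chi_{A_2}$ lie in the dense subspace $\mc F(\nu_1)$ on which $R$ and its Friedrichs extension $\wh R$ agree. Hence, by the defining relation \eqref{eq lambda_1},
$$
\lambda_1(A_1 \times A_2) = \langle \chi_{A_1}, R(\chi_{A_2})\rangle_{L^2(\nu_1)} = \langle \chi_{A_1}, \wh R(\chi_{A_2})\rangle_{L^2(\nu_1)}.
$$

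Next I would substitute the factorization. Writing $\wh R(\chi_{A_2}) = T_P\big(T_Q(\chi_{A_2})\big)$ and invoking the symmetric-pair relation $T_P \subset (T_Q)^*$ from Proposition \ref{prop P-Q via basis}(3) (equivalently Theorem \ref{thm T_P contractive}(3)), the pairing collapses to an inner product in $L^2(\nu_2)$:
$$
\langle \chi_{A_1}, T_P T_Q(\chi_{A_2})\rangle_{L^2(\nu_1)} = \langle T_Q(\chi_{A_1}), T_Q(\chi_{A_2})\rangle_{L^2(\nu_2)}.
$$
Since $T_Q(\chi_A)(y) = Q(y, A)$ by \eqref{eq_def T_P T_Q}, this last quantity is exactly $\int_{X_2} Q(y, A_1) Q(y, A_2)\, d\nu_2(y)$, which is the asserted identity. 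In fact this is the very chain of equalities already carried out inside the proof of Theorem \ref{thm factorization}, so I would cite it rather than reproduce it. For the ``in particular'' claim I would then specialize to $A_1 = A_2 = A$, obtaining
$$
\lambda_1(A \times A) = \int_{X_2} Q(y, A)^2\, d\nu_2(y) = \| Q(\cdot, A)\|^2_{L^2(\nu_2)},
$$
from which the equivalence $\lambda_1(A\times A) < \infty \Longleftrightarrow Q(\cdot, A) \in L^2(\nu_2)$ is immediate.

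The one genuinely delicate point, and the step I expect to demand the most care, is the passage from $\langle \chi_{A_1}, T_P T_Q(\chi_{A_2})\rangle_{L^2(\nu_1)}$ to $\langle T_Q(\chi_{A_1}), T_Q(\chi_{A_2})\rangle_{L^2(\nu_2)}$, because $T_P$ and $T_Q$ are only densely defined and in general unbounded. To legitimately apply the adjoint relation I must know that $T_Q(\chi_{A_2})$ belongs to $\mathrm{Dom}(T_P)$, i.e.\ that the composite $T_P T_Q$ is actually defined on these simple functions. This is precisely what the factorization theorem guarantees on $\mc F(\nu_1)$, so the domain bookkeeping reduces to quoting that result; no new analytic input is needed, and the remainder of the argument is a routine rewriting of the $L^2(\nu_2)$ inner product as the integral of the product of kernels.
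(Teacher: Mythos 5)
Your proof is correct and follows essentially the same route as the paper: the paper's own argument also rests on Theorem \ref{thm factorization}, applying it at the kernel level, $R(\chi_{A_2})(x) = \int_{X_2} P(x,dy)\,Q(y,A_2)$, and then using Fubini together with the duality $d\nu_1(x)P(x,dy) = d\nu_2(y)Q(y,dx)$ — which is precisely what your adjoint step $\langle \chi_{A_1}, T_PT_Q(\chi_{A_2})\rangle_{L^2(\nu_1)} = \langle T_Q(\chi_{A_1}), T_Q(\chi_{A_2})\rangle_{L^2(\nu_2)}$ encodes in operator form. Your remark that this chain of equalities already appears verbatim inside the proof of Theorem \ref{thm factorization}, and your identification of the domain condition $T_Q(\chi_{A_2}) \in \mathrm{Dom}(T_P)$ as the point secured by that theorem, are both accurate.
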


\begin{proof} It follows from  Theorem \ref{thm factorization}  that
$R(x, A) = QP(x, A) = \int_{X_2} P(x, dy) Q(y, A)$. We use it 
in the following computation:
$$
\ba 
\lambda_1(A_1 \times A_2)  & = \int_{X_1} \chi_{A_1}R(\chi_{A_2})\; 
d\nu_1\\
& = \int_{X_1} \chi_{A_1} \left(\int_{X_2} P(x, dy) Q(y, A_2)\right) 
d\nu_1\\
& = \int_{X_2}  \int_{A_1} Q(y, A_2) Q(y, dx)\; d\nu_2(y)\\
& =\int_{X_2}  Q(y, A_1) Q(y, A_2)\; d\nu_2(y).
\ea
$$

The particular case $A_1 = A_2$ gives the second statement of 
the lemma.
\end{proof}

\section{Measurable Bratteli diagrams}\label{sect meas BD}

In this section, we consider a measurable version of generalized 
Bratteli diagrams. The main differences are: (i) the levels $V_n$ of a
 measurable Bratteli diagram are formed by standard Borel spaces
 $(X_n, \A_n)$, 
 and (ii)  the sets of edges $E_n$ are Borel subsets of $X_n \times
 X_{n+1}, n \in \N_0$. An important subclass of measurable Bratteli
  diagrams is obtained when $X_n = X$ and the sets  $E_n$ are
   represented by  equivalence relations.
 
Our goal in this final section is to show that the main notions, and some 
results, from the case of discrete levels carry over to the more general case 
when instead the levels are measure spaces. This entails new developments 
in the analysis of path-space measures and the associated Markov 
processes.  Our treatment is brief, and full details are planned for a future 
paper.
    
 \subsection{Definitions, dynamics, and applications}\label{ssect basic MBD} 
 We begin this section with definitions of main objects.  
 
 \begin{definition}\label{def meas BD}
 Let $(X, \A)$ be an uncountable standard Borel space. Let $(X_n, 
 \A_n) : n  \geq 0)$ be a sequence of standard Borel spaces (each of 
 them is Borel isomorphic to $(X, \A)$). Suppose that $(E_n)$ is a
  sequence of Borel subsets of $X_n \times  X_{n+1}$ such that the 
projections $s_n : E_n \to X_n $ and $r_n : E_n \to X_{n+1}$ are onto 
Borel maps where  $s_n(e) = s_n(x, y) = x$ and $r_n(e) = r_n(x,y) =
y$ for every $e = (x, y) \in E_n$.
Then we call $\B = (X_n, E_n)$ a \textit{measurable Bratteli diagram}. 
The pair $(X_n, E_n)$ is called the $n$-th level of the   
measurable Bratteli diagram $\B$. 
 \end{definition}

\begin{remark}\label{rem_on_MBD}

 (1) We emphasize that all levels $X_n$ are formed by
the same Borel space $X$, so that we can (if necessary)  treat a subset 
$A$ of  $X$ as a subset of every $X_n$. Recall that this identification is 
used for
stationary Bratteli diagrams (standard and generalized diagrams). On the 
other hand,  the sets $E_n$ are not  identified.

(2) An important particular case of a measurable  Bratteli diagram is 
formed by \textit{countable Borel equivalence
 relations (CBER)} $E_n$, see e.g. \cite{JacksonKechrisLouveau_2002}, 
 \cite{DoughertyJacksonKechris_1994} for details on CBER. Briefly 
 speaking, this means 
 that (i) $E_n$ is a  symmetric Borel subset of $X_n \times  X_{n+1}$
  containing the diagonal, (ii) $(x,y) \in E_n$  and 
 $(y,z) \in E_n$ implies $(x,z) \in E_n$, and (iii) for any $x \in X_n$, 
 the set $\{y \in X_{n+1} : (x, y) \in E_n\}$ is countable.   
We note that the requirement  that $s_n, r_n$ are Borel onto maps 
is included in the definition of a measurable Bratteli diagram. This
 property is automatically true when $E_n$ is a CBER.
 
 (3) Generalizing the definition of $s_n$ and $r_n$, 
one can define two maps $r,s : \bigsqcup_n E_n \to \bigsqcup_n X_n$ 
by setting $s(e) = s_n(e) $ and $r(e) = r(e_n)$ where $e \in E_n$.
 Clearly, $r$ and $s$ are Borel maps since they coincide with $r_n$ and 
 $s_n$ on each $E_n$. This observation allows us to  
 simplify our notation and omit the subindex $n$ working with the maps
 $r, s$.
 
 (4) In some cases, it is useful to represent the set $E_n, n\in \N_0,$ as the union of  sections:
 $$
 \bigcup_{x\in X_n} s^{-1}(x) = E_n = \bigcup_{y\in X_{n+1}}
 r^{-1}(y).
 $$ 
 They are analogous to ``vertical'' and ``horizontal'' sections of a 
 subset in the product of two spaces. The fact that and $r_n$ in 
 Definition \ref{def meas BD} is an 
  onto map means that $\forall y \in X_{n+1}$ \ $\exists x \in X_n$ 
  such that $e = (x,y) \in E_n$. A similar property holds for $s_n$.
 
 \end{remark}

Suppose now that the standard Borel space $(X_0, \A_0)$, the initial 
level of a measurable Bratteli diagram $\B$, is 
endowed with  a $\sigma$-finite atomless Borel positive 
measure $\nu_0$ so that 
$(X_0,\A_0, \nu_0)$ is a standard measure space. 
We will consider  a sequence of  probability transition kernels $(P_n)$ 
(they are called \textit{Markov kernels}) on a 
measurable Bratteli diagram $\B$. Recall that a map $R : X \times \A
\to [0,1]$
is called a probability transition  kernel if: (a) $x \mapsto R(x, A)$ is a measurable map 
 for every set $A\in \A$, (b) for every $x \in X$, the function
$A \mapsto R(x, A)$ is a probability measure. 

\begin{definition}\label{def Markov kernels}
Let $\B = (X_n, E_n)$ be a measurable Bratteli diagram as in Definition 
\ref{def meas BD}. Let $(P_n)$ be a sequence of probability Markov
kernels such that $P_n : X_n \times \A_{n+1} \to \mathbb 
[0, 1]$,  $n \in \N_0$. We say that the sequence $(P_n)$ is  
\textit{consistent} with  the diagram $\B$, if, for every $x \in X_n$ and  
 $n \in  \N_0$, the map 
$x \mapsto P_n(x, \cdot)$ determines a probability measure on 
the set $r(s^{-1}(x_n)) \subset X_{n+1}$ such that $P(x_n, \cdot) \ll
\nu_{n+1}$, i.e., $P(x_n, r(s^{-1}(x_n)))  =1$ for all $x_n\in X_n$ 
and $n\in \N_0$. 

If all $(X_n, \A_n) = (X, \A)$ and $P_n = P, n\in \N_0$, then the
corresponding measurable  Bratteli diagram $\B$ is called
 \textit{stationary.}
\end{definition}

We are now in the setting that we used in Section 
\ref{sect Trans kernels}.
The only difference is that we have to work with a \textit{sequence} of
Markov kernels $(P_n)$ which generates a sequence of linear operators
$$
T_{P_n}(f) = \int_{X_{n+1}}  P_n(x_n, dx_{n+1}) f(y), \quad x_n
 \in X_n.
$$
Every $T_{P_n}$ sends  bounded Borel functions $f \in \mc F(X_{n+1}, 
\A_{n+1})$ into the set $\mc F(X_n, \A_n)$.

Similarly, we can define an action of operators $P_n$ on the set of
 measures $M(X_n, \A_n)$. More precisely, we set
\be\label{eq_nuP(n)}
(\mu P_n)(f)  = \int_{X_n} P_n(f) \; d\mu, \quad f \in \mc F(X_{n+1},
\A_{n+1}), \ \mu\in M (X_{n+1}, \A_{n+1}).
\ee
Then $P_n :  M(X_{n}, \A_{n}) \to M(X_{n+1}, \A_{n+1})$.
Note that $\mu P_n$ is a probability measure if and only if $\mu$ is 
probability.

\begin{theorem}\label{prop Markov kernels}
Let $\nu_0$ be a $\sigma$-finite measure on the initial level 
$(X_0, \A_0)$
of a measurable Bratteli diagram $\B = (X_n, E_n)$. Suppose that
$(P_n)$ is a sequence of Markov kernels consistent with $\B$. Then: 

(a) there exists a sequence of $\sigma$-finite measures $(\nu_n)$, 
where $\nu_n \in M(X_n, \A_n)$, such that 
$$
\nu_{n} = \nu_{n+1} Q_n, \ \ \nu_{n+1} = \nu_n P_n, \quad n \in \N_0,
$$ 

(b) there exists a sequence of transition kernels $(Q_n)$, where
$Q_n : X_{n+1} \times \A_n \to [0, \infty)$, such that 
$(P_n, Q_n)$ form a dual pair for all $n$, i.e.,  
$$
d\nu_n(x_n)P_n(x_n, dx_{n+1}) = d\nu_{n+1}(x_{n+1}) 
Q(x_{n+1}, dx_n),
$$ 

(c) there exists a sequence of measures $(\rho_n)$, which supported by 
the sets $E_n$, such that 
$$
\rho_n(A\times B) = \int_A d\nu_n(x_n) P(x_n, B) = 
\int_B d\nu_{n+1}(x_{n+1}) Q_n(x_{n+1}, A), 
$$
where $A \times B \in \A_n \times \A_{n+1}$,

(d) for all $n \geq 0$, $P_n(x_n, \cdot) \ll \nu_{n+1}$ $\nu_n$-a.e. 
and $Q_n(x_{n+1}, \cdot) \ll \nu_{n}$ $\nu_{n+1}$-a.e. 
\end{theorem}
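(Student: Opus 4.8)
The plan is to derive all four assertions by iterating Theorem \ref{thm_P determines Q} along the levels of $\B$, with the prescribed data $(\nu_0, P_0)$ serving as the seed of an induction. At stage $n$ I would apply Theorem \ref{thm_P determines Q} to the probability kernel $P_n : X_n \times \A_{n+1} \to [0,1]$ and the $\sigma$-finite measure $\nu_n$ already built on $(X_n, \A_n)$. That theorem then produces a $\sigma$-finite measure $\rho_n$ on $X_n \times X_{n+1}$, a $\sigma$-finite measure $\nu_{n+1} := \rho_n \circ \pi_2^{-1}$ on $(X_{n+1}, \A_{n+1})$, and a transition kernel $Q_n : X_{n+1} \times \A_n \to [0,\infty)$, all uniquely determined by $P_n$ and $\nu_n$. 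Since the output measure $\nu_{n+1}$ is again $\sigma$-finite, the hypotheses of Theorem \ref{thm_P determines Q} are met at the next level, and a straightforward induction starting from $\nu_0$ yields the full sequences $(\nu_n)$, $(Q_n)$, and $(\rho_n)$.

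Each of the listed conclusions is then read off from Theorem \ref{thm_P determines Q} specialized to level $n$. Assertion (b) is precisely the duality \eqref{eq P_Q dual}; assertion (a), namely $\nu_{n+1} = \nu_n P_n$ and $\nu_n = \nu_{n+1} Q_n$, is \eqref{eq P-Q on nu}; and the two representations of $\rho_n$ in (c) are the defining identity \eqref{eq_rho first def} for the measure $\rho_n$. For the absolute continuity in (d), the statement for $P_n$ is built into the consistency hypothesis of Definition \ref{def Markov kernels}, while for $Q_n$ I would argue directly from the duality: if $\nu_n(C) = 0$ then $\rho_n(C \times X_{n+1}) = \int_C P_n(x_n, X_{n+1})\, d\nu_n(x_n) = \nu_n(C) = 0$, whence $\int_{X_{n+1}} Q_n(x_{n+1}, C)\, d\nu_{n+1}(x_{n+1}) = 0$ and therefore $Q_n(x_{n+1}, C) = 0$ for $\nu_{n+1}$-a.e.\ $x_{n+1}$, giving $Q_n(x_{n+1}, \cdot) \ll \nu_n$.

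The one point demanding genuine verification, which I expect to be the main obstacle, is the support claim in (c) that $\rho_n$ is carried by $E_n$. This is where the consistency of $(P_n)$ with $\B$ enters decisively: by Definition \ref{def Markov kernels} the measure $P_n(x_n, \cdot)$ is concentrated on the section $r(s^{-1}(x_n)) \subset X_{n+1}$, which is exactly the set of $y$ with $(x_n, y) \in E_n$. Hence in $\rho_n(A \times B) = \int_A P_n(x_n, B)\, d\nu_n(x_n)$ only pairs lying in $E_n$ can contribute, and I would make this precise by choosing $B$ to be the $x_n$-section of the complement of $E_n$ and applying Fubini to conclude $\rho_n\big((X_n \times X_{n+1}) \setminus E_n\big) = 0$. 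A minor but recurring technical care throughout is that, in contrast to the probability kernels $P_n$, the dual kernels $Q_n$ are only guaranteed to be $\sigma$-finite (as Theorem \ref{thm_P determines Q} itself warns), so no finiteness or normalization of $Q_n$ should be invoked beyond what the disintegration supplies.
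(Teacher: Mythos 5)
Your proposal is correct and follows essentially the same route as the paper: the authors likewise induct on levels by setting $\nu_{n+1}=\nu_n P_n$, define $\rho_n$ by $d\rho_n(x_n,x_{n+1})=d\nu_n(x_n)P_n(x_n,dx_{n+1})$, obtain $Q_n$ from Theorem \ref{thm_P determines Q} (as a Radon--Nikodym derivative of $\rho_n$ against $\nu_{n+1}$), use the consistency of $(P_n)$ with $\B$ to place the support of $\rho_n$ in $E_n$, and settle (d) by the same duality argument you give (their Lemma \ref{lem_P,Q abs cont}). Your Fubini justification of the support claim and your explicit null-set argument for $Q_n\ll\nu_n$ merely spell out details the paper leaves as citations.
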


\begin{proof} 
The proposition follows from the results proved in Section 
\ref{sect Trans kernels}, see Lemma \ref{lem_P,Q abs cont},
 Theorem \ref{thm_P determines Q},
Corollary \ref{cor prob P}, and
Corollary \ref{cor rho determ P Q}. We mention here  a few 
moments that are needed for further developments. 

Beginning with the initial measure $\nu_0$ and using the sequence of
Markov kernels $(P_n)$ we define first a measure $\nu_1 = \nu_0 P_0$,
and then inductively $\nu_{n+1} = \nu_n P_n$ for all $n$. 

Next, we define the measures $\rho_n$ by setting 
\be \label{eq_def rho_n}
d\rho_n(x_n, x_{n+1}) = d\nu_n(x_0) P_n(x_n, dx_{n+1}).
\ee
 Since
$(P_n)$ is consistent with the measurable Bratteli diagram and 
$E_n = \bigcup_{x_n} E_n(x_n) = \bigcup_{x\in X_n} s^{-1}(x_n)$, 
 we see from \eqref{eq_def rho_n} that $\rho_n$ is defined on $E_n$.
Moreover, the measures $\nu_n$ and $\nu_{n+1}$ are projections
of $\rho_n$ on $X_n$ and $X_{n+1}$, i.e., $s(\rho_n) = \nu_n$ and
$r(\rho_n) = \nu_{n+1}$.

In order to define the dual transition kernel $Q_n$, we use the formula
$$
Q_n(x_{n+1}, A) = \frac{\rho_n(A, dx_{n+1})}{d\nu_{n+1}
(x_{n+1})},  \quad A \in \A_n,
$$
(see also Theorem \ref{thm_P determines Q} for the definition of
$Q_n$).
 \end{proof}
 
\begin{remark} The results formulated in Theorem 
\ref{prop Markov kernels} can be obtained from another equivalent 
setting. Namely, let $\B$ be a measurable Bratteli diagram with levels
$(X_n, \A_n)$ and $E_n$.  Suppose
 that a sequence of  $\sigma$-finite measures $(\nu_n)$ is chosen on 
 the spaces $(X_n, \A_n)$. Take measures $\rho_n$, supported by 
 $E_n$, such that $\rho_n \in L(\nu_n, \nu_{n+1})$. Then there are
 probability transition kernels $P_n$ that satisfy Theorem 
 \ref{prop Markov kernels}, see Corollaries \ref{cor prob kernel} and
 \ref{cor rho determ P Q}.

\end{remark} 
 
Using the objects defined in Theorem  \ref{prop Markov kernels}, we       
can determine the path-space measure $\mathbb P$ on the space 
$\mc X_\B$  of paths  of a measurable  Bratteli diagram $\B$. 

\begin{definition}\label{def path space meas}
Let $\B$ be a measurable Bratteli diagram suc as in  Definitions 
\ref{def meas BD} and \ref{def Markov kernels}. Define the path-space
of $\mc X_\B$ to be a 
subset of $E_0 \times E_1 \times \cdots $ formed by 
sequences $(e_i)$ such that $s(e_{i+1}) =r(e_i), i \geq 0$. The space
$\mc X_\B$ can be equivalently described as a set of sequences 
$(x_i), i \geq 0$ where $x_i = s(e_i)$. Denote by $\mc X_{\B}(x)$
the subset of $X_\B$ consisting of all paths that begin at $x \in X_0$.

Let $\nu_0$ be a $\sigma$-finite measure on $(X_0, \A_0)$ and 
$(P_n)$ the sequence of probability transition kernels. Define a 
probability measure $\mathbb P_x$ for $x \in X_0$. Let $f = 
f(x_1, \cdots, f_n)$ be a Borel bounded function on $\mc X_{\B}(x)$ 
depending on the first $n$ coordinates (cylinder function). We set
\be\label{eq mathbb P_x}
\int f\; d\mathbb P_x = \int_{X_1} \int_{X_2}\cdots \int_{X_n} 
P_0(x, dx_1) P_1(x_1, dx_2) \cdots P_{n-1} (x_{n-1}, dx_n)\; f(x_1,
 \cdots, x_n). 
\ee
Then $\mathbb P_x$ is extended to a probability measure on the set
$\mc X_{\B}(x)$. 

Finally, we define a measure $\mathbb P$ on $\mc X_\B$ by setting
\be\label{eq mathbb P}
\mathbb P = \int_{X_0} \mathbb P_x\; d\nu_0(x).
\ee
In more details, the measure $\mathbb P$ is computed as follows. 
$$
\ba 
\int_{\mc X_\B} g\; d\mathbb P & =  \int_{X_0}\mathbb P_{x}
(g)\; d\nu_0(x_0)\\ 
& =\int_{X_0} \int_{X_1} \cdots \int_{X_n} d\nu_0(x_0) 
P_0(x_0, dx_1)  \cdots P_{n-1} (x_{n-1}, dx_n)\; 
g(x_0, \cdots, x_n)
\ea
$$ 
where $g$ is a cylinder function. Extending  $\mathbb P$ to all Borel 
bounded functions, we obtain a measure which is
called a \textit{path-space measure} on $\mc X_\B$. 
\end{definition}

\begin{remark} Let $\xi_n : \mc X_\B \to (X_n, \A_n)$ be a random 
variable such that $\xi_n(x_i) = x_n$, $n\in \N$. The values of the 
path-space measure $\mathbb P$
on the cylinder sets can be also found by the formula:
\be\label{eq meas P_x} 
 \mathbb P_x(\xi_1 \in A_1, ... , \xi_n \in A_n) =
 \int_{A_n}\cdots \int_{A_1} P_0(x,  dx_1)  \cdots  P_{n-1} (x_{n-1}, dx_n) 
 \ee
 In particular, $\mathbb P_x(\xi_1 \in B) = P(x, B)$.
 
 As a consequence of \eqref{eq R and chi_A},
 we have also the following formula:
\be\label{eq meas P_x 2}
 \mathbb P_x(\xi_1 \in A_1, ... , \xi_n \in A_n) = 
 P_0(\chi_{A_1} P_1(\chi_{A_2}P_2(\ \cdots\  P_{n-1}(\chi_{A_n}) 
 \cdots )))(x).
\ee

\end{remark}

\begin{lemma} For a measurable Bratteli diagram $\B$ as above, let 
$\xi_n : \mc X_\B \to X_n$ be the random variable 
defined by $\xi_n(x) = x_n$ where $x = (x_i)$. Then the transition 
probability kernels $P_n$ and $Q_n$ can be restored as follows:
$$
\mathbb E(\xi_{n+1} \in B \ |\ \xi_n = x) = P_n(x, B), \quad 
\mathbb E(\xi_{n} \in A \ |\ \xi_{n+1} = y) = Q_n(y, A)
$$
where $\mathbb E$ is the mathematical expectation with respect 
to the path-space measure $\mathbb P$, $A \in \A_n$, $B \in 
\A_{n+1}$, and $n \in \N_0$.
\end{lemma}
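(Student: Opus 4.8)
The plan is to recognize the two claimed identities as statements about the disintegration of the joint law of the pair $(\xi_n, \xi_{n+1})$, and then to invoke the uniqueness of disintegration established in Theorem \ref{thm Simmons}. First I would determine the marginal law of $\xi_n$ under $\mathbb P$. Applying the defining formula for $\mathbb P$ (see \eqref{eq mathbb P} and the displayed computation following it) to the cylinder function $g(x_0, \dots, x_n) = h(x_n)$ with $h$ bounded Borel on $X_n$, and integrating out the coordinates $x_0, \dots, x_{n-1}$ by repeated use of the relations $\nu_{k+1} = \nu_k P_k$ from Theorem \ref{prop Markov kernels}(a), I obtain $\int_{\mc X_\B} h(\xi_n)\; d\mathbb P = \int_{X_n} h \; d\nu_n$. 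Thus $\xi_n$ has law $\nu_n$, and likewise $\xi_{n+1}$ has law $\nu_{n+1}$.

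Next I would compute the joint law of the pair $(\xi_n, \xi_{n+1})$. Applying the same formula to $g(x_0, \dots, x_{n+1}) = h(x_n, x_{n+1})$ and peeling off the first $n$ coordinates exactly as above leaves
\[
\int_{\mc X_\B} h(\xi_n, \xi_{n+1})\; d\mathbb P = \int_{X_n}\int_{X_{n+1}} d\nu_n(x_n)\, P_n(x_n, dx_{n+1})\; h(x_n, x_{n+1}),
\]
which is precisely integration against the measure $\rho_n$ of Theorem \ref{prop Markov kernels}(c). Hence the joint law of $(\xi_n, \xi_{n+1})$ under $\mathbb P$ is $\rho_n$, and its two coordinate marginals are $\nu_n$ and $\nu_{n+1}$ by the previous step.

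Now the conditional probabilities in the statement are, by definition, $\mathbb E(\chi_B(\xi_{n+1}) \mid \xi_n = x)$ and $\mathbb E(\chi_A(\xi_n) \mid \xi_{n+1} = y)$, i.e., the disintegrations of $\rho_n$ with respect to the projections onto $X_n$ and $X_{n+1}$. By Corollary \ref{cor disint} (which rests on Theorem \ref{thm Simmons}), the disintegration of $\rho_n$ along the first projection against the marginal $\nu_n$ is exactly the family $(P_n(x, \cdot))_{x \in X_n}$, while the disintegration along the second projection against $\nu_{n+1}$ is the family $(Q_n(y, \cdot))_{y \in X_{n+1}}$. By the uniqueness clause of Theorem \ref{thm Simmons} these disintegrating families agree $\nu_n$-a.e. (resp. $\nu_{n+1}$-a.e.) with any version of the conditional probability, which yields $\mathbb E(\xi_{n+1} \in B \mid \xi_n = x) = P_n(x, B)$ and $\mathbb E(\xi_n \in A \mid \xi_{n+1} = y) = Q_n(y, A)$.

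The main obstacle to watch is that $\mathbb P$, $\nu_n$, and $\nu_{n+1}$ are only $\sigma$-finite rather than probability measures, so the symbol $\mathbb E(\cdot \mid \cdot)$ must be read through disintegration rather than through normalized conditional measures; the applicability of Theorem \ref{thm Simmons} then hinges on verifying that the two coordinate pushforwards of $\rho_n$ are absolutely continuous with respect to $\nu_n$ and $\nu_{n+1}$, which is immediate from the marginal computation since these pushforwards equal $\nu_n$ and $\nu_{n+1}$ exactly. The remaining work is routine: justifying the Fubini--Tonelli interchanges for the iterated transition kernels while collapsing the first $n$ coordinates, and recording that the integrands are nonnegative (or bounded), so that no integrability issues arise.
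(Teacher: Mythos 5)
Your proof is correct. One thing you could not have known: the paper states this lemma \emph{without proof} — it is treated as an immediate consequence of the construction of $\mathbb P$ and of Theorem \ref{prop Markov kernels} — so there is no in-paper argument to compare against. Your argument fills in the omitted details in what is evidently the intended way: identifying the joint law of $(\xi_n, \xi_{n+1})$ under $\mathbb P$ with the measure $\rho_n$ makes the $P_n$-half essentially a restatement of \eqref{eq meas P_x} (the remark preceding the lemma already records $\mathbb P_x(\xi_1 \in B) = P(x,B)$), while the $Q_n$-half is precisely where the duality $d\nu_n(x_n)P_n(x_n, dx_{n+1}) = d\nu_{n+1}(x_{n+1})Q_n(x_{n+1}, dx_n)$ of Theorem \ref{prop Markov kernels}(b)--(c) and the uniqueness of disintegration (Theorem \ref{thm Simmons}, Corollary \ref{cor disint}) must enter, exactly as you invoke them. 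Your caveat that $\mathbb E(\,\cdot \mid \cdot\,)$ must be read through disintegration rather than through normalized conditional measures is also well placed, since $\mathbb P$ is only $\sigma$-finite when $\nu_0$ is; the paper glosses over this point entirely.
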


As in the case of discrete Bratteli diagrams, we can define the notion of 
tail equivalence relation $\mc R$: two paths $x = (x_n)$ and $y = (y_n)$
 from 
the path-space $\mc X_\B$ are called tail equivalent if there exists some 
$m$ such that $x_n = y_n$ for all $n \geq m$. Clearly, $\mc R$ is a Borel
equivalence relation with uncountable classes. 

Now we define an analogue of the tail invariant measure in the case of
measurable Bratteli diagrams. 

\begin{definition}
Let $\mathbb P$ be a path-space measure on the set $\mc X_\B$ where
$\B$ is a measurable Bratteli diagram, see Definition \ref{def path 
space meas} for notation. The measure $\mathbb P$ is called \textit{tail 
invariant} if for every $n \in \N$ and every $A \subset X_n$, $\nu_n
(A) >0$ the function $\mathbb P(\xi_n \in A \ |\ \xi_0 =x) = 
\mathbb P_x(\xi_n \in A)$ does not depend on $x\in X_0$ $\nu_0$-a.e.

\end{definition}
The existence of tail invariant measures follows from the following 
observation.

\begin{lemma}
Let $\B$ be a measurable Bratteli diagram with levels represented by 
probability measure spaces $(X_n, \A_n, \nu_n)$. Suppose that the 
probability transition kernels are defined by $P_n(x, B) = \nu_{n+1}(B)$,
$B\in \A_{n+1}$. Define the measures $\mathbb P_x$ and $\mathbb P$
as in \eqref{eq mathbb P_x} and \eqref{eq mathbb P}. Then $\mathbb P$
is  a tail equivalent measure.
\end{lemma}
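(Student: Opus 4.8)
The plan is to unwind the definition of tail invariance and reduce it to a direct computation with the iterated-integral formula for $\mathbb P_x$. By the definition preceding the lemma, $\mathbb P$ is tail invariant precisely when, for each $n \in \N$ and each $A \in \A_n$ with $\nu_n(A) > 0$, the marginal probability $\mathbb P_x(\xi_n \in A)$ is constant in $x$ for $\nu_0$-a.e. $x \in X_0$. Thus the entire task is to evaluate this one quantity, and the tail-invariance property will follow as soon as the dependence on $x$ disappears.

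First I would apply formula \eqref{eq meas P_x} with $A_1 = X_1, \dots, A_{n-1} = X_{n-1}$ and $A_n = A$, which gives
$$
\mathbb P_x(\xi_n \in A) = \int_A \int_{X_{n-1}} \cdots \int_{X_1} P_0(x, dx_1)\, P_1(x_1, dx_2) \cdots P_{n-1}(x_{n-1}, dx_n).
$$
Then I would substitute the hypothesis $P_k(x_k, dx_{k+1}) = d\nu_{k+1}(x_{k+1})$, so that each kernel is replaced by the fixed measure $\nu_{k+1}$ and the integrand no longer couples consecutive coordinates:
$$
\mathbb P_x(\xi_n \in A) = \int_A \int_{X_{n-1}} \cdots \int_{X_1} d\nu_1(x_1)\, d\nu_2(x_2) \cdots d\nu_n(x_n).
$$

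Second, I would integrate out the inner variables one at a time, from $x_1$ up through $x_{n-1}$, using that each $\nu_k$ is a \emph{probability} measure, so $\int_{X_k} d\nu_k(x_k) = 1$. All intermediate integrals collapse to $1$, only the outermost integral over $A$ survives, and one obtains $\mathbb P_x(\xi_n \in A) = \nu_n(A)$. Since the right-hand side does not involve $x$ at all, the tail-invariance condition holds (in fact for \emph{every} $x \in X_0$, not merely $\nu_0$-a.e.), which completes the argument.

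I expect no serious obstacle here: the computation is exactly the measurable analogue of the product-measure case, since the constant kernels $P_k(x, \cdot) = \nu_{k+1}$ are precisely the measurable counterpart of the constant transition kernels of Remark \ref{rem 1}~(1), for which the associated level measures $\rho_k$ are the products $\nu_k \times \nu_{k+1}$. The only point deserving a word of care is compatibility with the support requirement of Definition \ref{def Markov kernels}: for the collapse of the iterated integral to be legitimate one should note that these kernels are consistent with $\B$ exactly when the product measure $\nu_k \times \nu_{k+1}$ is carried by $E_k$, and I would record this as a standing assumption rather than an additional verification. Granting it, the telescoping of the probability measures is immediate and the lemma follows.
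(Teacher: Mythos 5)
Your proposal is correct and is essentially the paper's own argument: the paper's proof consists of the single observation that $\mathbb P_x(\xi_n \in B)$ equals a fixed measure of $B$ independent of $x$, which is exactly the identity $\mathbb P_x(\xi_n \in A) = \nu_n(A)$ that your telescoping computation establishes in detail. Your explicit unwinding of the iterated integral (and the side remark on consistency with the edge sets $E_k$) simply fills in what the paper leaves as "obvious."
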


The proof is obvious because $\mathbb P_x(\xi_n \in B) = \nu_{n+1}(B)$.

\subsection{Harmonic functions and graph Laplacians on measurable Bratteli diagrams}

Let $\B$ be a measurable Bratteli diagram defined in Subsection 
\ref{ssect basic MBD}. Denote the $n$-th level of $\B$ by 
$(X_n, \A_n, \nu_n)$,  a $\sigma$-finite measure space.  
Suppose that we have two sequences of finite transition kernels $P =
(P_n)$ and $Q = (Q_n)$ compatible with the diagram 
$\B$ and such that $P_n : X_n \times \A_{n+1} \to [0, \infty)$ and 
$Q_n : X_{n+1} \times \A_{n} \to [0, \infty)$. Assume also that 
they satisfy the following conditions:

(i) $(P_n, Q_n)$ is a dual pair, i.e., they satisfy the equality 
$$d\nu_n(x_n)
P_n(x_n, dx_{n+1}) = d\nu_{n+1}(x_{n+1}) Q_n(x_{n+1}, dx_n),\ 
n \in \N_0,$$ which determines a measure $\rho_n$ on $X_n \times
X_{n+1}$,

(ii) the Borel function $c_n(x_n) = P_n(x_n, X_{n+1})$ and 
$d_{n+1}(x_{n+1}) =  Q_n(x_{n+1}, X_n)$ are locally integrable. 

It follows from the definition of $(P_n, Q_n)$ that
$$
\int_{X_n} c_n(x_n)\; d\nu_n(x_n) = \int_{X_{n+1}} d_{n+1}
(x_{n+1})\; d\nu_{n+1}(x_{n+1}), \ \ n \in \N_0. 
$$

Let $F = (F_n)$ be an arbitrary Borel function such that  every 
$F_n \in L^2(\nu_n)$. 
Define the actions of $P$ and $Q$ on the functions $F$:
$$
P(F) = (T_{P_n} (F_{n+1})), \quad T_{P_n}(F_{n+1})(x_n) =
 \int_{X_{n+1}}P_n(x_n, dy) F_{n+1}(y),
$$
$$
Q(F) = (T_{Q_n} (F_{n})), \quad T_{Q_n}(F_{n})(x_{n+1})  =
 \int_{X_{n}} Q_n(x_{n+1}, dy) F_{n}(y).
$$
Recall that by Theorem 
\ref{thm T_P contractive} we have the following properties: 
(a) if $P_n$ and $Q_n$ are probability kernels, then $ T_{P_n}$ and 
$T_{Q_n}$ are contractive operators such that  $T_{P_n} : 
L^2(\nu_{n+1}) \to L^2(\nu_{n}) $,  $T_{Q_n} : 
L^2(\nu_{n}) \to L^2(\nu_{n+1}) $, and $T_{P_n}^* = T_{Q_n}$;
 (b) if $P_n$ and $Q_n$ are finite transition kernels, then $T_{P_n}$
  and  $T_{Q_n}$ constitute a dual pair of
densely defined   operators between the two $L^2$-spaces, provided 
the conditions $T_{P_n} (\mc D(\nu_{n+1}) \subset L^2(\nu_{n})$,
$T_{Q_n} (\mc D(\nu_{n}) \subset L^2(\nu_{n+1})$ hold. Moreover,
$T_{P_n} \subset (T_{Q_n})^*$ and $T_{Q_n} = (T_{P_n})^*$.

In the next definition, we apply the approach  used in Section 
\ref{sec Laplace}. It is worth noting that every finite transition kernel
can be normalized, if necessary. 

\begin{definition}\label{def M and Delta}
 Let $(P_n)$ and $(Q_n)$ be two sequences of 
finite transition kernels satisfying the properties (i) and (ii) described above. 
Let $F = (F_n)$ be a sequence of bounded Borel functions. Define a new
probability transition kernel $M= (M_n)$ by the formula
\be\label{eq_Markov op}
M(F) = (M(F)_n), \quad M(F)_n = \frac{1}{2}\left(\frac{1}{c_n} 
P_n(F_{n+1})+ \frac{1}{d_n} Q_{n-1}(F_{n-1})\right),
\ee
or $M(F)_n = \wt P_n(F_{n+1}) + 
\wt Q_{n-1}(F_{n-1})$
where $2c_n\wt P_n = P_n$ and $2d_n\wt Q_{n-1} = Q_{n-1}$.

The operator $\Delta$ such that
\be\label{eq_Lapl op}
(\Delta F)_n =  (c_n + d_n)F_n - P_n(F_{n+1}) - Q_{n-1}(F_{n-1})
\ee
is called a \textit{Laplace operator}.

A function $H = (H_n)$ such that $M(H)_n = H_n$ (or, equivalently,
$\Delta (H)_n =0$ is called a \textit{harmonic function}.
\end{definition}

From Definition \ref{def M and Delta}, we see that $M = (M_n)$ has the property $M_n(\mathbbm 1)=1$.
 
\begin{lemma} Formulas \eqref{eq_Markov op} and 
\eqref{eq_Lapl op} can be expanded as follows:
$$
M(F)_n (x_n)= \frac{1}{2}\left(\frac{1}{c_n(x_n)} \int_{X_{n+1}} 
P_n(x_n, dy)F_{n+1}(y) + \frac{1}{d_n(x_n)} \int_{X_{n}} 
Q_{n-1}(x_{n}, dy)F_{n-1}(y) \right)
$$
and
$$
(\Delta F)_n(x_n) = \int_{X_{n+1}} P_n(x_n, dy)(F_n(x_n) -
F_{n+1}(y)) + \int_{X_{n-1}} Q_{n-1}(x_n, dy)(F_n(x_n) -
F_{n-1}(y)).
$$
If $c_n = d_n$, then
$$
(\Delta F)_n = 2(c_n F_n - (M F)_n) = 2c_n(Id - M)(F)_n.
$$
\end{lemma}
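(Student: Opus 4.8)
The plan is to treat both identities as direct expansions of Definition \ref{def M and Delta}, substituting the integral representations of the operators $T_{P_n}$ and $T_{Q_{n-1}}$ recorded just above that definition. Throughout I would use the identifications $c_n(x_n) = P_n(x_n, X_{n+1}) = \int_{X_{n+1}} P_n(x_n, dy)$ and $d_n(x_n) = Q_{n-1}(x_n, X_{n-1}) = \int_{X_{n-1}} Q_{n-1}(x_n, dy)$, the latter coming from condition (ii) after the harmless index shift $n+1 \mapsto n$ in the definition of $d$. The local integrability assumed in (ii) guarantees that every integral appearing below is finite for $\nu_n$-a.e.\ $x_n$, so all the rearrangements are legitimate pointwise a.e.

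For the first identity I would simply insert $T_{P_n}(F_{n+1})(x_n) = \int_{X_{n+1}} P_n(x_n, dy) F_{n+1}(y)$ and $T_{Q_{n-1}}(F_{n-1})(x_n) = \int_{X_{n-1}} Q_{n-1}(x_n, dy) F_{n-1}(y)$ into the definition \eqref{eq_Markov op} of $M(F)_n$. Since $P_n(F_{n+1})$ and $Q_{n-1}(F_{n-1})$ are precisely these integrals, the stated expansion of $M(F)_n$ is immediate and requires no further manipulation.

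The second identity is the only one needing a small device. Starting from \eqref{eq_Lapl op}, I would rewrite the zeroth-order term $(c_n + d_n)F_n$ by moving the factor $F_n(x_n)$, which is constant in the integration variable, inside the two total-mass integrals, using $c_n(x_n) F_n(x_n) = \int_{X_{n+1}} P_n(x_n, dy) F_n(x_n)$ and $d_n(x_n) F_n(x_n) = \int_{X_{n-1}} Q_{n-1}(x_n, dy) F_n(x_n)$. Combining these with $-P_n(F_{n+1})(x_n)$ and $-Q_{n-1}(F_{n-1})(x_n)$ groups the integrands into the differences $F_n(x_n) - F_{n+1}(y)$ and $F_n(x_n) - F_{n-1}(y)$, giving the asserted formula for $(\Delta F)_n(x_n)$. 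The point to watch is the bookkeeping of domains: $P_n$ integrates a function on $X_{n+1}$ while, after the shift, $Q_{n-1}$ integrates a function on $X_{n-1}$, so the two difference integrals live over different spaces.

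Finally, in the case $c_n = d_n$ I would substitute back: then $M(F)_n = \tfrac{1}{2c_n}\big(P_n(F_{n+1}) + Q_{n-1}(F_{n-1})\big)$, whence $P_n(F_{n+1}) + Q_{n-1}(F_{n-1}) = 2c_n\, M(F)_n$, while $(c_n + d_n)F_n = 2c_n F_n$. Substituting both into \eqref{eq_Lapl op} gives $(\Delta F)_n = 2c_n F_n - 2c_n M(F)_n = 2c_n(\mathrm{Id} - M)(F)_n$, the claimed identity; the intermediate expression is to be read as $2c_n\big(F_n - (MF)_n\big)$, with the factor $c_n$ multiplying both terms. I do not expect a genuine obstacle here: the content is purely a substitution of definitions, and the only real care required is the a.e.-finiteness supplied by (ii) together with consistent tracking of the index shifts between the $P$- and $Q$-kernels.
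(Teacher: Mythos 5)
Your proof is correct and is exactly the argument the paper intends: the lemma is stated there without proof precisely because it is the direct expansion of Definition \ref{def M and Delta} via the integral formulas for $T_{P_n}$ and $T_{Q_{n-1}}$, which is what you carry out. You also rightly read the second integral in the first display as being over $X_{n-1}$ (a typo in the statement, since $Q_{n-1}(x_n,\cdot)$ is a measure on $X_{n-1}$) and the intermediate expression in the last identity as $2c_n\bigl(F_n - (MF)_n\bigr)$ with $c_n$ multiplying both terms, which is the only reading consistent with $2c_n(\mathrm{Id}-M)(F)_n$.
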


In what follows we show another approach for the definition of a 
Laplace operator. Our main references here are 
\cite{BezuglyiJorgensen_2019a, JorgensenPearse_2019, 
BezuglyiJorgensen_2019}. 

\begin{remark}
Let $(\mc X_\B, \mathbb P)$ be the path-space of a measurable 
Bratteli diagram equipped with a path-space measure $\mathbb P$.
It is a standard measure space with a $\sigma$-finite measure, in 
general.  Consider the Cartesian product $\mc X_\B \times \mc X_\B$
and take a symmetric measure $\lambda$ on $\mc X_\B \times 
\mc X_\B$. The support of $\lambda$ is a symmetric subset 
$Z(\lambda) \subset
\mc X_\B \times \mc X_\B$. We discussed symmetric measures  in 
Sections \ref{ssect Kernels on L^2} and \ref{ssect L-set}. Then
$\lambda$ admits a disintegration 
$$
\lambda = \int_{\mc X_B} \lambda_x \; d\mathbb P(x).
$$
Our choice of $\lambda$ is restricted by the property that $c(x) = 
\lambda_x(\mc X_\B)$ must be finite for a.e. $x$.

Define a linear operator $\ol \Delta$ acting on the space of bounded
 Borel functions on $(\mc X_\B, \mathbb P)$:
\be\label{eq-Delta bar} 
\ol \Delta (f)(x) = \int_{\mc X_\B} (f(x) - f(y)) \; d\lambda_x(y)= 
c(x) \left(f(x) - \int_{\mc X_\B} f(y) \; d\tau_x(y)\right)
\ee
where $d\tau_x(y) = \dfrac{1}{c(x)} d\lambda_x(y)$. 

The operator $\mc T : f \mapsto \int_{\mc X_\B} f \; d\tau_x$ is a 
Markov operator. 
 \end{remark}

We define now the \textit{finite energy space} $\mc H_E$. 
Suppose that a measurable Bratteli diagram $\mc B$ is defined by an
initial distribution $\nu_0$ and a sequence $(P_n)$ of Markov kernels.
These objects generate a sequence of  dual kernels $(Q_n)$ and measures
$(\rho_n)$. Recall that $\rho_n$ is the measure  which is defined by
the equality $\rho(dx, dy) = d\nu_n(x)P_n(x, dy) = d\nu_{n+1}(y)
Q(y, dx)$, $x \in X_n, y \in X_{n+1}$. 

\begin{definition}\label{def fin energy}
 For a measurable Bratteli diagram, let $F = (F_n)$ be a
 Borel function on the path-space $\mc X_{\mc B}$ of $\B$. It is said that
a function $F$ has \textit{finite energy} and belongs to the finite energy
space $\mc H_E$ if
\be\label{eq-norm fin en}
|| F ||^2_{\mc H_E}= \sum_{n=0}^\infty \ \iint_{X_n \times X_{n+1}}
 (F_n (x) -  F_{n+1}(y))^2\; d\rho_{n}(x, y) < \infty \ \ x\in X_n, y\in 
 X_{n+1}.
\ee

\end{definition}

As a matter of fact, the elements of $\mc H_E$ are the classes of functions: 
two functions $F$ and $F'$ are identified if $F-F' = \mathrm{const}$. 
It can be checked that $\mc H_E$, equipped with the norm as in 
\eqref{eq-norm fin en}, is a Hilbert space.

\begin{proposition} \label{prop meas energy}
Let $\B$ be a measurable Bratteli diagram with levels
represented by measure spaces  $(X_n, \A_n, \nu_n)$. Suppose that  
$(P_n)$ and $(Q_n)$ are  sequences of dual probability transition kernels
 compatible with $\B$. Then a function $F = (F_n)$ on the path-space of 
 $ \B$ belongs to the finite energy space $\mc H_E$ if and only if 
$$
\sum_{n\geq 0} \ \left( || F_n ||^2_{L^2(\nu_n)} - 
2 \langle F_n, T_{P_n}(F_{n+1})\rangle_{L^2(\nu_n)}  
+ || F_{n+1} ||^2_{L^2(\nu_{n+1})} \right)< \infty.
$$
\end{proposition}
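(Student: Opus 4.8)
The plan is to imitate the proof of Proposition \ref{prop finite energy}, its discrete counterpart: fix the level $n$, expand the square $(F_n(x)-F_{n+1}(y))^2$ inside the integral defining $|| F ||^2_{\mc H_E}$, and integrate the three resulting terms separately against $\rho_n$. The whole point is that the two representations of the dual measure, $d\rho_n(x,y) = d\nu_n(x) P_n(x,dy) = d\nu_{n+1}(y) Q_n(y,dx)$ supplied by Theorem \ref{prop Markov kernels}, let each ``diagonal'' term collapse to an $L^2$-norm while the cross term becomes an inner product with $T_{P_n}$.

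First I would record the term-by-term computation. Using $d\rho_n(x,y) = d\nu_n(x)P_n(x,dy)$ and the fact that $P_n$ is a probability kernel, so $P_n(x,X_{n+1}) = 1$,
$$\iint_{X_n\times X_{n+1}} F_n(x)^2\; d\rho_n(x,y) = \int_{X_n} F_n(x)^2 \Big(\int_{X_{n+1}} P_n(x,dy)\Big) d\nu_n(x) = || F_n ||^2_{L^2(\nu_n)}.$$
Symmetrically, using the representation $d\rho_n(x,y) = d\nu_{n+1}(y)Q_n(y,dx)$ together with $Q_n(y,X_n)=1$, the term $\iint F_{n+1}(y)^2\, d\rho_n$ equals $|| F_{n+1}||^2_{L^2(\nu_{n+1})}$. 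For the cross term, Fubini and the definition of $T_{P_n}$ give
$$\iint_{X_n\times X_{n+1}} F_n(x)F_{n+1}(y)\; d\rho_n(x,y) = \int_{X_n} F_n(x) \Big(\int_{X_{n+1}} P_n(x,dy) F_{n+1}(y)\Big) d\nu_n(x) = \langle F_n, T_{P_n}(F_{n+1})\rangle_{L^2(\nu_n)}.$$
Adding, the level-$n$ integral equals $|| F_n ||^2_{L^2(\nu_n)} - 2\langle F_n, T_{P_n}(F_{n+1})\rangle_{L^2(\nu_n)} + || F_{n+1}||^2_{L^2(\nu_{n+1})}$, and summing over $n$ would yield the claimed identity, hence the equivalence.

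The step requiring genuine care — and the main obstacle — is the legitimacy of splitting the single integral of a square into three separate integrals, since a priori only the nonnegative quantity $\iint (F_n - F_{n+1})^2\, d\rho_n$ is guaranteed to be well defined. To justify the split I would first check that each of the three pieces is finite under the hypothesis $F_n \in L^2(\nu_n)$ for all $n$: the two diagonal pieces are $|| F_n ||^2_{L^2(\nu_n)}$ and $|| F_{n+1}||^2_{L^2(\nu_{n+1})}$ by the computation above, while the absolute cross term is controlled by the Cauchy--Schwarz inequality in $L^2(\rho_n)$, namely $\iint |F_n(x)||F_{n+1}(y)|\, d\rho_n \leq || F_n||_{L^2(\nu_n)}\, || F_{n+1}||_{L^2(\nu_{n+1})} < \infty$. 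Equivalently, one may invoke Theorem \ref{thm T_P contractive}: since $P_n$ and $Q_n$ are probability kernels, $T_{P_n} : L^2(\nu_{n+1}) \to L^2(\nu_n)$ is contractive, so $T_{P_n}(F_{n+1}) \in L^2(\nu_n)$ and the inner product is finite by Cauchy--Schwarz. With all three pieces absolutely convergent, Fubini--Tonelli licenses the splitting and the interchange of the order of integration, and each level-$n$ summand on the right is finite and nonnegative, being the integral of a square.

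Finally I would assemble the equivalence. Because the level-$n$ integral and the level-$n$ summand on the right coincide and are nonnegative for every $n$, the series $\sum_n \iint (F_n-F_{n+1})^2\, d\rho_n$ and $\sum_n \big( || F_n ||^2_{L^2(\nu_n)} - 2\langle F_n, T_{P_n}(F_{n+1})\rangle_{L^2(\nu_n)} + || F_{n+1}||^2_{L^2(\nu_{n+1})}\big)$ are the same extended real number; in particular one is finite if and only if the other is. Thus $F \in \mc H_E$ precisely when the displayed series converges, which is the assertion. I expect no surprises beyond the finiteness bookkeeping: the argument is the measurable analogue of Proposition \ref{prop finite energy}, with sums over vertices replaced by integrals against $\nu_n$ and the matrices $\wh P_n$ replaced by the kernels $P_n$.
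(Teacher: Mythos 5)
Your proposal is correct and follows essentially the same route as the paper's proof: expand $(F_n(x)-F_{n+1}(y))^2$ under the integral, evaluate the two diagonal terms using the two representations $d\rho_n = d\nu_n(x)P_n(x,dy) = d\nu_{n+1}(y)Q_n(y,dx)$ together with the kernels being probability kernels, and identify the cross term as $\langle F_n, T_{P_n}(F_{n+1})\rangle_{L^2(\nu_n)}$. The only difference is that you explicitly justify the splitting of the integral via Cauchy--Schwarz and contractivity of $T_{P_n}$, a finiteness check the paper leaves implicit; this is a welcome but minor addition, not a different argument.
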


\begin{proof}
We first remind that if $P_n$ is a probability transition kernel, then 
$T_{P_n} : L^2(\nu_n) \to L^2(\nu_{n+1})$ is a contractive operator,
see Theorem \ref{thm T_P contractive}. Expanding the expression in 
\eqref{eq-norm fin en}, we obtain 
$$
\ba 
|| F ||^2_{\mc H_E} &= \sum_{n=0}^\infty \ \iint_{X_n \times 
X_{n+1}} \left(F_n^2(x) -2 F_n(x) F_{n+1}(y) + F_{n+1}^2(y) \right)
\; P_n(x, dy) d\nu_n(x)\\
& =
\sum_{n=0}^\infty \left[ \int_{X_n}\int_{X_{n+1}} F_n^2(x)\;
P_n(x, dy) d\nu_n(x) - 2\int_{X_n}\int_{X_{n+1}}  F_n(x) F_{n+1}(y)    \; P_n(x, dy) d\nu_n(x) \right.\\
 & \ \ \ \ \ \ \left. +\int_{X_n}\int_{X_{n+1}} F_{n+1}^2(y) \; P_n(x, dy) 
 d\nu_n(x) \right]\\
&= \sum_{n=0}^\infty \left(\int_{X_n} F_n^2(x) \; d\nu_n(x) -
\int_{X_n}  F_n(x) T_{P_n}(F_{n+1})(x) \; d\nu_n(x) \right.\\
& \ \ \ \ \ \  +\left.\int_{X_n}\int_{X_{n+1}} F_{n+1}^2(y) \; Q_n(y, dx)  
d\nu_{n+1}(y)  \right)\\
& = \sum_{n=0}^\infty \left( || F_n ||^2_{L^2(\nu_n)} - 
2 \langle F_n, T_{P_n}(F_{n+1})\rangle_{L^2(\nu_n)}  
+ || F_{n+1} ||^2_{L^2(\nu_{n+1})} \right)\\
\ea
$$
\end{proof}

\textbf{Acknowledgments.} The authors are pleased to thank colleagues 
and collaborators, especially members of the seminars in Mathematical 
Physics  and Operator Theory at the University of Iowa, where versions of 
this work  have been presented. We acknowledge very helpful conversations 
with  among others Professors Paul Muhly, Wayne Polyzou; and 
conversations at  distance with Professors Daniel Alpay, and his colleagues 
at both Ben Gurion  University, and Chapman University. 
We thank Dr. Feng Tian for help with the graphics in the Figures. 

\newpage

\bibliographystyle{alpha}
\bibliography{references_MBD.bib}

\newcommand{\etalchar}[1]{$^{#1}$}
\def\ocirc#1{\ifmmode\setbox0=\hbox{$#1$}\dimen0=\ht0 \advance\dimen0
  by1pt\rlap{\hbox to\wd0{\hss\raise\dimen0
  \hbox{\hskip.2em$\scriptscriptstyle\circ$}\hss}}#1\else {\accent"17 #1}\fi}
\begin{thebibliography}{LZW{\etalchar{+}}16}

\bibitem[Ada20]{Adams2020}
Jason~R. Adams.
\newblock {\em Plant {S}egmentation by {S}upervised {M}achine {L}earning
  {M}ethods and {P}henotypic {T}rait {E}xtraction of {S}oybean {P}lants {U}sing
  {D}eep {C}onvolutional {N}eural {N}etworks with {T}ransfer {L}earning}.
\newblock ProQuest LLC, Ann Arbor, MI, 2020.
\newblock Thesis (Ph.D.)--The University of Nebraska - Lincoln.

\bibitem[AFMP94]{Adams_et_al1994}
Gregory~T. Adams, John Froelich, Paul~J. McGuire, and Vern~I. Paulsen.
\newblock Analytic reproducing kernels and factorization.
\newblock {\em Indiana Univ. Math. J.}, 43(3):839--856, 1994.

\bibitem[AJ14]{AplayJorgensen2014}
Daniel Alpay and Palle Jorgensen.
\newblock Reproducing kernel {H}ilbert spaces generated by the binomial
  coefficients.
\newblock {\em Illinois J. Math.}, 58(2):471--495, 2014.

\bibitem[AJ15]{AlpayJorgensen2015}
Daniel Alpay and Palle Jorgensen.
\newblock Spectral theory for {G}aussian processes: reproducing kernels,
  boundaries, and {$L^2$}-wavelet generators with fractional scales.
\newblock {\em Numer. Funct. Anal. Optim.}, 36(10):1239--1285, 2015.

\bibitem[Ana11]{Anandam_2011}
Victor Anandam.
\newblock {\em Harmonic functions and potentials on finite or infinite
  networks}, volume~12 of {\em Lecture Notes of the Unione Matematica
  Italiana}.
\newblock Springer, Heidelberg; UMI, Bologna, 2011.

\bibitem[Ana12a]{Anandam2012}
Victor Anandam.
\newblock Integral representation of positive separately harmonic functions in
  a product tree.
\newblock {\em J. Anal.}, 20:91--101, 2012.

\bibitem[Ana12b]{Anandam_2012}
Victor Anandam.
\newblock Subordinate harmonic structures in an infinite network.
\newblock In {\em Complex analysis and potential theory}, volume~55 of {\em CRM
  Proc. Lecture Notes}, pages 301--314. Amer. Math. Soc., Providence, RI, 2012.

\bibitem[Aro50]{Aronszajn1950}
N.~Aronszajn.
\newblock Theory of reproducing kernels.
\newblock {\em Trans. Amer. Math. Soc.}, 68:337--404, 1950.

\bibitem[AS57]{AronszajnSmith1957}
N.~Aronszajn and K.~T. Smith.
\newblock Characterization of positive reproducing kernels. {A}pplications to
  {G}reen's functions.
\newblock {\em Amer. J. Math.}, 79:611--622, 1957.

\bibitem[Bau19]{Baudot2019}
Pierre Baudot.
\newblock The {P}oincar\'{e}-{S}hannon machine: statistical physics and machine
  learning aspects of information cohomology.
\newblock {\em Entropy}, 21(9):Paper No. 881, 39, 2019.

\bibitem[BDK06]{BezuglyiDooleyKwiatkowski2006}
S.~Bezuglyi, A.~H. Dooley, and J.~Kwiatkowski.
\newblock Topologies on the group of {B}orel automorphisms of a standard
  {B}orel space.
\newblock {\em Topol. Methods Nonlinear Anal.}, 27(2):333--385, 2006.

\bibitem[BJa]{BezuglyiJorgensen_2019}
Sergey Bezuglyi and Palle E.~T. Jorgensen.
\newblock Laplace operators in finite energy and dissipation spaces.
  \textit{arXiv:1903.09572}.

\bibitem[BJb]{BezuglyiJorgensen_2018}
Sergey Bezuglyi and Palle E.~T. Jorgensen.
\newblock Symmetric measures, continuous networks, and dynamics.
  \textit{arXiv:1812.00081}.

\bibitem[BJ15]{BezuglyiJorgensen2015}
S.~Bezuglyi and Palle E.~T. Jorgensen.
\newblock Representations of {C}untz-{K}rieger relations, dynamics on
  {B}ratteli diagrams, and path-space measures.
\newblock In {\em Trends in harmonic analysis and its applications}, volume 650
  of {\em Contemp. Math.}, pages 57--88. Amer. Math. Soc., Providence, RI,
  2015.

\bibitem[BJ19a]{BezuglyiJorgensen_2019a}
Sergey Bezuglyi and Palle E.~T. Jorgensen.
\newblock Graph {L}aplace and {M}arkov operators on a measure space.
\newblock In {\em Linear systems, signal processing and hypercomplex analysis},
  volume 275 of {\em Oper. Theory Adv. Appl.}, pages 67--138.
  Birkh\"{a}user/Springer, Cham, 2019.

\bibitem[BJ19b]{BJ-2019}
Sergey Bezuglyi and Palle E.~T. Jorgensen.
\newblock Monopoles, dipoles, and harmonic functions on {B}ratteli diagrams.
\newblock {\em Acta Appl. Math.}, 159:169--224, 2019.

\bibitem[BJKR01]{BratteliJorgensenKimRoush2001}
Ola Bratteli, Palle E.~T. Jorgensen, Ki~Hang Kim, and Fred Roush.
\newblock Decidability of the isomorphism problem for stationary {AF}-algebras
  and the associated ordered simple dimension groups.
\newblock {\em Ergodic Theory Dynam. Systems}, 21(6):1625--1655, 2001.

\bibitem[BJKR02]{BratteliJorgensenKimRoush2002}
Ola Bratteli, Palle E.~T. Jorgensen, Ki~Hang Kim, and Fred Roush.
\newblock Computation of isomorphism invariants for stationary dimension
  groups.
\newblock {\em Ergodic Theory Dynam. Systems}, 22(1):99--127, 2002.

\bibitem[BK16]{BezuglyiKarpel2016}
S.~Bezuglyi and O.~Karpel.
\newblock Bratteli diagrams: structure, measures, dynamics.
\newblock In {\em Dynamics and numbers}, volume 669 of {\em Contemp. Math.},
  pages 1--36. Amer. Math. Soc., Providence, RI, 2016.

\bibitem[BK20]{BezuglyiKarpel2020}
Sergey Bezuglyi and Olena Karpel.
\newblock Invariant measures for {C}antor dynamical systems.
\newblock In {\em Dynamics: topology and numbers}, volume 744 of {\em Contemp.
  Math.}, pages 259--295. Amer. Math. Soc., [Providence], RI, [2020] \copyright
  2020.

\bibitem[BKM09]{BezuglyiKwiatkowskiMedynets2009}
S.~Bezuglyi, J.~Kwiatkowski, and K.~Medynets.
\newblock Aperiodic substitution systems and their {B}ratteli diagrams.
\newblock {\em Ergodic Theory Dynam. Systems}, 29(1):37--72, 2009.

\bibitem[Bra72]{Bratteli1972}
O.~Bratteli.
\newblock Inductive limits of finite dimensional {$C^{\ast} $}-algebras.
\newblock {\em Trans. Amer. Math. Soc.}, 171:195--234, 1972.

\bibitem[BTA04]{BerlinetThomas-Agnan2004}
Alain Berlinet and Christine Thomas-Agnan.
\newblock {\em Reproducing kernel {H}ilbert spaces in probability and
  statistics}.
\newblock Kluwer Academic Publishers, Boston, MA, 2004.
\newblock With a preface by Persi Diaconis.

\bibitem[CD19]{CerfDalmau2019}
Rapha\"{e}l Cerf and Joseba Dalmau.
\newblock Galton-{W}atson and branching process representations of the
  normalized {P}erron-{F}robenius eigenvector.
\newblock {\em ESAIM Probab. Stat.}, 23:797--802, 2019.

\bibitem[CFS82]{CornfeldFominSinai1982}
I.~P. Cornfeld, S.~V. Fomin, and Ya.~G. Sina\u\i.
\newblock {\em Ergodic theory}, volume 245 of {\em Grundlehren der
  Mathematischen Wissenschaften [Fundamental Principles of Mathematical
  Sciences]}.
\newblock Springer-Verlag, New York, 1982.
\newblock Translated from the Russian by A. B. Sosinski\u\i.

\bibitem[CG12]{ChungGraham2012}
Fan Chung and Ron Graham.
\newblock Edge flipping in graphs.
\newblock {\em Adv. in Appl. Math.}, 48(1):37--63, 2012.

\bibitem[CHL{\etalchar{+}}20]{Chi-2020}
Nan Chi, Fangchen Hu, Guoqiang Li, Chaofan Wang, and Wenqing Niu.
\newblock A{I} based on frequency slicing deep neural network for underwater
  visible light communication.
\newblock {\em Sci. China Inf. Sci.}, 63(6):160303, 2020.

\bibitem[Cho14]{Cho2014}
Ilwoo Cho.
\newblock {\em Algebras, graphs and their applications}.
\newblock CRC Press, Boca Raton, FL, 2014.
\newblock Edited by Palle E. T. Jorgensen.

\bibitem[Chu07]{Chung2007}
Fan Chung.
\newblock Random walks and local cuts in graphs.
\newblock {\em Linear Algebra Appl.}, 423(1):22--32, 2007.

\bibitem[Chu10]{Chung2010}
Fan Chung.
\newblock Graph theory in the information age.
\newblock {\em Notices Amer. Math. Soc.}, 57(6):726--732, 2010.

\bibitem[Chu14]{Chung2014}
Fan Chung.
\newblock From quasirandom graphs to graph limits and graphlets.
\newblock {\em Adv. in Appl. Math.}, 56:135--174, 2014.

\bibitem[CK77]{ConnesKrieger1977}
Alain Connes and Wolfgang Krieger.
\newblock Measure space automorphisms, the normalizers of their full groups,
  and approximate finiteness.
\newblock {\em J. Functional Analysis}, 24(4):336--352, 1977.

\bibitem[CK14]{ChungKenter2014}
Fan Chung and Franklin Kenter.
\newblock Discrepancy inequalities for directed graphs.
\newblock {\em Discrete Appl. Math.}, 176:30--42, 2014.

\bibitem[CN19]{ChaystiNoutsos2019}
Thaniporn Chaysri and Dimitrios Noutsos.
\newblock On the {P}erron-{F}robenius theory of {$M_v$}-matrices and equivalent
  properties to eventually exponentially nonnegative matrices.
\newblock {\em Electron. J. Linear Algebra}, 35:424--440, 2019.

\bibitem[CP16]{CarollPetersen_2016}
Kathleen Carroll and Karl Petersen.
\newblock Markov diagrams for some non-{M}arkovian systems.
\newblock In {\em Ergodic theory, dynamical systems, and the continuing
  influence of {J}ohn {C}. {O}xtoby}, volume 678 of {\em Contemp. Math.}, pages
  73--101. Amer. Math. Soc., Providence, RI, 2016.

\bibitem[CSS19]{CiolettiSilvaStadlbauer2019}
L.~Cioletti, E.~Silva, and M.~Stadlbauer.
\newblock Thermodynamic formalism for topological {M}arkov chains on standard
  {B}orel spaces.
\newblock {\em Discrete Contin. Dyn. Syst.}, 39(11):6277--6298, 2019.

\bibitem[CVLX19]{ChenVongLiXu2019}
Xiao~Shan Chen, Seak-Weng Vong, Wen Li, and Hongguo Xu.
\newblock Noda iterations for generalized eigenproblems following
  {P}erron-{F}robenius theory.
\newblock {\em Numer. Algorithms}, 80(3):937--955, 2019.

\bibitem[DH03]{DooleyHamachi2003}
A.~H. Dooley and Toshihiro Hamachi.
\newblock Nonsingular dynamical systems, {B}ratteli diagrams and {M}arkov
  odometers.
\newblock {\em Israel J. Math.}, 138:93--123, 2003.

\bibitem[DHS99]{DurandHostSkau1999}
F.~Durand, B.~Host, and C.~Skau.
\newblock Substitutional dynamical systems, {B}ratteli diagrams and dimension
  groups.
\newblock {\em Ergodic Theory Dynam. Systems}, 19(4):953--993, 1999.

\bibitem[DJ10]{Dutkay_Jorgensen2010}
Dorin~Ervin Dutkay and Palle E.~T. Jorgensen.
\newblock Spectral theory for discrete {L}aplacians.
\newblock {\em Complex Anal. Oper. Theory}, 4(1):1--38, 2010.

\bibitem[DJK94a]{DoughertyJacksonKechris_1994}
R.~Dougherty, S.~Jackson, and A.~S. Kechris.
\newblock The structure of hyperfinite {B}orel equivalence relations.
\newblock {\em Trans. Amer. Math. Soc.}, 341(1):193--225, 1994.

\bibitem[DJK94b]{DoughertyJacksonKechris1994}
R.~Dougherty, S.~Jackson, and A.~S. Kechris.
\newblock The structure of hyperfinite {B}orel equivalence relations.
\newblock {\em Trans. Amer. Math. Soc.}, 341(1):193--225, 1994.

\bibitem[DMPS18]{Douc_2018}
Randal Douc, Eric Moulines, Pierre Priouret, and Philippe Soulier.
\newblock {\em Markov chains}.
\newblock Springer Series in Operations Research and Financial Engineering.
  Springer, Cham, 2018.

\bibitem[DS88]{DanfordSchwartz1988}
Nelson Dunford and Jacob~T. Schwartz.
\newblock {\em Linear operators. {P}art {II}}.
\newblock Wiley Classics Library. John Wiley \& Sons, Inc., New York, 1988.
\newblock Spectral theory. Selfadjoint operators in Hilbert space, With the
  assistance of William G. Bade and Robert G. Bartle, Reprint of the 1963
  original, A Wiley-Interscience Publication.

\bibitem[DSST20]{Dunlop2020}
Matthew~M. Dunlop, Dejan Slep\v{c}ev, Andrew~M. Stuart, and Matthew Thorpe.
\newblock Large data and zero noise limits of graph-based semi-supervised
  learning algorithms.
\newblock {\em Appl. Comput. Harmon. Anal.}, 49(2):655--697, 2020.

\bibitem[DT19]{DeyTrivedi2019}
Santanu Dey and Harsh Trivedi.
\newblock Bures distance and transition probability for
  {$\alpha$}-{CPD}-kernels.
\newblock {\em Complex Anal. Oper. Theory}, 13(5):2171--2190, 2019.

\bibitem[Dur10]{Durand2010}
Fabien Durand.
\newblock Combinatorics on {B}ratteli diagrams and dynamical systems.
\newblock In {\em Combinatorics, automata and number theory}, volume 135 of
  {\em Encyclopedia Math. Appl.}, pages 324--372. Cambridge Univ. Press,
  Cambridge, 2010.

\bibitem[EP19]{EpsteinPop2019}
Charles~L. Epstein and Camelia~A. Pop.
\newblock Transition probabilities for degenerate diffusions arising in
  population genetics.
\newblock {\em Probab. Theory Related Fields}, 173(1-2):537--603, 2019.

\bibitem[Fer06]{Ferenczi2006}
S\'{e}bastien Ferenczi.
\newblock Substitution dynamical systems on infinite alphabets.
\newblock volume~56, pages 2315--2343. 2006.
\newblock Num\'{e}ration, pavages, substitutions.

\bibitem[FN18]{FalkNussbaum2018}
Richard~S. Falk and Roger~D. Nussbaum.
\newblock {$C^m$} eigenfunctions of {P}erron-{F}robenius operators and a new
  approach to numerical computation of {H}ausdorff dimension: applications in
  {$\Bbb R^1$}.
\newblock {\em J. Fractal Geom.}, 5(3):279--337, 2018.

\bibitem[For97]{Forrest1997}
A.~H. Forrest.
\newblock {$K$}-groups associated with substitution minimal systems.
\newblock {\em Israel J. Math.}, 98:101--139, 1997.

\bibitem[FP10]{FrickPetersen_2010}
Sarah~Bailey Frick and Karl Petersen.
\newblock Reinforced random walks and adic transformations.
\newblock {\em J. Theoret. Probab.}, 23(3):920--943, 2010.

\bibitem[Geo10]{Georgakopoulos2010}
Agelos Georgakopoulos.
\newblock Uniqueness of electrical currents in a network of finite total
  resistance.
\newblock {\em J. Lond. Math. Soc. (2)}, 82(1):256--272, 2010.

\bibitem[GPS95]{GiordanoPutnamSkau1995}
Thierry Giordano, Ian~F. Putnam, and Christian~F. Skau.
\newblock Topological orbit equivalence and {$C^*$}-crossed products.
\newblock {\em J. Reine Angew. Math.}, 469:51--111, 1995.

\bibitem[GR19]{GiladiRuffer2019}
Ohad Giladi and Bj\"{o}rn~S. R\"{u}ffer.
\newblock A {P}erron-{F}robenius type result for integer maps and applications.
\newblock {\em Positivity}, 23(3):545--570, 2019.

\bibitem[GS20]{GaoSu2020}
Wenyou Gao and Chang Su.
\newblock Analysis on block chain financial transaction under artificial neural
  network of deep learning.
\newblock {\em J. Comput. Appl. Math.}, 380:112991, 2020.

\bibitem[GTH19]{GautierTudiscoHein2019}
Antoine Gautier, Francesco Tudisco, and Matthias Hein.
\newblock The {P}erron-{F}robenius theorem for multihomogeneous mappings.
\newblock {\em SIAM J. Matrix Anal. Appl.}, 40(3):1179--1205, 2019.

\bibitem[HN20]{HaninNica2020}
Boris Hanin and Mihai Nica.
\newblock Products of many large random matrices and gradients in deep neural
  networks.
\newblock {\em Comm. Math. Phys.}, 376(1):287--322, 2020.

\bibitem[HPS92]{HermanPutnamSkau1992}
Richard~H. Herman, Ian~F. Putnam, and Christian~F. Skau.
\newblock Ordered {B}ratteli diagrams, dimension groups and topological
  dynamics.
\newblock {\em Internat. J. Math.}, 3(6):827--864, 1992.

\bibitem[JKL02]{JacksonKechrisLouveau_2002}
S.~Jackson, A.~S. Kechris, and A.~Louveau.
\newblock Countable {B}orel equivalence relations.
\newblock {\em J. Math. Log.}, 2(1):1--80, 2002.

\bibitem[Jor06]{Jorgensen2006}
Palle E.~T. Jorgensen.
\newblock {\em Analysis and probability: wavelets, signals, fractals}, volume
  234 of {\em Graduate Texts in Mathematics}.
\newblock Springer, New York, 2006.

\bibitem[Jor12]{Jorgensen2012}
Palle E.~T. Jorgensen.
\newblock Unbounded graph-{L}aplacians in energy space, and their extensions.
\newblock {\em J. Appl. Math. Comput.}, 39(1-2):155--187, 2012.

\bibitem[JP08]{JorgensenPearse-2020}
Palle E.~T. Jorgensen and Erin P.~J. Pearse.
\newblock Operator theory of electrical resistance networks, 2008.

\bibitem[JP10]{Jorgensen_Pearse2010}
Palle E.~T. Jorgensen and Erin Peter~James Pearse.
\newblock A {H}ilbert space approach to effective resistance metric.
\newblock {\em Complex Anal. Oper. Theory}, 4(4):975--1013, 2010.

\bibitem[JP11]{JorgensenPearse2011}
Palle E.~T. Jorgensen and Erin P.~J. Pearse.
\newblock Resistance boundaries of infinite networks.
\newblock In {\em Random walks, boundaries and spectra}, volume~64 of {\em
  Progr. Probab.}, pages 111--142. Birkh\"{a}user/Springer Basel AG, Basel,
  2011.

\bibitem[JP13]{Jorgensen_Pearse2013}
Palle E.~T. Jorgensen and Erin P.~J. Pearse.
\newblock A discrete {G}auss-{G}reen identity for unbounded {L}aplace
  operators, and the transience of random walks.
\newblock {\em Israel J. Math.}, 196(1):113--160, 2013.

\bibitem[JP16]{JorgensenPearse_2016}
Palle E.~T. Jorgensen and Erin P.~J. Pearse.
\newblock Symmetric pairs and self-adjoint extensions of operators, with
  applications to energy networks.
\newblock {\em Complex Anal. Oper. Theory}, 10(7):1535--1550, 2016.

\bibitem[JP17]{JorgensenPearse_2017}
Palle E.~T. Jorgensen and Erin P.~J. Pearse.
\newblock Symmetric pairs of unbounded operators in {H}ilbert space, and their
  applications in mathematical physics.
\newblock {\em Math. Phys. Anal. Geom.}, 20(2):Art. 14, 24, 2017.

\bibitem[JP19]{JorgensenPearse_2019}
Palle E.~T. Jorgensen and Erin P.~J. Pearse.
\newblock Continuum versus discrete networks, graph {L}aplacians, and
  reproducing kernel {H}ilbert spaces.
\newblock {\em J. Math. Anal. Appl.}, 469(2):765--807, 2019.

\bibitem[JPT18]{JorgensenPearseTian_2018}
Palle Jorgensen, Erin Pearse, and Feng Tian.
\newblock Unbounded operators in {H}ilbert space, duality rules, characteristic
  projections, and their applications.
\newblock {\em Anal. Math. Phys.}, 8(3):351--382, 2018.

\bibitem[JT15]{JorgensenTian2015}
Palle Jorgensen and Feng Tian.
\newblock Discrete reproducing kernel {H}ilbert spaces: sampling and
  distribution of {D}irac-masses.
\newblock {\em J. Mach. Learn. Res.}, 16:3079--3114, 2015.

\bibitem[JT16]{JorgensenTian-2016}
Palle Jorgensen and Feng Tian.
\newblock Nonuniform sampling, reproducing kernels, and the associated
  {H}ilbert spaces.
\newblock {\em Sampl. Theory Signal Image Process.}, 15:37--72, 2016.

\bibitem[JT19a]{JT-2019}
Palle Jorgensen and Feng Tian.
\newblock On reproducing kernels, and analysis of measures.
\newblock {\em Markov Process. Related Fields}, 25(3):445--482, 2019.

\bibitem[JT19b]{JT_2019}
Palle Jorgensen and Feng Tian.
\newblock Realizations and factorizations of positive definite kernels.
\newblock {\em J. Theoret. Probab.}, 32(4):1925--1942, 2019.

\bibitem[Kat95]{Kato1995}
Tosio Kato.
\newblock {\em Perturbation theory for linear operators}.
\newblock Classics in Mathematics. Springer-Verlag, Berlin, 1995.
\newblock Reprint of the 1980 edition.

\bibitem[Kec95]{Kechris1995}
Alexander~S. Kechris.
\newblock {\em Classical descriptive set theory}, volume 156 of {\em Graduate
  Texts in Mathematics}.
\newblock Springer-Verlag, New York, 1995.

\bibitem[Kig01]{Kigami2001}
Jun Kigami.
\newblock {\em Analysis on fractals}, volume 143 of {\em Cambridge Tracts in
  Mathematics}.
\newblock Cambridge University Press, Cambridge, 2001.

\bibitem[Kit98]{Kitchens1998}
Bruce~P. Kitchens.
\newblock {\em Symbolic dynamics}.
\newblock Universitext. Springer-Verlag, Berlin, 1998.
\newblock One-sided, two-sided and countable state Markov shifts.

\bibitem[KKLS19]{KunszentiLovasz2019}
D\'{a}vid Kunszenti-Kov\'{a}cs, L\'{a}szl\'{o} Lov\'{a}sz, and Bal\'{a}zs
  Szegedy.
\newblock Measures on the square as sparse graph limits.
\newblock {\em J. Combin. Theory Ser. B}, 138:1--40, 2019.

\bibitem[KL16]{KerrLi_2016}
David Kerr and Hanfeng Li.
\newblock {\em Ergodic theory}.
\newblock Springer Monographs in Mathematics. Springer, Cham, 2016.
\newblock Independence and dichotomies.

\bibitem[Kle14]{Klenke_2014}
Achim Klenke.
\newblock {\em Probability theory}.
\newblock Universitext. Springer, London, second edition, 2014.
\newblock A comprehensive course.

\bibitem[KS19]{KovachkiStuart2019}
Nikola~B. Kovachki and Andrew~M. Stuart.
\newblock Ensemble {K}alman inversion: a derivative-free technique for machine
  learning tasks.
\newblock {\em Inverse Problems}, 35(9):095005, 35, 2019.

\bibitem[Lov12]{Lovasz2012}
L\'{a}szl\'{o} Lov\'{a}sz.
\newblock {\em Large networks and graph limits}, volume~60 of {\em American
  Mathematical Society Colloquium Publications}.
\newblock American Mathematical Society, Providence, RI, 2012.

\bibitem[LP16]{LyonsPeres_2016}
Russell Lyons and Yuval Peres.
\newblock {\em Probability on trees and networks}, volume~42 of {\em Cambridge
  Series in Statistical and Probabilistic Mathematics}.
\newblock Cambridge University Press, New York, 2016.

\bibitem[LT80]{LazarTaylor1980}
A.~J. Lazar and D.~C. Taylor.
\newblock Approximately finite-dimensional {$C^{\ast} $}-algebras and
  {B}ratteli diagrams.
\newblock {\em Trans. Amer. Math. Soc.}, 259(2):599--619, 1980.

\bibitem[LZW{\etalchar{+}}16]{Li-2016}
Xi~Li, Liming Zhao, Lina Wei, Ming-Hsuan Yang, Fei Wu, Yueting Zhuang, Haibin
  Ling, and Jingdong Wang.
\newblock Deepsaliency: multi-task deep neural network model for salient object
  detection.
\newblock {\em IEEE Trans. Image Process.}, 25(8):3919--3930, 2016.

\bibitem[Med06]{Medynets2006}
Konstantin Medynets.
\newblock Cantor aperiodic systems and {B}ratteli diagrams.
\newblock {\em C. R. Math. Acad. Sci. Paris}, 342(1):43--46, 2006.

\bibitem[Nad90]{Nadkarni1990}
M.~G. Nadkarni.
\newblock On the existence of a finite invariant measure.
\newblock {\em Proc. Indian Acad. Sci. Math. Sci.}, 100(3):203--220, 1990.

\bibitem[Nad95]{Nadkarni1995}
M.~G. Nadkarni.
\newblock {\em Basic ergodic theory}, volume~6 of {\em Texts and Readings in
  Mathematics}.
\newblock Hindustan Book Agency, Delhi; distributed outside Asia by Henry
  Helson, Berkeley, CA, 1995.

\bibitem[Num84]{Nummelin_1984}
Esa Nummelin.
\newblock {\em General irreducible {M}arkov chains and nonnegative operators},
  volume~83 of {\em Cambridge Tracts in Mathematics}.
\newblock Cambridge University Press, Cambridge, 1984.

\bibitem[Pet12]{Petit2012}
Camille Petit.
\newblock Harmonic functions on hyperbolic graphs.
\newblock {\em Proc. Amer. Math. Soc.}, 140(1):235--248, 2012.

\bibitem[Phi05]{Phillips2005}
N.~Christopher Phillips.
\newblock Crossed products of the {C}antor set by free minimal actions of
  {$\Bbb Z^d$}.
\newblock {\em Comm. Math. Phys.}, 256(1):1--42, 2005.

\bibitem[PR16]{PaulsenRaghupathi2016}
Vern~I. Paulsen and Mrinal Raghupathi.
\newblock {\em An introduction to the theory of reproducing kernel {H}ilbert
  spaces}, volume 152 of {\em Cambridge Studies in Advanced Mathematics}.
\newblock Cambridge University Press, Cambridge, 2016.

\bibitem[Put18]{Putnam2018}
Ian~F. Putnam.
\newblock {\em Cantor minimal systems}, volume~70 of {\em University Lecture
  Series}.
\newblock American Mathematical Society, Providence, RI, 2018.

\bibitem[Ren18]{Renault2018}
Jean Renault.
\newblock Random walks on {B}ratteli diagrams.
\newblock In {\em Operator theory: themes and variations}, volume~20 of {\em
  Theta Ser. Adv. Math.}, pages 187--204. Theta, Bucharest, 2018.

\bibitem[Rev84]{Revuz_1984}
D.~Revuz.
\newblock {\em Markov chains}, volume~11 of {\em North-Holland Mathematical
  Library}.
\newblock North-Holland Publishing Co., Amsterdam, second edition, 1984.

\bibitem[Roh49]{Rohlin1949}
V.~A. Rohlin.
\newblock On the fundamental ideas of measure theory.
\newblock {\em Mat. Sbornik N.S.}, 25(67):107--150, 1949.

\bibitem[Sch12]{Schmeudgen2012}
Konrad Schm\"{u}dgen.
\newblock {\em Unbounded self-adjoint operators on {H}ilbert space}, volume 265
  of {\em Graduate Texts in Mathematics}.
\newblock Springer, Dordrecht, 2012.

\bibitem[Sim12]{Simmons2012}
David Simmons.
\newblock Conditional measures and conditional expectation; {R}ohlin's
  disintegration theorem.
\newblock {\em Discrete Contin. Dyn. Syst.}, 32(7):2565--2582, 2012.

\bibitem[SL19]{SumLeung2019}
John Sum and Chi-Sing Leung.
\newblock Learning algorithm for {B}oltzmann machines with additive weight and
  bias noise.
\newblock {\em IEEE Trans. Neural Netw. Learn. Syst.}, 30(10):3200--3204, 2019.

\bibitem[Smi19]{Smirnov2019}
S.~N. Smirnov.
\newblock A {F}eller transition kernel with measure supports given by a
  set-valued mapping.
\newblock {\em Tr. Inst. Mat. Mekh.}, 25(1):219--228, 2019.

\bibitem[SS16]{SaitohSawano2016}
Saburou Saitoh and Yoshihiro Sawano.
\newblock {\em Theory of reproducing kernels and applications}, volume~44 of
  {\em Developments in Mathematics}.
\newblock Springer, Singapore, 2016.

\bibitem[SSQ{\etalchar{+}}20]{Sun-2020}
Guangling Sun, Yuying Su, Chuan Qin, Wenbo Xu, Xiaofeng Lu, and Andrzej
  Ceglowski.
\newblock Complete {D}efense {F}ramework to {P}rotect {D}eep {N}eural
  {N}etworks against {A}dversarial {E}xamples.
\newblock {\em Math. Probl. Eng.}, pages Art. ID 8319249, 17, 2020.

\bibitem[Tak03]{Takesaki2003}
M.~Takesaki.
\newblock {\em Theory of operator algebras. {III}}, volume 127 of {\em
  Encyclopaedia of Mathematical Sciences}.
\newblock Springer-Verlag, Berlin, 2003.
\newblock Operator Algebras and Non-commutative Geometry, 8.

\bibitem[VCH19]{Venkitaraman-2019}
Arun Venkitaraman, Saikat Chatterjee, and Peter H\"{a}ndel.
\newblock Predicting graph signals using kernel regression where the input
  signal is agnostic to a graph.
\newblock {\em IEEE Trans. Signal Inform. Process. Netw.}, 5(4):698--710, 2019.

\bibitem[Ver81]{Vershik1981}
A.~M. Vershik.
\newblock Uniform algebraic approximation of shift and multiplication
  operators.
\newblock {\em Dokl. Akad. Nauk SSSR}, 259(3):526--529, 1981.

\bibitem[Ver15]{Vershik2015}
A.~M. Vershik.
\newblock Equipped graded graphs, projective limits of simplices, and their
  boundaries.
\newblock {\em J. Math. Sci. (N.Y.)}, 209(6):860--873, 2015.

\bibitem[VO16]{VianaOliveira2016}
Marcelo Viana and Krerley Oliveira.
\newblock {\em Foundations of ergodic theory}, volume 151 of {\em Cambridge
  Studies in Advanced Mathematics}.
\newblock Cambridge University Press, Cambridge, 2016.

\bibitem[Xia20]{Xia2020}
Ying Xia.
\newblock Research on statistical machine translation model based on deep
  neural network.
\newblock {\em Computing}, 102(3):643--661, 2020.

\bibitem[XPL20]{Xiao2020}
Yatie Xiao, Chi-Man Pun, and Bo~Liu.
\newblock Adversarial example generation with adaptive gradient search for
  single and ensemble deep neural network.
\newblock {\em Inform. Sci.}, 528:147--167, 2020.

\bibitem[YD20]{YangDing2020}
Gang Yang and Fei Ding.
\newblock Associative memory optimized method on deep neural networks for image
  classification.
\newblock {\em Inform. Sci.}, 533:108--119, 2020.

\bibitem[YE19]{YankelevskyElad2019}
Yael Yankelevsky and Michael Elad.
\newblock Finding {GEMS}: multi-scale dictionaries for high-dimensional graph
  signals.
\newblock {\em IEEE Trans. Signal Process.}, 67(7):1889--1901, 2019.

\bibitem[Zer06]{Zerr2006}
Ryan~J. Zerr.
\newblock Minimal {B}ratteli diagrams and the dimension groups of {AF}
  {$C^*$}-algebras.
\newblock {\em Int. J. Math. Math. Sci.}, pages Art. ID 65737, 13, 2006.

\end{thebibliography}

\end{document}
\newpage

\begin{center}
\begin{large}
\textbf{Questions}\\
\end{large}
\textit{(this is not a part of the paper)}
\end{center}

\begin{itemize}

\item (Perron-Frobenius theorem) Let $A$ be an infinite matrix that 
satisfies the Perron-Frobenius theorem: there exists $\lambda$ and two
positive eigenvectors $\ell$ (left) and $r$ (right) such that $\ell A =
\lambda \ell$ and $Ar = \lambda r$. Let $\mu$ be the tail invariant 
measure defined as in \eqref{eq inv meas stat BD}. Then $\mu$
is finite if and only if $\sum_v r_v < \infty$. Are there conditions 
on the matrix $A$ that would guarantee the finiteness of $\mu$?

\item Let $\mu$ be as above. Suppose that $A$ is irreducible. Is $\mu$
ergodic?
Under what conditions is  $\mu$ a unique tail invariant measure? 

\item (Substitution dynamical systems) Let $\sigma$ be an aperiodic 
substitution on an infinite alphabet $\A$. Suppose that the matrix 
of the substitution is irreducible.  Define a Borel substitution
dynamical system (see Example \ref{ex subst DS}). Is it true that
every such substitution is realized as a Vershik map on a 
generalized stationary Bratteli diagram?

\item (on Theorem \ref{thm inv meas is Markov}) I suppose that the
theorem remains true for non-probability tail invariant  measures. 
Check this!

\item (on subsection \ref{ss operators}). In this item, I want to mention 
that one can use   the approach we used in an earlier  paper on 
harmonic functions and the Laplacian considered on classical Bratteli
 diagrams. 
The setting of generalized Bratteli diagrams are absolutely similar to 
the case of usual Bratteli diagrams and it can be used  for a further study of networks on generalized Bratteli diagrams. 

\item (on subsection \ref{ss operators} and operators $T_n$) 
We recall that Markov operators $T_n$ are defined in 
Corollary \ref{cor operator T_n}. Let $f = (f_n)$ be a function on the vertex set of a generalized Bratteli 
diagram. We can say that $f$ is \textit{harmonic} if $T(f) =f$, i.e., 
$T_n(f_n) = f_n$ for all $n$. Is this approach equivalent to that 
mentioned in the previous item? 

%\item (about operators $T_n)$) We 

%More precisely. Given $(P_n)$ and $q^{(0)}$ on a generalized 
%Bratteli diagram $B(V, E)$, we define the following objects: $q^{(n)},
% Q_n, \wh P_n, \wh Q_n, \mc H_n, T_{P_n}, T_{Q_n}$. 
% Let $f = (f_n)$ be a function on vertices $V$ of $B$. 

\item (Lemma \ref{lem_P,Q abs cont})  It was proved that, for $\rho
\in L(\nu_1, \nu_2)$, 
$$
P(x, \cdot) \ll \nu_2, \ \ Q(y, \cdot) \ll \nu_1.
$$ 
Hence, we can define 
$$
\frac{P(x, dy)}{d\nu_2(y)} = \alpha(x, y), \ \ \frac{Q(y, dx)}
{d\nu_1(x)} = \beta(x, y).
$$ 
What can be said about the functions $\alpha(x, y)$ and $\beta(x, y)$?
Are they related to the other Radon-Nikodym derivatives considered in
 the  paper?
 
 Can one use the condition $P(x, \cdot) \ll \nu_2$ to define  a dual kernel
 $Q$? Is the formula 
 $$
 Q(y, A) = \int_A \alpha(x, y)\; d\nu_1(x)
 $$
correct?

\item  Let $\rho$ be a measure
on $(X_1 \times X_2, \A_1 \times \A_2)$. Let $(\nu, P)$ be a pair
which is defined by $\rho$: $\nu = \rho\circ\pi_1^{-1}$ and
$\{\delta_x\} \times P(x, dy) = d\rho_x(y)$. Suppose that $\rho'$
is another measure which is  equivalent to $\rho$.
How can one characterize relations between the pairs $(\nu, P)$ and 
$(\nu', P')$? In particular, if the  two measures $\rho$ and $\rho'$ 
 have the same projection $\nu_1$,  what can be said about actions of
 the kernels $P$ and $P'$?

\item (Theorem \ref{thm_P determines Q}) Beginning with $\nu_1$ and
$P$, we defined $\rho, \nu_2$, and $Q$ such that $\nu_2 = \nu_1 P$,
$\nu_1 = \nu_2 Q$. Let $P$ is a finite (or probability)  transition 
kernel, then what can be said about $Q$? Under what condition it is 
finite? \textit{Answer:} If $P(\chi_B)\in \L^1(\nu_1)$ for all $B \in \mc 
D(\nu_1)$, then $c_2(y) = Q(y, X_1)$ is finite and locally integrable. 

\item Theorem \ref{thm_P determines Q} and Proposition 
\ref{cor_P,Q is 1} should be combined in a single statement. What 
would be a correct formulation of it? (\tcb{I suppose I did that})

\item (on Theorem \ref{thm symm meas}) In Theorem  
\ref{thm symm meas}, we defined two measures,  $\lambda_1$ on 
$X_1\times X_1$ and $\lambda_2$ on $X_2\times X_2$. How are 
they related to each other? I suppose it must be a simple formula.
Furthermore, in the case of measurable Bratteli diagrams we have the
operators $T_{P_nQ_n}$ and $T_{Q_{n+1}P_{n+1}}$ acting on the
same space of functions on $(X_n, \A_n)$. One can define the 
symmetric measures  $\lambda_n$ and $\lambda_n'$ as in Theorem 
\ref{thm symm meas}. Are they equal?

\item In Section \ref{ssect L-set}, we defined the kernels $P(x, B)$ and 
$Q(y, A)$ for the sets $A$ and $B$ of finite measure by 
\eqref{eq def Q via basis} and \eqref{eq def P via basis}. I think we  need
to show that the values of $P$ and $Q$ on these sets are finite. 
I included this property as an assumption.

\item In Section \ref{sect meas BD}, we defined the notion of a tail
invariant measure on the path-space of a measurable diagrams. It would
be nice to give examples of such measures in the measurable category.
They exist for discrete Bratteli diagrams. 

\item Let $\mathbb P$ be the path-space measure on $\mc X_\B$ defined by a sequence of probability kernels $(P_n)$ and a 
measure $\nu_0$, 
$\mathbb P = \int_{X_0} \mathbb P_x d\nu_0(x)$. Suppose that
$F = (F_n)$ be a sequence of functions such that $F_n \in L^2(\nu_n)$.
Is $F$ in $L^2(\mathbb P)$?

\item Is there a relation between the operators $\Delta$ and $\ol 
\Delta$? 

\item  Let $\rho$ be a measure
on $(X_1 \times X_2, \A_1 \times \A_2)$. Let $(\nu, P)$ be a pair
which is defined by $\rho$: $\nu = \rho\circ\pi_1^{-1}$ and
$\{\delta_x\} \times P(x, dy) = d\rho_x(y)$. Suppose that $\rho'$
is another measure which is  equivalent to $\rho$.
How can one characterize relations between the pairs $(\nu, P)$ and 
$(\nu', P')$? In particular, if the  two measures $\rho$ and $\rho'$ 
 have the same projection $\nu_1$,  what can be said about actions of
 the kernels $P$ and $P'$?

\end{itemize}

\end{document}

\tcr{Question: The definitions of $ Q(y, A)$ as in \eqref{eq def Q via 
basis} and $P(x, B)$ as in \eqref{eq def P via basis} depend on 
our choice of ONBs in $L^2(\nu_2)$ and the RKHS $\mc H(\lambda)$.
What happens if we change bases in the two spaces? I suppose we 
should get $Q', P'$ that are equivalent in some sense to the $Q, P$.}

\tcr{Question: I suppose we have to modify the basic setting. 
Originally, we assumed that $P$ and $Q$ are finite transition kernels.
It seems that when we begin with a symmetric operator $R$ and  a
$\sigma$-finite measure $\nu$, then $P$ and $Q$ defined as above are not finite in general. Maybe one can find some conditions on $(\nu, R)$ 
under which 
 $Q(y, X_1) <\infty, P(x, X_2) < \infty$.  As a starting point, we can
 consider probability measures $\nu_1$ and $\nu_2$.  Will it guarantee that $P$ and $Q$ are finite?}
\\

{q^{(0)}_{v_0}  
p^{(0)}_{v_0,v_1} \cdots p^{(n-1)}_{v_{n-1},w} } \\
= & \ \frac{1}{q^{(0)}_{v_0}  p^{(0)}_{v_0,v_1} \cdots 
p^{(n-1)}_{v_{n-1},w} } \sum_{v \in V_{n+1}} a^{(n)}_{w, v}
\nu_v^{(n+1)}\\
= & \ \frac{\nu_w^{(n)}}{q^{(0)}_{v_0}  p^{(0)}_{v_0,v_1} \cdots 
p^{(n-1)}_{v_{n-1},w} } \\
= & \ 1.
\ea